\documentclass[11pt]{amsart}

\usepackage[dvipsnames]{xcolor}
\usepackage{tikz}
\usetikzlibrary{positioning}
\usepackage{amsmath}
\usepackage{amssymb}
\usepackage[T1]{fontenc}
\usepackage[utf8]{inputenc}
\usepackage{mathtools}
\usepackage{amsfonts}
\usepackage{fancyhdr}
\usepackage{mathrsfs}
\usepackage{verbatim}
\usetikzlibrary{arrows}
\usepackage{stmaryrd}

\begin{document}

\title{On analytic analogues of quantum groups}
\author{Craig Smith}

\maketitle

\begin{abstract}
In this paper we present a new construction of analytic analogues of quantum groups over non-Archimedean fields and construct braided monoidal categories of their representations. We do this by constructing analytic Nichols algebras and use Majid's double-bosonisation construction to glue them together. We then go on to study the rigidity of these analytic quantum groups as algebra deformations of completed enveloping algebras through bounded cohomology. This provides the first steps towards a $p$-adic Drinfel'd-Kohno Theorem, which should relate this work to Furusho's $p$-adic Drinfel'd associators. Finally, we adapt these constructions to working over Archimedean fields.
\end{abstract}

\tableofcontents

\newtheorem{theorem}{Theorem}[section]
\newtheorem{corollary}[theorem]{Corollary}
\newtheorem{example}[theorem]{Example}
\newtheorem{lem}[theorem]{Lemma}
\newtheorem{obs}[theorem]{Observation}
\newtheorem{ass}[theorem]{Assumptions}
\newtheorem{prop}[theorem]{Proposition}
\theoremstyle{definition}
\newtheorem{defn}[theorem]{Definition}
\newtheorem*{theorem*}{Theorem}

\newtheorem*{Prop3.4}{Proposition 3.4}
\newtheorem*{Cor3.39}{Corollary 3.39}

\newtheorem{rem}[theorem]{Remark}
\numberwithin{equation}{section}

\newenvironment{definition}[1][Definition]{\begin{trivlist}
\item[\hskip \labelsep {\bfseries #1}]}{\end{trivlist}}

\newenvironment{remark}[1][Remark]{\begin{trivlist}
\item[\hskip \labelsep {\bfseries #1}]}{\end{trivlist}}

\newenvironment{notation}[1][Notation]{\begin{trivlist}
\item[\hskip \labelsep {\bfseries #1}]}{\end{trivlist}}

\setcounter{section}{-1}

\section{Introduction}

In 2007, Soibelman gave a rough introduction to $p$-adic analogues of quantum groups in \cite{QpSaQpG} as examples of non-commutative spaces over non-Archimedean fields. Inspired by this, Lyubinin explicitly constructs a $p$-adic quantum hyperenveloping algebra in \cite{pQHAfSL2} in the case of $\mathfrak{sl}_{2}$. His construction involves using Skew-Tate algebras to construct completions of the positive and negative parts of the quantum enveloping algebra. The disadvantage of this construction is that it requires some work to generalise this to arbitrary Kac-Moody Lie algebras. In this paper we present an alternative construction of analytic analogues of quantum groups over non-Archimedean fields that works for any Kac-Moody Lie algebra and construct braided monoidal categories of their representations. With this we hope to exhibit interesting new analytic representations of braid groups. We then go on to use bounded cohomology to study the rigidity of these analytic quantum groups as algebra deformations of completed enveloping algebras. We hope that this will provide the first steps towards a $p$-adic Drinfel'd-Kohno Theorem, relating this work to Furusho's $p$-adic Drinfel'd associators in \cite{pMPatpKZE}.\\

In \cite{ItQG}, Lusztig constructs the positive and negative parts of quantum enveloping algebras as quotients of tensor algebras by the radical of a duality pairing. This is an example of more a general construction, a \emph{Nichols algebra}, discussed in detail in \cite{PHA}. Section 1 of this paper is devoted to presenting the definitions and results required to define analytic analogues of Nichols algebras. All of this is done in the categories of IndBanach spaces over both Archimedean and non-Archimedean fields.\\

Majid's construction in \cite{DBoBG} brings together dually paired braided Hopf algebras $B$ and $C$ with compatible respective right and left actions of a Hopf algebra $H$ to form a new Hopf algebra $U(B,H,C)$, the \emph{double-bosonisation}. The motivation behind this construction is that one can recover the quantum enveloping algebra $U_{q}(\mathfrak{g})$ from $U(B,H,C)$ if $B=U_{q}^{+}(\mathfrak{g})$ and $C=U_{q}^{-}(\mathfrak{g})$ are the respective positive and negative parts of a quantum enveloping algebra and $H=U_{q}^{0}(\mathfrak{g})$ is the Cartan part. Section 2 of this paper recalls and rephrases Majid's double-bosonisation construction in the context of IndBanach spaces, which will allow us to construct analytic analogues quantum enveloping algebras from analytic Nichols algebras in the subsequent sections.\\

In Section 3 we restrict ourselves to working over non-Archimedean fields. We begin Subsection 3.1 by proving the existence of the analytic Nichols algebras defined in Section 1 through two different constructions. The first, given in the proof of Proposition \ref{Radius1NicholsAlgebrasExists}, exhibits the quotient of a completed tensor algebra by a certain universal Hopf ideal as an analytic Nichols algebra. The second, given in Proposition \ref{BilinearFormGivesNicholsAlgebra}, constructs an analytic Nichols algebra as the quotient of a completed tensor algebra by the radical of a duality pairing. In particular this second construction gives the duality pairing between Nichols algebras that allows us to use Majid's double-bosonisation construction. We show in Proposition \ref{NicholsAlgebrasEquivalentDefinition} that these two constructions are equivalent. In Subsection 3.2 we apply these constructions to obtain completions of the positive and negative parts of quantum enveloping algebras and use Majid's double-bosonisation to glue them together into completions of quantum enveloping algebras. We call the resulting IndBanach Hopf algebras \emph{analytic quantum groups}.\\

Unfortunately, we see in Subsection 3.3 that the R-matrix of $U_{q}(\mathfrak{g})$ still does not converge in any of our analytic quantum groups. Nonetheless, in Subsection 3.4 we use an alternate description of our analytic quantum groups as quotients of a Drinfel'd doubles to obtain a braided monoidal category of representations analogous to the BGG category $\mathcal{O}$. We then present an example in the case of $\mathfrak{g}=\mathfrak{sl}_{2}$ of such a braided representation with no highest weight vectors. The further study of these braided representations should produce interesting new examples of braid group representation on Banach spaces and may give a new context in which some special analytic functions, such as $p$-adic multiple polylogarithms, naturally arise.\\

The classical rigidity results of Chevalley, Eilenberg and Cartan from the 1940s assert that there are no non-trivial formal deformations (as an algebra) of the universal enveloping algebra of a semisimple Lie algebra $\mathfrak{g}$. The proof relies on the vanishing of certain Lie algebra cohomology groups. In Theorems \ref{Rigidity1} and \ref{Rigidity2} of Subsection 3.5 we prove an analogous result that, provided an unproven bounded cohomology vanishing result holds, any algebra deformation of a completed enveloping algebra is isomorphic to the trival one. In particular this implies Corollary \ref{RigidityApplied} that asserts that, modulo a bounded cohomology vanishing result, our analytic quantum groups are isomorphic to the trivial algebra deformation of a completed enveloping algebra. Furthermore, both of these isomorphisms are unique up to conjugation. In Subsection 3.6 we highlight the benefits of working over formal powerseries $k \llbracket \hslash \rrbracket$ as opposed to convergent powerseries. In particular this allows us to prove Theorems \ref{Rigidity1b} and \ref{Rigidity2b}, rigidity results analogous to Theorems \ref{Rigidity1} and \ref{Rigidity2} that require weaker assumptions on bounded cohomology.\\

In \cite{QG}, Kassel uses algebraic analogues of the rigidity theorems of Subsections 3.5 and 3.6 to present a proof of the Drinfel'd-Khono Theorem over $\mathbb{C}$. This theorem states that the category of representations of the quantum enveloping algebra is equivalent, as a braided monoidal category, to the category of $U(\mathfrak{g})$-modules with associativity constraint given by the Drinfel'd associator and braiding given by the associated R-matrix. As a result of this, the associated braid group representations are equivalent. This can be interpreted as a statement about the monodromy of the Knizhnik-Zamolodchikov (KZ) equations that govern the Drinfel'd associator. In \cite{pMPatpKZE}, Furusho uses $p$-adic multiple polylogarithms to construct solutions to the $p$-adic KZ equations and a $p$-adic Drinfel'd associator. In the future the author hopes to expand upon the work in Subsections 3.5 and 3.6 to prove a $p$-adic analogue of the Drinfel'd-Khono theorem and to investigate links to Furusho's work.\\

Finally, in Section 4 we adapt these constructions to working over Archimedean fields. We begin by proving the existence of some analytic Nichols algebras and then use Majid's double-bosonisation to form Archimedean analytic quantum groups. We finish by constructing a braided monoidal category of representations as in Subsection 3.4. Again, we hope that the further study of these representations will produce interesting new braid group representations in which we might see 
some special analytic functions arising, such as the quantum dilogarithms that appear in \cite{PRftQDaQM}.

\subsection*{Funding}
This work was supported by the Engineering and Physical Sciences Research Council [EP/M50659X/1].

\subsection*{Acknowledgements}
\thanks{The author would like to thank Kobi Kremnitzer for his expert supervision and continued support throughout this research, without which writing this paper would not have been possible. He would also like to thank Dan Ciubotaru and Shahn Majid for their advice and encouragement.}

\section{Braided IndBanach Hopf algebras, analytic gradings and Nichols algebras}

\subsection{Preliminary definitions}

Let $k$ be a complete valued field throughout. 

\begin{defn}
\label{BanachCategory}
Let $\text{Ban}_{k}$ denote the category of $k$-Banach spaces, each equipped with a specific norm, and bounded linear transformations between them, and let $\text{Ban}_{k}^{\leq 1}$ denote the wide subcategory whose morphisms are bounded linear transformations of norm at most 1, the \emph{contracting category of Banach spaces}. By \emph{wide} we mean that $\text{Ban}_{k}^{\leq 1}$ contains all objects of $\text{Ban}_{k}$. If our field is non-Archimedean then Banach spaces may be defined in two ways, depending on whether we require norms to satisfy the usual triangle inequality or the strong triangle inequality. We shall therefore distinguish between the following two cases:
\begin{itemize}
\item[{\bf (NA)}] $k$ is non-Archimedean and we require all norms in $\text{Ban}_{k}$ to satisfy the strong triangle inequality; and
\item[{\bf (A)}] $k$ is not necessarily non-Archimedean and we only require norms in $\text{Ban}_{k}$ to satisfy the weak triangle inequality.
\end{itemize}
\end{defn}

\begin{defn}
\label{Contracting(Co)Products}
Let $(V_{i})_{i \in I}$ be a family of Banach spaces. Let us define the \emph{contracting product} of this family as the Banach space
$$\prod\nolimits_{i \in I}^{\leq 1}V_{i}=\{(v_{i})_{i \in I} \in \times_{i \in I} V_{i} \mid \text{Sup}_{i \in I} \|v_{i}\| \leq \infty\}$$
with norm $\|(v_{i})\|=\text{Sup}_{i \in I} \|v_{i}\|$ in both the {\bf (NA)} and {\bf (A)} cases, and the \emph{contracting coproduct} as the Banach space
$$\coprod\nolimits_{i \in I}^{\leq 1}V_{i} = \{(v_{i})_{i \in I} \in \times_{i \in I} V_{i} \mid \sum_{i \in I} \|v_{i}\| \leq \infty\}$$
with norm $\|(v_{i})\|=\sum_{i \in I} \|v_{i}\|$ in the case of {\bf (A)} and
$$\coprod\nolimits_{i \in I}^{\leq 1}V_{i} = \{(v_{i})_{i \in I} \in \times_{i \in I} V_{i} \mid \text{lim}_{i \in I} \|v_{i}\| =0\}$$
with norm $\|(v_{i})\|=\text{Sup}_{i \in I} \|v_{i}\|$ in the case of {\bf (NA)}.
\end{defn}

\begin{prop}
The category $\text{Ban}_{k}^{\leq 1}$ has small limits and colimits.
\end{prop}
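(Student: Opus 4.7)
The plan is to reduce the statement to the existence of four types of (co)limits, since it is standard that a category has all small limits if it has all small products and equalizers, and dually all small colimits if it has all small coproducts and coequalizers. I will show each of these four exists in $\text{Ban}_{k}^{\leq 1}$ by producing explicit candidates and verifying their universal properties against morphisms of norm at most $1$.

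For products, I would take the contracting product $\prod_{i\in I}^{\leq 1} V_{i}$ from Definition \ref{Contracting(Co)Products} together with the obvious projections $\pi_{j}(v_{i})_{i\in I}=v_{j}$, each of which has norm at most $1$. Given a cone $(f_{i}\colon X\to V_{i})_{i\in I}$ in $\text{Ban}_{k}^{\leq 1}$, the map $x\mapsto (f_{i}(x))_{i\in I}$ lands in the contracting product because $\sup_{i}\|f_{i}(x)\|\leq \sup_{i}\|f_{i}\|\,\|x\|\leq \|x\|$, and this bound simultaneously shows the mediating map is contracting; its uniqueness is forced by the projections. For coproducts, the candidate is $\coprod_{i\in I}^{\leq 1} V_{i}$ with the coordinate inclusions, which are again contracting. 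Given $(g_{i}\colon V_{i}\to X)_{i\in I}$, define a mediating map by $(v_{i})\mapsto \sum_{i\in I} g_{i}(v_{i})$; convergence holds in the (A) case because $\sum\|g_{i}(v_{i})\|\leq\sum\|v_{i}\|<\infty$, and in the (NA) case because $\|v_{i}\|\to 0$ makes the net summable by completeness. In both cases $\|\sum g_{i}(v_{i})\|$ is bounded by the coproduct norm of $(v_{i})$, giving a contracting map with the required universal property.

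For equalizers of $f,g\colon V\to W$, I take $E=\{v\in V\mid f(v)=g(v)\}$, which is a closed subspace of $V$ (hence a Banach space in both the (A) and (NA) cases) equipped with the restricted norm, together with its inclusion, which is tautologically contracting. Any contracting $h\colon X\to V$ with $fh=gh$ corestricts uniquely to $X\to E$, and the corestriction is contracting since the norm on $E$ is the restricted norm. For coequalizers, I take $C=W/\overline{(f-g)(V)}$, equipped with the quotient norm, together with the quotient map $\pi\colon W\to C$, which has norm at most $1$. Any contracting $h\colon W\to X$ with $hf=hg$ kills $(f-g)(V)$ and, by continuity, its closure, so factors uniquely through $\pi$; the factorisation is contracting because the quotient norm is defined as an infimum, giving $\|\overline{h}(\pi(w))\|=\inf_{e}\|h(w-e)\|\leq \|w\|_{C}$.

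The easy steps are the equalizer and the product, where the construction is literally a subspace and the norm is transparent. The main obstacle is in the coproduct argument: one must verify that the candidate map is well-defined and contracting in a single estimate, and one must handle the (NA) and (A) cases simultaneously because the coproduct norm is genuinely different ($\ell^{\infty}$-type versus $\ell^{1}$-type). A secondary point to take care of is completeness of $C=W/\overline{(f-g)(V)}$ in the (NA) setting, where one should check that the quotient of a Banach space with strong triangle inequality by a closed subspace remains a Banach space satisfying the strong triangle inequality; this is standard, but worth flagging since Definition \ref{BanachCategory} makes the distinction between the two cases part of the data of the category.
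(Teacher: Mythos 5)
Your proposal is correct and follows essentially the same route as the paper: the paper reduces the claim to the contracting (co)products of Definition 1.2 together with kernels and cokernels inherited from $\text{Ban}_{k}$, and your equalizer and coequalizer constructions are exactly these (the kernel of $f-g$ and the cokernel of $f-g$, i.e.\ the quotient by $\overline{(f-g)(V)}$), with the verifications the paper leaves to the reader written out.
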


\begin{proof}
Indeed, it has kernels and cokernels inhereted from $\text{Ban}_{k}$, and it is easy to verify that Definition \ref{Contracting(Co)Products} describes products and coproducts in this category.
\end{proof}

\begin{defn}
For a set $I$, let $\text{Ban}_{k}^{I,\text{bd}}$ be the category whose objects are collections $(V_{i})_{i \in I}$ of Banach spaces $V_{i}$ indexed by $i \in I$ and whose morphisms are uniformly bounded,
$$\text{Hom}((V_{i})_{i \in I},(V'_{i})_{i \in I}):=\prod\nolimits_{i \in I}^{\leq 1}\underline{\text{Hom}}(V_{i},V'_{i}).$$
\end{defn}

\begin{lem}
\label{ContractingUniversalProperty}
$\prod_{i \in I}^{\leq 1}$ and $\coprod_{i \in I}^{\leq 1}$ define functors from $\text{Ban}_{k}^{I,\text{bd}}$ to $\text{Ban}_{k}$. Furthermore, contracting products are right adjoints to the diagonal functors
$$\Delta^{I}:\text{Ban}_{k} \rightarrow \text{Ban}_{k}^{I,\text{bd}}, \quad V \mapsto (V)_{i \in I},$$
and likewise contracting coproducts are left adjoints to $\Delta^{I}$.
\end{lem}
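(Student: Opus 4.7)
The plan is to first verify that the two assignments are functorial into $\text{Ban}_k$, and then exhibit explicit natural bijections on hom-sets realising the adjunctions.

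For functoriality, given a uniformly bounded family of morphisms $(f_i\colon V_i \to V_i')$ in $\text{Ban}_k^{I,\text{bd}}$ with $C := \sup_i \|f_i\| < \infty$, I would define the induced map componentwise by $(v_i) \mapsto (f_i(v_i))$. On $\prod^{\leq 1}V_i$ this is well-defined since $\sup_i \|f_i(v_i)\| \leq C \sup_i \|v_i\|$, hence gives a bounded map of norm at most $C$. On $\coprod^{\leq 1}V_i$ in case \textbf{(A)}, the estimate $\sum_i \|f_i(v_i)\| \leq C \sum_i \|v_i\|$ shows the image lies in the target coproduct and the induced map is bounded by $C$; in case \textbf{(NA)}, the bound $\|f_i(v_i)\| \leq C\|v_i\|$ ensures $\|f_i(v_i)\| \to 0$ whenever $\|v_i\| \to 0$, and the norm estimate is again at most $C$. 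Compatibility with composition and identities is immediate from the componentwise definition, so both assignments are functors to $\text{Ban}_k$.

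For the adjunction with $\prod^{\leq 1}$ as right adjoint, I would show that the map
$$\Phi\colon \text{Hom}_{\text{Ban}_k}\bigl(V, \prod\nolimits^{\leq 1}_{i \in I}W_i\bigr) \longrightarrow \text{Hom}_{\text{Ban}_k^{I,\text{bd}}}\bigl(\Delta^I V, (W_i)_{i \in I}\bigr),$$
sending $f$ to the family $(\pi_i \circ f)_{i \in I}$ of compositions with the (norm $\leq 1$) coordinate projections, is a bijection with inverse $(f_i) \mapsto (v \mapsto (f_i(v))_i)$. A short check gives $\|f\| = \sup_i \|\pi_i \circ f\|$, so not only are the hom-sets in bijection but their natural Banach norms agree; naturality in $V$ and $(W_i)$ is automatic from the componentwise definition. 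Equivalently, one can exhibit the unit $V \to \prod^{\leq 1}\Delta^I V$ (the diagonal) and counit $\Delta^I \prod^{\leq 1} W_i \to (W_i)$ (the family of projections) and verify the triangle identities.

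For the adjunction with $\coprod^{\leq 1}$ as left adjoint, I would define
$$\Psi\colon \text{Hom}_{\text{Ban}_k^{I,\text{bd}}}\bigl((V_i)_{i \in I}, \Delta^I W\bigr) \longrightarrow \text{Hom}_{\text{Ban}_k}\bigl(\coprod\nolimits^{\leq 1}_{i \in I}V_i, W\bigr)$$
by $\Psi((f_i))\bigl((v_i)\bigr) = \sum_{i \in I} f_i(v_i)$, with inverse given by precomposition with the coordinate embeddings $\iota_i\colon V_i \hookrightarrow \coprod^{\leq 1}V_i$. The main thing to check, and the only real obstacle in the proof, is that this sum converges in $W$: in case \textbf{(A)} it converges absolutely because $\sum_i \|f_i(v_i)\| \leq C\sum_i \|v_i\| < \infty$, and in case \textbf{(NA)} it converges because $\|f_i(v_i)\| \leq C\|v_i\| \to 0$ and $W$ is non-Archimedean complete; either way the resulting map has operator norm at most $\sup_i \|f_i\|$. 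The two assignments are mutually inverse because $\Psi((f_i)) \circ \iota_j = f_j$ is immediate, while for any bounded $f\colon \coprod^{\leq 1}V_i \to W$ the continuity of $f$ together with the fact that every element $(v_i)$ is the convergent sum $\sum_i \iota_i(v_i)$ in $\coprod^{\leq 1}V_i$ gives $f((v_i)) = \sum_i (f \circ \iota_i)(v_i) = \Psi((f\circ\iota_i))((v_i))$. Naturality is again formal.
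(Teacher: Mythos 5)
Your proof is correct. The paper itself does not spell out an argument at all: its proof of this lemma is a one-line citation to Lemma 2.5 of \cite{TRTfIBS}, so your self-contained verification is the genuinely different (and more informative) route. The substantive points are exactly the ones you isolate: that a uniformly bounded family induces a componentwise map on the contracting (co)product with operator norm at most $\sup_i\|f_i\|$; that for the coproduct adjunction the sum $\sum_i f_i(v_i)$ converges in $W$, which you correctly split into the \textbf{(A)} case (absolute convergence from $\sum_i\|v_i\|<\infty$) and the \textbf{(NA)} case (terms tending to $0$ plus completeness and the strong triangle inequality); and that the two assignments are mutually inverse because each element of $\coprod^{\leq 1}V_i$ is the convergent sum of its coordinate inclusions, so a bounded map is determined by its restrictions $f\circ\iota_i$. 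Your observation that $\|f\|=\sup_i\|\pi_i\circ f\|$ (and the analogous norm identity for the coproduct) is a pleasant extra, since it shows the adjunction bijections are in fact isometric for the natural norms on the hom-spaces, which is what makes these functors compatible with the contracting framework the paper works in.
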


\begin{proof}
This follows from Lemma 2.5 of \cite{TRTfIBS}.
\end{proof}

\begin{defn}
Let us denote by $\text{IndBan}_{k}$ the Ind completion of $\text{Ban}_{k}$. That is, objects of $\text{IndBan}_{k}$ are filtered diagrams of Banach spaces, $X:I \rightarrow \text{Ban}_{k}$, and morphisms are given by
$$\text{Hom}(X,Y)=\text{colim}_{j \in J} \text{lim}_{i \in I} \text{Hom}(X(i),Y(j)).$$
We think of these objects as formal colimits, and use the notation $\text{"colim"}_{i \in I} X(i)$ for the diagram $X$. For a Banach space $V$ we denote by $\text{"}V\text{"}$ the object in $\text{IndBan}_{k}$ represented by the constant diagram at $V$, and often just as $V$ when there is no ambiguity.
\end{defn}

\begin{prop}
The category $\text{IndBan}_{k}$ is a complete and cocomplete quasi-abelian category, and can be given a closed monoidal structure extending that of $\text{Ban}_{k}$ by defining
$$(\text{"colim"}_{i \in I} X_{i})\hat{\otimes}(\text{"colim"}_{j \in J} Y_{j}):= \text{"colim"}_{\substack{i \in I \\ j \in J}} X_{i} \hat{\otimes} Y_{j},$$
$$\underline{\text{Hom}}(\text{"colim"}_{i \in I} X_{i},\text{"colim"}_{j \in J} Y_{j}):= \text{colim}_{j \in J} \text{lim}_{i \in I} \underline{Hom}(X_{i},Y_{j}).$$
\end{prop}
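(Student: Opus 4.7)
The plan is to handle the three assertions---(co)completeness, the quasi-abelian structure, and the closed monoidal structure---in turn, leveraging standard results about Ind completions at each step.

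For (co)completeness I would appeal to the general theory of Ind categories: $\text{Ban}_{k}$ has finite limits (kernels and finite products) and finite colimits (cokernels and finite coproducts), and for any category $\mathcal{C}$ with finite (co)limits, $\text{Ind}(\mathcal{C})$ is complete and cocomplete. Concretely, filtered colimits are built in by construction of $\text{IndBan}_{k}$, arbitrary colimits are obtained by combining finite colimits of representing diagrams with filtered colimits, and limits are computed via the formula (cf.\ Kashiwara--Schapira) expressing limits in $\text{Ind}(\mathcal{C})$ as filtered colimits of finite limits of representing diagrams.

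For the quasi-abelian structure I would use that $\text{Ban}_{k}$ is itself quasi-abelian---this is classical, with the strict morphisms being precisely the maps that are open onto their image---and that the Ind completion of a quasi-abelian category inherits a canonical quasi-abelian structure in which strict epis and monos are detected on representing diagrams. For the monoidal structure, the bifunctor $\hat{\otimes}\colon \text{Ban}_{k} \times \text{Ban}_{k} \to \text{Ban}_{k}$ preserves finite colimits in each variable since it is already part of a closed monoidal structure on $\text{Ban}_{k}$; hence by the universal property of Ind completions it extends uniquely to a bifunctor on $\text{IndBan}_{k}$ commuting with filtered colimits in each variable, and this unique extension is exactly the displayed formula. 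Associativity, unit, and symmetry constraints transport from $\text{Ban}_{k}$ by the same universal property.

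Closedness amounts to checking the hom-tensor adjunction
$$\text{Hom}_{\text{IndBan}_{k}}(X \hat{\otimes} Y, Z) \;\cong\; \text{Hom}_{\text{IndBan}_{k}}(X, \underline{\text{Hom}}(Y, Z)).$$
I would verify this by writing $X=\text{"colim"}_{i} X_{i}$, $Y=\text{"colim"}_{j} Y_{j}$, $Z=\text{"colim"}_{\ell} Z_{\ell}$, unpacking both sides via the given formulas, and reducing to the Banach-level adjunction $\text{Hom}(X_{i} \hat{\otimes} Y_{j}, Z_{\ell}) \cong \text{Hom}(X_{i}, \underline{\text{Hom}}(Y_{j}, Z_{\ell}))$, after which one must exchange an outer filtered colimit with an inner limit in the appropriate order. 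I expect this bookkeeping to be the main technical obstacle, since morphism sets in $\text{IndBan}_{k}$ are themselves $\text{colim}\,\text{lim}$ expressions and one must check that the nested colimit-limit juggling genuinely recovers the adjunction---this is precisely the point at which the Ind machinery is doing nontrivial work, rather than simply importing a statement from $\text{Ban}_{k}$.
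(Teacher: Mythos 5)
Your outline of cocompleteness, of the quasi-abelian structure, and of the extension of $\hat{\otimes}$ matches the paper's citation-based proof, but the completeness step contains a genuine gap. There is no general theorem that $\text{Ind}(\mathcal{C})$ is complete whenever $\mathcal{C}$ has finite limits: Kashiwara--Schapira give small projective limits in $\text{Ind}(\mathcal{C})$ only when $\mathcal{C}$ itself admits small projective limits (and finite ones when $\mathcal{C}$ has finite ones), and $\text{Ban}_{k}$ has no infinite products --- a family of bounded maps $f_{\alpha}:W \rightarrow V_{\alpha}$ with unbounded norms factors through no single Banach space, so the sup-norm product is only a product in the contracting category. In particular your formula ``limits in $\text{Ind}(\mathcal{C})$ as filtered colimits of finite limits of representing diagrams'' does not produce infinite products. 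Completeness of $\text{IndBan}_{k}$ is true but is a Banach-specific fact: the product of a family $(V_{\alpha})$ is the filtered colimit, over all families of radii $(r_{\alpha})$, of the contracting products $\prod^{\leq 1}_{\alpha}(V_{\alpha})_{r_{\alpha}}$, exactly as in Section 1.4.1 of \cite{LaACH} (and as used again in the proof of Lemma \ref{CountableProductsCommuteWithTensor}); this is precisely why the paper cites Meyer's explicit construction rather than general Ind machinery. With that supplied, the remainder of your first two steps coincides with the paper's proof: cocompleteness from cokernels and finite direct sums in $\text{Ban}_{k}$, and quasi-abelianness from Proposition 2.1.17 of \cite{QACaS}.

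On closedness, you correctly sense that this is where the work is, but the step you defer --- ``exchange an outer filtered colimit with an inner limit'' --- is not available: such an interchange fails in general (take $Y$ an infinite coproduct of copies of $k$ and $Z=k\{t\}^{\dagger}$; a sequence of sections with shrinking radii of overconvergence lies in the limit of colimits but not in the colimit of limits). The adjunction instead becomes purely formal once the internal Hom is read with the limit over the source index outermost, i.e. as the limit in $\text{IndBan}_{k}$ over $i$ of the formal colimits over $j$ of $\underline{\text{Hom}}(X_{i},Y_{j})$, matching the description of morphism sets as a limit over the source of colimits over the target. Then, using completeness, the Banach-level adjunction, and the fact that Banach objects are compact in $\text{IndBan}_{k}$ (Hom out of them commutes with filtered colimits), one gets $\text{Hom}(X,\underline{\text{Hom}}(Y,Z)) \cong \text{lim}_{i}\text{lim}_{j}\text{colim}_{\ell}\text{Hom}(X_{i}\hat{\otimes}Y_{j},Z_{\ell}) \cong \text{Hom}(X\hat{\otimes}Y,Z)$ with no colimit--limit interchange at all. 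So replace the proposed interchange by this compactness argument; as it stands, that step of your plan would fail.
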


\begin{proof}
Since $\text{Ban}_{k}$ has cokernels and finite direct sums, $\text{IndBan}_{k}$ is cocomplete. An explicit construction of limits in $\text{IndBan}_{k}$ can be found in Section 1.4.1 of \cite{LaACH}. Proposition 2.1.17 of \cite{QACaS} asserts that $\text{IndBan}_{k}$ is quasi-ableian.
\end{proof}

\begin{remark}
For an account of Ind completions see \cite{CaS}, and more on $\text{IndBan}_{k}$ can be found in \cite{TRTfDM}, \cite{SDiBAG}, \cite{NAAGaRAG} and \cite{LaACH} and numerous other excellent sources.
\end{remark}

\begin{defn}
We extend the definition of contracting (co)products to $\text{IndBan}_{k}$ as follows. The contracting product and coproduct functors
$$\prod\nolimits^{\leq 1}_{I},\coprod\nolimits^{\leq 1}_{I}:\text{Ban}_{k}^{I,\text{bd}} \rightarrow \text{Ban}_{k}$$
induce functors from the Ind completion of $\text{Ban}_{k}^{I,\text{bd}}$,
$$\text{IndBan}_{k}^{I,\text{bd}}:=\text{Ind}(\text{Ban}_{k}^{I,\text{bd}}),$$
to $\text{IndBan}_{k}$, which we will continue to denote as $\prod^{\leq 1}_{I}$ and $\coprod^{\leq 1}_{I}$ respectively. There is a faithful diagonal embedding functor $\Delta^{I}:\text{IndBan}_{k} \rightarrow \text{IndBan}_{k}^{I,\text{bd}}$ induced by $\Delta^{I}:\text{Ban}_{k} \rightarrow \text{Ban}_{k}^{I,\text{bd}}$.
\end{defn}

\begin{prop}
With the above definitions, there are adjunctions
$$\text{Hom}(\coprod\nolimits_{I} ^{\leq 1}X_{I},Y) \cong \text{Hom}(X_{I},\Delta^{I}Y)$$
and
$$\text{Hom}(Y,\prod\nolimits_{I} ^{\leq 1}X_{I}) \cong \text{Hom}(\Delta^{I}Y,X_{I})$$
for $X_{I} \in \text{IndBan}_{k}^{I,\text{bd}}$, $Y \in \text{IndBan}_{k}$.
\end{prop}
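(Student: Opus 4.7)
The plan is to bootstrap the Banach-level adjunction of Lemma \ref{ContractingUniversalProperty} up to the Ind-level by unwinding the colim-lim formula defining morphisms in $\text{IndBan}_k$ and $\text{IndBan}_k^{I,\text{bd}}$, and then applying the base-level adjunction indexwise.

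First I would write $X_I = \text{"colim"}_{a \in A} X_I^a$ for a filtered diagram $A \to \text{Ban}_k^{I,\text{bd}}$ and $Y = \text{"colim"}_{b \in B} Y^b$ for a filtered diagram $B \to \text{Ban}_k$. Since the contracting (co)product functors on $\text{IndBan}_k^{I,\text{bd}}$ and the diagonal functor on $\text{IndBan}_k$ are defined to be the Ind-extensions of the corresponding functors on the base categories, one immediately has
\begin{align*}
\coprod\nolimits_I^{\leq 1} X_I &= \text{"colim"}_{a \in A}\coprod\nolimits_I^{\leq 1} X_I^a,\\
\prod\nolimits_I^{\leq 1} X_I &= \text{"colim"}_{a \in A}\prod\nolimits_I^{\leq 1} X_I^a,\\
\Delta^I Y &= \text{"colim"}_{b \in B}\Delta^I Y^b.
\end{align*}

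I would then compute the first adjunction directly, using the Hom-formula in the Ind-completion in the first and last lines and Lemma \ref{ContractingUniversalProperty} applied pointwise in $(a,b)$ in the middle line:
\begin{align*}
\text{Hom}\bigl(\coprod\nolimits_I^{\leq 1} X_I,\, Y\bigr)
&= \mathrm{colim}_{b}\, \lim_{a}\, \text{Hom}_{\text{Ban}_k}\!\bigl(\coprod\nolimits_I^{\leq 1} X_I^a,\, Y^b\bigr)\\
&\cong \mathrm{colim}_{b}\, \lim_{a}\, \text{Hom}_{\text{Ban}_k^{I,\text{bd}}}\!\bigl(X_I^a,\, \Delta^I Y^b\bigr)\\
&= \text{Hom}(X_I,\, \Delta^I Y).
\end{align*}
A parallel computation, this time of shape $\mathrm{colim}_a \lim_b$ and using the product half of Lemma \ref{ContractingUniversalProperty}, yields the second adjunction.

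The only real obstacle is bookkeeping: one must verify that the base-level bijection of Hom-sets is natural in both the Banach-space variable and the $I$-indexed family, so that it commutes with the $\mathrm{colim}_b\lim_a$ (respectively $\mathrm{colim}_a\lim_b$); and one must identify the Hom-set in $\text{Ban}_k^{I,\text{bd}}$ with the underlying set of the contracting product $\prod_{i\in I}^{\leq 1}\underline{\text{Hom}}(X_I^a(i),\Delta^I Y^b(i))$ appearing in the definition of morphisms in $\text{Ind}(\text{Ban}_k^{I,\text{bd}})$. Both points are automatic from the functorial statement of Lemma \ref{ContractingUniversalProperty} together with the universal property of Ind-completion, so the argument is essentially a diagram chase.
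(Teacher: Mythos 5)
Your argument is correct and is essentially the paper's proof: the paper simply says the adjunctions follow from Lemma \ref{ContractingUniversalProperty} by taking filtered colimits, which is exactly the colim--lim computation you spell out.
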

\begin{proof}
This follows from the adjunction given in Lemma \ref{ContractingUniversalProperty} by taking filtered colimits.
\end{proof}

\subsection{Braided IndBanach Hopf algebras}

\begin{defn}
\label{Braiding}
Let $V$ be an IndBanach space. We say that a morphism $c:V \hat{\otimes} V \rightarrow V \hat{\otimes} V$ is a \emph{pre-braiding} on $V$ if it satisfied the \emph{hexagon axiom}, \emph{i.e.} the diagram
\begin{center}
\begin{tikzpicture}[node distance=6cm, auto]
  \node (A) {$V \hat{\otimes} V \hat{\otimes} V$};
  \node (B) [right=1.3cm of A] {$V \hat{\otimes} V \hat{\otimes} V$};
  \node (C') [below=1cm of A] {};
  \node (C) [left=0.5cm of C'] {$V \hat{\otimes} V \hat{\otimes} V$};
  \node (D') [below=1cm of B] {};
  \node (D) [right=0.5cm of D'] {$V \hat{\otimes} V \hat{\otimes} V$};
  \node (E) [below=1cm of C'] {$V \hat{\otimes} V \hat{\otimes} V$};
  \node (F) [below=1cm of D'] {$V \hat{\otimes} V \hat{\otimes} V$};
  \draw[->] (A) to node {$c \otimes \text{Id}_{V}$} (B);
  \draw[->] (B) to node {$\text{Id}_{V} \otimes c$} (D);
  \draw[->] (D) to node {$c \otimes \text{Id}_{V}$} (F);
  \draw[->] (A) to node [swap]{$\text{Id}_{V} \otimes c$} (C);
  \draw[->] (C) to node [swap]{$c \otimes \text{Id}_{V}$} (E);
  \draw[->] (E) to node [swap]{$\text{Id}_{V} \otimes c$} (F);
\end{tikzpicture}
\end{center}
commutes. We say that the pair $(V,c)$ is a \emph{pre-braided IndBanach space}. If $c$ is an isomorphism then $c$ is a \emph{braiding} and $(V,c)$ is a \emph{braided IndBanach space}.
\end{defn}

\begin{defn}
Let $V$ be a Banach space. We define the \emph{Banach tensor algebra}, $T(V)$, to be the contracting coproduct
$$T(V):=\coprod\nolimits_{n \in \mathbb{Z}_{\geq 0}}^{\leq 1}V^{\hat{\otimes} n}$$
where $V^{\hat{\otimes}0}:=k$. For $r>0$ we will use the notation $T_{r}(V)$ for the Banach space $T(V_{r})$, where $V_{r}$ is the Banach space $V$ with its norm rescaled by $r$. For $r \leq r'$ there is a natural map $T_{r'}(V) \rightarrow T_{r}(V)$, and for all $\rho \geq 0$ we will denote by $T_{\rho}(V)^{\dagger}$ the colimit $\text{"colim"}_{r>\rho}T_{r}(V)$ of this system. We will call this the \emph{dagger tensor algebra} or \emph{overconvergent tensor algebra} of radius $\rho$.
\end{defn}

\begin{lem}
Given a pre-braiding $c$ of a Banach space $V$, there is an induced map $c_{n,m}:V^{\hat{\otimes} n} \hat{\otimes} V^{\hat{\otimes} m} \rightarrow V^{\hat{\otimes} m} \hat{\otimes} V^{\hat{\otimes} n}$ with $\|c_{n,m}\| \leq \|c\|^{mn}$ for each $n,m \geq 0$ satisfying the commutative diagram below:
\begin{center}
\begin{tikzpicture}[node distance=6cm, auto]
  \node (A) {$V^{\hat{\otimes} l} \hat{\otimes} V^{\hat{\otimes} m} \hat{\otimes} V^{\hat{\otimes} n}$};
  \node (B) [right=1.2cm of A] {$V^{\hat{\otimes} m} \hat{\otimes} V^{\hat{\otimes} l} \hat{\otimes} V^{\hat{\otimes} n}$};
  \node (C') [below=1cm of A] {};
  \node (C) [left=0.2cm of C'] {$V^{\hat{\otimes} l} \hat{\otimes} V^{\hat{\otimes} n} \hat{\otimes} V^{\hat{\otimes} m}$};
  \node (D') [below=1cm of B] {};
  \node (D) [right=0.2cm of D'] {$V^{\hat{\otimes} m} \hat{\otimes} V^{\hat{\otimes} n} \hat{\otimes} V^{\hat{\otimes} l}$};
  \node (E) [below=1cm of C'] {$V^{\hat{\otimes} n} \hat{\otimes} V^{\hat{\otimes} l} \hat{\otimes} V^{\hat{\otimes} m}$};
  \node (F) [below=1cm of D'] {$V^{\hat{\otimes} n} \hat{\otimes} V^{\hat{\otimes} m} \hat{\otimes} V^{\hat{\otimes} l}$};
  \draw[->] (A) to node {$c_{l,m} \otimes \text{Id}_{V}$} (B);
  \draw[->] (B) to node {$\text{Id}_{V} \otimes c_{l,n}$} (D);
  \draw[->] (D) to node {$c_{m,n} \otimes \text{Id}_{V}$} (F);
  \draw[->] (A) to node [swap]{$\text{Id}_{V} \otimes c_{m,n}$} (C);
  \draw[->] (C) to node [swap]{$c_{l,n} \otimes \text{Id}_{V}$} (E);
  \draw[->] (E) to node [swap]{$\text{Id}_{V} \otimes c_{l,m}$} (F);
\end{tikzpicture}
\end{center}
Hence if $\|c\| \leq 1$ then there is an induced pre-braiding $\tilde{c}$ on $T(V)$ with $\|\tilde{c}\| = \|c\|$. Furthermore, $\tilde{c}$ is a braiding if and only if $c$ is an isometry.
\end{lem}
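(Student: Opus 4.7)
The plan is to construct $c_{n,m}$ by iterated application of $c$, verify the higher hexagon by reduction to the original one, and then assemble $\tilde{c}$ summand-by-summand on the coproduct defining $T(V)$. The main obstacle is checking the hexagonal diagram displayed in the statement, which is a combinatorial coherence statement.

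First I would inductively define $c_{n,m}$. Take $c_{1,1}:=c$, and for the degenerate cases $n=0$ or $m=0$ take $c_{n,m}$ to be the canonical identifications using $V^{\hat{\otimes}0}=k$. I would construct $c_{1,m}$ as the composition
$$c_{1,m}:=(\mathrm{Id}_{V^{\hat{\otimes}(m-1)}}\hat{\otimes} c)\circ\cdots\circ(c\hat{\otimes}\mathrm{Id}_{V^{\hat{\otimes}(m-1)}})$$
that moves a single strand past $m$ strands one at a time, and then set
$$c_{n+1,m}:=(c_{1,m}\hat{\otimes}\mathrm{Id}_{V^{\hat{\otimes}n}})\circ(\mathrm{Id}_V\hat{\otimes} c_{n,m}).$$
An immediate induction gives $\|c_{n,m}\|\leq\|c\|^{nm}$ from submultiplicativity of operator norms under composition and the multiplicativity $\|f\hat{\otimes} g\|=\|f\|\cdot\|g\|$ of the completed tensor product.

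The main step is verifying the displayed coherence diagram. I would argue by induction on $l+m+n$, with base case $l=m=n=1$ being precisely the hexagon axiom for $c$. In the inductive step both paths decompose, via the recursive definition of $c_{p,q}$, into compositions of the elementary braiding $c$ acting on adjacent positions in $V^{\hat{\otimes}(l+m+n)}$; they correspond to two reduced expressions for the block braid reversing three blocks of sizes $l,m,n$, and these differ by applications of the standard braid relations — each such relation either reduces to the $l=m=n=1$ hexagon on a three-letter sub-block or is covered by a strictly smaller instance of the inductive hypothesis.

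Once this is in place I would define $\tilde{c}$ on $T(V)\hat{\otimes} T(V)=\coprod_{n,m\geq 0}^{\leq 1}V^{\hat{\otimes}n}\hat{\otimes} V^{\hat{\otimes}m}$ as the map whose $(n,m)$-component is $c_{n,m}$ followed by the inclusion into the $(m,n)$-summand. The universal property of the contracting coproduct combined with the bound $\|c_{n,m}\|\leq\|c\|^{nm}\leq\|c\|$ (for $nm\geq 1$, using $\|c\|\leq 1$) ensures $\tilde{c}$ is well-defined and bounded, while restriction to $V\hat{\otimes} V$ (where $\tilde{c}$ literally equals $c$) provides the matching lower bound, yielding $\|\tilde{c}\|=\|c\|$. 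The higher hexagon for the $c_{n,m}$ translates directly into the hexagon identity for $\tilde{c}$. For the final equivalence, if $c$ is an isometry then each factor in the formula for $c_{n,m}$ is an isometry of the completed tensor product (isometries remaining isometries under tensoring with identities and composition), hence so is $c_{n,m}$; one can then symmetrically assemble $\tilde{c}^{-1}$ from the analogously constructed $(c^{-1})_{m,n}$, the higher hexagon for $c^{-1}$ ensuring the two-sided inverse identities. Conversely, if $\tilde{c}$ is a braiding then restricting to $V\hat{\otimes} V$ forces $c$ to be invertible, and the boundedness of $\tilde{c}^{-1}$ together with the analogous bound $\|(c^{-1})_{n,m}\|\leq\|c^{-1}\|^{nm}$ forces $\|c^{-1}\|\leq 1$; combined with $\|c\|\leq 1$ this shows $c$ is an isometry.
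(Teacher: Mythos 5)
Your construction of $c_{n,m}$ by iterating $c$ across adjacent tensor factors, the bound $\|c_{n,m}\|\leq\|c\|^{nm}$ via submultiplicativity, and the verification of the displayed diagram by repeated use of the hexagon axiom is essentially the paper's own proof (the paper builds $c_{n,1}$ first and iterates in the second argument rather than starting from $c_{1,m}$, which is the same idea), so on those points the approaches coincide. You in fact go further than the paper's written proof, which stops after the diagram and never spells out the assembly of $\tilde{c}$ on the contracting coproduct or the final equivalence; there your converse argument has one genuine soft spot: from boundedness of $\tilde{c}^{-1}$ together with the \emph{upper} bound $\|(c^{-1})_{n,m}\|\leq\|c^{-1}\|^{nm}$ you cannot conclude $\|c^{-1}\|\leq 1$, since an upper estimate forces nothing about $\|c^{-1}\|$; you would need a lower estimate showing $\|c_{n,m}^{-1}\|$ grows with $nm$ when $\|c^{-1}\|>1$, or some other direct argument (the paper offers no proof of this sentence of the lemma either, so this is a gap in the statement's justification rather than a divergence from the paper's method).
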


\begin{proof}
Applying successively $\text{Id}_{V}^{\otimes n-i} \otimes c \otimes \text{Id}_{V}^{\otimes i-1}$ for $i=1, \ldots, n$ we obtain a map $c_{n}:V^{\hat{\otimes} n} \hat{\otimes} V \rightarrow V \hat{\otimes} V^{\hat{\otimes} n}$ with $\|c_{n}\| \leq \|c\|^{n}$. Then successive applications of $\text{Id}_{V}^{\otimes i-1} \otimes c_{n} \otimes \text{Id}_{V}^{\otimes m-i}$ for $i=1, \ldots, m$ gives a map $c_{n,m}:V^{\hat{\otimes} n} \hat{\otimes} V^{\hat{\otimes} m} \rightarrow V^{\hat{\otimes} m} \hat{\otimes} V^{\hat{\otimes} n}$ with $\|c_{n,m}\| \leq \|c_{n}\|^{m}\leq \|c\|^{nm}$. The commutativity of the given diagram follows from repeated applications of the hexagon axiom from Definition \ref{Braiding}.
\end{proof}

\begin{defn}
\label{BraidedBialgebra}
A \emph{pre-braided IndBanach bialgebra} is an IndBanach space $A$ with both the structure of an algebra, $(A, \mu, \eta)$, and a coalgebra, $(A, \Delta, \varepsilon)$, and equipped with a pre-braiding $c$ on $A$ such that
$$c \circ (\eta \otimes \text{Id}) = \text{Id} \otimes \eta, \quad c \circ (\text{Id} \otimes \eta) = \eta \otimes \text{Id},$$
$$(\text{Id} \otimes \mu)(c \otimes \text{Id})(\text{Id} \otimes c)=c \circ (\mu \otimes \text{Id}), \quad (\mu \otimes \text{Id})(\text{Id} \otimes c)(c \otimes \text{Id})=c \circ (\text{Id} \otimes \mu),$$
$$(\varepsilon \otimes \text{Id}) \circ c = \text{Id} \otimes \varepsilon, \quad (\text{Id} \otimes \varepsilon) \circ c = \varepsilon \otimes \text{Id},$$
$$(c \otimes \text{Id})(\text{Id} \otimes c)(\Delta \otimes \text{Id})=(\text{Id} \otimes \Delta) \circ c, \quad (\text{Id} \otimes c)(c \otimes \text{Id})(\text{Id} \otimes \Delta)=(\Delta \otimes \text{Id}) \circ c,$$
the diagram
\begin{center}
\begin{tikzpicture}[node distance=6cm, auto]
  \node (A) {$A \hat{\otimes} A$};
  \node (B) [below=2.045cm of A] {$A$};
  \node (C) [right=2cm of A] {$A \hat{\otimes} A \hat{\otimes} A \hat{\otimes} A$};
  \node (D) [below=2cm of C] {$A \hat{\otimes} A$};
  \node (E) [below=0.75cm of C] {$A \hat{\otimes} A \hat{\otimes} A \hat{\otimes} A$};
  \draw[->] (A) to node [swap]{$\mu$} (B);
  \draw[->] (C) to node {$\text{Id} \otimes c \otimes \text{Id}$} (E);
  \draw[->] (A) to node {$\Delta \otimes \Delta$} (C);
  \draw[->] (E) to node {$\mu \otimes \mu$} (D);
  \draw[->] (B) to node {$\Delta$} (D);
\end{tikzpicture}
\end{center}
commutes and
$$\Delta \circ \eta = \eta \otimes \eta, \quad \varepsilon \circ \eta = \text{Id}_{k}, \quad \varepsilon \circ \mu = \varepsilon \circ \varepsilon.$$
If $c$ is an isomorphism then $A$ is a \emph{braided IndBanach} \emph{bialgebra}.
\end{defn}

\begin{remark}
For any IndBanach algebra $A$ with a (pre-) braiding $c$, we may define morphisms
$$A \hat{\otimes} A \hat{\otimes} A \hat{\otimes} A \overset{\text{Id} \otimes c \otimes \text{Id}}{\xrightarrow{\hspace*{1cm}}} A \hat{\otimes} A \hat{\otimes} A \hat{\otimes} A \overset{\mu \otimes \mu}{\longrightarrow} A \hat{\otimes} A,$$
$$A \hat{\otimes} A \overset{\Delta \otimes \Delta}{\longrightarrow} A \hat{\otimes} A \hat{\otimes} A \hat{\otimes} A \overset{\text{Id} \otimes c \otimes \text{Id}}{\xrightarrow{\hspace*{1cm}}} A \hat{\otimes} A \hat{\otimes} A \hat{\otimes} A.$$
The first four relations in Definition \ref{BraidedBialgebra} are equivalent to the first of these maps making $A \hat{\otimes} A$ an associative algebra with unit $\eta \otimes \eta$. The next four relations are equivalent to the second of these maps making $A \hat{\otimes} A$ an associative coalgebra with counit $\varepsilon \otimes \varepsilon$. Then the diagram and final three relations are equivalent to $\Delta$ and $\varepsilon$ being algebra homomorphisms, or equivalently $\mu$ and $\eta$ being coalgebra homomorphisms.
\end{remark}

\begin{prop}
\label{TensorBialgebra}
Assume {\bf (NA)}. For $V$ a Banach space, $T_{r}(V)$ is a Banach algebra with multiplication $\mu$ induced by the maps $V_{r}^{\hat{\otimes}n} \hat{\otimes} V_{r}^{\hat{\otimes}m} \overset{\sim}{\longrightarrow} V_{r}^{\hat{\otimes}n+m}$. Given a pre-braiding $c$ on $V$ with $\|c\| \leq 1$, $T_{r}(V)$ is a pre-braided Banach bialgebra with the comultiplication $\Delta$ uniquely determined by $\Delta(v)=1 \otimes v + v \otimes 1$ for all $v \in V_{r}=V_{r}^{\hat{\otimes} 1} \subset T_{r}(V)$ and the pre-braiding $\tilde{c}$ induced by $c$. Furthermore, $\Delta$ has norm at most 1.
\end{prop}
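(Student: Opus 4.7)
The plan is to invoke the universal property of $T_r(V)$ as the free Banach algebra on $V_r$ in the contracting category. First I would verify that $T_r(V)$ is itself a Banach algebra: the concatenation $V_r^{\hat{\otimes} n}\hat\otimes V_r^{\hat\otimes m}\xrightarrow{\sim} V_r^{\hat\otimes(n+m)}$ is an isometry, and under \textbf{(NA)} the contracting coproduct carries the sup norm, so the strong triangle inequality immediately yields $\|\mu(a\otimes b)\|\le\|a\|\|b\|$. The unit $\eta:k\to T_r(V)$ is isometric as the inclusion of the degree-zero summand. The universal property is then a direct consequence of the characterisation of the contracting coproduct in Definition \ref{Contracting(Co)Products}: any contraction $f:V_r\to A$ into a Banach algebra $A$ with norm-$\le 1$ multiplication extends to a unique algebra homomorphism $T_r(V)\to A$ of norm $\le 1$, simply by declaring $\tilde f|_{V_r^{\hat\otimes n}}(v_1\otimes\cdots\otimes v_n)=f(v_1)\cdots f(v_n)$ and using that $\|\tilde f|_{V_r^{\hat\otimes n}}\|\le \|f\|^n\le 1$ uniformly.

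Next I would endow $T_r(V)\hat\otimes T_r(V)$ with the braided multiplication $\mu_{\otimes}:=(\mu\otimes\mu)\circ(\text{Id}\otimes\tilde c\otimes\text{Id})$ and unit $\eta\otimes\eta$. Associativity of $\mu_{\otimes}$ follows from the hexagon-type diagram for the family $c_{n,m}$ proved in the preceding lemma, and $\|\tilde c\|\le 1$ together with $\|\mu\|\le 1$ gives $\|\mu_{\otimes}\|\le 1$. Now consider the map $f:V_r\to T_r(V)\hat\otimes T_r(V)$, $v\mapsto 1\otimes v+v\otimes 1$. Since $1\otimes v$ and $v\otimes 1$ lie in distinct summands $V_r^{\hat\otimes 0}\hat\otimes V_r^{\hat\otimes 1}$ and $V_r^{\hat\otimes 1}\hat\otimes V_r^{\hat\otimes 0}$ of the contracting coproduct decomposition of $T_r(V)\hat\otimes T_r(V)$, under \textbf{(NA)} we have $\|f(v)\|=\max(\|v\|,\|v\|)=\|v\|$, so $\|f\|\le 1$. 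The universal property then produces a unique contracting algebra homomorphism $\Delta:T_r(V)\to T_r(V)\hat\otimes T_r(V)$ extending $f$, which simultaneously supplies both the required comultiplication and the norm bound $\|\Delta\|\le 1$. The counit $\varepsilon:T_r(V)\to k$ is taken to be the projection onto the degree-zero component, trivially a contracting algebra homomorphism.

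It remains to verify the braided bialgebra axioms of Definition \ref{BraidedBialgebra}. The identities $\Delta\circ\eta=\eta\otimes\eta$, $\varepsilon\circ\eta=\text{Id}_k$, and $\varepsilon\circ\mu=\varepsilon\otimes\varepsilon$ are immediate from the construction. The multiplicativity diagram for $\Delta$ is precisely the statement that $\Delta$ is an algebra homomorphism into $(T_r(V)\hat\otimes T_r(V),\mu_{\otimes})$, which is how $\Delta$ was defined. The four compatibilities between $\tilde c$ and $\mu,\eta$ are built into the inductive construction of $c_{n,m}$ from $c$ in the preceding lemma (they are simply the degenerate cases $m=0$ or $n=0$ and the recursive definition). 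The four compatibilities between $\tilde c$ and $\Delta,\varepsilon$ are then verified by the standard universal-property argument: both sides of each identity are algebra homomorphisms from appropriate tensor powers of $T_r(V)$ into targets carrying the braided algebra structure, and they agree on the generating subspace $V_r$ (by direct computation), hence globally.

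I expect the main obstacle to be the bookkeeping of the last family of identities, since in each case one must identify the appropriate braided Banach-algebra target and check that both sides are genuinely algebra homomorphisms before invoking the universal property; the rest of the proof is a Banach-analytic repackaging of the classical construction of the comultiplication on a braided tensor algebra.
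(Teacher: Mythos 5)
Your proposal is correct and takes essentially the same route as the paper: the paper constructs $\Delta$ inductively on graded pieces via $\Delta_{n+1}=(\mu\otimes\mu)\circ(\text{Id}\otimes\tilde{c}\otimes\text{Id})\circ(\Delta_{n}\otimes\Delta_{1})$ with the bound $\|\Delta_{n+1}\|\leq\|\Delta_{n}\|\cdot\|\Delta_{1}\|\cdot\|\tilde{c}\|\leq 1$, which is precisely your extension of $v\mapsto 1\otimes v+v\otimes 1$ along the free-contracting-Banach-algebra universal property into $\bigl(T_{r}(V)\hat{\otimes}T_{r}(V),\ (\mu\otimes\mu)\circ(\text{Id}\otimes\tilde{c}\otimes\text{Id})\bigr)$. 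Your universal-property packaging is just an abstraction of that induction, with the same norm bookkeeping, so the two arguments coincide in substance.
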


\begin{proof}
The fact that $T_{r}(V)$ forms a Banach algebra is clear from construction, with unit $k \overset{\sim}{\longrightarrow} V^{\hat{\otimes} 0}$. We have a map $\Delta_{1}:V_{r}^{\hat{\otimes} 1} \rightarrow T_{r}(V) \hat{\otimes} T_{r}(V)$, given by $v \mapsto 1 \otimes v + v \otimes 1$, with $\|\Delta_{1}\| \leq 1$. Suppose we have a map $\Delta_{n}:V_{r}^{\hat{\otimes} n} \rightarrow T_{r}(V) \hat{\otimes} T_{r}(V)$ with $\|\Delta_{n}\| \leq \|c\|^{n-1}$. Then we define $\Delta_{n+1}$ as the composition
$$
\begin{array}{rcl}
V_{r}^{\hat{\otimes}n+1}= V_{r}^{\hat{\otimes}n} \hat{\otimes} V_{r} &\overset{\Delta_{n} \otimes \Delta_{1}}{\xrightarrow{\hspace*{1cm}}}& T_{r}(V) \hat{\otimes} T_{r}(V) \hat{\otimes} T_{r}(V) \hat{\otimes} T_{r}(V)\\
&\overset{\text{Id} \otimes \tilde{c} \otimes \text{Id}}{\xrightarrow{\hspace*{1cm}}}& T_{r}(V) \hat{\otimes} T_{r}(V) \hat{\otimes} T_{r}(V) \hat{\otimes} T_{r}(V)\\
&\overset{\mu \otimes \mu}{\xrightarrow{\hspace*{1cm}}}& T_{r}(V) \hat{\otimes} T_{r}(V).
\end{array}
$$
Then since the multiplication on $T_{r}(V)$ is of norm 1, we have $\|\Delta_{n+1}\| \leq \|\Delta_{n}\| \cdot \|\Delta_{1}\| \cdot \|\tilde{c}\| \leq \|c\|^{n}$.
\end{proof}

\begin{prop}
\label{ArchimedeanTensorBialgebra}
Assume {\bf (A)}. For $V$ a Banach space, $T_{r}(V)$ is a Banach algebra with multiplication $\mu$ induced by the maps $V_{r}^{\hat{\otimes}n} \hat{\otimes} V_{r}^{\hat{\otimes}m} \overset{\sim}{\longrightarrow} V_{r}^{\hat{\otimes}n+m}$. Given a pre-braiding $c$ on $V$  with $\|c\| \leq 1$, $T_{0}(V)^{\dagger}$ is a pre-braided IndBanach bialgebra with comultiplication $\Delta$ whose restriction to $V$ is
$$\text{"colim"}_{r>0}V_{r} \cong V \rightarrow T(V) \hat{\otimes} T(V) \rightarrow T_{0}(V)^{\dagger} \hat{\otimes} T_{0}(V)^{\dagger}, \quad v \mapsto 1 \otimes v + v \otimes 1,$$
for the pre-braiding $\tilde{c}$ induced by $c$.
\end{prop}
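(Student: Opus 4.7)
The algebra structure on $T_r(V)$ is unchanged from Proposition \ref{TensorBialgebra}: the multiplications $V_r^{\hat{\otimes}n}\hat{\otimes}V_r^{\hat{\otimes}m}\to V_r^{\hat{\otimes}(n+m)}$ are isometries regardless of which triangle inequality is in force, so they assemble to a contracting map $\mu:T_r(V)\hat{\otimes}T_r(V)\to T_r(V)$ which descends along $r\to 0$ to the claimed multiplication on $T_0(V)^{\dagger}$. The substance of the proof lies in constructing $\Delta$.

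I would mimic the inductive recipe of Proposition \ref{TensorBialgebra}, setting $\Delta_1(v)=1\otimes v+v\otimes 1$ and
$$\Delta_{n+1}=(\mu\otimes\mu)\circ(\text{Id}\otimes\tilde{c}\otimes\text{Id})\circ(\Delta_n\otimes\Delta_1).$$
The decisive difference in case \textbf{(A)} is that the weak triangle inequality yields only $\|\Delta_1\|\leq 2$ rather than $\leq 1$; combined with $\|\tilde{c}\|\leq 1$ from the previous lemma, submultiplicativity of the projective cross-norm, and $\|\mu\|\leq 1$, a straightforward induction then gives $\|\Delta_n\|\leq 2^n$ as a morphism $V_r^{\hat{\otimes}n}\to T_r(V)\hat{\otimes}T_r(V)$. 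Since the image of $\Delta_n$ lies in the total-degree-$n$ component, postcomposing with the natural scaling map $T_r(V)\hat{\otimes}T_r(V)\to T_{r'}(V)\hat{\otimes}T_{r'}(V)$ contracts it by a further factor $(r'/r)^n$, so as a map into $T_{r'}(V)\hat{\otimes}T_{r'}(V)$ each $\Delta_n$ has norm at most $(2r'/r)^n$.

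Taking any $r'<r/2$, the sequence $(2r'/r)^n$ is bounded above by $1$, and the universal property in case \textbf{(A)} of the contracting coproduct $T_r(V)=\coprod^{\leq 1}_{n\geq 0}V_r^{\hat{\otimes}n}$ then assembles the $\Delta_n$ into a single bounded morphism $\Delta:T_r(V)\to T_{r'}(V)\hat{\otimes}T_{r'}(V)$. These morphisms are manifestly compatible under the transition maps defining $T_0(V)^{\dagger}$, and since $\hat{\otimes}$ commutes with filtered colimits in $\text{IndBan}_k$, they induce the required $\Delta:T_0(V)^{\dagger}\to T_0(V)^{\dagger}\hat{\otimes}T_0(V)^{\dagger}$. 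The pre-braiding $\tilde{c}$ and counit $\varepsilon$ extend directly from each $T_r(V)$ to $T_0(V)^{\dagger}$ without any shrinkage of radii, since they are already contracting on every $T_r(V)$.

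The pre-braided bialgebra axioms of Definition \ref{BraidedBialgebra} are then inherited formally: each axiom is an identity of bounded morphisms out of $T_0(V)^{\dagger}$ or a tensor power thereof; both sides agree on the generating subspace $V$ by direct computation on the primitive elements $v\mapsto 1\otimes v+v\otimes 1$, and because the relevant composites on either side of each axiom are algebra morphisms built from $\mu$, $\Delta$ and $\tilde{c}$, the identities propagate from $V$ to all of $T_0(V)^{\dagger}$. The principal obstacle, and the feature that genuinely distinguishes this proposition from its non-Archimedean counterpart, is exactly the factor of $2$ in $\|\Delta_1\|$ introduced by the loss of the strong triangle inequality; it is precisely this loss that prevents $\Delta$ from landing inside a single $T_r(V)$ and forces us into the overconvergent setting $T_0(V)^{\dagger}$.
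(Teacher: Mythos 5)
Your proposal is correct and takes essentially the same route as the paper: the paper absorbs the factor-of-two loss from the weak triangle inequality by mapping $V_{r}$ into $T_{\frac{r}{2}}(V) \hat{\otimes} T_{\frac{r}{2}}(V)$ with norm at most $1$ (so every $\Delta_{n}$ stays contracting), whereas you run the induction at radius $r$, get $\|\Delta_{n}\| \leq 2^{n}$, and then rescale using degree-$n$ homogeneity to a radius $r' \leq r/2$ --- an equivalent bookkeeping of the same idea. The assembly of the maps $T_{r}(V) \rightarrow T_{r'}(V) \hat{\otimes} T_{r'}(V)$ over the colimit defining $T_{0}(V)^{\dagger}$, and the verification of the bialgebra axioms on the dense subalgebra generated by $V$, likewise match the paper's argument.
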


\begin{proof}
The given map $V \rightarrow T(V) \hat{\otimes} T(V)$ induces maps $V_{r} \rightarrow T_{\frac{r}{2}}(V) \hat{\otimes} T_{\frac{r}{2}}(V)$ of norm at most $1$. By the same construction as in the proof of the previous proposition, we obtain maps $T_{r}(V) \rightarrow T_{\frac{r}{2}}(V) \hat{\otimes} T_{\frac{r}{2}}(V)$ which induce the desired comultiplication on $T_{0}(V)^{\dagger}$.
\end{proof}

\begin{remark}
Note that $V \mapsto T(V)$ is only functorial on the contracting category $\text{Ban}_{k}^{\leq 1}$. So, in the {\bf (NA)} case, the diagonal embedding $V \rightarrow V \oplus V$ induces the map $T(V) \rightarrow T(V \oplus V) \cong T(V) \hat{\otimes} T(V)$. However, in the {\bf (A)} case, the diagonal embedding is not contracting, thus $T(V)$ does not form a coalgebra.
\end{remark}

\begin{defn}
Let $A$ be a (pre-) braided IndBanach bialgebra. We say that $A$ is a (pre-) braided IndBanach Hopf algebra if the identity on $A$ is convolution invertible. That is, $\text{Id}_{A}$ is invertible with respect to the convolution product $\ast$ on $\text{Hom}(A,A)$,
$$f \ast g:A \overset{\Delta}{\longrightarrow} A \hat{\otimes} A \overset{f \otimes g}{\longrightarrow} A \hat{\otimes} A \overset{\mu}{\longrightarrow} A$$
for $f,g \in \text{Hom}(A,A)$, whose unit is $\eta \circ \varepsilon$. We will call the convolution inverse of $\text{Id}_{A}$ the antipode, and often denote it by $S$ or $S_{A}$.
\end{defn}

\begin{lem}
\label{GradedPiecesOfTensorAlgebraCoideals}
Assume {\bf (NA)}. For a Banach space $V$ with a pre-braiding $c$ on $V$ with $\|c\| \leq 1$, we have that, in $T(V)$, $\Delta(V^{\hat{\otimes} n}) \subset \sum_{i=0}^{n} V^{\hat{\otimes} i} \hat{\otimes} V^{\hat{\otimes} n-i}$, and so in particular
$$\Delta(\coprod\nolimits_{i \leq n}^{\leq 1}V^{\hat{\otimes} i}) \subset (\coprod\nolimits_{i \leq n-1}^{\leq 1}V^{\hat{\otimes} i}) \hat{\otimes} T(V) + T(V) \hat{\otimes} (\coprod\nolimits_{i \leq n-1}^{\leq 1}V^{\hat{\otimes} i})$$
for all $n \geq 0$.
\end{lem}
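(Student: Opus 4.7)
The plan is to argue the first inclusion by induction on $n$, using the recursive formula $\Delta_{n+1}=(\mu \otimes \mu)\circ(\text{Id}\otimes \tilde{c}\otimes \text{Id})\circ(\Delta_{n} \otimes \Delta_{1})$ established in the proof of Proposition \ref{TensorBialgebra}. The base cases $n=0,1$ are immediate: $\Delta(1)=1\otimes 1$ sits in $V^{\hat{\otimes}0}\hat{\otimes}V^{\hat{\otimes}0}$, and by construction $\Delta(v)=1\otimes v+v\otimes 1\in V^{\hat{\otimes}0}\hat{\otimes} V+V\hat{\otimes} V^{\hat{\otimes}0}$. The key observation driving the inductive step is that both $\mu$ and $\tilde{c}$ preserve total tensor degree: $\mu$ sends $V^{\hat{\otimes}p}\hat{\otimes}V^{\hat{\otimes}q}$ into $V^{\hat{\otimes}(p+q)}$, and the induced braiding restricts to $c_{p,q}:V^{\hat{\otimes}p}\hat{\otimes}V^{\hat{\otimes}q}\to V^{\hat{\otimes}q}\hat{\otimes}V^{\hat{\otimes}p}$.

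For the inductive step, take $x\in V^{\hat{\otimes}n}$, $v\in V$ and write $\Delta_{n}(x)=\sum_{i=0}^{n}\alpha_{i}$ with $\alpha_{i}\in V^{\hat{\otimes}i}\hat{\otimes}V^{\hat{\otimes}n-i}$ by the inductive hypothesis. Then $\Delta_{n+1}(x\otimes v)$ splits as the sum of two contributions: the piece coming from $1\otimes v$ passes through $\tilde{c}$ as the canonical isomorphism $V^{\hat{\otimes}n-i}\hat{\otimes}k\cong k\hat{\otimes}V^{\hat{\otimes}n-i}$ and, after applying $\mu\otimes\mu$, lands in $V^{\hat{\otimes}i}\hat{\otimes}V^{\hat{\otimes}(n+1-i)}$; the piece coming from $v\otimes 1$ has $\tilde{c}$ acting by $c_{n-i,1}:V^{\hat{\otimes}n-i}\hat{\otimes}V\to V\hat{\otimes}V^{\hat{\otimes}n-i}$ and, after $\mu\otimes\mu$, lands in $V^{\hat{\otimes}(i+1)}\hat{\otimes}V^{\hat{\otimes}(n-i)}$. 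Both kinds of contributions live in $\sum_{j=0}^{n+1}V^{\hat{\otimes}j}\hat{\otimes}V^{\hat{\otimes}(n+1-j)}$, completing the induction.

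The second containment is then an elementary bookkeeping argument: for any $i\leq n$ and any $0\leq j\leq i$, at least one of $j$ and $i-j$ is $\leq n-1$ (otherwise $j,i-j\geq n$ forces $i\geq 2n$, incompatible with $i\leq n$ unless $n=0$, a case easily handled separately or by convention). Accordingly, each summand $V^{\hat{\otimes}j}\hat{\otimes}V^{\hat{\otimes}i-j}$ appearing in $\Delta(V^{\hat{\otimes}i})$ is contained in either $(\coprod_{\ell\leq n-1}^{\leq 1}V^{\hat{\otimes}\ell})\hat{\otimes}T(V)$ or $T(V)\hat{\otimes}(\coprod_{\ell\leq n-1}^{\leq 1}V^{\hat{\otimes}\ell})$, and the desired inclusion follows. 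The only mild subtlety, rather than a real obstacle, is verifying that $\tilde{c}$ genuinely acts as the canonical swap when one of the factors has tensor degree $0$, which follows from unwinding the construction of $c_{n,m}$ in the degenerate cases (empty composition of copies of $c$).
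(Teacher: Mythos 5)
Your proof is correct and takes essentially the same route as the paper, whose proof is just the one-line remark that the claim "follows by induction using the proof of Proposition \ref{TensorBialgebra}": your degree bookkeeping for $\mu$ and $c_{p,q}$, and the check that $\tilde{c}$ acts as the canonical swap against degree-zero factors, are exactly the details left implicit there. (The degenerate $n=0$ reading you flag is a quirk of the statement's convention and is never used, since the lemma is only applied for $n\geq 1$.)
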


\begin{proof}
This follows by induction using the proof of Proposition \ref{TensorBialgebra}.
\end{proof}

\begin{prop}
\label{TensorHopfAlgebra}
Assume {\bf (NA)}. Given a Banach space $V$ with a pre-braiding $c$ on $V$ with $\|c\| \leq 1$, $T(V)$ is a pre-braided Banach Hopf algebra.
\end{prop}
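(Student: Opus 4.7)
The plan is to exhibit a bounded antipode for $T(V)$. By Proposition \ref{TensorBialgebra}, $T(V)$ is already a pre-braided Banach bialgebra with $\|\mu\| \le 1$ and, since $\|c\| \le 1$ implies $\|\Delta|_{V^{\hat{\otimes} n}}\| \le \|c\|^{n-1} \le 1$, also $\|\Delta\| \le 1$. So the task reduces to constructing a convolution inverse $S$ of $\text{Id}_{T(V)}$ in the category of IndBanach spaces.

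I would build $S$ inductively on the grading, using Lemma \ref{GradedPiecesOfTensorAlgebraCoideals}. Set $S|_{V^{\hat{\otimes} 0}} = \text{Id}_k$ and $S|_V(v) = -v$. Suppose $S$ has been defined on $\coprod_{i \le n-1}^{\le 1} V^{\hat{\otimes} i}$ with operator norm $\le 1$. For $x \in V^{\hat{\otimes} n}$ with $n \ge 2$, write $\Delta(x) = x \otimes 1 + 1 \otimes x + \Delta'(x)$, where Lemma \ref{GradedPiecesOfTensorAlgebraCoideals} guarantees $\Delta'(x) \in \sum_{i=1}^{n-1} V^{\hat{\otimes} i} \hat{\otimes} V^{\hat{\otimes} n-i}$. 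The equation $(S * \text{Id})(x) = 0$ forces
$$S(x) := -x - \mu \circ (S \otimes \text{Id})\bigl(\Delta'(x)\bigr).$$
The strong triangle inequality (case \textbf{(NA)}) gives $\|\Delta'(x)\| \le \max(\|\Delta(x)\|, \|x\|) \le \|x\|$, and combined with the inductive bound $\|S|_{V^{\hat{\otimes} i}}\| \le 1$ for $i < n$ and $\|\mu\| \le 1$, this yields $\|S(x)\| \le \|x\|$. Hence $\|S|_{V^{\hat{\otimes} n}}\| \le 1$ for all $n$, and $S$ extends by the universal property of the contracting coproduct to a bounded map $S : T(V) \to T(V)$ with $\|S\| \le 1$.

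By construction $S * \text{Id} = \eta \circ \varepsilon$ on each graded piece, hence on all of $T(V)$. To see that $S$ is also a right convolution inverse, I would perform a symmetric induction producing $S'$ with $\text{Id} * S' = \eta \circ \varepsilon$, then appeal to associativity of the convolution product:
$$S = S * (\eta \circ \varepsilon) = S * (\text{Id} * S') = (S * \text{Id}) * S' = (\eta \circ \varepsilon) * S' = S'.$$
Thus $S$ is a two-sided convolution inverse to $\text{Id}_{T(V)}$, making $T(V)$ a pre-braided Banach Hopf algebra.

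The only genuinely delicate point is controlling the norm of $S$, which is where the assumptions of case \textbf{(NA)} together with $\|c\| \le 1$ are essential: in the Archimedean setting the subtraction producing $\Delta'$ would introduce a factor of $2$ or larger at each stage, and the inductive bound on $\|S\|$ would blow up. The non-Archimedean ultrametric absorbs this extra term and delivers the uniform bound $\|S\| \le 1$ essentially for free.
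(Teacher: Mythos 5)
Your proof is correct, and it takes a different route from the paper's. The paper follows Takeuchi's argument: setting $\gamma = \eta \circ \varepsilon - \text{Id}_{T(V)}$, it uses Lemma \ref{GradedPiecesOfTensorAlgebraCoideals} to show $\gamma^{\ast n+1}$ vanishes on $\coprod_{i \leq n}^{\leq 1} V^{\hat{\otimes} i}$, so that $\sum_{n \geq 0} \gamma^{\ast n}$ is a finite sum in each degree; the {\bf (NA)} hypothesis then gives a uniform norm bound $\leq 1$ on the partial sums, so the series extends to a bounded operator on all of $T(V)$ which is the convolution inverse of the identity. You instead run the classical connected-graded recursion, defining $S$ degree by degree via the formula forced by $(S \ast \text{Id})(x)=0$, namely $S(x) = -x - \mu \circ (S \otimes \text{Id})(\Delta'(x))$, and using the ultrametric inequality together with $\|\mu\|, \|\Delta\| \leq 1$ to propagate the bound $\|S|_{V^{\hat{\otimes} n}}\| \leq 1$, then extending by the universal property of the contracting coproduct. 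Both arguments hinge on the same two ingredients (the filtration statement of Lemma \ref{GradedPiecesOfTensorAlgebraCoideals} and the non-Archimedean norm control), and your recursion is essentially an unfolding of the paper's geometric series; what your version buys is an explicit degree-wise formula for $S$ and a transparent norm estimate, at the cost of needing the extra (correctly supplied) associativity argument $S = S \ast (\text{Id} \ast S') = (S \ast \text{Id}) \ast S' = S'$ to get two-sidedness, whereas the paper's closed-form series handles the inverse more uniformly. One small remark: since $T(V) \hat{\otimes} T(V)$ is itself a contracting coproduct of the pieces $V^{\hat{\otimes} i} \hat{\otimes} V^{\hat{\otimes} j}$ with sup-type norm, the bound $\|\Delta'(x)\| \leq \|x\|$ follows already from the norm-$\leq 1$ projection onto the components with $1 \leq i \leq n-1$, so the appeal to the strong triangle inequality at that particular step is not strictly needed (it is, of course, still essential for the bound on $S(x)$ and for the uniformity in $n$).
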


\begin{proof}
We proceed as in Takeuchi's proof of Lemma 5.2.10 presented in \cite{HAatAoR} to find a convolution inverse to $\text{Id}_{T(V)}$. Let $\gamma = \eta \circ \varepsilon - \text{Id}_{T(V)}$. Then $\gamma|_{V^{\hat{\otimes} 0}} =0$. Now suppose that $\gamma^{\ast n}|_{\coprod_{i \leq n-1}^{\leq 1}V^{\hat{\otimes} i}} =0$ for some $n \geq 1$ and let $x \in \coprod_{i \leq n}^{\leq 1}V^{\hat{\otimes} i}$. Here we use the notation $\gamma^{\ast n}$ for the $n$-fold product of $\gamma$ under the convolution product. Since
$$\Delta(x) \in (\coprod\nolimits_{i \leq n-1}^{\leq 1}V^{\hat{\otimes} i}) \hat{\otimes} T(V) + T(V) \hat{\otimes} (\coprod\nolimits_{i \leq n-1}^{\leq 1}V^{\hat{\otimes} i}),$$
by Lemma \ref{GradedPiecesOfTensorAlgebraCoideals}, and since $\gamma^{\ast (n+1)}=\gamma^{\ast n} \ast \gamma = \gamma \ast \gamma^{\ast n}$ we see that $\gamma^{\ast n+1}(x)=0$. Hence, inductively, $\gamma^{\ast n+1}|_{\coprod_{i \leq n}^{\leq 1}V^{\hat{\otimes} i}} =0$ for all $n \geq 0$. It follows that $\sum_{n=0}^{\infty} \gamma^{\ast n}$ is well defined on the direct sum of vector spaces $\bigoplus_{n \in \mathbb{Z}_{\geq 0}}V^{\hat{\otimes} n}$. Furthermore, since $\|c\| \leq 1$ and so $\|\Delta\| \leq 1$ we see that $\| \gamma \| \leq 1$, $\| \gamma^{\ast n} \| \leq 1$ and so $\|\sum_{n=0}^{N} \gamma^{\ast n}\| \leq 1$ for all $N \geq 0$. It then follows that $\sum_{n=0}^{\infty} \gamma^{\ast n}$ converges to a well defined function on $T(V)$, which is convolution inverse to $\text{Id}_{T(V)}$ since
$$\begin{array}{rcl}
\text{Id}_{T(V)}\ast \sum_{n=0}^{\infty} \gamma^{\ast n} &=& (\eta \circ \varepsilon) \ast \sum_{n=0}^{\infty} \gamma^{\ast n} - \gamma \ast \sum_{n=0}^{\infty} \gamma^{\ast n}\\
&=& \sum_{n=0}^{\infty} \gamma^{\ast n} - \sum_{n=1}^{\infty} \gamma^{\ast n}\\
&=& \eta \circ \varepsilon.
\end{array}$$
\end{proof}

\begin{defn}
Assume {\bf (NA)}. Given a (pre-) braided Banach space $(V,c)$ with $\|c\| \leq 1$ and $r>0$ we denote by $T_{r}^{c}(V)$ the (pre-) braided Banach Hopf algebra described in Proposition \ref{TensorBialgebra} and Proposition \ref{TensorHopfAlgebra}, or just $T_{r}(V)$ if the (pre-) braiding is implicit.
\end{defn}

\begin{prop}
\label{TensorHopfAlgebraA}
Assume {\bf (A)}. Given a Banach space $V$ with a braiding $c$ on $V$ with $\|c\| \leq 1$, $T_{0}(V)^{\dagger}$ is a pre-braided Banach Hopf algebra.
\end{prop}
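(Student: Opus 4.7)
The plan is to follow the Takeuchi-style construction of Proposition \ref{TensorHopfAlgebra}, setting $\gamma = \eta \circ \varepsilon - \text{Id}$ and seeking $S = \sum_{m \geq 0} \gamma^{*m}$ as the convolution inverse of $\text{Id}$. The algebraic part of the argument transfers directly: since Lemma \ref{GradedPiecesOfTensorAlgebraCoideals} is purely algebraic (its proof uses only that $\Delta$ is multiplicative with $\Delta(v) = 1 \otimes v + v \otimes 1$ on $V$), it holds verbatim in the {\bf (A)} case, and the inductive argument from the proof of Proposition \ref{TensorHopfAlgebra} shows $\gamma^{*m}|_{V^{\hat{\otimes} n}} = 0$ for $m > n$. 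Consequently, $S$ is a well-defined, grading-preserving operator on the algebraic tensor algebra $\bigoplus_{n \geq 0} V^{\hat{\otimes} n}$, and the convolution identities $\text{Id} * S = \eta \circ \varepsilon = S * \text{Id}$ hold there.

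The new work is the norm estimate. In the {\bf (NA)} case the strong triangle inequality controls the partial sums $\sum_{m=0}^{N} \gamma^{*m}$ uniformly, but this fails in {\bf (A)}; moreover, the radius-halving $\Delta: T_r(V) \to T_{r/2}(V) \hat{\otimes} T_{r/2}(V)$ of Proposition \ref{ArchimedeanTensorBialgebra} makes it impossible to bound the individual terms $\gamma^{*m}|_{V^{\hat{\otimes} n}}$ uniformly on a single Banach level $T_r(V)$. Instead, I exploit the fact that, on the algebraic tensor algebra, $S$ coincides with the unique braided-antimultiplicative extension of $-\text{Id}_V$, characterized by $S|_V = -\text{Id}_V$ and $S \circ \mu = \mu \circ \tilde{c} \circ (S \otimes S)$. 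Iterating this recursion shows that $S$ acts on $V^{\hat{\otimes} n}$ by a single braided reversal of $v_1 \otimes \cdots \otimes v_n$ requiring $\binom{n}{2}$ applications of $c$, up to the sign $(-1)^n$. Since $\|c\| \leq 1$, this yields the crucial uniform bound $\|S|_{V^{\hat{\otimes} n}}\| \leq \|c\|^{\binom{n}{2}} \leq 1$, independent of any dagger rescaling.

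From this bound, $S$ extends by continuity to a contracting endomorphism of $T_r(V)$ for each $r > 0$ (summing over grades in the $\coprod^{\leq 1}$ norm preserves the contraction estimate), and hence defines a morphism $S: T_0(V)^{\dagger} \to T_0(V)^{\dagger}$ in $\text{IndBan}_k$. The convolution inverse relations, already established on the dense algebraic tensor algebra, extend by continuity to $T_0(V)^{\dagger}$. The main obstacle is establishing the identification of the Takeuchi sum with the braided-antimultiplicative extension, which requires a careful induction in the braided setting where the multiplication on $T(V) \hat{\otimes} T(V)$ uses $\tilde{c}$; once this is in hand, everything else reduces to the straightforward norm estimate above. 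This is also, plausibly, the technical reason the hypothesis strengthens from pre-braiding (as in Proposition \ref{TensorHopfAlgebra}) to braiding, although only $\tilde{c}$ itself (not $\tilde{c}^{-1}$) appears in the above construction.
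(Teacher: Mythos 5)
Your proposal is correct, and it ultimately rests on the same key object as the paper's proof: the braided anti-multiplicative extension of $-\text{Id}_{V}$, i.e.\ the algebra homomorphism $T_{r}(V) \rightarrow T_{r}(V)^{\text{op}}$ (with $\mu^{\text{op}}=\mu \circ \tilde{c}$) sending $v \mapsto -v$. The difference is in how you reach it. The paper never touches the Takeuchi sum in the {\bf (A)} case: it \emph{defines} $S$ at each radius $r$ directly via the universal property of the Banach tensor algebra applied to the contracting map $V_{r} \rightarrow T_{r}(V)^{\text{op}}$, $v \mapsto -v$ (boundedness is then automatic, since $\|\tilde{c}\| \leq 1$ makes $\mu^{\text{op}}$ contracting), verifies that $\mu \circ (S \otimes \text{Id}) \circ \Delta$ and $\mu \circ (\text{Id} \otimes S) \circ \Delta$ agree with $\eta \circ \varepsilon$ on $V$ and hence on the dense subalgebra generated by $V$, extends by continuity, and takes colimits to get $S$ on $T_{0}(V)^{\dagger}$. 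Your route instead builds the antipode as $\sum_{m} \gamma^{\ast m}$ on the algebraic tensor algebra, identifies it with the anti-multiplicative extension, and extracts the uniform bound $\|S|_{V^{\hat{\otimes} n}}\| \leq \|c\|^{\binom{n}{2}} \leq 1$ from the explicit $(-1)^{n}$-signed braided reversal. That identification (the step you flag as the main obstacle) is true — uniqueness of convolution inverses reduces it to checking that the anti-multiplicative extension is an antipode on the graded algebraic braided bialgebra, a standard induction — but it is exactly the work the paper's direct definition makes unnecessary; your explicit $\binom{n}{2}$-crossing norm estimate is likewise a nice but dispensable extra, since contractivity comes for free from the universal property. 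Your closing observation is also consistent with the paper: its proof, like yours, only uses $\tilde{c}$ and not $\tilde{c}^{-1}$, so the strengthening from pre-braiding to braiding in the {\bf (A)} statement is not forced by this construction.
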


\begin{proof}
We take a slightly different approach to the proof of Proposition \ref{TensorHopfAlgebra}. The linear maps $S_{r}:V_{r} \rightarrow T_{r}(V)$ defined by $v \mapsto -v$ determine a unique algebra homomorphisms $S:T_{r}(V) \rightarrow T_{r}(V)^{\text{op}}$, where $T_{r}(V)^{\text{op}}$ is the opposite algebra whose multiplication is $\mu \circ \tilde{c}$. The compositions
$$T_{r}(V) \overset{\Delta}{\longrightarrow} T_{\frac{r}{2}}(V) \hat{\otimes} T_{\frac{r}{2}}(V) \overset{S_{\frac{r}{2}} \otimes \text{Id}}{\longrightarrow}  T_{\frac{r}{2}}(V) \hat{\otimes} T_{\frac{r}{2}}(V) \rightarrow T_{\frac{r}{2}}(V)$$
agree with $\eta \circ \varepsilon$ when restricted to $V$. It is then easy to check that they agree on the subalgebra generated by $V$, which is dense in $T_{r}(V)$. So they agree. Likewise this is true for $\text{Id} \otimes S_{\frac{r}{2}}$ in place of $S_{\frac{r}{2}} \otimes \text{Id}$. Taking colimits we obtain the antipode $S:T_{0}(V)^{\dagger} \rightarrow T_{0}(V)^{\dagger}$.
\end{proof}

\begin{defn}
Assume {\bf (A)}. Given a (pre-) braided Banach space $(V,c)$ with $\|c\| \leq 1$ we denote by $T_{0}^{c}(V)^{\dagger}$ the (pre-) braided Banach Hopf algebra described in Proposition \ref{ArchimedeanTensorBialgebra} and Proposition \ref{TensorHopfAlgebraA}, or just $T_{0}(V)^{\dagger}$ if the (pre-) braiding is implicit.
\end{defn}

\begin{remark}
The main distinction between the cases {\bf (NA)} and {\bf (A)} is that the tensor Hopf algebra can be defined on any radius in the non-Archimedean setting but in the Archimedean setting can only be defined at radius 0. This is not entirely unexpected. Analogously we see that, for a non-Archimedean field $k$, the balls of each radius in $k$ form additive subgroups, however the same cannot be said for Archimedean fields.
\end{remark}

\subsection{Analytic gradings}

\begin{defn}
Let $C$ be an IndBanach bialgebra. We will say that an IndBanach space $V$ is \emph{graded by} $C$ if it is a $C$-comodule. An IndBanach space $V$ with a (pre-) braiding $c$ is a \emph{graded (pre-) braided IndBanach space} (graded by $C$) if $c$ is a morphism of $C\hat{\otimes} C$-comodules. A \emph{graded (pre-) braided IndBanach bialgebra} is a (pre-) braided IndBanach bialgebra $A$ such that $A$ is a $C$-comodule, $\eta$, $\mu$, $\varepsilon$ and $\Delta$ are $C$-comodule homomorphisms, and $c$ is a $C \hat{\otimes} C$-comodule homomorphism. If, in addition, the identity on $A$ has a convolution inverse $S$ that is a $C$-comodule homomorphism then $A$ is a \emph{graded (pre-) braided IndBanach Hopf algebra}.
\end{defn}

The results in Sections 4.2 and 4.3 of \cite{TRTfIBS} justify our definition of grading above.

\begin{defn}
\label{FunctionsOnOpenDisk}
Let $k\{\underline{t}\}=k\{t_{1},\ldots,t_{N}\}$ be the bialgebra $\coprod_{\underline{n} \in \mathbb{N}^{N}}^{\leq 1} k \cdot \underline{t}^{\underline{n}}$ where the comultiplication maps $\underline{t}^{\underline{n}} \mapsto \underline{t}^{\underline{n}} \otimes \underline{t}^{\underline{n}}$ and the counit is $\underline{t}^{\underline{n}} \mapsto 1$, and the multiplication maps $\underline{t}^{\underline{m}} \otimes \underline{t}^{\underline{n}} \mapsto  \underline{t}^{\underline{m}+ \underline{n}}$ with unit $\underline{t}^{\underline{0}}$.
\end{defn}

\begin{lem}
IndBanach spaces graded by $k\{\underline{t}\}$ are of the form $\coprod_{\mathbb{N}^{N}}^{\leq 1}M$ for $M$ in $\text{IndBan}_{k}^{\mathbb{N}^{N},\text{bd}}$. The space of morphisms that respect the grading is
$$\underline{\text{Hom}}_{k\{\underline{t}\}}(\coprod\nolimits_{\mathbb{N}^{N}}^{\leq 1}M,\coprod\nolimits_{\mathbb{N}^{N}}^{\leq 1}M')= \text{colim}_{j \in J} \text{lim}_{i \in I} \prod\nolimits_{\underline{n} \in \mathbb{N}^{N}}^{\leq 1}\underline{\text{Hom}}(M_{\underline{n}}(i),M'_{\underline{n}}(j))$$
where $M=\text{"colim"}_{i \in I}(M_{\underline{n}}(i))_{\underline{n} \in \mathbb{N}^{N}}$ and $M'=\text{"colim"}_{j \in J}(M'_{\underline{n}}(j))_{\underline{n} \in \mathbb{N}^{N}}$ in $\text{IndBan}_{k}^{\mathbb{N}^{N},\text{bd}}$.
\end{lem}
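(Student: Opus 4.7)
The plan is to establish that a $k\{\underline{t}\}$-coaction is the same data as an $\mathbb{N}^{N}$-grading, by exploiting that every $\underline{t}^{\underline{n}}$ is grouplike. Since $k\{\underline{t}\} = \coprod_{\underline{n}}^{\leq 1} k \cdot \underline{t}^{\underline{n}}$ and the completed tensor product commutes with the contracting coproduct (being a left adjoint), there is a canonical identification $V \hat{\otimes} k\{\underline{t}\} \cong \coprod_{\underline{n} \in \mathbb{N}^{N}}^{\leq 1} V$, and I would use this throughout to convert between coactions and families of maps.

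For the direction ``coaction $\Rightarrow$ decomposition'', suppose $\delta:V \to V \hat{\otimes} k\{\underline{t}\}$ is the coaction. The bounded linear functionals $\pi_{\underline{n}}:k\{\underline{t}\} \to k$ picking out the $\underline{t}^{\underline{n}}$-coefficient (which are contracting, by the definition of $\coprod^{\leq 1}$) give bounded endomorphisms $p_{\underline{n}} := (\mathrm{Id}_{V} \otimes \pi_{\underline{n}}) \circ \delta$. Coassociativity combined with $\Delta(\underline{t}^{\underline{n}}) = \underline{t}^{\underline{n}} \otimes \underline{t}^{\underline{n}}$ forces the orthogonal idempotent relations $p_{\underline{n}} \circ p_{\underline{m}} = \delta_{\underline{n},\underline{m}} p_{\underline{n}}$, and the counit axiom $(\mathrm{Id} \otimes \varepsilon) \circ \delta = \mathrm{Id}$ forces $\sum_{\underline{n}} p_{\underline{n}} = \mathrm{Id}_{V}$ in the appropriate sense. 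Setting $M_{\underline{n}} := \mathrm{Im}(p_{\underline{n}})$ and reinterpreting $\delta$ under the identification above then exhibits $V$ as $\coprod_{\underline{n}}^{\leq 1} M_{\underline{n}}$, with the family $M=(M_{\underline{n}})_{\underline{n}}$ landing in $\mathrm{IndBan}_{k}^{\mathbb{N}^{N},\mathrm{bd}}$ because the projectors $p_{\underline{n}}$ are uniformly bounded.

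For the reverse direction, starting from $M \in \mathrm{IndBan}_{k}^{\mathbb{N}^{N},\mathrm{bd}}$, set $V = \coprod_{\underline{n}}^{\leq 1} M_{\underline{n}}$ and assemble $\delta$ from the contracting family of maps $M_{\underline{n}} \to V \hat{\otimes} k\{\underline{t}\}$, $m \mapsto m \otimes \underline{t}^{\underline{n}}$, using the universal property of $\coprod^{\leq 1}$ from Lemma \ref{ContractingUniversalProperty}. Coassociativity and counit axioms then reduce to the grouplike identities on each $\underline{t}^{\underline{n}}$. For the formula describing $\underline{\mathrm{Hom}}_{k\{\underline{t}\}}$, a map of comodules is forced by compatibility with $\delta$ to restrict to a family of maps $M_{\underline{n}} \to M'_{\underline{n}}$, so morphism spaces are cut out of $\underline{\mathrm{Hom}}(V,V')$ by grading-preservation; applying the right adjoint description of $\prod^{\leq 1}$ from Lemma \ref{ContractingUniversalProperty}, together with the definition of morphisms in $\mathrm{IndBan}_{k}$ as $\mathrm{colim}_{j} \mathrm{lim}_{i}$, produces the stated formula after exchanging the (filtered) Ind-colimit/limit with the product functor.

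The main obstacle is the interaction between the contracting coproduct and the Ind-completion: I need to confirm that $\hat{\otimes}$ really does commute with $\coprod^{\leq 1}$ against $k\{\underline{t}\}$ at the level of $\mathrm{IndBan}_{k}$, and that the formal series $\sum_{\underline{n}} p_{\underline{n}}$ genuinely recovers $\mathrm{Id}_{V}$ as an equality of Ind-Banach morphisms (not merely pointwise); once these two identifications are secured, everything else reduces to routine coalgebra bookkeeping for grouplike elements and the contracting (co)product adjunctions already recorded in Lemma \ref{ContractingUniversalProperty}.
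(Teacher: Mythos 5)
The paper does not actually prove this lemma: it simply cites Proposition 4.3 of \cite{TRTfIBS}, so you are supplying an argument the paper outsources. Your route — exploiting that every $\underline{t}^{\underline{n}}$ is grouplike to produce coefficient functionals $\pi_{\underline{n}}$, hence uniformly bounded orthogonal idempotents $p_{\underline{n}}=(\mathrm{Id}\otimes\pi_{\underline{n}})\circ\delta$ whose images give the graded pieces, and then recovering the Hom formula from the contracting (co)product adjunctions of Lemma \ref{ContractingUniversalProperty} — is the standard comodule-over-a-grouplike-coalgebra argument and is sound at the Banach level. The two identifications you flag are indeed fine: $V\hat{\otimes}k\{\underline{t}\}\cong\coprod^{\leq 1}_{\underline{n}}V$ holds isometrically for Banach $V$ (and $\hat{\otimes}$ in $\text{IndBan}_{k}$ is computed termwise against the constant diagram $k\{\underline{t}\}$), and $\sum_{\underline{n}}p_{\underline{n}}=\mathrm{Id}$ is the counit axiom, since $\varepsilon$ is exactly summation of coefficients; coassociativity against $\Delta(\underline{t}^{\underline{n}})=\underline{t}^{\underline{n}}\otimes\underline{t}^{\underline{n}}$ gives $p_{\underline{m}}p_{\underline{n}}=\delta_{\underline{m},\underline{n}}p_{\underline{n}}$ exactly as you say, and boundedness of the mutually inverse maps $v\mapsto(p_{\underline{n}}(v))_{\underline{n}}$ and summation gives $V\cong\coprod^{\leq 1}_{\underline{n}}M_{\underline{n}}$.

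The one place where your sketch underestimates the work is the Ind-level packaging, which is where the cited Proposition 4.3 earns its keep: the conclusion is not merely that each graded piece $M_{\underline{n}}=\mathrm{Im}(p_{\underline{n}})$ is an IndBanach space, but that the whole family arises from a single object of $\text{IndBan}_{k}^{\mathbb{N}^{N},\text{bd}}=\text{Ind}(\text{Ban}_{k}^{\mathbb{N}^{N},\text{bd}})$, i.e.\ a filtered diagram $i\mapsto(M_{\underline{n}}(i))_{\underline{n}}$ of \emph{uniformly bounded} families whose colimit recovers each $M_{\underline{n}}$, and the Hom formula is stated in terms of such presentations. A coaction on $V=\text{"colim"}_{i}V_{i}$ is only represented levelwise after reindexing, and the comodule axioms hold in the colimit rather than at each level, so extracting compatible levelwise decompositions (uniformity in $\underline{n}$ coming from $\|\delta_{i}\|$, plus a cofinality/image argument in the quasi-abelian Ind-category) needs to be carried out explicitly before the $\text{colim}_{j}\text{lim}_{i}\prod^{\leq 1}_{\underline{n}}$ formula can be read off from the adjunctions. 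With that step made precise, your argument is a correct direct proof of the statement.
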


\begin{proof}
This is Proposition 4.3 of \cite{TRTfIBS}.
\end{proof}

\begin{defn}
Let $k\{\underline{t}\}^{\dagger}:=\text{"colim"}_{\underline{r} > 1} k\{\underline{t}/ \underline{r}\}$, where the colimit is taken over all polyradii $\underline{r}=(r_{1},..,r_{N})$ with $1 < r_{i}$ and 
$$k\{\underline{t}/ \underline{r}\}=\coprod\nolimits_{\underline{n} \in \mathbb{N}^{N}}^{\leq 1} k_{\underline{r}^{\underline{n}}} \cdot t^{\underline{n}}=\left\lbrace \ \sum a_{\underline{n}} \underline{t}^{\underline{n}} \ \middle| \ \|a_{\underline{n}}\| \underline{r}^{\underline{n}} \rightarrow 0 \ \right\rbrace.$$
The algebra structure given as in Definition \ref{FunctionsOnOpenDisk} on each $k\{\underline{t}/ \underline{r}\}$ makes $k\{\underline{t}\}^{\dagger}$ an IndBanach algebra. Furthermore, the maps $k\{\underline{t}/ \underline{r}^{2}\} \rightarrow k\{\underline{t}/\underline{r}\} \hat{\otimes} k\{\underline{t}/ \underline{r}\}$, $\underline{t}^{\underline{n}} \mapsto \underline{t}^{\underline{n}} \otimes \underline{t}^{\underline{n}}$, and $k\{\underline{t}/\underline{r}\} \rightarrow k$, $\underline{t}^{\underline{n}} \mapsto 1$, induce an IndBanach bialgebra structure on $k\{\underline{t}\}^{\dagger}$. Likewise we define the IndBanach bialgebra $k\{\underline{t}/0\}^{\dagger}:=\text{"colim"}_{\underline{r} > 0} k\{\underline{t}/\underline{r}\}$.
\end{defn}

\begin{lem}
IndBanach spaces graded by $k\{\underline{t}\}^{\dagger}$ are of the form
$$M=\text{"colim"}_{\substack{\underline{r} > 1 \\ i \in I}}\coprod\nolimits_{\underline{n} \in \mathbb{N}^{N}}^{\leq 1} M(\underline{n},i)_{\underline{r}^{\underline{n}}}$$
for $\text{"colim"}_{i \in I} (M(\underline{n},i))_{\underline{n} \in \mathbb{N}^{N}}$ in $\text{IndBan}_{k}^{\mathbb{N}^{N},\text{bd}}$. The space of morphisms that respect the grading is
$$\underline{\text{Hom}}_{k\{\underline{t}\}^{\dagger}}(M,M')= \text{colim}_{j \in J} \text{lim}_{i \in I}\text{lim}_{\underline{r}<1}\prod\nolimits_{\underline{n} \in \mathbb{N}^{n}}^{\leq 1} \text{Hom}(M(\underline{n},i),M'(\underline{n},j))_{\underline{r}^{\underline{n}}},$$
for
$$M=\text{"colim"}_{\underline{r} > 1, i \in I}\coprod\nolimits_{\underline{n} \in \mathbb{N}^{N}}^{\leq 1} M(\underline{n},i)_{\underline{r}^{n}}$$
and
$$M'=\text{"colim"}_{\underline{r} > 1, j \in J}\coprod\nolimits_{\underline{n} \in \mathbb{N}^{N}}^{\leq 1} M'(\underline{n},j)_{\underline{r}^{n}}.$$
Similarly, IndBanach spaces graded by $k\{\underline{t}/0\}^{\dagger}$ are of the form
$$M=\text{"colim"}_{\substack{\underline{r} > 0 \\ i \in I}}\coprod\nolimits_{\underline{n} \in \mathbb{N}^{N}}^{\leq 1} M(\underline{n},i)_{\underline{r}^{\underline{n}}}$$
for $\text{"colim"}_{i \in I} (M(\underline{n},i))_{\underline{n} \in \mathbb{N}^{N}}$ in $\text{IndBan}_{k}^{\mathbb{N}^{N},\text{bd}}$. The space of morphisms that respect the grading is
$$\underline{\text{Hom}}_{k\{\underline{t}/0\}^{\dagger}}(M,M')= \text{colim}_{j \in J} \text{lim}_{i \in I}\text{lim}_{\underline{r}>0}\prod\nolimits_{\underline{n} \in \mathbb{N}^{n}}^{\leq 1} \text{Hom}(M(\underline{n},i),M'(\underline{n},j))_{\underline{r}^{\underline{n}}},$$
for
$$M=\text{"colim"}_{\underline{r} > 0, i \in I}\coprod\nolimits_{\underline{n} \in \mathbb{N}^{N}}^{\leq 1} M(\underline{n},i)_{\underline{r}^{n}}$$
and
$$M'=\text{"colim"}_{\underline{r} > 0, j \in J}\coprod\nolimits_{\underline{n} \in \mathbb{N}^{N}}^{\leq 1} M'(\underline{n},j)_{\underline{r}^{n}}.$$
\end{lem}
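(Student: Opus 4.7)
The plan is to reduce the problem to the description of $k\{\underline{t}\}$-comodules given in the previous lemma by exploiting that $k\{\underline{t}\}^{\dagger}$ is a filtered colimit of the rescaled bialgebras $k\{\underline{t}/\underline{r}\}$. The key inputs are that $\hat{\otimes}$ commutes with filtered colimits in each variable, that morphisms in $\text{IndBan}_{k}$ out of an Ind-object factor through its Banach pieces, and that algebraically $k\{\underline{t}/\underline{r}\}$ coincides with $k\{\underline{t}\}$ so the decomposition into isotypic components is the same.

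First I would prove a rescaled analogue of the previous lemma: an IndBanach $k\{\underline{t}/\underline{r}\}$-comodule is exactly an object of the form $\coprod_{\underline{n} \in \mathbb{N}^{N}}^{\leq 1} M(\underline{n})_{\underline{r}^{\underline{n}}}$ for some $(M(\underline{n}))_{\underline{n}} \in \text{IndBan}_{k}^{\mathbb{N}^{N},\text{bd}}$. The algebraic decomposition $M = \bigoplus_{\underline{n}} M(\underline{n})$ with coaction $m \mapsto m \otimes \underline{t}^{\underline{n}}$ on the $\underline{n}$-isotypic piece is formally the same as in the $k\{\underline{t}\}$ case, and the only new content is the weighting: requiring the coaction map and its inverse (via the counit) to be contracting pins down the norm on each graded piece as $\underline{r}^{\underline{n}}$-rescaled. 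The morphism-space description follows from the same adjunction argument used in the previous lemma with the norm rescalings tracked through.

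Next I would handle the $\dagger$ version. Using that $\hat{\otimes}$ commutes with filtered colimits, write
\[
M \hat{\otimes} k\{\underline{t}\}^{\dagger} = \text{"colim"}_{\underline{r}>1}\, M \hat{\otimes} k\{\underline{t}/\underline{r}\}.
\]
By the definition of Hom in $\text{IndBan}_{k}$, a coaction $M \to M \hat{\otimes} k\{\underline{t}\}^{\dagger}$ is given, for each Banach piece $M_{i}$ of $M$ (writing $M = \text{"colim"}_{i \in I} M_{i}$), by a morphism $M_{i} \to M \hat{\otimes} k\{\underline{t}/\underline{r}(i)\}$ for some polyradius $\underline{r}(i)>1$, and the coassociativity and counit axioms descend these to genuine $k\{\underline{t}/\underline{r}(i)\}$-comodule data at finite stage (after perhaps shrinking $\underline{r}(i)$). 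Applying the first step to each stage and reindexing turns the formal colimit into the form $\text{"colim"}_{\underline{r}>1,\, i \in I} \coprod_{\underline{n}}^{\leq 1} M(\underline{n},i)_{\underline{r}^{\underline{n}}}$ of the claim, and the stated Hom formula then follows by combining the morphism description of $\text{IndBan}_{k}$ with the Hom formula established in the rescaled lemma, the inner $\lim_{\underline{r}<1}$ coming from the fact that graded morphisms must be compatible with all rescalings strictly smaller than the parameter at which they are defined.

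The case of $k\{\underline{t}/0\}^{\dagger}$ is formally identical with the index set $\underline{r}>1$ replaced by $\underline{r}>0$ throughout, since no special property of the threshold $1$ is used. The main obstacle I expect is not any single deep step but rather the bookkeeping: carefully tracking which bialgebra $k\{\underline{t}/\underline{r}\}$ a given piece of the coaction factors through, how coassociativity and the counit axiom survive the passage to the filtered colimit (in particular that the sub-bialgebra at which everything is defined can be enlarged consistently with the comodule axioms), and ensuring that the limits and colimits in the resulting Hom formula assemble in the stated order. Once one is comfortable that morphisms in $\text{IndBan}_{k}$ really do reduce everything to finite stages, the argument becomes a routine unwinding.
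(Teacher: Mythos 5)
The paper itself does not argue this lemma: it simply cites Propositions 4.8 and 4.10 of \cite{TRTfIBS}, so your proposal has to be judged on its own merits, and as written it has a genuine gap at its first and pivotal step. You propose a ``rescaled analogue of the previous lemma'' asserting that an IndBanach $k\{\underline{t}/\underline{r}\}$-comodule is exactly an object $\coprod^{\leq 1}_{\underline{n}} M(\underline{n})_{\underline{r}^{\underline{n}}}$, and that the coaction of $k\{\underline{t}\}^{\dagger}$ restricted to a Banach piece descends to ``genuine $k\{\underline{t}/\underline{r}(i)\}$-comodule data at finite stage.'' For the dagger-$1$ case this is not available: when $\underline{r}>1$ the comultiplication $\underline{t}^{\underline{n}} \mapsto \underline{t}^{\underline{n}} \otimes \underline{t}^{\underline{n}}$ is unbounded as a map into $k\{\underline{t}/\underline{r}\} \hat{\otimes} k\{\underline{t}/\underline{r}\}$ (its norm on $\underline{t}^{\underline{n}}$ grows like $\underline{r}^{\underline{n}}$); it only exists in the shifted form $k\{\underline{t}/\underline{r}^{2}\} \rightarrow k\{\underline{t}/\underline{r}\} \hat{\otimes} k\{\underline{t}/\underline{r}\}$, assembling to a bialgebra only at the colimit. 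So there is no notion of a comodule over a fixed piece $k\{\underline{t}/\underline{r}\}$, $\underline{r}>1$, and coassociativity cannot be localized to a single finite stage; it necessarily relates data at different radii. Concretely, even your proposed building block fails to carry the structure you ascribe to it: for $m \in M(\underline{n})$ the element $m \otimes \underline{t}^{\underline{n}}$ has norm $\underline{r}^{\underline{n}}\underline{s}^{\underline{n}}\|m\|$ against source norm $\underline{r}^{\underline{n}}\|m\|$, so no bounded coaction $\coprod^{\leq 1}_{\underline{n}} M(\underline{n})_{\underline{r}^{\underline{n}}} \rightarrow \bigl(\coprod^{\leq 1}_{\underline{n}} M(\underline{n})_{\underline{r}^{\underline{n}}}\bigr) \hat{\otimes} k\{\underline{t}/\underline{s}\}$ exists for $\underline{s}>1$; the coaction must drop the radius, mapping the piece at $\underline{r}$ to the piece at $\underline{r}'$ with $\underline{r} \geq \underline{r}'\underline{s}$ (exactly the mechanism used later for the dagger grading on $T_{\rho}(V)^{\dagger}$). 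Hence the reduction ``one Banach stage, one fixed radius, then take colimits'' cannot work as stated; the correct argument must instead extract the component maps $\rho_{\underline{n}}$ of the coaction on each Banach piece, use the fact that the coaction lands in $M \hat{\otimes} k\{\underline{t}/\underline{r}\}$ for some $\underline{r}>1$ to get the decay $\|\rho_{\underline{n}}\| \lesssim \underline{r}^{-\underline{n}}$ producing the rescaled pieces $M(\underline{n},i)_{\underline{r}^{\underline{n}}}$, and track the radius shifts through counit and coassociativity, which is the content of the cited Propositions 4.8 and 4.10.

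Two smaller points: your remark that ``algebraically $k\{\underline{t}/\underline{r}\}$ coincides with $k\{\underline{t}\}$'' is only true of the dense polynomial subspace, not of the Banach coalgebra structure that the argument needs; and the Hom formula, in particular the appearance of the inner $\lim_{\underline{r}<1}$ with weights $\underline{r}^{\underline{n}}$, is asserted rather than derived --- it requires the same radius bookkeeping and is not a formal consequence of the $k\{\underline{t}\}$ case. Your strategy is essentially sound for the dagger-$0$ statement, where cofinally $\underline{r}<1$ and each $k\{\underline{t}/\underline{r}\}$ is an honest Banach bialgebra with contracting comultiplication, but the dagger-$1$ half of the lemma is precisely where the finite-stage reduction breaks down.
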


\begin{proof}
This is Proposition 4.8 and Proposition 4.10 of \cite{TRTfIBS}.
\end{proof}

\begin{defn}
We say that an IndBanach space is \emph{analytically $\mathbb{N}^{N}$-graded}, or just \emph{analytically graded} if $N$ is implicit, if it is graded over $k\{\underline{t}\}$. Likewise, we say that an IndBanach space is \emph{dagger-1 $\mathbb{N}^{N}$-graded}, or just \emph{dagger-1 graded}, (respectively \emph{dagger-0 $\mathbb{N}^{N}$-graded}, or \emph{dagger-0 graded}) if it is graded over $k\{\underline{t}\}^{\dagger}$ (respectively over $k\{\underline{t}/0\}^{\dagger}$). We will occasionally just use the term \emph{dagger graded} when there is no ambiguity.
\end{defn}

\begin{lem}
Assuming {\bf (NA)}, for each $0 < r$ and each pre-braided Banach space $(V,c)$ with $\|c\| \leq 1$ the Banach tensor algebra $T_{r}^{c}(V)$ is naturally an analytically graded pre-braided Hopf algebra. For any $0 < \rho$, $T_{\rho}^{c}(V)^{\dagger}:=\text{"colim"}_{r > \rho} T_{r}^{c}(V)$ is naturally a dagger-1 graded pre-braided Hopf algebra, and $T_{0}^{c}(V)^{\dagger}:=\text{"colim"}_{r > 0} T_{r}^{c}(V)$ is a dagger-0 graded pre-braided Hopf algebra. Likewise, assuming {\bf (A)}, for each pre-braided Banach space $(V,c)$ with $\|c\| \leq 1$ the dagger tensor algebra $T_{0}^{c}(V)^{\dagger}$ is naturally a dagger-0 graded pre-braided Hopf algebra.
\end{lem}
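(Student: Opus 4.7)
The plan is to exhibit the natural $\mathbb{N}$-grading on $T_{r}^{c}(V)$ by homogeneous tensor degree as a comodule structure over the appropriate bialgebra, and then verify that each structure map of the pre-braided Hopf algebra is a comodule homomorphism. In case {\bf (NA)}, the decomposition $T_{r}^{c}(V) = \coprod\nolimits^{\leq 1}_{n \in \mathbb{N}} V_{r}^{\hat{\otimes} n}$ is already in the form identified by the characterization lemma for $k\{t\}$-comodules above, with $V_{r}^{\hat{\otimes} n}$ in degree $n$. For the dagger variants, using $V_{r}^{\hat{\otimes} n} = (V^{\hat{\otimes} n})_{r^{n}}$ and, in the $\rho > 0$ case, substituting $s = r/\rho$, we rewrite
$$T_{\rho}^{c}(V)^{\dagger} = \text{"colim"}_{s > 1} \coprod\nolimits^{\leq 1}_{n} (V_{\rho}^{\hat{\otimes} n})_{s^{n}}, \qquad T_{0}^{c}(V)^{\dagger} = \text{"colim"}_{r > 0} \coprod\nolimits^{\leq 1}_{n} (V^{\hat{\otimes} n})_{r^{n}},$$
which are precisely the forms of a dagger-1 (respectively dagger-0) graded IndBanach space identified by the characterization lemma above, with graded pieces $V_{\rho}^{\hat{\otimes} n}$ or $V^{\hat{\otimes} n}$.

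Next I would check each structure map in turn. The multiplication $\mu$ sends $V_{r}^{\hat{\otimes} n} \hat{\otimes} V_{r}^{\hat{\otimes} m}$ to $V_{r}^{\hat{\otimes} n+m}$ and is therefore graded via the product $t^{n} \otimes t^{m} \mapsto t^{n+m}$ on $k\{t\}$; the unit lands in degree $0$; the counit vanishes in positive degrees. The comultiplication preserves total degree by Lemma \ref{GradedPiecesOfTensorAlgebraCoideals}. The induced braiding $\tilde{c}$ exchanges $V^{\hat{\otimes} n} \hat{\otimes} V^{\hat{\otimes} m}$ and $V^{\hat{\otimes} m} \hat{\otimes} V^{\hat{\otimes} n}$, preserving total degree. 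For the antipode in {\bf (NA)}, each convolution power $\gamma^{\ast k}$ from the proof of Proposition \ref{TensorHopfAlgebra} is degree-preserving (since $\gamma = \eta \varepsilon - \mathrm{Id}$, $\Delta$ and $\mu$ all are), and only finitely many summands act non-trivially on a given $V^{\hat{\otimes} n}$, so $S = \sum_{k} \gamma^{\ast k}$ preserves degree. In case {\bf (A)}, the antipode built in Proposition \ref{TensorHopfAlgebraA} is induced by $v \mapsto -v$ on $V$, which is manifestly degree-preserving.

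Finally, for the dagger variants the transition maps $T_{r'}^{c}(V) \to T_{r}^{c}(V)$ for $\rho < r \leq r'$ are morphisms of graded pre-braided Hopf algebras, so the colimit inherits a graded pre-braided Hopf algebra structure over the appropriate dagger bialgebra $k\{t\}^{\dagger}$ or $k\{t/0\}^{\dagger}$. The main delicacy is the bookkeeping required to match the canonical form of a dagger-1 graded space via the rescaling $s = r/\rho$, together with verifying that the convolution series defining $S$ in case {\bf (NA)} passes to the colimit; both reduce to finite-degree statements once the grading is identified, so I expect the entire argument to be essentially routine given the earlier propositions.
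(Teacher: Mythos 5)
Your proposal is correct and follows essentially the same route as the paper: the paper defines the coaction $x \mapsto t^{n}\otimes x$ on $V^{\hat{\otimes}n}$ (with the radius bookkeeping $r \geq r's$ playing the role of your rescaling $s = r/\rho$), passes to the colimits for the dagger cases, and observes that all structure maps are graded by construction. Your use of the comodule characterization lemmas and the explicit degree-check of $\mu$, $\Delta$, $\tilde{c}$ and $S$ is just a more spelled-out version of the same argument.
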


\begin{proof}
We define $T_{r}(V) \rightarrow k\{\frac{t}{s}\} \hat{\otimes} T_{r'}(V)$ by $x \mapsto t^{n} \otimes x$ for $x \in V^{\hat{\otimes} n}$ whenever $r \geq r's$. For $r=r'$, $s=1$, this gives $T_{r}(V)$ an analytic grading for which it is an analytically graded braided Hopf algebra. For fixed $\rho >0$ and each $r>\rho$ there exists $s>1$ such that $r>s\rho$, so we may define a map $T_{\rho}(V)^{\dagger} \rightarrow k\{t\}^{\dagger} \hat{\otimes} T_{\rho}(V)^{\dagger}$. Likewise we can define a map $T_{0}(V)^{\dagger} \rightarrow k\{t/0\}^{\dagger} \hat{\otimes} T_{0}(V)^{\dagger}$. These give $T_{\rho}(V)^{\dagger}$ and $T_{0}(V)^{\dagger}$ dagger-1 and dagger-0 gradings respectively. By construction all of the structure maps for the pre-braided Hopf algebras $T_{r}^{c}(V)$, $T_{\rho}^{c}(V)^{\dagger}$ and $T_{0}^{c}(V)^{\dagger}$ are graded.
\end{proof}

\begin{defn}
Let $C$ be an IndBanach coalgebra. A \emph{generalised element} of $C$ is a morphism $\lambda:k \rightarrow C$. We say that a generalised element is \emph{grouplike} if it is a coalgebra homomorphism. Let $G(C)$ denote the set of grouplike generalised elements of $C$.
\end{defn}

\begin{lem}
Let $C$ be an IndBanach coalgebra. Then $G(C)$ forms a monoid under the composition law where $\lambda \ast \lambda'$ is the composition $k \cong k \hat{\otimes} k \overset{\lambda \otimes \lambda'}{\longrightarrow} C \hat{\otimes} C \rightarrow C$. If $C$ is an IndBanach Hopf algebra then $G(C)$ is a group.
\end{lem}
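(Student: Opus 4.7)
The plan is to first observe that the unlabelled arrow $C \hat{\otimes} C \to C$ in the definition of $\ast$ must denote the multiplication of $C$, so implicitly $C$ carries an IndBanach bialgebra structure throughout the monoid claim and a Hopf algebra structure for the group claim. With this reading the proof is a direct adaptation of the classical argument for grouplike elements in a (Hopf) bialgebra.

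For the monoid part I would verify closure, associativity, and the identity axiom. Closure is the essential point: given $\lambda, \lambda' \in G(C)$, I expand
\[
\Delta \circ (\lambda \ast \lambda') = \Delta \circ \mu \circ (\lambda \otimes \lambda') \circ \Delta_k
\]
using the bialgebra compatibility $\Delta \circ \mu = (\mu \otimes \mu) \circ (\text{Id} \otimes \tau \otimes \text{Id}) \circ (\Delta \otimes \Delta)$ (with $\tau$ the flip on $C \hat{\otimes} C$) together with $\Delta \circ \lambda = (\lambda \otimes \lambda) \circ \Delta_k$ and likewise for $\lambda'$. Naturality of $\tau$ lets one pull $\tau$ through the morphisms from $k$, where it reduces to the identity on $k \hat{\otimes} k \cong k$; the expression then collapses to $((\lambda \ast \lambda') \otimes (\lambda \ast \lambda')) \circ \Delta_k$. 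The counit condition is immediate from $\varepsilon \circ \mu = \varepsilon \otimes \varepsilon$ and $\varepsilon \circ \lambda = \text{Id}_k$. Associativity of $\ast$ follows from coassociativity of $\Delta_k$ together with associativity of $\mu$, and the unit $\eta \colon k \to C$ is grouplike (by $\Delta \circ \eta = \eta \otimes \eta$ from Definition \ref{BraidedBialgebra}) and is a two-sided identity for $\ast$ via the unit axioms of $\mu$.

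For the Hopf algebra case the candidate inverse of $\lambda \in G(C)$ is $S \circ \lambda$; the identity $\lambda \ast (S \circ \lambda) = \eta$ follows by precomposing the antipode axiom $\mu \circ (\text{Id} \otimes S) \circ \Delta = \eta \circ \varepsilon$ with $\lambda$ and using $\varepsilon \circ \lambda = \text{Id}_k$, and the other side is symmetric. The main obstacle is showing that $S \circ \lambda$ is itself in $G(C)$, for which one needs the standard fact that $S$ is an anti-coalgebra homomorphism, $\Delta \circ S = \tau \circ (S \otimes S) \circ \Delta$. In the IndBanach setting this is proved by showing that both sides are mutually convolution-inverse to $\Delta$ in $\text{Hom}(C, C \hat{\otimes} C)$, exactly as in the classical Takeuchi-style argument. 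Once available, applying this identity to $\lambda$ and using that $\tau \circ (f \otimes f) = f \otimes f$ for any $f \colon k \to C$ (since the flip is trivial on $k \hat{\otimes} k$) yields $\Delta \circ S \circ \lambda = ((S \circ \lambda) \otimes (S \circ \lambda)) \circ \Delta_k$, finishing the proof.
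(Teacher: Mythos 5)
Your proposal is correct and follows essentially the same route as the paper, which simply observes that $\ast$ is the restriction of the convolution product on $\text{Hom}(k,C)$ and that the inverse of $\lambda$ is $S \circ \lambda$, leaving the classical verifications implicit. You have merely written out those standard details (closure of grouplikes under convolution, the unit $\eta$, and $S$ being an anti-coalgebra morphism so that $S\circ\lambda$ is again grouplike), all of which adapt to the IndBanach setting exactly as you describe.
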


\begin{proof}
Note that this is just the restriction of the convolution product on $\text{Hom}(k,C)$. As with Hopf algebras over vector spaces, it is clear that the inverse to $\lambda$ is the composition $k \overset{\lambda}{\longrightarrow} C \overset{S}{\longrightarrow} C$.
\end{proof}

\begin{remark}
Note that $G(k\{\underline{t}\}) \cong G(k\{\underline{t}\}^{\dagger}) \cong G(k\{\underline{t}/0\}^{\dagger}) \cong \mathbb{N}^{N}$ as monoids.
\end{remark}

\begin{defn}
Let $C$ be an IndBanach coalgebra and $V$ be an IndBanach space graded over $C$. Given $\lambda\in G(C)$ we define $V(\lambda)=\text{eq}(V \rightrightarrows C \hat{\otimes} V)$ as the equaliser of the given coaction of $C$ on $V$ and the map
$$V \cong k \hat{\otimes} V \overset{\lambda \otimes \text{Id}_{V}}{\longrightarrow} C \hat{\otimes} V.$$
We think of this as the degree $\lambda$ graded piece. If $\eta_{C}$ is the unit of $C$ then we use the notation $V(0):=V(\eta_{C})$.
\end{defn}

\begin{lem}
\label{TensorOfBraidedPieces}
Let $C$ be an IndBanach coalgebra and $(V,c)$ a $C$-graded, pre-braided IndBanach space and let $\lambda, \lambda' \in G(C)$. Assume further that $V$ and $V(\lambda')$ are flat as IndBanach spaces. Then $(V \hat{\otimes} V)(\lambda \otimes \lambda') = V(\lambda) \hat{\otimes} V(\lambda')$ where $\lambda \otimes \lambda'$ is the morphism $k \cong k \hat{\otimes} k \xrightarrow{\lambda \otimes \lambda'} C \otimes C$.
\end{lem}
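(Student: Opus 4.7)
The plan is to identify both $(V \hat{\otimes} V)(\lambda \otimes \lambda')$ and $V(\lambda) \hat{\otimes} V(\lambda')$ with the same simultaneous equalizer inside $V \hat{\otimes} V$, namely the joint equalizer of the two pairs of arrows
\[
V \hat{\otimes} V \rightrightarrows C \hat{\otimes} V \hat{\otimes} V, \qquad V \hat{\otimes} V \rightrightarrows V \hat{\otimes} C \hat{\otimes} V,
\]
where the first pair consists of the coaction on the first factor versus $(\lambda \hat{\otimes} \text{Id}_V) \hat{\otimes} \text{Id}_V$, and the second pair consists of the coaction on the second factor versus $\text{Id}_V \hat{\otimes} (\lambda' \hat{\otimes} \text{Id}_V)$.

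For the left-hand side, the $C \hat{\otimes} C$-comodule structure on $V \hat{\otimes} V$ is, by definition, the composition
\[
\tilde\rho : V \hat{\otimes} V \xrightarrow{\rho \hat{\otimes} \rho} (C \hat{\otimes} V) \hat{\otimes} (C \hat{\otimes} V) \xrightarrow{\text{Id}_C \hat{\otimes} \sigma \hat{\otimes} \text{Id}_V} (C \hat{\otimes} C) \hat{\otimes} (V \hat{\otimes} V),
\]
where $\rho$ is the $C$-coaction on $V$ and $\sigma$ is the flip, and $(V \hat{\otimes} V)(\lambda \otimes \lambda')$ is by definition the equalizer of $\tilde\rho$ with $(\lambda \hat{\otimes} \lambda') \hat{\otimes} \text{Id}_{V \hat{\otimes} V}$. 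A morphism into $V \hat{\otimes} V$ equalises this pair if and only if it equalises both of the factorwise pairs above: the forward implication follows by post-composing with $\text{Id}_C \hat{\otimes} \varepsilon \hat{\otimes} \text{Id}_{V \hat{\otimes} V}$ and $\varepsilon \hat{\otimes} \text{Id}_C \hat{\otimes} \text{Id}_{V \hat{\otimes} V}$, using that $\varepsilon \circ \lambda = \varepsilon \circ \lambda' = \text{Id}_k$ by the grouplike property; the converse follows because $\tilde\rho$ is itself reconstructed from its two factorwise components via $\rho \hat{\otimes} \rho$, so that factorwise agreement with $\lambda \hat{\otimes} \text{Id}$ and $\lambda' \hat{\otimes} \text{Id}$ forces combined agreement with $(\lambda \hat{\otimes} \lambda') \hat{\otimes} \text{Id}$.

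For the right-hand side I would commute tensor products with equalizers using the flatness hypotheses. Applying $V \hat{\otimes} -$ to the defining equalizer $V(\lambda') \hookrightarrow V \rightrightarrows C \hat{\otimes} V$, flatness of $V$ identifies $V \hat{\otimes} V(\lambda')$ with the second factorwise equalizer above. Next, applying $- \hat{\otimes} V(\lambda')$ to the defining equalizer of $V(\lambda)$, flatness of $V(\lambda')$ identifies $V(\lambda) \hat{\otimes} V(\lambda')$ with the equalizer of the first factorwise pair restricted to $V \hat{\otimes} V(\lambda') \subset V \hat{\otimes} V$. Composing these two observations exhibits $V(\lambda) \hat{\otimes} V(\lambda')$ as the simultaneous equalizer of both factorwise pairs, which by the previous paragraph is $(V \hat{\otimes} V)(\lambda \otimes \lambda')$.

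The main obstacle is the counit-projection argument in the second paragraph, which is the one place where the grouplike hypothesis is actually used; once this is in place everything else is a diagram chase. The flatness bookkeeping then goes through routinely, provided one appeals to the standard characterisation in the quasi-abelian category $\text{IndBan}_k$ of flat objects as those $X$ for which $- \hat{\otimes} X$ preserves the relevant finite limits, so that the two equalizer manipulations above are genuinely equalizer preserving.
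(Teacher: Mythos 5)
Your proposal is correct and follows essentially the same route as the paper: the paper also extracts the two factorwise equalizing conditions by applying counits (using that $\lambda,\lambda'$ are grouplike), then uses flatness of $V$ to identify the second-factor equalizer with $V \hat{\otimes} V(\lambda')$ and flatness of $V(\lambda')$ to identify the remaining equalizer with $V(\lambda) \hat{\otimes} V(\lambda')$, in the same order. Your reformulation in terms of the joint equalizer of the two factorwise pairs is only a cosmetic repackaging of the paper's test-object argument.
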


\begin{proof}
The map $V(\lambda) \hat{\otimes} V(\lambda') \rightarrow V \hat{\otimes} V$ induces a map
$$V(\lambda) \hat{\otimes} V(\lambda') \rightarrow  (V \hat{\otimes} V)(\lambda \otimes \lambda').$$
Suppose we are given a morphism $f: W \rightarrow V \hat{\otimes} V$ such that the compositions
$$W \xrightarrow{f} V \hat{\otimes} V \cong k \hat{\otimes} k \hat{\otimes} V \hat{\otimes} V \xrightarrow{\lambda \otimes \lambda' \otimes \text{Id} \otimes \text{Id}} C \hat{\otimes} C \hat{\otimes} V \hat{\otimes} V$$
and
$$W \xrightarrow{f} V \hat{\otimes} V  \xrightarrow{\Delta_{V} \otimes \Delta_{V}} C \hat{\otimes} V \hat{\otimes} C \hat{\otimes} V \xrightarrow{\text{Id} \otimes \tau \otimes \text{Id}} C \hat{\otimes} C \hat{\otimes} V \hat{\otimes} V$$
agree. Postcomposing both with $\varepsilon \otimes \text{Id}\otimes \text{Id}\otimes \text{Id}$ we see that the compositions
$$W \xrightarrow{f} V \hat{\otimes} V \cong V \hat{\otimes} k \hat{\otimes} V \xrightarrow{\text{Id} \otimes \lambda' \otimes \text{Id}} V \hat{\otimes} C \hat{\otimes} V$$
and
$$W \xrightarrow{f} V \hat{\otimes} V  \xrightarrow{\text{Id} \otimes \Delta_{V}} V \hat{\otimes} C \hat{\otimes} V$$
agree, so we obtain a map from $W$ to the equaliser of $\text{Id} \otimes \Delta_{V}$ and $\text{Id} \otimes \lambda' \otimes \text{Id}$. Since $V$ is flat, this equaliser is $V \hat{\otimes} V(\lambda')$. Thus we have a unique map $f':W \rightarrow V \hat{\otimes}V(\lambda')$ such that the compositions
$$W \xrightarrow{f'} V \hat{\otimes} V(\lambda') \cong k \hat{\otimes} V \hat{\otimes} V(\lambda') \xrightarrow{\lambda \otimes \text{Id} \otimes \text{Id}} C \hat{\otimes} V \hat{\otimes} V(\lambda')$$
and
$$W \xrightarrow{f'} V \hat{\otimes} V(\lambda') \xrightarrow{\Delta_{V} \otimes \text{Id}} C \hat{\otimes} V \hat{\otimes} V(\lambda')$$
agree. Again, since $V(\lambda')$ is assumed to be flat, the equaliser of $\lambda \otimes \text{Id} \otimes \text{Id}$ and $\Delta_{V} \otimes \text{Id}$ is $V(\lambda) \hat{\otimes} V(\lambda')$ and we obtain a unique map $f'':W \rightarrow V(\lambda) \hat{\otimes} V(\lambda')$. This exhibits $V(\lambda) \hat{\otimes} V(\lambda')$ as the equaliser $(V \hat{\otimes} V)(\lambda \otimes \lambda')$ as required.
\end{proof}

\begin{lem}
\label{RestrictedBraiding}
Let $C$ be an IndBanach coalgebra and $(V,c)$ a $C$-graded, (pre-) braided IndBanach space and let $\lambda\in G(C)$. Assume further that $V$ and $V(\lambda)$ are flat as an IndBanach space. Then $c$ restricts to a (pre-) braiding of $C(\lambda)$.
\end{lem}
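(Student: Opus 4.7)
The plan is to exploit the assumption that $c$ is a morphism of $C\hat{\otimes} C$-comodules to conclude that $c$ must preserve the $(\lambda\otimes\lambda)$-graded piece of $V\hat{\otimes} V$, and then identify this graded piece with $V(\lambda)\hat{\otimes} V(\lambda)$ using the preceding lemma.

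First I would observe that, by the definition of a graded pre-braided IndBanach space, $c:V\hat{\otimes} V\rightarrow V\hat{\otimes} V$ is a morphism of $C\hat{\otimes} C$-comodules. In particular $c$ commutes with the coaction of $C\hat{\otimes} C$, so its composition with the two maps defining the equaliser $(V\hat{\otimes} V)(\lambda\otimes\lambda)$ agree. By the universal property of equalisers, $c$ restricts to an endomorphism of $(V\hat{\otimes} V)(\lambda\otimes\lambda)$.

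Next I would apply Lemma \ref{TensorOfBraidedPieces}: since $V$ and $V(\lambda)$ are flat, we have the identification
$$(V\hat{\otimes} V)(\lambda\otimes\lambda) \;=\; V(\lambda)\hat{\otimes} V(\lambda).$$
Thus $c$ restricts to a morphism $c_{\lambda}:V(\lambda)\hat{\otimes} V(\lambda)\rightarrow V(\lambda)\hat{\otimes} V(\lambda)$. To verify the hexagon axiom for $c_{\lambda}$, I would iterate Lemma \ref{TensorOfBraidedPieces} to identify $V(\lambda)^{\hat{\otimes} 3}$ with the $(\lambda\otimes\lambda\otimes\lambda)$-graded piece of $V^{\hat{\otimes} 3}$ (using flatness of $V$ and $V(\lambda)$ once more), and observe that the hexagon for $c_{\lambda}$ is simply the restriction of the hexagon for $c$ to this equaliser subobject.

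Finally, for the braided case, I would note that when $c$ is an isomorphism, its inverse $c^{-1}$ is automatically also a morphism of $C\hat{\otimes} C$-comodules (since the inverse of an isomorphism in any category of comodules is a comodule morphism whenever the underlying map is invertible). Therefore $c^{-1}$ restricts to an endomorphism of $V(\lambda)\hat{\otimes} V(\lambda)$ by the same argument, and this restriction must be the inverse of $c_{\lambda}$ by functoriality of the restriction. The main subtlety to watch is the use of flatness in Lemma \ref{TensorOfBraidedPieces}, both for $V\hat{\otimes} V$ and for $V^{\hat{\otimes} 3}$; once these equaliser identifications are in place, the proof reduces entirely to the universal property of equalisers and no further calculation is needed.
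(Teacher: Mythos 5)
Your proposal is correct and follows essentially the same route as the paper: restrict $c$ to the $(\lambda\otimes\lambda)$-graded piece using that it is a morphism of $C\hat{\otimes}C$-comodules, then identify $(V\hat{\otimes}V)(\lambda\otimes\lambda)$ with $V(\lambda)\hat{\otimes}V(\lambda)$ via Lemma \ref{TensorOfBraidedPieces}. Your extra verifications (the hexagon on the triple graded piece and the restriction of $c^{-1}$) are just a more explicit spelling-out of what the paper leaves implicit.
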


\begin{proof}
The braiding $c$ restricts to a morphism $(V \hat{\otimes} V)(\lambda \otimes \lambda) \rightarrow (V \hat{\otimes} V){(\lambda \otimes \lambda)}$. By Lemma \ref{TensorOfBraidedPieces}, this gives a braiding $V(\lambda) \hat{\otimes} V(\lambda) \rightarrow V(\lambda) \hat{\otimes} V(\lambda)$.
\end{proof}

\subsection{Analytic and Dagger Nichols algebras}
\

Theorem 4.3 of \cite{PHA} shows that the positive and negative parts of quantum enveloping algebras arise as Nichols algebras in the category of vector spaces. In this section we introduce analytic and dagger analogues of Nichols algebras that will allow us to construct the positive and negative parts of analytic quantum groups in Sections \ref{NAQGSection} and \ref{AQGSection}.

\begin{defn}
Given an IndBanach Hopf algebra $H$, we denote by $P(H)$ the equaliser of the two maps $H \rightarrow H \hat{\otimes} H$, given by the comultiplication $\Delta$ on $H$ and the sum of the maps
$$H \cong k \hat{\otimes} H \xrightarrow{\eta \otimes \text{Id}_{H}} H \hat{\otimes} H \text{ and } H \cong H \hat{\otimes} k \xrightarrow{\text{Id}_{H} \otimes \eta} H \hat{\otimes} H,$$
the \emph{primitive subspace} of $H$.
\end{defn}

\begin{defn}
\label{GeneratedBy}
Given an IndBanach algebra $A$ and an IndBanach space $V$ equipped with a strict monomorphism $V \hookrightarrow A$, we say that $A$ \emph{is generated by} $V$ if, for any diagram
\begin{center}
\begin{tikzpicture}[node distance=6cm, auto]
  \node (A) {$V$};
  \node (B) [right=1cm of A] {$A$};
  \node (C) [below=0.5cm of B] {$A'$};
  \draw[->] (A) to node {} (B);
  \draw[->] (A) to node {} (C);
  \draw[->] (C) to node [swap]{$f$} (B);
\end{tikzpicture}
\end{center}
where $A'$ is an IndBanach algebra and $f$ is a morphism of algebras, $f$ is an epimorphism.
\end{defn}

\begin{lem}
Given an IndBanach algebra $A$ and an IndBanach space $V$ equipped with a strict monomorphism $V \hookrightarrow A$, $A$ is generated by $V$ if and only if the induced map $\coprod_{n \geq 0}V^{\hat{\otimes}n} \rightarrow A$ is an epimorphism.
\end{lem}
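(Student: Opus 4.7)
The plan is to identify $T := \coprod_{n \geq 0} V^{\hat{\otimes} n}$ with the underlying object of the free IndBanach algebra on $V$ and then run a universal-property argument. First I would equip $T$ with its natural algebra structure, given on summands by the isomorphisms $V^{\hat{\otimes} n} \hat{\otimes} V^{\hat{\otimes} m} \cong V^{\hat{\otimes} n+m}$; the monomorphism $V \hookrightarrow A$ then assembles, via iterated multiplication in $A$, into the stated algebra morphism $\phi : T \to A$. More generally, I expect the same recipe to show that for any IndBanach algebra $A'$ and any morphism $g: V \to A'$ there exists a unique algebra morphism $\tilde g: T \to A'$ extending $g$. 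This universal property is the only nontrivial ingredient.

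For the forward implication, suppose $A$ is generated by $V$ in the sense of Definition \ref{GeneratedBy}. I would apply the defining diagram with $A' = T$, with the canonical inclusion $V \to T$ into the $n=1$ summand, and with $f = \phi$. Since $\phi$ restricts to $V \hookrightarrow A$ by construction, the triangle commutes, so the hypothesis forces $\phi$ to be an epimorphism.

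For the converse, assume $\phi$ is an epimorphism and consider any compatible triangle consisting of an algebra morphism $f: A' \to A$ and a morphism $V \to A'$ lifting $V \hookrightarrow A$. The universal property produces an algebra morphism $\tilde g: T \to A'$ extending $V \to A'$, and the uniqueness clause (applied to the two algebra morphisms $f \circ \tilde g$ and $\phi$ out of $T$, both of which restrict to $V \hookrightarrow A$ on the $n=1$ summand) yields $f \circ \tilde g = \phi$. Since $\phi$ is an epimorphism and composing on the left by an epi forces the outer map to be epi, $f$ must itself be an epimorphism, as required.

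The main obstacle I expect is to make the universal property of $T$ fully rigorous in the IndBanach setting; in particular one must check that the iterated multiplications $V^{\hat{\otimes} n} \to A$ assemble into an actual morphism out of the contracting coproduct $T$. In the \textbf{(NA)} case this is immediate after rescaling $V$ so that the strict monomorphism $V \hookrightarrow A$ (and the given $V \to A'$) becomes contracting, using that multiplication in a Banach algebra is contracting in the standard submultiplicative normalisation; in the \textbf{(A)} case one passes to a dagger rescaling at some positive radius and uses that $T$ is already written as an Ind-object absorbing such rescalings. Once these bookkeeping steps are handled, the argument is purely formal.
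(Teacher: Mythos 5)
Your argument is correct and is essentially the paper's proof: the forward direction applies Definition of generation with $A'$ the tensor algebra $\coprod_{n\geq 0}V^{\hat{\otimes}n}$ and $f$ the induced algebra map, and the converse factors that map as $f\circ\tilde g$ through any test algebra $A'$ (the paper does this by checking the level-wise diagrams $V^{\hat{\otimes}n}\to (A')^{\hat{\otimes}n}\to A'$ commute with $f$, which is exactly the content of your uniqueness clause). One clarification on your final paragraph: the coproduct in the statement is the categorical coproduct in $\text{IndBan}_{k}$, not the contracting coproduct $\coprod^{\leq 1}$, so a morphism out of $\coprod_{n\geq 0}V^{\hat{\otimes}n}$ is nothing more than an arbitrary family of morphisms $V^{\hat{\otimes}n}\to A'$, and the free-algebra universal property holds with no uniform boundedness, rescaling, or dagger-radius bookkeeping in either the {\bf (NA)} or {\bf (A)} case; had the contracting coproduct been intended, the rescalings you describe would replace the object by some $T_{r}(V)$ and would change the statement, so it is worth being precise that no such step is needed here.
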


\begin{proof}
If $V$ generates $A$ then the induced algebra map $\coprod_{n \geq 0}V^{\hat{\otimes}n} \rightarrow A$ is epic by assumption. Conversely, suppose that we have an epimorphism $\coprod_{n \geq 0}V^{\hat{\otimes}n} \rightarrow A$ and a diagram
\begin{center}
\begin{tikzpicture}[node distance=6cm, auto]
  \node (A) {$V$};
  \node (B) [right=1cm of A] {$A$};
  \node (C) [below=0.5cm of B] {$A'$};
  \draw[->] (A) to node {} (B);
  \draw[->] (A) to node {} (C);
  \draw[->] (C) to node [swap]{$f$} (B);
\end{tikzpicture}
\end{center}
as in Definition \ref{GeneratedBy}. Then for each $n \geq 0$ we obtain a commutative diagram
\begin{center}
\begin{tikzpicture}[node distance=6cm, auto]
  \node (A) {$V^{\hat{\otimes} n}$};
  \node (B) [right=1cm of A] {$A^{\hat{\otimes} n}$};
  \node (C) [below=0.5cm of B] {$(A')^{\hat{\otimes} n}$};
  \node (D) [right=0.7cm of B] {$A$};
  \node (E) [below=0.7cm of D] {$A'$};
  \draw[->] (A) to node {} (B);
  \draw[->] (A) to node {} (C);
  \draw[->] (C) to node [swap]{$f^{\hat{\otimes} n}$} (B);
  \draw[->] (E) to node [swap]{$f$} (D);
  \draw[->] (B) to node {} (D);
  \draw[->] (C) to node {} (E);
\end{tikzpicture}
\end{center}
which induces a diagram
\begin{center}
\begin{tikzpicture}[node distance=6cm, auto]
  \node (A) {$\coprod_{n \geq 0}V^{\hat{\otimes}n}$};
  \node (B) [right=1cm of A] {$A$};
  \node (C) [below=0.5cm of B] {$A'$};
  \draw[->] (A) to node {} (B);
  \draw[->] (A) to node {} (C);
  \draw[->] (C) to node [swap]{$f$} (B);
\end{tikzpicture}
\end{center}
which ensures that $f$ is an epimorphism since the map $\coprod_{n \geq 0}V^{\hat{\otimes}n} \rightarrow A$ is epic.
\end{proof}

\begin{defn}
Let $V$ be a flat IndBanach space with pre-braiding $c$. Fix an IndBanach bialgebra $C$ and a grouplike generalised element $\lambda:k \rightarrow C$. Then a flat braided graded IndBanach Hopf algebra $R$, graded over $C$, is called an \emph{IndBanach Nichols algebra} of $V$ if $R(0) \cong k$, $P(R) = R(\lambda) \cong V$ as a braided IndBanach space, and $R$ is generated by $R(\lambda)$. If $C=k\{t\}$, $\lambda:1 \mapsto t$, we say that $R$ is an \emph{analytic Nichols algebra}, and likewise if $C=k\{t\}^{\dagger}$ (respectively $C=k\{t/0\}^{\dagger}$), $\lambda:1 \mapsto t$, we say that  $R$ is a \emph{dagger-1 Nichols algebra} (respectively a \emph{dagger-0 Nichols algebra}).
\end{defn}

\begin{remark}
We require flatness in the above definition so that the braiding on $R$ automatically restricts to $R(\lambda)$ by Lemma \ref{RestrictedBraiding}. In the {\bf (NA)} case flatness is automatic by Lemma 3.49 of \cite{SDiBAG}.
\end{remark}

We will prove existence of these Nichols algebras in Sections \ref{NANicholsAlegbraSection} and \ref{ANicholsAlegbraSection}, following a discussion of how to form quantum groups using Majid's double-bosonisation construction.

\section{Double-bosonisation}
\label{Analytic Bosonisation}

In \cite{DBoBG}, Majid introduces a construction, \emph{double-bosonisation}, which he uses to reconstruct Lusztig's form of the quantum enveloping algebra $U_{q}(\mathfrak{g})$. We present here an adaptation of this construction to the setting of IndBanach spaces. For more on these ideas, see \cite{AaHAiBC}, a brief review of which is given in \cite{DBoBG}.

\begin{defn}
Let $H$ and $A$ be IndBanach Hopf algebras. A duality pairing is a bilinear form $\langle -,- \rangle:H \hat{\otimes} A \rightarrow k$ such that the following diagrams commute:
\begin{center}
\begin{tikzpicture}[node distance=6cm, auto]
  \node (A) {$H \hat{\otimes} H \hat{\otimes} A$};
  \node (A') [below=0.75cm of A] {$H \hat{\otimes} H \hat{\otimes} A$};
  \node (B) [below=2cm of A] {$H \hat{\otimes} A$};
  \node (C) [right=3cm of A] {$H \hat{\otimes} H \hat{\otimes} A \hat{\otimes} A$};
  \node (D) [below=2.05cm of C] {$k$};
  \node (E) [below=0.75cm of C] {$H \hat{\otimes} A$};
  \draw[->] (A') to node [swap]{$\mu_{H} \otimes \text{Id}$} (B);
  \draw[->] (A) to node [swap]{$\tau \otimes \text{Id}$} (A');
  \draw[->] (C) to node [swap]{$\text{Id} \otimes \langle - , - \rangle \otimes \text{Id}$} (E);
  \draw[->] (A) to node {$\text{Id} \otimes \text{Id} \otimes \Delta_{A}$} (C);
  \draw[->] (E) to node [swap]{$\langle - , - \rangle$} (D);
  \draw[->] (B) to node {$\langle - , - \rangle$} (D);
\end{tikzpicture} \quad
\begin{tikzpicture}[node distance=6cm, auto]
  \node (A) {$H$};
  \node (B) [right=1cm of A] {$H \hat{\otimes} A$};
  \node (C) [below=2cm of B] {$k$};
  \draw[->] (A) to node {$\eta_{A}$} (B);
  \draw[->] (A) to node [swap]{$\varepsilon_{H}$} (C);
  \draw[->] (B) to node {$\langle -,- \rangle$} (C);
\end{tikzpicture}

\begin{tikzpicture}[node distance=6cm, auto]
  \node (A) {$H \hat{\otimes} A \hat{\otimes} A$};
  \node (A') [below=0.75cm of A] {$H \hat{\otimes} A \hat{\otimes} A$};
  \node (B) [below=2cm of A] {$H \hat{\otimes} A$};
  \node (C) [right=3cm of A] {$H \hat{\otimes} H \hat{\otimes} A \hat{\otimes} A$};
  \node (D) [below=2.05cm of C] {$k$};
  \node (E) [below=0.75cm of C] {$H \hat{\otimes} A$};
  \draw[->] (A') to node [swap]{$\text{Id} \otimes \mu_{A}$} (B);
  \draw[->] (A) to node [swap]{$\text{Id} \otimes \tau$} (A');
  \draw[->] (C) to node [swap]{$\text{Id} \otimes \langle - , - \rangle \otimes \text{Id}$} (E);
  \draw[->] (A) to node {$\Delta_{H} \otimes \text{Id} \otimes \text{Id}$} (C);
  \draw[->] (E) to node [swap]{$\langle - , - \rangle$} (D);
  \draw[->] (B) to node {$\langle - , - \rangle$} (D);
\end{tikzpicture} \quad
\begin{tikzpicture}[node distance=6cm, auto]
  \node (A) {$A$};
  \node (B) [right=1cm of A] {$H \hat{\otimes} A$};
  \node (C) [below=2cm of B] {$k$};
  \draw[->] (A) to node {$\eta_{H}$} (B);
  \draw[->] (A) to node [swap]{$\varepsilon_{A}$} (C);
  \draw[->] (B) to node {$\langle -,- \rangle$} (C);
\end{tikzpicture}

\begin{tikzpicture}[node distance=6cm, auto]
  \node (A) {$H \hat{\otimes} A$};
  \node (B) [right=2cm of A] {$H \hat{\otimes} A$};
  \node (C) [below=0.6cm of A] {$H \hat{\otimes} A$};
  \node (D) [below=0.65cm of B] {$k$};
  \draw[->] (A) to node {$\text{Id} \otimes S_{A}$} (B);
  \draw[->] (B) to node {$\langle -,- \rangle$} (D);
  \draw[->] (A) to node [swap]{$S_{H} \otimes \text{Id}$} (C);
  \draw[->] (C) to node [swap]{$\langle -,- \rangle$} (D);
\end{tikzpicture}
\end{center}
If $H$ and $A$ have respective right and left actions $\zeta_{H}$ and $\zeta_{A}$ of an algebra $\mathcal{A}$ then we say that $\langle-,- \rangle$ is $\mathcal{A}$-equivariant if the diagram
\begin{center}
\begin{tikzpicture}[node distance=6cm, auto]
  \node (A) {$H \hat{\otimes} \mathcal{A} \hat{\otimes} A$};
  \node (B) [right=2cm of A] {$H \hat{\otimes} A$};
  \node (C) [below=1cm of A] {$H \hat{\otimes} A$};
  \node (D) [below=1cm of B] {$k$};
  \draw[->] (A) to node {$\zeta_{H}\otimes \text{Id}$} (B);
  \draw[->] (B) to node {$\langle -,- \rangle$} (D);
  \draw[->] (A) to node [swap]{$\text{Id} \otimes \zeta_{A}$} (C);
  \draw[->] (C) to node [swap]{$\langle -,- \rangle$} (D);
\end{tikzpicture}
\end{center}
commutes. Likewise, if $H$ and $A$ have respective left and right coactions $\zeta_{H}$ and $\zeta_{A}$ of a coalgebra $C$ then we say that $\langle-,- \rangle$ is $C$-equivariant if the diagram
\begin{center}
\begin{tikzpicture}[node distance=6cm, auto]
  \node (A) {$H \hat{\otimes} A$};
  \node (B) [right=2cm of A] {$C \hat{\otimes} H \hat{\otimes} A$};
  \node (C) [below=1cm of A] {$H \hat{\otimes} A \hat{\otimes} C$};
  \node (D) [below=1cm of B] {$C$};
  \draw[->] (A) to node {$\zeta_{H}\otimes \text{Id}$} (B);
  \draw[->] (B) to node {$\text{Id} \otimes \langle -,- \rangle$} (D);
  \draw[->] (A) to node [swap]{$\text{Id} \otimes \zeta_{A}$} (C);
  \draw[->] (C) to node [swap]{$\langle -,- \rangle \otimes \text{Id}$} (D);
\end{tikzpicture}
\end{center}
commutes.
\end{defn}

\begin{defn}
Let $H$ and $A$ be IndBanach Hopf algebras with a duality pairing $\langle -,- \rangle:H \hat{\otimes} A \rightarrow k$. Suppose we have a pair of convolution invertible maps $\mathscr{R}$ and $\overline{\mathscr{R}}$ in $\text{Hom}(A,H)$ that are both algebra homomorphisms and anti-coalgebra homomorphisms, such that the following diagrams commute:
\begin{center}

\begin{tikzpicture}[node distance=6cm, auto]
  \node (A) {$A \hat{\otimes} A$};
  \node (B) [below=1.25cm of A] {$H \hat{\otimes} A$};
  \node (C) [right=3cm of A] {$A \hat{\otimes} H$};
  \node (D) [below=1.3cm of C] {$k$};
  \node (E) [below=0.35cm of C] {$H \hat{\otimes} A$};
  \draw[->] (A) to node [swap]{$\overline{\mathscr{R}} \otimes \text{Id}$} (B);
  \draw[->] (C) to node [swap]{$\tau$} (E);
  \draw[->] (A) to node {$\text{Id} \otimes \mathscr{R}^{-1}$} (C);
  \draw[->] (E) to node {} (D);
  \draw[->] (B) to node {$\langle - , - \rangle$} (D);
\end{tikzpicture}
\\
\begin{tikzpicture}[node distance=6cm, auto]
  \node (A) {$H \hat{\otimes} A$};
  \node (B) [below=3.85cm of A] {$H \hat{\otimes} H \hat{\otimes} A$};
  \node (C) [right=5cm of A] {$H \hat{\otimes} H \hat{\otimes} A \hat{\otimes} A \hat{\otimes} A$};
  \node (D) [below=3.9cm of C] {$H$};
  \node (E) [below=0.5cm of C] {$H \hat{\otimes} A \hat{\otimes} H \hat{\otimes} A \hat{\otimes} A$};
  \node (F) [below=0.5cm of E] {$A \hat{\otimes} H \hat{\otimes} A \hat{\otimes} H \hat{\otimes} A$};
  \node (G) [below=0.5cm of F] {$H \hat{\otimes} H \hat{\otimes} H$};
  \draw[->] (A) to node [swap]{$\Delta_{H} \otimes \text{Id}$} (B);
  \draw[->] (C) to node [swap]{$\text{Id} \otimes \tau \otimes \text{Id}\otimes \text{Id}$} (E);
  \draw[->] (A) to node {$\Delta_{H} \otimes ((\text{Id} \otimes \Delta_{A})\circ\Delta_{A})$} (C);
  \draw[->] (E) to node [swap]{$\tau \otimes \tau \otimes \text{Id}$} (F);
  \draw[->] (F) to node [swap]{$\mathscr{R} \otimes \langle -,- \rangle \otimes \text{Id} \otimes \mathscr{R}^{-1}$} (G);
  \draw[->] (G) to node [swap]{$\mu_{H} \circ(\mu_{H} \otimes \text{Id})$} (D);
  \draw[->] (B) to node [swap]{$\text{Id} \otimes \langle -,- \rangle$} (D);
\end{tikzpicture}
\\
\begin{tikzpicture}[node distance=6cm, auto]
  \node (A) {$H \hat{\otimes} A$};
  \node (B) [below=2.95cm of A] {$H \hat{\otimes} H \hat{\otimes} A$};
  \node (C) [right=5cm of A] {$H \hat{\otimes} H \hat{\otimes} A \hat{\otimes} A \hat{\otimes} A$};
  \node (D) [below=3cm of C] {$H$};
  \node (E) [below=1.15cm of A] {$H \hat{\otimes} H \hat{\otimes} A$};
  \node (F) [below=0.5cm of C] {$A \hat{\otimes} H \hat{\otimes} H \hat{\otimes} A \hat{\otimes} A$};
  \node (G) [below=0.6cm of F] {$H \hat{\otimes} H \hat{\otimes} H$};
  \draw[->] (A) to node [swap]{$\Delta_{H} \otimes \text{Id}$} (E);
  \draw[->] (E) to node [swap]{$\tau \otimes \text{Id}$} (B);
  \draw[->] (A) to node {$\Delta_{H} \otimes ((\text{Id} \otimes \Delta_{A})\circ\Delta_{A})$} (C);
  \draw[->] (C) to node [swap]{$((\tau \otimes \text{Id}) \circ (\text{Id} \otimes \tau)) \otimes \text{Id} \otimes \text{Id}$} (F);
  \draw[->] (F) to node [swap]{$\overline{\mathscr{R}} \otimes \text{Id} \otimes \langle -,- \rangle \otimes \overline{\mathscr{R}}^{-1}$} (G);
  \draw[->] (G) to node [swap]{$\mu_{H} \circ(\mu_{H} \otimes \text{Id})$} (D);
  \draw[->] (B) to node [swap]{$\text{Id} \otimes \langle -,- \rangle$} (D);
\end{tikzpicture}
\end{center}
In this case we call $H$ and $A$ a \emph{weakly quasi-triangular dual pair}.
\end{defn}

\begin{defn}
For an IndBanach Hopf algebra $H$, let us denote by $H\text{-Mod}$ and $\text{Mod-}H$ the categories of left and right $H$-modules respectively. Given a dual pair of IndBanach Hopf algebras, $H$ and $A$, let us denote by $A\text{-Comod}$ and $\text{Comod-}A$ the categories of left and right $A$-comodules respectively. There are faithful functors $A\text{-Comod} \rightarrow \text{Mod-}H$ and $\text{Comod-}A \rightarrow H\text{-Mod}$ induced by the duality pairing.
\end{defn}

\begin{prop}
\label{WeaklyQTPairComodulesBraided}
For a weakly quasi-triangular dual pair of IndBanach Hopf algebras, $H$ and $A$, $A\text{-Comod}$ and $\text{Comod-}A$ are braided monoidal, with braidings given by the compositions
$$M \hat{\otimes} M' \longrightarrow A \hat{\otimes} M \hat{\otimes} A \hat{\otimes} M' \xrightarrow{\mathscr{R} \otimes \text{Id} \otimes \text{Id} \otimes \text{Id}} H \hat{\otimes} M \hat{\otimes} A \hat{\otimes} M' \longrightarrow M \hat{\otimes} M' \xrightarrow{\tau} M' \hat{\otimes} M,$$
where the third map is $(\langle -,- \rangle \otimes \text{Id}) \circ (\text{Id} \otimes \tau \otimes \text{Id})$, and
$$N \hat{\otimes} N' \xrightarrow{\tau} N' \otimes N \longrightarrow  N' \hat{\otimes} A \hat{\otimes} N \hat{\otimes} A \xrightarrow{\text{Id} \otimes \mathscr{R} \otimes \text{Id} \otimes \text{Id}} N' \hat{\otimes} H \hat{\otimes} N \hat{\otimes} A \longrightarrow N' \hat{\otimes} N,$$
where the last map is  $(\text{Id} \otimes \langle -,- \rangle) \circ (\text{Id} \otimes \tau \otimes \text{Id})$, for left $A$-comodules $M$ and $M'$ and right $A$-comodules $N$ and $N'$.
\end{prop}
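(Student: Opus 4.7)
The plan is to adapt Majid's proof from \cite{DBoBG} for vector space Hopf algebras, exploiting the fact that $\text{IndBan}_{k}$ is a closed symmetric monoidal category so every categorical diagram chase transfers directly. First, I would set up the monoidal structures. For left $A$-comodules $M, M'$ with coactions $\Delta_{M}, \Delta_{M'}$, the tensor product $M \hat{\otimes} M'$ receives a coaction via
$$M \hat{\otimes} M' \xrightarrow{\Delta_M \otimes \Delta_{M'}} A \hat{\otimes} M \hat{\otimes} A \hat{\otimes} M' \xrightarrow{\text{Id} \otimes \tau \otimes \text{Id}} A \hat{\otimes} A \hat{\otimes} M \hat{\otimes} M' \xrightarrow{\mu_{A} \otimes \text{Id}} A \hat{\otimes} M \hat{\otimes} M',$$
with unit object $k$ coacting by $\eta_{A}$, and analogously for $\text{Comod-}A$. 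Naturality of the proposed braidings is immediate since they are built out of structure maps and the natural transformation $\tau$.

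Second, I would check that the stated composite is a morphism of $A$-comodules from $M \hat{\otimes} M'$ to $M' \hat{\otimes} M$. This reduces to tracing the coactions through the composite and invoking the fact that $\mathscr{R}$ is simultaneously an algebra and anti-coalgebra homomorphism, together with the two intertwining diagrams relating $\mu_{H}, \mu_{A}$ with the duality pairing $\langle -,- \rangle$. Then I would verify the two hexagon axioms. This is the core of the proof: the two large diagrams appearing in the definition of a weakly quasi-triangular dual pair are designed precisely to encode the identities
$$c_{M, M' \hat{\otimes} M''} = (\text{Id} \otimes c_{M, M''}) \circ (c_{M, M'} \otimes \text{Id}), \quad c_{M \hat{\otimes} M', M''} = (c_{M, M''} \otimes \text{Id}) \circ (\text{Id} \otimes c_{M', M''}),$$
the diagram involving $\mathscr{R}$ yielding the first and the diagram involving $\overline{\mathscr{R}}$ yielding the second. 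Each hexagon is obtained by unpacking the coactions on the triple tensor product, swapping $A$-factors past $M$-factors with $\tau$, and then applying the corresponding weakly quasi-triangular diagram.

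Finally, I would construct the inverse braiding. The first constraint diagram in the definition of a weakly quasi-triangular dual pair identifies $\langle \overline{\mathscr{R}}(-), - \rangle$ with $\langle \mathscr{R}^{-1}(-), - \rangle \circ \tau$ in a precise sense; combined with the anti-coalgebra property of $\mathscr{R}$ and convolution invertibility of $\mathscr{R}, \overline{\mathscr{R}}$, this produces a two-sided inverse for $c_{M,M'}$ built from $\mathscr{R}^{-1}$ (or equivalently $\overline{\mathscr{R}}$ composed with $\tau$). The analogous argument handles the braiding on $\text{Comod-}A$, with left and right coactions interchanged and the roles of $\mathscr{R}$ and $\overline{\mathscr{R}}$ symmetric. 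The main obstacle is bookkeeping: the diagrams are large and the composites have many factors, so the calculation is lengthy, but no genuinely new ingredient beyond the defining diagrams of the weakly quasi-triangular structure is required.
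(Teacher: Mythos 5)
Your overall strategy---a direct diagram-chase verification carried out in $\text{IndBan}_{k}$---is viable and genuinely different from the paper's proof, which is essentially a two-line reduction: the paper notes (following the preliminary remarks of \cite{DBoBG}) that the functional $A \hat{\otimes} A \xrightarrow{\mathscr{R} \otimes \text{Id}} H \hat{\otimes} A \xrightarrow{\langle-,-\rangle} k$ makes $A$ dual quasi-triangular, and then invokes Theorem 1.16 of \cite{AaHAiBC}, which says that the comodule categories of a dual quasi-triangular Hopf algebra are braided monoidal; since that theorem is purely diagrammatic it transfers verbatim to IndBanach spaces. Your plan amounts to re-proving that theorem by hand, which costs length but buys self-containedness; the cleanest way to organise it is in fact to first verify the dual quasi-triangularity of $\langle\mathscr{R}(-),-\rangle$ and then run the standard comodule argument, which is exactly the paper's route made explicit.

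There is, however, a concrete misallocation in your outline that you would run into when executing it. The two large diagrams in the definition of a weakly quasi-triangular dual pair do \emph{not} encode the hexagon identities. The hexagons for the stated braidings follow from $\mathscr{R}$ being an algebra homomorphism and an anti-coalgebra homomorphism combined with the compatibility of $\langle-,-\rangle$ with $(\mu_{H},\Delta_{A})$ and $(\Delta_{H},\mu_{A})$: these yield precisely the two multiplicativity axioms of a dual quasi-triangular functional on $A$. (Note also that both braidings in the statement, for $A\text{-Comod}$ and for $\text{Comod-}A$, are built from $\mathscr{R}$ alone, so neither hexagon can be attributed to the $\overline{\mathscr{R}}$-diagram.) What the first large diagram actually supplies is the step you assign to the pairing axioms alone, namely that the braiding is a morphism of $A$-comodules: that check requires the quasi-commutativity axiom for $\langle\mathscr{R}(-),-\rangle$ (the analogue, for this functional, of the relation $\tau\circ\Delta=\mathscr{R}(\Delta)\mathscr{R}^{-1}$ in the definition of an R-matrix), and this is obtained from the first large diagram by convolution-cancelling the $\mathscr{R}^{-1}$ leg and pairing the resulting identity in $H$ against the second comodule leg; without it the comodule-morphism verification does not close. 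So all the needed ingredients appear in your proposal, but wired to the wrong steps: reorganise as (i) algebra/anti-coalgebra property of $\mathscr{R}$ plus pairing axioms $\Rightarrow$ hexagons, (ii) the large conjugation diagram $\Rightarrow$ braiding is a comodule map, (iii) convolution invertibility of $\mathscr{R}$ (equivalently the small diagram identifying $\langle\overline{\mathscr{R}}(-),-\rangle$ with $\langle\mathscr{R}^{-1}(-),-\rangle\circ\tau$) $\Rightarrow$ invertibility of the braiding, and the argument goes through as you intend.
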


\begin{proof}
This follows from Theorem 1.16 of \cite{AaHAiBC}, using the remark from the preliminary section of \cite{DBoBG} that $A$ is dual quasi-triangular under the composition
$$A \hat{\otimes} A \xrightarrow{\mathscr{R} \otimes \text{Id}} H \hat{\otimes} A \xrightarrow{\langle-,-\rangle} k.$$
\end{proof}

\begin{prop}
For a weakly quasi-triangular dual pair of IndBanach Hopf algebra, $H$ and $A$, and an algebra $B$ in $\text{Comod-}A$ there is an algebra structure on $B \hat{\otimes} H$ with multiplication defined by
$$\begin{array}{rcl}
B \hat{\otimes} H \hat{\otimes} B \hat{\otimes} H & \overset{\text{Id} \otimes \Delta_{H} \otimes \text{Id} \otimes \text{Id}}{\xrightarrow{\hspace*{2cm}}} & B \hat{\otimes} H \hat{\otimes} H \hat{\otimes} B \hat{\otimes} H \\
& \overset{\text{Id} \otimes \text{Id} \otimes \tau \otimes \text{Id}}{\xrightarrow{\hspace*{2cm}}} & B \hat{\otimes} H \hat{\otimes} B \hat{\otimes} H \hat{\otimes} H \\
& \overset{\text{Id} \otimes \zeta_{B} \otimes \text{Id} \otimes \text{Id}}{\xrightarrow{\hspace*{2cm}}} & B \hat{\otimes} B \hat{\otimes} H \hat{\otimes} H \\
& \overset{\mu_{M} \otimes \mu_{H}}{\xrightarrow{\hspace*{2cm}}} & B \hat{\otimes} H. 
\end{array}$$
Here, $\zeta_{B}$ is the left action of $H$ on $B$. Furthermore, if $B$ is a braided IndBanach Hopf algebra then we can give $B \hat{\otimes} H$ a braided IndBanach Hopf algebra structure with comultiplication defined by
$$\begin{array}{rcccl}
B \hat{\otimes} H & \overset{\Delta_{B} \otimes \Delta_{H}}{\xrightarrow{\hspace*{1.3cm}}} & B \hat{\otimes} B \hat{\otimes} H \hat{\otimes} H
& \overset{\text{Id} \otimes \Psi_{B,H} \otimes \text{Id}}{\xrightarrow{\hspace*{2cm}}} & B \hat{\otimes} H \hat{\otimes} B \hat{\otimes} H
\end{array}$$
where $\Psi$ is the braiding on $\text{Comod-}A$. Likewise, for $C$ in $\text{Mod-H}$ there is a (Hopf) algebra structure on $H \hat{\otimes} C$ defined analogously.
\end{prop}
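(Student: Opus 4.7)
The plan is to recognize this construction as Majid's bosonization (smash product on the algebra side, biproduct on the Hopf side) carried out internally in the braided monoidal category $\text{Comod-}A$ supplied by Proposition \ref{WeaklyQTPairComodulesBraided}, then verify that every structure map is a bounded IndBanach morphism.

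First I would transfer the right $A$-coaction on $B$ to a left $H$-action $\rho \colon H \hat{\otimes} B \to B$ via the duality pairing, namely $\rho = (\text{Id}_{B} \otimes \langle -,- \rangle) \circ (\tau \otimes \text{Id}) \circ (\text{Id}_{H} \otimes \zeta_{B})$. Since $B$ is an algebra in $\text{Comod-}A$ (so $\mu_{B}$ is $A$-colinear) and $\langle -,- \rangle$ interchanges $\Delta_{H}$ with $\mu_{A}$, this makes $B$ into an $H$-module algebra. The given formula for the multiplication on $B \hat{\otimes} H$ then becomes the standard smash product $(b \otimes h)(b' \otimes h') = b \, \rho(h_{(1)} \otimes b') \otimes h_{(2)} h'$ in Sweedler notation. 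Associativity follows by a diagram chase using the module-algebra axiom, coassociativity of $\Delta_{H}$, and associativity of $\mu_{B}$ and $\mu_{H}$; the unit $\eta_{B} \otimes \eta_{H}$ works because $\rho \circ (\eta_{H} \otimes \text{Id}) = \text{Id}$ and $(\text{Id} \otimes \varepsilon_{H}) \circ \Delta_{H} = \text{Id}$.

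For the Hopf algebra part, given a braided Hopf algebra $B$ in $\text{Comod-}A$, view $H$ as the trivial $A$-comodule (via $\eta_{A}$) so that $\Psi_{B,H}$ from Proposition \ref{WeaklyQTPairComodulesBraided} is defined. The formula $(\Delta_{B} \otimes \Delta_{H}) \circ (\text{Id} \otimes \Psi_{B,H} \otimes \text{Id})$ is exactly Majid's biproduct coproduct, interpreted in our category. Coassociativity and the counit axiom follow from coassociativity of $\Delta_{B}$ and $\Delta_{H}$ combined with naturality and the hexagon identity for $\Psi$. The bialgebra compatibility between this $\Delta$ and the smash product $\mu$ is where the braiding is essential: it reduces, via a hexagon-shaped diagram chase, to the braided bialgebra axiom for $B$ together with the bialgebra axiom for $H$. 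The antipode is then constructed from $S_{B}$ and $S_{H}$ using $\Psi$ following Majid's classical formula, and convolution invertibility is inherited from that of $S_{B}$ and $S_{H}$.

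The main obstacle is not the algebra, which is largely a transcription of \cite{DBoBG} and \cite{AaHAiBC} into the internal language of $\text{Comod-}A$, but rather checking that every composite is a bounded morphism of IndBanach spaces. This is automatic here: all the building blocks $\mu_{B}$, $\mu_{H}$, $\Delta_{B}$, $\Delta_{H}$, $\zeta_{B}$, $\tau$ and $\Psi_{B,H}$ are already IndBanach morphisms, and $\hat{\otimes}$ preserves colimits so the formulas define morphisms in $\text{IndBan}_{k}$. The case of $H \hat{\otimes} C$ for $C \in \text{Mod-}H$ is entirely symmetric, using instead the braiding on $H\text{-Mod}$ transported from $A\text{-Comod}$.
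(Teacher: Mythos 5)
Your algebra-level plan (transfer the $A$-coaction to an $H$-action via the pairing, recognise the multiplication as a smash product, and note that boundedness is automatic because every ingredient is already an IndBanach morphism) matches what the paper implicitly relies on — its proof is simply a citation of Theorem 2.1 of \cite{CPbBGaB}, i.e.\ Majid's bosonisation — so that part is fine. The genuine gap is in your treatment of the coalgebra structure: you propose to "view $H$ as the trivial $A$-comodule (via $\eta_{A}$) so that $\Psi_{B,H}$ is defined." With the trivial coaction, the braiding of Proposition \ref{WeaklyQTPairComodulesBraided} degenerates to the plain flip $\tau$ (the $\mathscr{R}$-term is paired against $1_{A}$ and only contributes a counit), so your comultiplication is just the ordinary tensor-product coproduct on $B \hat{\otimes} H$. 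That is \emph{not} Majid's biproduct coproduct, and it is not compatible with the smash product: the bialgebra axiom you defer to a "hexagon-shaped diagram chase" fails whenever the $H$-action on $B$ is non-trivial. A concrete check is the intended application $\mathbf{f}_{r}^{\text{an}} \rtimes H$: there the coproduct must come out as $\Delta(F_{i}) = F_{i} \otimes 1 + t_{i}^{\pm 1} \otimes F_{i}$, whereas your reading would make $F_{i}$ primitive, which is inconsistent with the cross relation $K_{\lambda} F_{i} K_{\lambda}^{-1} = q^{-\lambda(\alpha_{i})} F_{i}$.

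What the statement (and Majid) actually use for $\Psi_{B,H}$ is the entwining map built from the weak quasi-triangular structure: writing the $A$-coaction on $B$ as $b \mapsto b^{(1)} \otimes b^{(0)}$, the map is $B \hat{\otimes} H \rightarrow H \hat{\otimes} B$, $b \otimes h \mapsto \mathscr{R}(b^{(1)})\, h \otimes b^{(0)}$ — i.e.\ the $H$-valued "coaction" $(\mathscr{R} \otimes \text{Id}) \circ \zeta$ on $B$ followed by multiplication into $H$, not the braiding of $B$ with a trivially coacted $H$. With that map in place of your $\Psi_{B,H}$, the smash coproduct is exactly the Radford--Majid biproduct coproduct, the bialgebra compatibility reduces (as you intend) to the braided bialgebra axioms for $B$ together with the coaction/action compatibility supplied by the weakly quasi-triangular pairing, and the antipode formula from $S_{B}$, $S_{H}$ and $\mathscr{R}^{\pm 1}$ goes through; the boundedness argument you give then applies verbatim since this entwining map is again a composite of IndBanach morphisms. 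As written, however, your construction of the Hopf structure would not compile into a proof of the proposition.
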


\begin{proof}
This follows from Theorem 2.1 of \cite{CPbBGaB}.
\end{proof}

\begin{defn}
We denote by $B \rtimes H$ the IndBanach (Hopf) algebra on $B \hat{\otimes} H$ as described above, and likewise we denote by $H \ltimes C$ the (Hopf) algebra on $H \hat{\otimes} C$. These are the \emph{bosonisations} of $B$ and $H$ or $H$ and $C$ respectively.
\end{defn}

\begin{lem}
\label{OverlineHopfAlgebras}
Let $H$ and $A$ be a weakly quasi-triangular dual pair of IndBanach Hopf algebras, with maps $\mathscr{R}_{H,A}$, $\overline{\mathscr{R}}_{H,A}$ giving the weakly quasi-triangular structure. Let $C$ be a braided IndBanach Hopf algebra in $\text{Comod-}A$ with invertible antipode $S_{C}$. Then there is a weakly quasi-triangular dual pair $\overline{H}$ and $\overline{A}$ with the same Hopf algebra structures as $H$ and $A$ but with weakly quasi-triangular structure given by $\mathscr{R}_{\overline{H},\overline{A}}=\overline{\mathscr{R}}^{-1}_{{H},{A}}$ and $\overline{\mathscr{R}}_{\overline{H},\overline{A}}=\mathscr{R}^{-1}_{{H},{A}}$. Furthermore, there is a braided IndBanach Hopf algebra $\overline{C}$ in $\text{Comod-}\overline{A}$ with the same algebra structure as $C$ but with the opposite comultiplication $\Delta_{\overline{C}}=\Psi^{-1}_{C,C} \circ \Delta_{C}$, where $\Psi$ is the braiding on $\text{Comod-}A$, and antipode $S_{\overline{C}}=S_{C}^{-1}$.
\end{lem}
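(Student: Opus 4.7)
The plan is to verify the claimed structures in three steps: first, that the pair $(\overline{H},\overline{A})$ equipped with the interchanged and inverted $\mathscr{R}$-morphisms remains a weakly quasi-triangular dual pair; second, that the resulting braiding $\Psi'$ on $\text{Comod-}\overline{A}$ agrees with the inverse $\Psi^{-1}$ of the original braiding $\Psi$ on $\text{Comod-}A$; and third, that $\overline{C}$ with opposite comultiplication $\Psi^{-1}_{C,C}\circ\Delta_{C}$ and antipode $S_{C}^{-1}$ is a braided IndBanach Hopf algebra in $(\text{Comod-}\overline{A},\Psi')$.

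For the first step I would check each of the three diagrams in the definition of a weakly quasi-triangular dual pair for the new pair $(\mathscr{R}_{\overline{H},\overline{A}},\overline{\mathscr{R}}_{\overline{H},\overline{A}})=(\overline{\mathscr{R}}^{-1}_{H,A},\mathscr{R}^{-1}_{H,A})$. The first diagram unwinds to the equality $\langle\overline{\mathscr{R}}(a),a'\rangle=\langle\mathscr{R}^{-1}(a'),a\rangle$, so after the swap the required equation is $\langle\mathscr{R}^{-1}(a),a'\rangle=\langle\overline{\mathscr{R}}(a'),a\rangle$, which is the original equation read from the other side. The second and third diagrams differ only in the roles of $\mathscr{R}$ and $\overline{\mathscr{R}}$ together with certain permutations of tensor factors, so the swap exchanges one with the other. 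To finish, one observes that $\mathscr{R}^{-1}$ and $\overline{\mathscr{R}}^{-1}$ are again convolution-invertible algebra homomorphisms and anti-coalgebra homomorphisms.

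For the second step I would apply Proposition \ref{WeaklyQTPairComodulesBraided} to the new pair: the formula for $\Psi'$ on $\text{Comod-}\overline{A}$ has $\mathscr{R}$ replaced by $\overline{\mathscr{R}}^{-1}$. Using the first weakly quasi-triangular axiom to rewrite the pairing $\langle\overline{\mathscr{R}}^{-1}(-),-\rangle$ in terms of $\langle\mathscr{R}(-),-\rangle$ composed with a flip, together with naturality of $\tau$ and the coaction axioms, one can reorganise the composition defining $\Psi'$ into the composition defining $\Psi^{-1}$. I expect this identification to be the principal obstacle of the proof, since it requires a careful diagrammatic chase through several tensor factors and two applications of the duality pairing.

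Once $\Psi'=\Psi^{-1}$ is established, the third step reduces to the general categorical fact that if $(C,\mu,\eta,\Delta,\varepsilon,S)$ is a braided Hopf algebra in $(\mathcal{C},\Psi)$ with invertible antipode, then $(C,\mu,\eta,\Psi^{-1}\circ\Delta,\varepsilon,S^{-1})$ is a braided Hopf algebra in $(\mathcal{C},\Psi^{-1})$. Coassociativity of the new comultiplication follows by applying the hexagon axiom for $\Psi$ to the original coassociativity of $\Delta$; compatibility of $\mu$ with $\Psi^{-1}\circ\Delta$ relative to the braiding $\Psi^{-1}$ follows from the original bialgebra compatibility after pre- and post-composing with the appropriate instances of $\Psi^{-1}$; and the antipode axiom for $S^{-1}$ is obtained from the original antipode axiom by applying $S$ to both sides and using that $S$ is a braided anti-homomorphism of bialgebras. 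Combining the three steps yields the desired structures on $\overline{H}$, $\overline{A}$, and $\overline{C}$.
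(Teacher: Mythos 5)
The paper does not verify this statement directly: its proof is a citation to Lemma 3.1 of \cite{DBoBG} (whose proof is in \cite{AaHAiBC}), together with Remark 3.9 of \cite{DBoBG} to handle the left/right and op/cop conventions, the point being that Majid's argument is categorical and transfers verbatim to IndBanach spaces. Your plan is instead a from-scratch verification, and its three-step shape (new weakly quasi-triangular pair, identification of the induced braiding with $\Psi^{-1}$, braided-opposite Hopf algebra) does match the content of the cited results. However, as written it has a genuine gap at its first step. You assert that $\mathscr{R}^{-1}$ and $\overline{\mathscr{R}}^{-1}$ are "again convolution-invertible algebra homomorphisms and anti-coalgebra homomorphisms". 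This is not a formal consequence of the definitions: since $\mathscr{R}$ is an algebra homomorphism and an anti-coalgebra homomorphism, its convolution inverse is $\mathscr{R}\circ S_{A}=S_{H}^{-1}\circ\mathscr{R}$, which is an \emph{anti}-algebra homomorphism and a coalgebra homomorphism. Concretely, for the weak structure induced by a genuine R-matrix $\mathscr{R}'$ as in the unlabelled lemma following the definition of R-matrices, one computes from $(\text{Id}\otimes\Delta)\circ\mathscr{R}'=\mathscr{R}'_{13}\ast\mathscr{R}'_{12}$ that $\overline{\mathscr{R}}^{-1}(ab)$ and $\overline{\mathscr{R}}^{-1}(a)\overline{\mathscr{R}}^{-1}(b)$ differ by the order of multiplication in $H$, so your claim fails whenever $H$ is non-commutative (it is invisible in the paper's application, Lemma \ref{HAPairingWeakQuasiTriangular}, only because there $H$ and $H'$ are commutative). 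This is exactly the point where the convention bookkeeping of Remark 3.9 of \cite{DBoBG} does real work, and it cannot be "observed"; without addressing it, the verification of the axioms for $(\overline{\mathscr{R}}^{-1},\mathscr{R}^{-1})$ does not go through as sketched.

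Two further steps are asserted rather than proved. The claim that interchanging $\mathscr{R}$ and $\overline{\mathscr{R}}$ "exchanges the second and third diagrams" is not literally true: the two diagrams differ in the permutations of tensor factors and in which slot the pairing is applied, so relating them requires an actual chase using the pairing axioms (and the antipode compatibility), not a formal swap. Finally, your step 3 is precisely the braided-opposite construction that constitutes Lemma 3.1 of \cite{DBoBG}/\cite{AaHAiBC}; if you intend a self-contained proof you must verify the antipode axiom for $S_{C}^{-1}$ with respect to $\Psi^{-1}\circ\Delta_{C}$ using the \emph{braided} anti-multiplicativity of $S_{C}$ (which involves insertions of $\Psi$), rather than the unbraided identity you invoke. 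In short, the route is legitimate but the crux of the lemma is exactly the points you pass over; either supply these verifications or, as the paper does, quote Majid's lemma and remark and note that the argument is purely categorical.
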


\begin{proof}
This follows from Lemma 3.1 in \cite{DBoBG}, which was proven in \cite{AaHAiBC}, using Remark 3.9 of \cite{DBoBG}.
\end{proof}

\begin{prop}
\label{DoubleBosonisationVariation}
Let $H$ and $A$ be a weakly quasi-triangular dual pair of IndBanach Hopf algebras, let $B$ be a braided IndBanach Hopf algebra in $A\text{-Comod}$ and let $C$ be a braided IndBanach Hopf algebra in $\text{Comod-}A$ with respective induced right and left actions $\zeta_{B}$ and $\zeta_{C}$ of $H$. Suppose further that we have a $H$-equivariant duality pairing between $B$ and $C$, $\langle - , - \rangle: B \hat{\otimes} C \rightarrow k$. Then there is an IndBanach Hopf algebra structure on $C \hat{\otimes} H \hat{\otimes} B$ such that the maps $\overline{C} \rtimes \overline{H} \rightarrow C \hat{\otimes} H \hat{\otimes} B$ and $H \ltimes B \rightarrow C \hat{\otimes} H \hat{\otimes} B$ are morphisms of Hopf algebras, and the multiplication restricts to the composition
$$\begin{array}{rcl}
B \hat{\otimes} \overline{C} &\xrightarrow{\hspace*{2.5cm}}& B \hat{\otimes} B \hat{\otimes} B \hat{\otimes} \overline{C} \hat{\otimes} \overline{C} \hat{\otimes} \overline{C}\\
&\overset{\tau_{(1 \, \, 2 \, \, 4)(3 \, \, 6 \, \, 5)}}{\xrightarrow{\hspace*{2.5cm}}}& \overline{C} \hat{\otimes} B \hat{\otimes} \overline{C} \hat{\otimes} B \hat{\otimes} \overline{C} \hat{\otimes} B\\
&\overset{\text{Id} \otimes \text{Id} \otimes \text{Id} \otimes \text{Id} \otimes S_{\overline{C}} \otimes \text{Id}}{\xrightarrow{\hspace*{2.5cm}}}& \overline{C} \hat{\otimes} B \hat{\otimes} \overline{C} \hat{\otimes} B \hat{\otimes} \overline{C} \hat{\otimes} B\\
&\overset{\text{Id} \otimes R_{B,\overline{C}} \otimes R^{-1}_{B,\overline{C}} \otimes \text{Id}}{\xrightarrow{\hspace*{2.5cm}}}& \overline{C} \hat{\otimes} B \hat{\otimes} \overline{C} \hat{\otimes} B \hat{\otimes} \overline{C} \hat{\otimes} B\\
&\overset{\langle -,- \rangle \otimes \text{Id} \otimes \text{Id} \otimes \langle -,- \rangle}{\xrightarrow{\hspace*{2.5cm}}}& \overline{C} \hat{\otimes} B \longrightarrow C \hat{\otimes} H \hat{\otimes} B\\
\end{array}$$
between $B \hookrightarrow C \hat{\otimes} H \hat{\otimes} B$ and $C \hookrightarrow C \hat{\otimes} H \hat{\otimes} B$. Here, the first map is induced by the coproducts $\Delta_{B}$ and $\Delta_{\overline{C}}$, $\tau_{(1 \, \, 2 \, \, 4)(3 \, \, 6 \, \, 5)}$ is a reordering given by the permutation $(1 \, \, 2 \, \, 4)(3 \, \, 6 \, \, 5) \in S_{6}$, and $R_{B,\overline{C}}$ and $R_{B,\overline{C}}^{-1}$ are the respective compositions
$$B \hat{\otimes} \overline{C} \rightarrow A \hat{\otimes} B \hat{\otimes} \overline{C} \hat{\otimes} A \overset{\mathscr{R} \otimes \text{Id} \otimes \text{Id} \otimes \overline{\mathscr{R}}}{\xrightarrow{\hspace*{2.5cm}}} H \hat{\otimes} B \hat{\otimes} \overline{C} \hat{\otimes} H \rightarrow B \hat{\otimes} \overline{C}$$
and
$$B \hat{\otimes} \overline{C} \rightarrow A \hat{\otimes} B \hat{\otimes} \overline{C} \hat{\otimes} A \overset{\mathscr{R}^{-1} \otimes \text{Id} \otimes \text{Id} \otimes \overline{\mathscr{R}}^{-1}}{\xrightarrow{\hspace*{2.5cm}}} H \hat{\otimes} B \hat{\otimes} \overline{C} \hat{\otimes} H \rightarrow B \hat{\otimes} \overline{C}$$
where the first maps are the coactions of $A$ and the last maps are the induced actions of $H$.
\end{prop}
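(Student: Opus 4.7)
The plan is to follow Majid's original construction from Theorem 3.2 of \cite{DBoBG}, and verify that each step transfers to the closed symmetric monoidal category $\text{IndBan}_{k}$. Majid's argument is essentially diagrammatic, so most of the work is checking that the relevant morphisms are bounded and that the required universal constructions exist in $\text{IndBan}_{k}$, both of which follow from the structure established in Section 1. The already-constructed IndBanach Hopf algebras $H \ltimes B$ and $\overline{C} \rtimes \overline{H}$ (built from the previous proposition applied to $B \in A\text{-Comod}$ and to $\overline{C} \in \text{Comod-}\overline{A}$ via Lemma \ref{OverlineHopfAlgebras}) share the common Hopf subalgebra $H$, and they will serve as the two halves to be glued.

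First, I would define the multiplication on $C \hat{\otimes} H \hat{\otimes} B$. The bosonisation multiplications dictate how to multiply elements within $H \ltimes B$ and within $\overline{C} \rtimes \overline{H}$; what remains is to specify the cross relation $B \hat{\otimes} \overline{C} \to \overline{C} \hat{\otimes} B \hookrightarrow C \hat{\otimes} H \hat{\otimes} B$, which is prescribed in the statement. Extending $B$-left-of-$\overline{C}$ products by this formula and then using the two bosonisation multiplications gives a well-defined map $\mu:(C \hat{\otimes} H \hat{\otimes} B)^{\hat{\otimes} 2} \to C \hat{\otimes} H \hat{\otimes} B$. Unitality with respect to $\eta_{C} \otimes \eta_{H} \otimes \eta_{B}$ is immediate from the unitality of the bosonisations and the pairing axioms for the unit.

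Second, I would define the comultiplication by gluing those of $\overline{C} \rtimes \overline{H}$ and $H \ltimes B$ along $H$, giving the composition
\[
C \hat{\otimes} H \hat{\otimes} B \xrightarrow{\Delta_{C} \otimes \Delta_{H} \otimes \Delta_{B}} (C \hat{\otimes} H \hat{\otimes} B)^{\hat{\otimes} 2} \; \text{then reorder with bosonisation braidings},
\]
and extract the antipode from those of the two bosonisations in the standard way (using convolution invertibility of the identity on each factor together with the cross-relations to check the antipode on mixed elements). The counit is $\varepsilon_{C} \otimes \varepsilon_{H} \otimes \varepsilon_{B}$. By construction the canonical inclusions $\overline{C} \rtimes \overline{H} \hookrightarrow C \hat{\otimes} H \hat{\otimes} B$ and $H \ltimes B \hookrightarrow C \hat{\otimes} H \hat{\otimes} B$ are Hopf algebra maps, and the prescribed restriction of $\mu$ to $B \hat{\otimes} \overline{C}$ is built into the definition.

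The main obstacle is associativity of $\mu$, which requires the cross relation to be compatible with the algebra structures on both $B$ and $\overline{C}$. This compatibility amounts to a hexagon-type identity for the map $R_{B,\overline{C}}$, built from $\mathscr{R}$, $\overline{\mathscr{R}}$, the pairing $\langle -,- \rangle$, the coactions of $A$ on $B,C$, and the actions of $H$ on $B,C$. Exactly this identity is forced by the axioms of a weakly quasi-triangular dual pair together with $H$-equivariance of $\langle -,- \rangle$, as in Majid's verification, and the diagrammatic argument transfers verbatim to $\text{IndBan}_{k}$ once one notes that each constituent morphism is bounded (being built from $\mu_{B}, \mu_{C}, \mu_{H}, \Delta_{B}, \Delta_{C}, \Delta_{H}$, the pairing, $\mathscr{R}^{\pm 1}, \overline{\mathscr{R}}^{\pm 1}$, and the braiding $\tau$), and that all tensor products and colimits involved exist in $\text{IndBan}_{k}$. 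Coassociativity, the bialgebra axiom, and the antipode axioms are then checked in the same diagrammatic fashion, reducing in each case to identities already known for $H \ltimes B$, for $\overline{C} \rtimes \overline{H}$, and for the pairing.
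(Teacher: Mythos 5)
Your proposal is correct and follows essentially the same route as the paper, which simply invokes Theorem 3.2 (together with Remark 3.9) of Majid's double-bosonisation paper and notes that the diagrammatic construction transfers to $\text{IndBan}_{k}$. Your additional detail on how the multiplication, comultiplication and antipode are assembled from the two bosonisations and the cross relation is a legitimate expansion of the same argument rather than a different approach.
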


\begin{proof}
This follows from Theorem 3.2, along with Remark 3.9, of \cite{DBoBG}.
\end{proof}

\begin{defn}
We will denote by $U(C,H,B)$ the Banach Hopf algebra $C \hat{\otimes} H \hat{\otimes} B$ as described in Proposition \ref{DoubleBosonisationVariation}, the \emph{double bosonisation} of $B$, $H$ and $C$ over $A$.
\end{defn}

\begin{defn}
Let $H$ be an IndBanach Hopf algebra. An \emph{R-matrix} for $H$ is a convolution invertible generalised element of $H \hat{\otimes} H$, $\mathscr{R}:k \rightarrow H \hat{\otimes} H$, such that $(\Delta \otimes \text{Id}) \circ \mathscr{R} = \mathscr{R}_{13} \ast \mathscr{R}_{23}$, $(\text{Id} \otimes \Delta) \circ \mathscr{R} = \mathscr{R}_{13} \ast \mathscr{R}_{12}$, and $\tau \circ \Delta = \mathscr{R}(\Delta)\mathscr{R}^{-1}$. Here we use the notation $\mathscr{R}_{12}$ for the composition
$$k \overset{\mathscr{R}}{\longrightarrow} H \hat{\otimes} H \cong H \hat{\otimes} H \hat{\otimes} k \overset{\text{Id} \otimes \text{Id} \otimes \eta_{H}}{\xrightarrow{\hspace*{1.2cm}}} H \hat{\otimes} H \hat{\otimes} H$$
and likewise for $\mathscr{R}_{13}$ and $\mathscr{R}_{23}$, and $\mathscr{R}(\Delta)\mathscr{R}^{-1}$ for the composition
$$H \cong k \hat{\otimes} H \hat{\otimes} k \overset{\mathscr{R} \otimes \Delta_{H} \otimes \mathscr{R}^{-1}}{\xrightarrow{\hspace*{1.4cm}}} (H \hat{\otimes} H) \hat{\otimes} (H \hat{\otimes} H) \hat{\otimes} (H \hat{\otimes} H) \rightarrow H \hat{\otimes} H$$
where the last map is the multiplication on $H \hat{\otimes} H$. A \emph{quasi-triangular IndBanach Hopf algebra} is an IndBanach Hopf algebra $H$ equipped with an R-matrix $\mathscr{R}$.
\end{defn}

\begin{lem}
Let $H$ and $A$ be Hopf algebras with a dual pairing $\langle-,-\rangle:H \hat{\otimes} A \rightarrow k$. Suppose further that $H$ is quasi-triangular, with R-matrix $\mathscr{R}':k \rightarrow H \hat{\otimes} H$. Then the maps
$$\mathscr{R}:A \cong k \hat{\otimes} A \xrightarrow{\mathscr{R}' \otimes \text{Id}} H \hat{\otimes} H \hat{\otimes} A \xrightarrow{\text{Id} \otimes \tau} H \hat{\otimes} A \hat{\otimes} H \xrightarrow{\langle -,- \rangle \otimes \text{Id}} k \hat{\otimes} H \cong H$$
and
$$\overline{\mathscr{R}}:A \cong k \hat{\otimes} A \xrightarrow{(\mathscr{R}')^{-1} \otimes \text{Id}} H \hat{\otimes} H \hat{\otimes} A \xrightarrow{\text{Id} \otimes \langle -,- \rangle} H \hat{\otimes} k \cong H$$
induce a weak quasi-triangular structure on the dual pair $H$ and $A$.
\end{lem}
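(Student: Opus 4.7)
The plan is to verify, in turn, each defining property of a weakly quasi-triangular dual pair for the maps $\mathscr{R},\overline{\mathscr{R}}:A \rightarrow H$. Writing $\mathscr{R}'=R'\otimes R''$ and $(\mathscr{R}')^{-1}=\bar{R}'\otimes\bar{R}''$ in Sweedler-like notation, the maps unfold to $\mathscr{R}(a)=\langle R',a\rangle R''$ and $\overline{\mathscr{R}}(a)=\langle \bar{R}'',a\rangle \bar{R}'$. Every verification below is a string-diagram manipulation that only invokes the closed symmetric monoidal structure of $\text{IndBan}_{k}$, so the classical Hopf-theoretic proof of this statement passes over verbatim.

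First I would verify the algebra-homomorphism property $\mathscr{R}(ab)=\mathscr{R}(a)\mathscr{R}(b)$ from $(\Delta\otimes\text{Id})\mathscr{R}'=\mathscr{R}'_{13}\mathscr{R}'_{23}$ together with $\langle xy,a\rangle=\langle x\otimes y,\Delta a\rangle$, then dually the anti-coalgebra property from $(\text{Id}\otimes\Delta)\mathscr{R}'=\mathscr{R}'_{13}\mathscr{R}'_{12}$ and $\langle x,ab\rangle=\langle \Delta x,a\otimes b\rangle$. The analogous statements for $\overline{\mathscr{R}}$ follow by applying the same argument to $(\mathscr{R}')^{-1}$, whose corresponding coproduct identities come from inverting $\mathscr{R}'$ inside $H \hat{\otimes} H \hat{\otimes} H$. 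Convolution invertibility is then automatic: $\mathscr{R}\circ S_{A}$ is a two-sided convolution inverse to $\mathscr{R}$ because $\mathscr{R}$ is a unital algebra map and $\sum a_{(1)}S_{A}(a_{(2)})=\varepsilon_{A}(a)1_{A}$, and similarly $\overline{\mathscr{R}}\circ S_{A}$ inverts $\overline{\mathscr{R}}$.

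The first compatibility diagram reduces to the pointwise identity $\langle \overline{\mathscr{R}}(a),b\rangle=\langle \mathscr{R}^{-1}(b),a\rangle$; after applying $(\mathscr{R}')^{-1}=(S_{H}\otimes\text{Id})\mathscr{R}'$ and the Hopf-pairing identity $\langle S_{H}x,b\rangle=\langle x,S_{A}b\rangle$, both sides collapse to $\sum\langle R'',a\rangle\langle R',S_{A}b\rangle$.

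The main obstacle is the second and third large diagrams. Unpacked, the second says
$$\sum\langle h_{(2)},a\rangle h_{(1)}=\sum\langle h_{(1)},a_{(2)}\rangle\mathscr{R}(a_{(1)})h_{(2)}\mathscr{R}^{-1}(a_{(3)})$$
for $h \in H$, $a \in A$, and the third is its $\overline{\mathscr{R}}$-dual. This is precisely where the third quasi-triangularity axiom $\tau\circ\Delta_{H}=\mathscr{R}'\,\Delta_{H}\,(\mathscr{R}')^{-1}$ enters: pairing the first factor of this identity with $a$ yields
$$\sum\langle h_{(2)},a\rangle h_{(1)}=\sum\langle R'h_{(1)}\bar{R}',a\rangle R''h_{(2)}\bar{R}'',$$
and then $\langle xyz,a\rangle=\langle x,a_{(1)}\rangle\langle y,a_{(2)}\rangle\langle z,a_{(3)}\rangle$ distributes the pairing across the threefold comultiplication of $a$. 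Recognising $\langle R',a_{(1)}\rangle R''=\mathscr{R}(a_{(1)})$ and $\langle \bar{R}',a_{(3)}\rangle \bar{R}''=\mathscr{R}^{-1}(a_{(3)})$ reassembles the right-hand side. The bookkeeping in this final step is where the real work of the proof lies; everything else is a routine unpacking of the Hopf-pairing and R-matrix axioms.
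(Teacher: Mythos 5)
Your verification is essentially correct, but it takes a different route from the paper: the paper's entire proof is a citation to the preliminary remarks of Majid's double-bosonisation paper \cite{DBoBG}, where the weak quasi-triangular structure induced on a dual pair by an R-matrix is recorded, whereas you reprove that classical statement directly. Your outline has the right ingredients in the right places — multiplicativity of $\mathscr{R}$ from $(\Delta\otimes\text{Id})\mathscr{R}'=\mathscr{R}'_{13}\mathscr{R}'_{23}$, the anti-coalgebra property from the other hexagon, convolution invertibility via $\mathscr{R}\circ S_{A}$ (valid precisely because $\mathscr{R}$ is a unital algebra map), the first compatibility square from $(S_{H}\otimes\text{Id})\mathscr{R}'=(\mathscr{R}')^{-1}$ together with $\langle S_{H}x,b\rangle=\langle x,S_{A}b\rangle$, and the two large diagrams from the intertwining axiom $\tau\circ\Delta_{H}=\mathscr{R}'\,\Delta_{H}\,(\mathscr{R}')^{-1}$ — and your opening remark that every step is a composition of morphisms in a closed symmetric monoidal category is exactly the point that justifies transporting the vector-space argument to $\text{IndBan}_{k}$, which is what the paper's citation implicitly delegates. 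Two cautions on the bookkeeping you defer: the paper's duality-pairing axioms contain flips $\tau$, so multiplication in $H$ pairs against the reversed comultiplication of $A$; identities like $\langle xyz,a\rangle=\langle x,a_{(1)}\rangle\langle y,a_{(2)}\rangle\langle z,a_{(3)}\rangle$ must have their legs reordered accordingly, which permutes (but does not obstruct) which of $\mathscr{R},\overline{\mathscr{R}}$ satisfies which diagram. Also, the auxiliary facts $(\varepsilon\otimes\text{Id})\mathscr{R}'=1$ and $(S_{H}\otimes\text{Id})\mathscr{R}'=(\mathscr{R}')^{-1}$ are not among the paper's stated R-matrix axioms and should be derived (by the usual convolution-algebra argument, which again is purely diagrammatic) before being used. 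With those details filled in, your argument is a complete, self-contained substitute for the citation; what the paper's approach buys is brevity, while yours makes explicit that nothing beyond the monoidal structure is needed in the analytic setting.
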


\begin{proof}
This follows from the remarks in the preliminary section of \cite{DBoBG}.
\end{proof}

\begin{prop}
\label{QuasiTriangularBraiding}
For a quasi-triangular IndBanach Hopf algebra $H$, $H\text{-Mod}$ and $\text{Mod-}H$ are braided monoidal. The braiding on $M$ and $M'$ in $H\text{-Mod}$ is given by the composition
$$M \hat{\otimes}M' \cong k \hat{\otimes} M \hat{\otimes}M' \overset{\mathscr{R} \otimes \text{Id} \otimes \text{Id}}{\xrightarrow{\hspace*{1.2cm}}} H \hat{\otimes} H\hat{\otimes} M \hat{\otimes}M' \overset{\tau \circ (\zeta_{M} \otimes \zeta_{M'})\circ(\text{Id} \otimes \tau \otimes \text{Id})}{\xrightarrow{\hspace*{3.2cm}}}M' \hat{\otimes}M,$$
whilst the braiding on $N$ and $N'$ in $\text{Mod-}H$ is given by the composition
$$N \hat{\otimes}N' \overset{\tau}{\rightarrow} N' \hat{\otimes}N \overset{ \text{Id} \otimes \text{Id} \otimes \mathscr{R}}{\xrightarrow{\hspace*{1.4cm}}}  N' \hat{\otimes}N\hat{\otimes}H \hat{\otimes} H \overset{(\zeta_{N'} \otimes \zeta_{N})\circ(\text{Id} \otimes \tau \otimes \text{Id})}{\xrightarrow{\hspace*{2.8cm}}}N' \hat{\otimes}N.$$
\end{prop}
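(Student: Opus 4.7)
The plan is to verify directly the three defining properties of a braiding---naturality, the hexagon axioms, and invertibility as an $H$-module isomorphism---by mimicking Drinfel'd's classical argument, which is purely formal in the axioms of an R-matrix and thus carries over verbatim to the IndBanach setting. I would begin by observing that $H\text{-Mod}$ and $\text{Mod-}H$ are monoidal via the comultiplication $\Delta_{H}$, with unit the trivial module $k$ equipped with $H$-action $\varepsilon_{H}$, and that naturality of the proposed $c_{M,M'}$ in both arguments is immediate: $\mathscr{R}$ is a fixed generalised element, and every other morphism in the composition depends only on the module structure maps, which respect morphisms of $H$-modules.

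To check that $c_{M,M'}$ is itself an $H$-module homomorphism, the key input is the quasi-cocommutativity axiom $\tau \circ \Delta = \mathscr{R}(\Delta)\mathscr{R}^{-1}$, which may be reformulated as the equality of the convolution products $(\tau \circ \Delta) \ast \mathscr{R} = \mathscr{R} \ast \Delta$ in $\text{Hom}(H, H \hat{\otimes} H)$. Applied via the actions on $M \hat{\otimes} M'$ and $M' \hat{\otimes} M$, this identity yields precisely the intertwining required for $c_{M,M'}$ to commute with the $H$-action (where the $H$-action on the target is defined via $\Delta_{H}$). The hexagon axioms from Definition \ref{Braiding} then fall out of the remaining two R-matrix identities $(\Delta \otimes \text{Id}) \circ \mathscr{R} = \mathscr{R}_{13} \ast \mathscr{R}_{23}$ and $(\text{Id} \otimes \Delta) \circ \mathscr{R} = \mathscr{R}_{13} \ast \mathscr{R}_{12}$: applied to triple tensor products of modules and combined with the associativity of the action, each identity translates directly into one of the two hexagons.

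For invertibility, one checks that the convolution inverse $\mathscr{R}^{-1}$ provides an R-matrix for the opposite braiding, so the analogous composition using $\mathscr{R}^{-1}$ defines a two-sided inverse for $c_{M,M'}$ viewed as a morphism $M \hat{\otimes} M' \to M' \hat{\otimes} M$; the verification is a short calculation using the defining relation $\mathscr{R} \ast \mathscr{R}^{-1} = \eta \circ \varepsilon$ and the $H$-module axioms. The case of $\text{Mod-}H$ is entirely symmetric. I do not expect a genuine obstacle: the IndBanach setting only requires interpreting Sweedler-style bookkeeping as genuine compositions in the closed symmetric monoidal category $\text{IndBan}_{k}$, which is the standing convention of the paper, and no analytic estimates are needed since $\mathscr{R}$ is given as a morphism $k \to H \hat{\otimes} H$ and its convolution inverse exists by hypothesis.
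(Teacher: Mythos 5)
Your argument is correct, but it takes a different route from the paper: the paper does not verify the axioms at all, it simply invokes Theorem 1.10 of \cite{AaHAiBC}, Majid's result that modules over a quasi-triangular Hopf algebra in a (suitably complete) monoidal category form a braided monoidal category, which applies verbatim to $\text{IndBan}_{k}$. Your proposal instead reproves that theorem from scratch in the IndBanach setting: naturality from the fact that $\mathscr{R}$ is a fixed generalised element $k \rightarrow H \hat{\otimes} H$, the intertwining property from quasi-cocommutativity read as a convolution identity, the two hexagons from $(\Delta \otimes \text{Id}) \circ \mathscr{R} = \mathscr{R}_{13} \ast \mathscr{R}_{23}$ and $(\text{Id} \otimes \Delta) \circ \mathscr{R} = \mathscr{R}_{13} \ast \mathscr{R}_{12}$, and invertibility from the convolution inverse $\mathscr{R}^{-1}$ (equivalently, from $\mathscr{R}_{21}^{-1}$ being an R-matrix for the opposite braiding). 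This is exactly Drinfel'd's classical computation, and your key observation---that every step is a composition of morphisms in the closed symmetric monoidal category $\text{IndBan}_{k}$, so no analytic estimates or convergence arguments are needed---is the reason the paper can get away with a one-line citation. What your approach buys is a self-contained and transparent proof that makes the formal nature of the statement explicit (and would also cover unit compatibility via the counit identities for $\mathscr{R}$, which you should at least mention for completeness); what the citation buys is brevity and the reassurance that the categorical generality has already been checked carefully in the literature.
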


\begin{proof}
This is Theorem 1.10 of \cite{AaHAiBC}.
\end{proof}

\section{Non-Archimedean Analytic quantum groups}
\label{NAQGSection}

We will assume {\bf (NA)} throughout this section. The {\bf (A)} case will be covered in the subsequent section.

\subsection{Constructing Non-Archimedean Nichols algebras}
\label{NANicholsAlegbraSection}

\begin{defn}
Let $V$ be an analytically $\mathbb{N}$-graded Banach space, $V(n):=V(t^{n})$, $V=\coprod_{n \in \mathbb{N}}^{\leq 1} V(n)$. For $W \subset V$ a subspace, not necessarily closed, let $W(n)=V(n) \cap W$. We say that $W$ is homogeneous if
$$\bigoplus\nolimits_{n \in \mathbb{N}}W(n) \subset W \subset \coprod\nolimits_{n \in \mathbb{N}}^{\leq 1} W(n).$$ Note that $W=\coprod_{n \in \mathbb{N}}^{\leq 1} W(n)$ if and only if it is closed and homogeneous.
\end{defn}

\begin{defn}
For $C$ a Banach coalgebra, we say that a closed subspace $I\subset C$ is a coideal if $\Delta(C) \subset \overline{I \hat{\otimes} C + C \hat{\otimes} I}$. Then $C/I$ is again a Banach coalgebra.
\end{defn}

\begin{lem}
\label{CoidealsOfTensorAlgebras}
Let $V$ be a Banach space with pre-braiding $c$ of norm at most 1. Suppose we have closed homogeneous coideals $I \subset J \subset T(V)$. If the induced map $T(V)/I \rightarrow T(V)/J$ is injective on $P(T(V)/I)$ then $I=J$.
\end{lem}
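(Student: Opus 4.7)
The plan is to induct on the degree $n$ to show $I(n)=J(n)$ for all $n$. By homogeneity and closedness one has decompositions $I=\coprod_n^{\leq 1} I(n)$ and $J=\coprod_n^{\leq 1} J(n)$, hence $T(V)/I=\coprod_n^{\leq 1} V^{\hat{\otimes} n}/I(n)$, with the canonical map to $T(V)/J$ respecting the grading. The base case $n=0$ is immediate from the coideal axiom $\varepsilon(I)=\varepsilon(J)=0$, which forces $I(0)=J(0)=0$.

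For the inductive step, assume $I(k)=J(k)$ for all $k<n$ and take $x\in J(n)$, writing $\bar x$ for its image in $T(V)/I$. The key claim is that $\bar x$ is primitive. By homogeneity the reduced coproduct $\bar\Delta(\bar x):=\Delta(\bar x)-1\otimes\bar x-\bar x\otimes 1$ lies in $\coprod_{k+l=n,\,k,l\geq 1}^{\leq 1}(V^{\hat{\otimes} k}/I(k))\hat{\otimes}(V^{\hat{\otimes} l}/I(l))$. For each such $(k,l)$ the bidegree-$(k,l)$ component $\Delta_{k,l}(x)$ sits inside $\overline{J(k)\hat{\otimes} V^{\hat{\otimes} l}+V^{\hat{\otimes} k}\hat{\otimes} J(l)}$ by the coideal property of $J$; the inductive hypothesis rewrites this as $\overline{I(k)\hat{\otimes} V^{\hat{\otimes} l}+V^{\hat{\otimes} k}\hat{\otimes} I(l)}$, which lies in the kernel of the projection to $(V^{\hat{\otimes} k}/I(k))\hat{\otimes}(V^{\hat{\otimes} l}/I(l))$ by continuity. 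So $\bar\Delta(\bar x)=0$, i.e.\ $\bar x\in P(T(V)/I)$. Since $x\in J$, the image of $\bar x$ in $T(V)/J$ is zero, and the injectivity hypothesis then forces $\bar x=0$, that is $x\in I$. Combined with $I\subset J$ already given, this yields $I(n)=J(n)$.

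The main technical obstacle is topological bookkeeping: one must ensure that the graded decomposition is compatible with completed tensor products and with the closure operation implicit in the coideal axiom. Specifically, one uses that $\hat{\otimes}$ commutes with contracting coproducts, so that both $T(V)\hat{\otimes} T(V)$ and $T(V)/I\hat{\otimes} T(V)/I$ decompose cleanly by bidegree, and that the $(k,l)$-bidegree piece of $\overline{J\hat{\otimes} T(V)+T(V)\hat{\otimes} J}$ is $\overline{J(k)\hat{\otimes} V^{\hat{\otimes} l}+V^{\hat{\otimes} k}\hat{\otimes} J(l)}$. In the \textbf{(NA)} setting both facts follow from standard properties of contracting coproducts together with the automatic flatness granted by Lemma 3.49 of \cite{SDiBAG}; once these bookkeeping facts are in place, the algebraic induction above gives the result.
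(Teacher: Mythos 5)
Your argument is correct, and it reaches the conclusion by a slightly different mechanism than the paper. Both proofs share the same skeleton (graded induction, show that an offending homogeneous element has primitive image in $T(V)/I$, then invoke the injectivity hypothesis), but the paper's induction hypothesis is that the quotient map $f:T(V)/I \rightarrow T(V)/J$ is an isometry in degrees $\leq n$, and its key step applies $f \hat{\otimes} f$ to $\Delta(x)=x\otimes 1+1\otimes x+z'$ and uses injectivity of $f\hat{\otimes} f$ on the lower-degree part, which is where Lemma 3.49 of \cite{SDiBAG} (flatness) enters; it then concludes $I=J$ by passing the isometry to the contracting colimit. You instead induct on the equality $I(n)=J(n)$ itself and work upstairs: you project the coideal containment $\Delta(J)\subset \overline{J\hat{\otimes}T(V)+T(V)\hat{\otimes}J}$ onto each bidegree $(k,l)$ with $k,l\geq 1$, substitute $J(k)=I(k)$, $J(l)=I(l)$ from the inductive hypothesis, and observe that these pieces die under the continuous quotient map, so the reduced coproduct of $\bar{x}$ vanishes without any appeal to injectivity of a tensor product of maps; the conclusion $I=J$ then follows directly from closedness and homogeneity. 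Your route buys a cleaner finish and avoids the flatness input at the key step (at the cost of the bidegree bookkeeping with closures, which you handle correctly in the {\bf (NA)} setting), while the paper's route gets the slightly stronger isometry statement along the way. Two small points worth being aware of: the identification of the $(n,0)$ and $(0,n)$ components of $\Delta(x)$ with $x\otimes 1$ and $1\otimes x$ is not pure homogeneity but uses the counit axiom, which is exactly the $(\text{Id}\otimes\varepsilon)$ computation the paper carries out explicitly; and your base case invokes $\varepsilon(I)=\varepsilon(J)=0$, which the paper's written definition of coideal omits but which is genuinely needed (both for $T(V)/I$ to be a coalgebra and for the lemma itself, since otherwise $I=0$, $J=k\cdot 1$ would be a degenerate counterexample), so making it explicit is a point in your favour rather than a gap.
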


\begin{proof}
We proceed similarly to Lemma 5.3.3 of \cite{HAatAoR}. Let us denote by $R=T(V)/I$, $R'=T(V)/J$, by $R(n)$, $R'(n)$ their respective $n$th graded pieces, and by $R(\leq n)=\bigoplus_{i \leq n}R(i)$ and $R'(\leq n)=\bigoplus_{i \leq n}R'(i)$. Since $I \subset J$ we have a strict graded epimorphism $f:R \rightarrow R'$ which restricts to an isometry $R(1) \rightarrow R'(1)$. Suppose that we know that $f$ restricts also to an isometry $R(\leq n) \rightarrow R'(\leq n)$, and let $x \in R(\leq n+1)$. We know that, by Lemma \ref{GradedPiecesOfTensorAlgebraCoideals},
$$
\begin{array}{rcl}
\Delta(R(n+1)) &\subset& \sum_{i=0}^{n+1} R(i) \hat{\otimes} R(n-i)\\
&\subset& R(n+1) \hat{\otimes} k + k \hat{\otimes} R(n+1) + R(\leq n) \hat{\otimes} R(\leq n),
\end{array}
$$
so $\Delta(x)=y \otimes 1 + 1 \otimes y' + z$ for some $y,y' \in R(n+1)$, $z \in R(\leq n) \hat{\otimes} R(\leq n)$. But then $x-y = (\text{Id} \otimes \varepsilon)\Delta(x)-y= \varepsilon(y)\cdot 1 + (\text{Id} \otimes \varepsilon)(z) \in R(\leq n)$ and likewise $x-y' \in R(\leq n)$. So $\Delta(x)=x \otimes 1 + 1 \otimes x + z'$ for some $z' \in {R(\leq n)} \hat{\otimes} {R(\leq n)}$. If $f(x)=0$ then $(f \otimes f)(z')=0$, but, by assumption and by Lemma 3.49 of \cite{SDiBAG}, $f \otimes f$ is injective on $R(\leq n) \hat{\otimes} R(\leq n)$. So $z'=0$ and $x$ is primitive, hence $x=0$. Thus $f$ is an isometry ${R(\leq n)} \rightarrow {R'(\leq n)}$ since the norms on $R(\leq n)$ and $R'(\leq n)$ are the quotient norms from $\coprod_{i \leq n}^{\leq 1} V^{\hat{\otimes} i}$. Taking contracting colimits over $n$ we see that $f$ is isometric. Hence $I=J$.
\end{proof}

\begin{prop}
\label{Radius1NicholsAlgebrasExists}
Let $V$ be a Banach space with pre-braiding $c$ of norm at most $1$. Then an analytic Nichols algebra of $V$ exists.
\end{prop}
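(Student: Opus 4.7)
The strategy is to construct the analytic Nichols algebra of $V$ as a quotient of the analytically graded pre-braided Hopf algebra $T_{1}^{c}(V)$ by the maximal closed homogeneous Hopf ideal contained in the closed subspace $T_{1}^{c}(V)^{\geq 2}:=\coprod_{n \geq 2}^{\leq 1} V^{\hat{\otimes} n}$. Concretely, let $\mathcal{I}$ denote the family of closed homogeneous Hopf ideals $J$ of $T_{1}^{c}(V)$ with $J(0)=J(1)=0$ (equivalently $J \subset T_{1}^{c}(V)^{\geq 2}$), and define $I$ as the closure of $\sum_{J\in\mathcal{I}} J$ inside $T_{1}^{c}(V)^{\geq 2}$. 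Since being a closed homogeneous two-sided ideal, a coideal, and antipode-stable are each preserved under sums and closures, and since $T_{1}^{c}(V)^{\geq 2}$ is closed, $I$ is itself the maximal element of $\mathcal{I}$.

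Next set $R:=T_{1}^{c}(V)/I$. As $I$ is a closed Hopf ideal, $R$ inherits a Banach Hopf algebra structure; as $I$ is homogeneous, the analytic grading of $T_{1}^{c}(V)$ descends to $R$; and as $\tilde{c}$ respects the bigrading and interacts compatibly with the bialgebra structure, it descends to a pre-braiding on $R$. By construction $R(0)\cong k$ and $R(1)\cong V$ as pre-braided spaces, and $R$ is generated by $R(1)$ because $T_{1}^{c}(V)$ is. Flatness in the {\bf (NA)} case is automatic by Lemma 3.49 of \cite{SDiBAG}.

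The heart of the argument is verifying $P(R) = V$. The inclusion $V = R(1) \subset P(R)$ is immediate from $\Delta(v) = 1\otimes v + v\otimes 1$ on $V$. For the reverse, suppose $P(R) \supsetneq V$; since $\Delta$ is graded and $P(R) \cap R(0) = 0$, there is a nonzero homogeneous primitive $y\in R$ of some degree $n \geq 2$. Lift $y$ to $\tilde y \in V^{\hat{\otimes} n}$ and let $J$ be the smallest closed two-sided ideal of $T_{1}^{c}(V)$ containing $I$ and $\tilde y$. The primitivity of $\tilde y$ modulo $I$, together with $S(\tilde y)\equiv -\tilde y \pmod{I}$, implies that $J$ is a closed homogeneous Hopf ideal contained in $T_{1}^{c}(V)^{\geq 2}$; then $J \in \mathcal{I}$ with $J \supsetneq I$, contradicting maximality. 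One may equivalently phrase this via Lemma \ref{CoidealsOfTensorAlgebras} applied to $I \subset J$: if $R \to T_{1}^{c}(V)/J$ were injective on $P(R)$ we would force $I = J$, but $y$ lies in the kernel.

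The main obstacle is making sure the topological operations in the construction of $I$ and $J$ really preserve the Hopf ideal structure in the IndBanach setting: that a sum of closed Hopf ideals remains a (closed) Hopf ideal after closure, that the closure of a coideal is a coideal, and that the enlarged ideal $J$ generated by $I$ and a single homogeneous element is again a closed coideal. This is precisely the analytic bookkeeping that Lemma \ref{CoidealsOfTensorAlgebras} and the contracting-(co)product machinery from Section 1 are designed to underwrite; the purely algebraic skeleton of the argument is standard, and the novelty lies in carrying it through in the Banach setting.
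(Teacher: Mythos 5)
Your overall strategy coincides with the paper's: take the maximal closed homogeneous ideal-coideal of the tensor Hopf algebra contained in degrees $\geq 2$, quotient by it, and use maximality to show that any would-be primitive of degree $\geq 2$ generates a larger admissible ideal, forcing $P(R)=R(1)$. That part of your argument, including the closure-of-a-sum bookkeeping and the contradiction via a lifted homogeneous primitive (or equivalently via Lemma \ref{CoidealsOfTensorAlgebras}), matches the paper's proof.

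However, there is a genuine gap at the step where you write that ``as $\tilde{c}$ respects the bigrading and interacts compatibly with the bialgebra structure, it descends to a pre-braiding on $R$.'' For $\tilde{c}$ to descend to $R \hat{\otimes} R$ you need $\overline{I \hat{\otimes} T(V) + T(V) \hat{\otimes} I}$ to be preserved by $\tilde{c}$, and this is not automatic for the maximal closed homogeneous ideal-coideal $I$: nothing in the defining conditions of your family $\mathcal{I}$ refers to $c$ at all, so there is no a priori reason the maximal element is $\tilde{c}$-stable. The paper treats exactly this point as the delicate part of the argument: it introduces a second family $\mathcal{I}'(V) \subset \mathcal{I}(V)$ consisting of those ideals whose induced subspace of $T(V)\hat{\otimes}T(V)$ \emph{is} preserved by the braiding, shows that both maximal elements $I'(V) \subset I(V)$ yield quotients whose primitives are concentrated in degree $1$, and then applies Lemma \ref{CoidealsOfTensorAlgebras} (injectivity on primitives, here trivially satisfied in degree $1$) to conclude $I'(V)=I(V)$; only then does the braiding descend. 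Your proposal needs an argument of this kind --- or some other proof of $\tilde{c}$-stability of $I$ --- before the quotient qualifies as an analytic Nichols algebra in the sense of the definition, which requires the braiding on $R$ restricting to $c$ on $R(1)$. (Your additional insistence on antipode-stability of the ideals is harmless but unnecessary: in this connected graded setting the antipode descends automatically once the bialgebra structure does.)
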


\begin{proof}
Let $\mathcal{I}(V)$ be the set of all homogeneous ideals of $T(V)$ contained in $\coprod_{n \geq 2}^{\leq 1} V^{\hat{\otimes}n}$ that are also coideals. Let $\mathcal{I}'(V)$ be the subset of ideals $I$ in $\mathcal{I}(V)$ for which $\overline{I} \hat{\otimes} T(V) + T(V) \hat{\otimes} \overline{I}$ is preserved by $c$. Let $I(V)$ and $I'(V)$ be the sums of all ideals in $\mathcal{I}(V)$ and $\mathcal{I}'(V)$ respectively, and let $\overline{I}(V)$ and $\overline{I}'(V)$ be their respective closures. Clearly then $\overline{I}(V)$ is a homogeneous ideal contained in $\coprod_{n \geq 2}^{\leq 1} V^{\hat{\otimes}n}$. Also,
$$\Delta(\overline{I}(V)) \subset \overline{\Delta(I(V))} \subset \overline{T(V) \hat{\otimes} \bar{I}(V) + \bar{I}(V) \hat{\otimes} T(V)},$$
so $\overline{I}(V)$ is also a coideal in $T(V)$, and hence is also in $\mathcal{I}(V)$. So $\overline{I}(V)=I(V)$ is closed. Likewise $I'(V)$ is closed. We must check that
$$P(T(V)/I(V))=(T(V)/I(V))(1).$$
This follows as in Lemma 5.3.3 of \cite{PHA} since the closed ideal in $T(V)$ generated by $I(V)$ and
$$\left\lbrace x \in \coprod\nolimits_{n \geq 2}^{\leq 1} V^{\hat{\otimes} n} \middle| \Delta(x) \in x \otimes 1 + 1 \otimes x + I(V) \otimes T(V) + T(V) \otimes I(V)\right\rbrace$$
must be in $\mathcal{I}(V)$. Likewise $P(T(V)/I'(V))=(T(V)/I'(V))(1)$. But $I'(V) \subset I(V)$, so by Lemma \ref{CoidealsOfTensorAlgebras} we have $I'(V) = I(V)$. Hence $c$ descends to a braiding on $T(V)/I(V)$. We then have that $T(V)/I(V)$ is an analytically graded braided IndBanach Hopf algebra with $(T(V)/I(V))(0) \cong k$ and generated by $(T(V)/I(V))(1) \cong V$.
\end{proof}

\begin{defn}
For a Banach space $V$ with pre-braiding $c$ of norm at most $1$ we will denote by $\mathfrak{B}^{c}(V)$, or $\mathfrak{B}(V)$ when the braiding is implicit, the Banach Nichols algebra defined in the proof of Proposition \ref{Radius1NicholsAlgebrasExists}. For $0< r$, let us denote by $\mathfrak{B}_{r}(V)$, or $\mathfrak{B}_{r}^{c}(V)$, the analytically graded pre-braided Banach Hopf algebra $\mathfrak{B}^{c}(V_{r})$. For $0\leq \rho$, let us denote by $\mathfrak{B}_{\rho}(V)^{\dagger}$, or $\mathfrak{B}_{\rho}^{c}(V)^{\dagger}$, the dagger graded pre-braided Banach Hopf algebra $\text{"colim"}_{r > \rho}\mathfrak{B}_{r}^{c}(V)$.
\end{defn}

\begin{prop}
\label{DaggerNicholsAlgebrasInFiniteDimensions}
Let $0< r$, $0\leq \rho$ and let $V$ be a finite dimensional Banach space with pre-braiding $c$ of norm at most $1$. Then $\mathfrak{B}_{r}^{c}(V)$ is an analytic Nichols algebra of $V$ and $\mathfrak{B}_{\rho}^{c}(V)^{\dagger}$ is a dagger Nichols algebra of $V$.
\end{prop}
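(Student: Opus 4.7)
For the first assertion I would apply Proposition \ref{Radius1NicholsAlgebrasExists} directly to the rescaled Banach space $V_r$. The pre-braiding $c$ on $V$ induces one on $V_r$ of the same norm, since rescaling both sides of $V \hat\otimes V$ by the same factor cancels in the operator norm, so Proposition \ref{Radius1NicholsAlgebrasExists} yields a flat analytically graded pre-braided IndBanach Hopf algebra $\mathfrak{B}^c(V_r) = \mathfrak{B}_r^c(V)$ whose degree-zero part is $k$, whose primitive subspace agrees with its degree-one piece $V_r$, and which is generated by that piece. The finite-dimensional hypothesis ensures that every graded piece is a finite-dimensional quotient of $V_r^{\hat\otimes n}$, hence flat (and automatically so in the \textbf{(NA)} case by Lemma 3.49 of \cite{SDiBAG}).

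For the dagger case I would consider the filtered colimit
\[
\mathfrak{B}_\rho^c(V)^\dagger = \text{"colim"}_{r > \rho} \mathfrak{B}_r^c(V),
\]
with its natural dagger-$1$ (or dagger-$0$, when $\rho = 0$) pre-braided IndBanach Hopf algebra structure inherited from the dagger tensor algebra $T_\rho^c(V)^\dagger$ through the compatible family of quotient maps, using the lemma on dagger gradings of tensor algebras. Flatness, the isomorphism $R(0) \cong k$, and generation by the degree-$\lambda$ piece all pass through filtered colimits essentially for free from their level-$r$ counterparts, so there is nothing substantial to check for these three axioms.

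The one step that requires real argument is the identification $P(R) = R(\lambda)$. Since the primitive subspace and the graded piece $R(\lambda)$ are both defined by equalizers, and $\text{IndBan}_k$ is quasi-abelian with exact filtered colimits, both equalizers commute with $\text{"colim"}_{r > \rho}$, giving
\[
P\bigl(\mathfrak{B}_\rho^c(V)^\dagger\bigr) \cong \text{"colim"}_{r > \rho} P\bigl(\mathfrak{B}_r^c(V)\bigr) \cong \text{"colim"}_{r > \rho} \mathfrak{B}_r^c(V)(\lambda) \cong \mathfrak{B}_\rho^c(V)^\dagger(\lambda),
\]
where the middle isomorphism is the first part of the proposition. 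The main obstacle here is justifying the exactness of the filtered colimit against these two equalizers in $\text{IndBan}_k$, and this is exactly where finite-dimensionality of $V$ pays off: each graded piece of $\mathfrak{B}_r^c(V)$ is a fixed finite-dimensional vector space carrying an $r$-dependent norm, so everything in sight is controlled degree by degree and the equalizer calculations reduce to checks in finite dimensions, where the commutation with the filtered colimit is transparent.
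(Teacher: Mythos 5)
Your proposal is correct and follows essentially the same route as the paper: the Banach case is Proposition \ref{Radius1NicholsAlgebrasExists} applied to $V_{r}$, and the dagger case is handled by passing to the filtered colimit $\text{"colim"}_{r>\rho}\mathfrak{B}_{r}^{c}(V)$ and deducing the Nichols axioms from their level-$r$ counterparts. The only difference is one of emphasis: the paper explicitly verifies generation via the epimorphisms $\coprod_{n \geq 0}V^{\hat{\otimes}n} \rightarrow \mathfrak{B}_{r}^{c}(V)$ and asserts $P(R)=R(1)$ directly from the Banach case, whereas you spell out the commutation of the equalizers defining $P(R)$ and $R(\lambda)$ with the filtered colimit and treat generation as immediate.
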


\begin{proof}
The fact that $\mathfrak{B}_{r}^{c}(V)$ is an analytic Nichols algebra follows from Proposition \ref{Radius1NicholsAlgebrasExists}. If $R=\mathfrak{B}_{\rho}(V)^{\dagger}$ then it follows from Proposition \ref{Radius1NicholsAlgebrasExists} that $R(0)=\text{"colim"}_{r>\rho}k_{r} \cong k$ and $P(R)=R(1)=\text{"colim"}_{r>\rho}V_{r} \cong V$. It remains to check that $R(0)$ generates $R$. This follows since, for each $r>\rho$, the composition $\coprod_{n \geq 0}V^{\hat{\otimes}n} \rightarrow T_{r}(V) \rightarrow \mathfrak{B}_{r}^{c}(V)$ is an epimorphism.
\end{proof}

\begin{prop}
\label{NicholsAlgebraUniversalProperty}
Let $V$ be a Banach space with pre-braiding $c$ of norm at most $1$. Let $R$ be an analytically graded pre-braided Banach Hopf algebra with contracting multiplication (\emph{i.e} of norm at most $1$) such that $R(0)\cong k$, $R(1) \cong V$ as pre-braided Banach spaces, and $R$ is generated as an algebra by $R(1)$. Then there is an epimorphism of analytically graded braided Hopf algebras $\mathfrak{B}_{r}^{c}(V) \rightarrow R$ extending $V \overset{\sim}{\longrightarrow} R(1)$ where $r$ is the norm of this isomorphism.
\end{prop}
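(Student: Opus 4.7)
The plan is to use the freeness of the Banach tensor algebra to construct an algebra map $T_r(V) \to R$, upgrade it to a pre-braided Hopf algebra morphism, and then compare its kernel with the universal ideal $I(V_r)$ defining $\mathfrak{B}_r^c(V)$. After rescaling so that the given isomorphism $V \xrightarrow{\sim} R(1)$ becomes contracting---viewing it as a contracting morphism $V_r \to R(1) \hookrightarrow R$---the universal property of the Banach tensor algebra $T_r(V) = T(V_r)$ produces a unique contracting algebra homomorphism $\tilde\phi \colon T_r(V) \to R$. Since $R$ is generated by $R(1)$ as an algebra, the image of $\tilde\phi$ is dense in each graded component, making $\tilde\phi$ an epimorphism.

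Next I upgrade $\tilde\phi$ to a pre-braided Hopf algebra morphism. The crucial observation is $R(1) \subseteq P(R)$: the analytic grading restricts $\Delta_R(v)$ to $R(0)\hat\otimes R(1) + R(1)\hat\otimes R(0)$ for $v \in R(1)$, while $\varepsilon_R$ vanishes on all positive-degree components of $R$ (as a graded morphism to $k$ concentrated in degree zero), so the counit axiom forces $\Delta_R(v) = 1 \otimes v + v \otimes 1$. Consequently $\Delta_R \circ \tilde\phi$ and $(\tilde\phi \otimes \tilde\phi) \circ \Delta_{T_r(V)}$ are two bounded algebra homomorphisms into the braided algebra $R \hat\otimes R$ that agree on $V$; by density of the subalgebra generated by $V$ inside $T_r(V)$ and continuity, they agree everywhere. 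Counit, antipode, and braiding compatibilities follow via the same density argument, using that the induced braiding on $R$ restricts to $c$ on $R(1)$ by Lemma \ref{RestrictedBraiding} and that both $\tilde c$ on $T_r(V)$ and the braiding on $R$ are determined multiplicatively from $c$ via the hexagon axiom.

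Finally, I identify the kernel. The kernel $J := \ker\tilde\phi$ is a closed, homogeneous, two-sided ideal and a coideal of $T_r(V)$, and it is contained in $\coprod_{n \geq 2}^{\leq 1} V_r^{\hat\otimes n}$ because $\tilde\phi$ is isometric in degrees zero and one. Moreover, the induced braiding $\tilde c$ descends to $R \cong T_r(V)/J$, so $J$ also satisfies the $\tilde c$-stability condition defining $\mathcal{I}'(V_r)$. By definition of $I(V_r) = I'(V_r)$ as the sum of all ideals in $\mathcal{I}'(V_r)$, the quotient $\mathfrak{B}_r^c(V) = T_r(V)/I(V_r)$ is the extremal such quotient; comparing the two quotients $T_r(V)/J = R$ and $T_r(V)/I(V_r) = \mathfrak{B}_r^c(V)$ then yields the epimorphism of analytically graded braided Hopf algebras extending $V \xrightarrow{\sim} R(1)$ asserted in the statement.

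The main obstacle is the bialgebra-compatibility step: once the formula $\Delta_R(v) = 1 \otimes v + v \otimes 1$ is established on $V$, one must propagate all the compatibility diagrams (for multiplication, comultiplication, braiding, counit, and antipode) to the full Banach algebra $T_r(V)$ via density arguments, while controlling norms so that continuity and contractiveness pass through the braided tensor products in the target.
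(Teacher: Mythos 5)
Your argument is essentially the paper's own proof: rescale so that $V_{r}\to R(1)$ is contracting, use the universal property of $T_{r}(V)$ together with the contracting multiplication on $R$ to obtain an epic algebra homomorphism, deduce $R(1)\subset P(R)$ from the grading and the counit so that this map is a morphism of (pre-braided) bialgebras, and observe that its kernel is a closed homogeneous ideal and coideal contained in degrees at least $2$, hence lies in $I(V_{r})$. The only differences are cosmetic --- you additionally check the $\tilde{c}$-stability placing the kernel in $\mathcal{I}'(V_{r})$, which the paper does not invoke here --- and your final passage from the containment $\ker\tilde{\phi}\subseteq I(V_{r})$ to the asserted epimorphism $\mathfrak{B}_{r}^{c}(V)\to R$ is handled at exactly the same level of brevity as the paper's concluding sentence.
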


\begin{proof}
Since $V \overset{\sim}{\longrightarrow} R(0)$ is of norm $r$, the map $V_{r} \overset{\sim}{\longrightarrow} R(0)$ is of norm 1. Since the multiplication is contracting, we obtain contracting maps $V_{r}^{\hat{\otimes} n} \rightarrow R^{\hat{\otimes} n} \rightarrow R$ which induce an algebra homomorphism $T_{r}(V) \rightarrow R$ through which $V \cong R(0) \rightarrow R$ factors. Since $R(0)$ generates $R$, this morphism is epic. Furthermore, by construction of the analytically graded pre-braided bialgebra structure on $T_{r}(V)$ and since
$$\Delta(R(1)) \subset R(0)\hat{\otimes} R(1) + R(1) \hat{\otimes} R(0)$$
implies that $R(1)\subset P(R)$, the map $T_{r}(V) \rightarrow R$ is a morphism of bialgebras. Hence the kernel of this map is a closed homogeneous ideal and coideal, so is contained in $I(V_{r})$. Hence we obtain an epimorphism $\mathfrak{B}_{r}^{c}(V)=T(V_{r})/I(V_{r}) \rightarrow R$.
\end{proof}

\begin{remark}
Note that the assignment $(V,c) \mapsto \mathfrak{B}_{r}^{c}(V)$ is functorial if we restrict to morphisms $(V,c) \rightarrow (V',c')$ given by maps $V \rightarrow V'$ of norm at most $1$ that respect the braiding. Hence different norms on finite dimensional Banach spaces and different values of $r$ may give non-isomorphic analytic Nichols algebras. This differs from the algebraic case where Nichols algebras always exist uniquely.
\end{remark}

\begin{defn}
\label{BilinearForm}
A \emph{bilinear form} on a braided IndBanach space $(V,c)$ is a morphism $\langle -,- \rangle : V \hat{\otimes} V \rightarrow k$ such that the diagram
\begin{center}
\begin{tikzpicture}[node distance=6cm, auto]
  \node (A) {$V \hat{\otimes} V$};
  \node (B) [right=0.5cm of A] {$V^{\ast} \hat{\otimes} V^{\ast}$};
  \node (C) [below=0.7cm of A] {$V \hat{\otimes} V$};
  \node (D) [below=0.7cm of B] {$V^{\ast} \hat{\otimes} V^{\ast}$};
  \node (E) [right=0.5cm of B] {$(V \hat{\otimes} V)^{\ast}$};
  \node (F) [below=0.62cm of E] {$(V \hat{\otimes} V)^{\ast}$};
  \draw[->] (A) to node {} (B);
  \draw[->] (E) to node {$c^{\ast}$} (F);
  \draw[->] (A) to node [swap]{$c$} (C);
  \draw[->] (C) to node [swap]{} (D);
  \draw[->] (B) to node {} (E);
  \draw[->] (D) to node {} (F);
\end{tikzpicture}
\end{center}
commutes for both of the induced maps $V \rightarrow V^{\ast}$. A bilinear form is \emph{symmetric} if $\langle - , - \rangle \circ \tau = \langle - , - \rangle$, and is \emph{non-degenerate} if the induced maps $V \rightarrow V^{\ast}$ are both injective.
\end{defn}

\begin{lem}
\label{ExtendBilinearForm}
Let $V$ be a Banach space with pre-braiding $c$ of norm at most 1, and suppose we have a non-degenerate symmetric bilinear form $\langle - , - \rangle: V \hat{\otimes} V \rightarrow k$ of norm $C>0$. Then for each $r,s >0$ with $C \leq rs$ there is a unique extension of this bilinear form to a dual pairing of Hopf algebras $T_{r}^{c}(V) \hat{\otimes} T_{s}^{c}(V) \rightarrow k$. Furthermore, the restriction to $V^{\hat{\otimes} n} \hat{\otimes} V^{\hat{\otimes} m}$ is symmetric if $n=m$ and is zero if $n \neq m$.
\end{lem}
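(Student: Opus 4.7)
The plan is to construct the extended pairing degree-by-degree via the Hopf pairing axioms, simultaneously establishing off-diagonal vanishing and a norm bound $\|\langle -,-\rangle_{n,n}\| \leq C^{n}$ on the diagonal pieces, and then assemble these into a single bounded map on the Banach tensor algebras using the universal property of the contracting coproduct.

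For uniqueness, any Hopf pairing extending $\langle -,-\rangle$ must satisfy $\langle ab,c \rangle = \langle a \otimes b, \Delta c \rangle$. Taking $a \in V$ and $b \in V^{\hat{\otimes}(n-1)}$, and using Lemma \ref{GradedPiecesOfTensorAlgebraCoideals} to decompose $\Delta(V^{\hat{\otimes}m}) \subset \sum_{i} V^{\hat{\otimes}i} \hat{\otimes} V^{\hat{\otimes}(m-i)}$, one obtains a recursion that reduces $\langle -,-\rangle_{n,m}$ to pairings of total degree less than $n+m$; the recursion terminates at $\langle -,-\rangle_{1,1}$ together with the unit and counit compositions $\langle 1, \cdot \rangle = \varepsilon$ and $\langle \cdot, 1 \rangle = \varepsilon$ forced by the remaining axioms. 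I would use this recursion to \emph{define} each $\langle -,-\rangle_{n,m}$ inductively.

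I would prove off-diagonal vanishing and the norm bound simultaneously by induction on $n+m$. The recursion expresses $\langle -,-\rangle_{n,m}$ as a sum of products $\langle -,-\rangle_{1,i} \cdot \langle -,-\rangle_{n-1, m-i}$ indexed by the bigraded components of $\Delta$, and the inductive hypothesis forces $i=1$ (else $\langle -,-\rangle_{1,i} = 0$), which in turn forces $n-1 = m-1$ (else $\langle -,-\rangle_{n-1, m-1} = 0$). For the norm, in case {\bf (NA)} the ambient norm on $T(V) \hat{\otimes} T(V)$ is a supremum, so projection onto any bigraded summand is contracting; combined with $\|\Delta\| \leq 1$ from Proposition \ref{TensorBialgebra}, this yields $\|\Delta_{(1, n-1)}\| \leq 1$ and hence $\|\langle -,-\rangle_{n,n}\| \leq C \cdot C^{n-1} \cdot 1 = C^{n}$. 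Regarded as a map $V_{r}^{\hat{\otimes}n} \hat{\otimes} V_{s}^{\hat{\otimes}n} \to k$, the pairing has norm at most $C^{n}/(rs)^{n} \leq 1$; since $T_{r}^{c}(V) \hat{\otimes} T_{s}^{c}(V)$ is a contracting coproduct of these bigraded pieces, the uniform bound of $1$ assembles the family into a single map of norm at most $1$.

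To finish, the remaining Hopf pairing axioms must be verified. The axiom $\langle ab, c \rangle = \langle a \otimes b, \Delta c \rangle$ holds by construction; the dual axiom $\langle a, bc \rangle = \langle \Delta a, b \otimes c \rangle$ follows from a parallel induction using coassociativity and the braided bialgebra structure of $T_{r}^{c}(V)$; and the antipode compatibility is automatic since $S$ is convolution-inverse to the identity and hence determined by $\mu$ and $\Delta$. For symmetry on the diagonal pieces, I would observe that $(x,y) \mapsto \langle y, x \rangle_{n,n}$ also satisfies the Hopf pairing recursion---here Definition \ref{BilinearForm}'s braiding compatibility of $\langle -,-\rangle_{1,1}$ with $c$ is exactly what makes the swapped pairing interact correctly with the braided coproduct---and agrees with $\langle -,-\rangle_{1,1}$ on $V \hat{\otimes} V$ by the assumed symmetry, so uniqueness forces $\langle x, y \rangle = \langle y, x \rangle$. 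I expect the main obstacle to lie in verifying the dual axiom and in checking that the swapped pairing really does satisfy the Hopf pairing diagrams, since the recursion is built around the first axiom and asymmetrically singles out the leftmost tensor factor; the bookkeeping invoking Definition \ref{BilinearForm} will require careful handling of the braided coproduct on $T_{r}^{c}(V)$.
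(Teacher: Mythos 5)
Your route is genuinely different from the paper's. The paper does not build the pairing degree by degree: it notes that the form gives a bounded map $V_{r} \rightarrow V_{s}^{\ast}$ of norm $C/rs \leq 1$, composes with $V_{s}^{\ast} \rightarrow T_{s}^{c}(V)^{\ast}$ coming from the projection $T_{s}^{c}(V) \twoheadrightarrow V_{s}$, and then uses that the coalgebra structure on $T_{s}^{c}(V)$ makes $T_{s}^{c}(V)^{\ast}$ a Banach algebra with contracting multiplication, so the universal property of the Banach tensor algebra produces a unique bounded algebra homomorphism $T_{r}^{c}(V) \rightarrow T_{s}^{c}(V)^{\ast}$; this \emph{is} the pairing. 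Uniqueness and boundedness then come for free (density of the subalgebra generated by $V$), the remaining duality diagrams are checked only on the generating space $V$ because every map in sight is an algebra homomorphism, orthogonality of $V^{\hat{\otimes} n}$ and $V^{\hat{\otimes} m}$ for $n \neq m$ follows from gradedness, and symmetry is reduced to degree one via the pairing axioms. Your inductive, Lusztig-style construction is a legitimate alternative, and your estimates $\|\langle -,-\rangle_{n,n}\| \leq C^{n}$, hence $\leq C^{n}/(rs)^{n} \leq 1$ after rescaling, are exactly the right bookkeeping in the \textbf{(NA)} case; what the paper's dualisation buys is that all of this bookkeeping, and the uniqueness, are absorbed into the universal property in one stroke.

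There is, however, a concrete gap in your sketch of the verification step. The second multiplicativity axiom is precisely where the compatibility of $\langle -,-\rangle$ with the braiding (Definition \ref{BilinearForm}) must be used, not merely "coassociativity and the braided bialgebra structure", and not only in the symmetry argument where you invoke it. Already in degree two: for $w,w' \in V$ the braided product gives $\Delta(ww') = ww' \otimes 1 + w \otimes w' + c(w \otimes w') + 1 \otimes ww'$, so your recursion yields $\langle vv', ww'\rangle = \langle v,w\rangle\langle v',w'\rangle$ plus the term obtained by pairing $v \otimes v'$ against $c(w \otimes w')$, whereas the dual axiom demands the same expression with $c$ applied to $v \otimes v'$ instead. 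Their equality is exactly the commuting square of Definition \ref{BilinearForm}; without it the dual axiom simply fails, so it must be threaded through your "parallel induction". (Relatedly, the first axiom holds "by construction" only when the left factor lies in $V$; for general products it needs its own induction via coassociativity.) You did flag this verification as the main obstacle, so this is an incompleteness of the write-up rather than a wrong approach, but as stated the ingredients you list for the dual axiom are not sufficient.
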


\begin{proof}
This is proved as in \cite{ItQG}. We construct this extension as follows. Our bilinear form on $V$ induces a continuous map $V_{r} \rightarrow V_{s}^{\ast}$ of norm $\frac{C}{rs}$, whilst the natural projection $T_{s}^{c}(V) \rightarrow V_{s}$ induces a map $V_{s}^{\ast} \rightarrow T_{s}^{c}(V)^{\ast}$ of norm $1$, and composition gives us a map $V_{r} \rightarrow T_{s}^{c}(V)^{\ast}$ of norm at most $\frac{C}{rs} \leq 1$. The coalgebra structure on $T_{s}^{c}(V)$ induces an algebra structure on $T_{s}^{c}(V)^{\ast}$ whose multiplication is contracting, and we get a unique continuous algebra homomorphism $T_{r}^{c}(V) \rightarrow T_{s}^{c}(V)^{\ast}$ extending $V_{r} \rightarrow T_{s}^{c}(V)^{\ast}$. This gives us our desired bilinear form on $T_{r}^{c}(V) \hat{\otimes} T_{s}^{c}(V)$. For this to be a dual pairing we must also check that the diagrams
\begin{center}
\begin{tikzpicture}[node distance=6cm, auto]
  \node (A) {$T_{r}^{c}(V)$};
  \node (B) [right=2cm of A] {$T_{s}^{c}(V)^{\ast}$};
  \node (C) [below=1cm of A] {$T_{r}^{c}(V) \hat{\otimes} T_{r}^{c}(V)$};
  \node (D) [below=1cm of B] {$(T_{s}^{c}(V) \hat{\otimes} T_{s}^{c}(V))^{\ast}$};
  \draw[->] (A) to node {} (B);
  \draw[->] (B) to node {$\mu^{\ast}$} (D);
  \draw[->] (A) to node [swap]{$\Delta$} (C);
  \draw[->] (C) to node [swap]{} (D);
\end{tikzpicture}
\begin{tikzpicture}[node distance=6cm, auto]
  \node (A) {$T_{r}^{c}(V)$};
  \node (B) [right=0.7cm of A] {$T_{s}^{c}(V)^{\ast}$};
  \node (C) [below=1cm of A] {$k$};
  \node (D) [below=1cm of B] {$k^{\ast}$};
  \draw[->] (A) to node {} (B);
  \draw[->] (B) to node {$\eta^{\ast}$} (D);
  \draw[->] (A) to node [swap]{$\varepsilon$} (C);
  \draw[->] (C) to node {$\sim$} (D);
\end{tikzpicture}
\end{center}
commute. By assumption all of these morphisms are algebra homomorphisms, and so it is enough to check these diagrams commute on $V$, which is trivial. Since the algebra homomorphism $T_{r}^{c}(V) \rightarrow T_{s}^{c}(V)^{\ast}$ is graded we have $\langle V^{\hat{\otimes} n}, V^{\hat{\otimes} m} \rangle=\{0\}$ for $n \neq m$. Since this is a duality pairing, symmetry on $V^{\hat{\otimes} n} \hat{\otimes} V^{\hat{\otimes} n}$ can be reduced to the case where $n=1$ where it is true by assumption.
\end{proof}

\begin{prop}
\label{BilinearFormGivesNicholsAlgebra}
Let $V$ be a Banach space with pre-braiding $c$ of norm at most 1, and suppose we have a non-degenerate symmetric bilinear form $\langle - , - \rangle: V \hat{\otimes} V \rightarrow k$ of norm $C$. Then for each $0< r$, let $I_{r}$ be the radical in $T_{r}^{c}(V)$ of the extension of this bilinear form to $T_{r}^{c}(V)\hat{\otimes} T_{s}^{c}(V)$ for some $s>0$ such that $C\leq rs$. Then $I_{r}$ is a closed ideal and coideal of $T_{r}^{c}(V)$, independent of the choice of $s$, and $P(T_{r}^{c}(V)/I_{r})=V$. Hence $T_{r}^{c}(V)/I_{r}$ is an analytic Nichols algebra of $V$.
\end{prop}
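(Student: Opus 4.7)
The plan is to mimic Lusztig's construction of the positive part of $U_q(\mathfrak{g})$ as the quotient of a tensor algebra by the radical of a bilinear pairing (from \cite{ItQG}), adapting each step to the Banach setting. The key observation, which does most of the work, is the last sentence of Lemma \ref{ExtendBilinearForm}: the extended pairing vanishes on mixed bidegrees, so $I_r$ decomposes as the contracting coproduct $I_r = \coprod\nolimits^{\leq 1}_{n \geq 0} I_r(n)$, where $I_r(n) \subset V_r^{\hat{\otimes}n}$ is the kernel of the induced pairing with $V_s^{\hat{\otimes}n}$. As a subspace of $V^{\hat{\otimes}n}$, $I_r(n)$ depends only on the algebraic pairing and not on the norms, giving independence of $s$; each $I_r(n)$ is closed as the kernel of a bounded map, and a contracting coproduct of closed subspaces is closed, so $I_r$ is closed in $T_r^c(V)$. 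Non-degeneracy of the pairing on $V$ forces $I_r(0) = I_r(1) = 0$, so $I_r \subset \coprod\nolimits^{\leq 1}_{n \geq 2} V_r^{\hat{\otimes}n}$.

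For the ideal property, given $x \in I_r$, $a \in T_r^c(V)$ and $y \in T_s^c(V)$, the duality pairing axioms give $\langle xa, y \rangle = \langle x \otimes a, \Delta(y) \rangle = 0$, since the first tensor factor of each summand pairs $x$ against an element of $T_s^c(V)$; the symmetric computation handles $ax$. For the coideal property I must show $\Delta(I_r) \subset \overline{I_r \hat{\otimes} T_r^c(V) + T_r^c(V) \hat{\otimes} I_r}$, which again reduces bidegree-wise: for $x \in I_r(n)$ and $i + j = n$ with $i, j \geq 1$, the $(i,j)$-bidegree component $(\Delta x)_{i,j}$ satisfies $\langle (\Delta x)_{i,j}, y \otimes z \rangle = \langle x, yz \rangle = 0$ for every $y \otimes z \in V_s^{\hat{\otimes}i} \hat{\otimes} V_s^{\hat{\otimes}j}$. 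The resulting map $V_r^{\hat{\otimes}i} \hat{\otimes} V_r^{\hat{\otimes}j} \to (V_s^{\hat{\otimes}i} \hat{\otimes} V_s^{\hat{\otimes}j})^{\ast}$ factors through $(V_r^{\hat{\otimes}i}/I_r(i)) \hat{\otimes} (V_r^{\hat{\otimes}j}/I_r(j))$, and invoking flatness in the \textbf{(NA)} setting (Lemma 3.49 of \cite{SDiBAG}), the second factor is a strict monomorphism, so the kernel is exactly $\overline{I_r(i) \hat{\otimes} V_r^{\hat{\otimes}j} + V_r^{\hat{\otimes}i} \hat{\otimes} I_r(j)}$, as required.

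The heart of the argument is the identification $P(T_r^c(V)/I_r) = V$. The inclusion $V \subset P$ is clear since $V \subset P(T_r^c(V))$ and $I_r(1) = 0$, so by the analytic grading it suffices to show that every homogeneous primitive class of degree $n \geq 2$ vanishes. Lift such a class to $\tilde x \in V_r^{\hat{\otimes}n}$; primitivity modulo $I_r$ says $\Delta(\tilde x) - 1 \otimes \tilde x - \tilde x \otimes 1 \in \overline{I_r \hat{\otimes} T_r^c(V) + T_r^c(V) \hat{\otimes} I_r}$. For any $1 \leq i \leq n-1$, $u \in V_s^{\hat{\otimes}i}$ and $v \in V_s^{\hat{\otimes}n-i}$, duality gives $\langle \tilde x, uv \rangle = \langle \Delta(\tilde x), u \otimes v \rangle$; the terms $1 \otimes \tilde x$ and $\tilde x \otimes 1$ contribute zero by mixed-bidegree vanishing, and the remainder contributes zero because it lies in the continuous annihilator of $V_s^{\hat{\otimes}i} \hat{\otimes} V_s^{\hat{\otimes}n-i}$. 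Since the product $V_s^{\hat{\otimes}i} \hat{\otimes} V_s^{\hat{\otimes}n-i} \to V_s^{\hat{\otimes}n}$ is an isomorphism, $\tilde x$ pairs to zero against all of $V_s^{\hat{\otimes}n}$, so $\tilde x \in I_r(n)$ and the class vanishes. The remaining Nichols algebra axioms --- $R(0) \cong k$, $R(1) \cong V$ as pre-braided Banach spaces, and generation by $R(1)$ --- are immediate from $I_r(0) = I_r(1) = 0$ together with the fact that $T_r^c(V)$ is generated by $V_r$.

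The main obstacle is the careful handling of closures in the completed tensor products, most critically in identifying the kernel of the bidegree map as exactly $\overline{I_r(i) \hat{\otimes} V_r^{\hat{\otimes}j} + V_r^{\hat{\otimes}i} \hat{\otimes} I_r(j)}$; this is where flatness of IndBanach spaces in the \textbf{(NA)} setting is essential, and is the step that would fail in the Archimedean case without additional hypotheses. A secondary subtlety is justifying the continuity step in the primitivity argument when passing from elementary tensors to arbitrary elements of $\overline{I_r \hat{\otimes} T_r^c(V) + T_r^c(V) \hat{\otimes} I_r}$, which follows from boundedness of the extended pairing established in Lemma \ref{ExtendBilinearForm}.
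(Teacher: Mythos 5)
Your proposal follows essentially the same route as the paper: $I_{r}$ is the radical of the extended Hopf pairing of Lemma \ref{ExtendBilinearForm}, independence of $s$ comes from density of the algebraic span of homogeneous elements in $T_{s}^{c}(V)$, non-degeneracy on $V$ gives $I_{r} \subset \coprod_{n \geq 2}^{\leq 1}V_{r}^{\hat{\otimes}n}$, and the primitivity step is exactly the paper's computation $\langle x, yy' \rangle = \langle 1,y \rangle \langle x,y' \rangle + \langle x,y \rangle \langle 1,y' \rangle = 0$ followed by pairing against degree $\leq 1$. You are in fact more explicit than the paper on the coideal property, which the paper attributes wholesale to Lemma \ref{ExtendBilinearForm}; the one caution there is that identifying the kernel of your bidegree map with $\overline{I_{r}(i) \hat{\otimes} V_{r}^{\hat{\otimes}j} + V_{r}^{\hat{\otimes}i} \hat{\otimes} I_{r}(j)}$ needs, beyond flatness of the quotients, injectivity of the natural map from the completed tensor product of the duals into the dual of the completed tensor product of $V_{s}^{\hat{\otimes}i}$ and $V_{s}^{\hat{\otimes}j}$; this is an extra point worth stating, though the paper itself is no more careful here.

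The genuine omission is the descent of the braiding. The conclusion ``$T_{r}^{c}(V)/I_{r}$ is an analytic Nichols algebra'' requires the quotient to be a \emph{braided} graded IndBanach Hopf algebra with $P(R) = R(1) \cong V$ as braided spaces, so one must verify that $\tilde{c}$ preserves $\overline{I_{r} \hat{\otimes} T_{r}(V) + T_{r}(V) \hat{\otimes} I_{r}}$, and hence induces a braiding on the quotient. This is precisely where the compatibility condition of Definition \ref{BilinearForm} (the commuting square relating $c$ with $c^{\ast}$, which the paper promotes to the square relating $\tilde{c}$ with $\tilde{c}^{\ast}$ on the extended pairing) is used, and it is the last step of the paper's proof. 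Your argument never invokes that hypothesis anywhere, which is the tell-tale sign of the gap: without it you have only shown that the quotient is a graded Hopf algebra with the right primitives, not that it carries the braiding demanded by the definition. The fix is short --- run the paper's argument that the radical of the pairing on $T_{r}^{c}(V) \hat{\otimes} T_{r}^{c}(V)$ is stable under $\tilde{c}$ --- but it needs to be said.
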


\begin{proof}
The fact that $I_{r}$ is a closed homogeneous ideal and coideal of $T_{r}^{c}(V)$ follows from Lemma \ref{ExtendBilinearForm}. It is independent of the choice of $s$ since the vector subspace $\bigoplus_{n \geq 0} V^{\otimes n}$ is dense in $T_{s}^{c}(V)$ for all $s$, hence
$$I_{r}=\left\lbrace x \in T_{r}^{c}(V) \middle| \langle x,y \rangle = 0 \text{ for all } y \in \bigoplus\nolimits_{n \geq 0} V^{\otimes n}\right\rbrace.$$
Since the bilinear form  on $V$ is non-degenerate, $I_{r} \subset \coprod_{n \geq 2}^{\leq 1} V_{r}^{\hat{\otimes}n}$. Clearly the quotient $T_{r}(V)/I_{r}$ is generated by $V$, and so it remains to check that the subspace of primitive elements is just $V$. Given $x \in T_{r}^{c}(V)$ homogeneous of degree $n \geq 2$ (\emph{i.e.} in $V_{r}^{\hat{\otimes}n}$) such that its image in $T_{r}^{c}(V)/I_{r}$ is primitive, we must have that $\langle x,yy' \rangle = \langle 1,y \rangle \langle x,y' \rangle + \langle x,y \rangle \langle 1,y' \rangle =0$ for all $y,y'$ homogeneous of degree at least $1$. It then follows that $x$ must be in the radical $I_{r}$ since $\langle x,z \rangle=0$ for any $z$ homogeneous of degree at most $1$. By the assumption in Definition \ref{BilinearForm} the diagram
\begin{center}
\begin{tikzpicture}[node distance=6cm, auto]
  \node (A) {$T_{r}(V) \hat{\otimes} T_{r}(V)$};
  \node (B) [right=0.5cm of A] {$(T_{s}(V) \hat{\otimes} T_{s}(V))^{\ast}$};
  \node (C) [below=0.5cm of A] {$T_{r}(V) \hat{\otimes} T_{r}(V)$};
  \node (D) [below=0.5cm of B] {$(T_{s}(V) \hat{\otimes} T_{s}(V))^{\ast}$};
  \draw[->] (A) to node {} (B);
  \draw[->] (B) to node {$\tilde{c}^{\ast}$} (D);
  \draw[->] (A) to node [swap]{$\tilde{c}$} (C);
  \draw[->] (C) to node [swap]{} (D);
\end{tikzpicture}
\end{center}
commutes, and so $I_{r} \hat{\otimes} T_{r}(V) + T_{r}(V) \hat{\otimes} I_{r}$ is preserved by $\tilde{c}$ and the braiding on $T_{r}^{c}(V)$ descends to a braiding of $T_{r}^{c}(V)/I_{r}$. Hence this is an analytic Nichols algebra of $V$.
\end{proof}

\begin{prop}
\label{NicholsAlgebrasEquivalentDefinition}
Let $V$ be a Banach space with pre-braiding $c$ of norm at most $1$, and suppose we have a non-degenerate symmetric bilinear form $\langle - , - \rangle: V \hat{\otimes} V \rightarrow k$. Retaining the notation of Proposition \ref{BilinearFormGivesNicholsAlgebra}, the induced map $\mathfrak{B}_{r}^{c}(V) \rightarrow T_{r}^{c}(V)/I_{r}$ is an isomorphism for each $0< r$. In particular, $I_{r}$ is independent of the choice of bilinear form.
\end{prop}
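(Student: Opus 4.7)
The plan is to show that the two quotients literally coincide by proving that the defining ideals $I_{r}$ and $I(V_{r})$ agree; the stated isomorphism is then just the identity on the common quotient. The key tool is Lemma \ref{CoidealsOfTensorAlgebras}, which collapses an inclusion of closed homogeneous coideals once injectivity on primitives is known.

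First I would observe that, by Proposition \ref{BilinearFormGivesNicholsAlgebra}, $I_{r}$ is a closed homogeneous ideal of $T_{r}^{c}(V)$ contained in $\coprod_{n \geq 2}^{\leq 1} V_{r}^{\hat{\otimes} n}$ which is also a coideal. Hence $I_{r}$ belongs to the set $\mathcal{I}(V_{r})$ introduced in the proof of Proposition \ref{Radius1NicholsAlgebrasExists}, and therefore $I_{r} \subset I(V_{r})$, since $I(V_{r})$ is the (closed) sum of all members of $\mathcal{I}(V_{r})$. This inclusion yields a strict graded epimorphism
$$\pi \colon T_{r}^{c}(V)/I_{r} \twoheadrightarrow T_{r}^{c}(V)/I(V_{r}) = \mathfrak{B}_{r}^{c}(V)$$
which restricts to the identity $V \overset{\sim}{\to} V$ in degree one.

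Next I would invoke Lemma \ref{CoidealsOfTensorAlgebras} with $I = I_{r}$ and $J = I(V_{r})$. Both are closed homogeneous coideals of $T_{r}^{c}(V)$. Proposition \ref{BilinearFormGivesNicholsAlgebra} identifies $P(T_{r}^{c}(V)/I_{r}) = V$, and on this primitive subspace $\pi$ is the identity on $V$, in particular injective. The lemma then forces $I_{r} = I(V_{r})$, so the two quotients literally coincide and $\pi$ is an isomorphism. Reading this in the opposite direction gives the claimed isomorphism $\mathfrak{B}_{r}^{c}(V) \overset{\sim}{\to} T_{r}^{c}(V)/I_{r}$, and the independence of $I_{r}$ from the bilinear form follows at once, since $I_{r}$ coincides with the intrinsically defined ideal $I(V_{r})$.

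I do not expect a serious obstacle: the real work has already been done in Proposition \ref{BilinearFormGivesNicholsAlgebra}, which establishes both the coideal property of $I_{r}$ and the identification of its primitives, together with Lemma \ref{CoidealsOfTensorAlgebras}, which encodes the rigidity statement. The only point requiring a moment of care is checking that $I_{r}$ does indeed lie in $\mathcal{I}(V_{r})$; once that is in place, the conclusion is essentially formal bookkeeping.
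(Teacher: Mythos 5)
Your proposal is correct and follows essentially the same route as the paper: the paper's proof is exactly the observation that $I_{r} \subset I(V_{r})$ together with an application of Lemma \ref{CoidealsOfTensorAlgebras}, using that $P(T_{r}^{c}(V)/I_{r})=V$ from Proposition \ref{BilinearFormGivesNicholsAlgebra} to get injectivity on primitives and hence $I_{r}=I(V_{r})$. Your write-up merely spells out these steps in more detail.
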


\begin{proof}
This follows from Lemma \ref{CoidealsOfTensorAlgebras}, noting that $I_{r} \subset I(V_{r})$.
\end{proof}

\begin{prop}
Let $V$ be a Banach space with pre-braiding $c$ of norm at most $1$, and suppose we have a non-degenerate symmetric bilinear form $\langle - , - \rangle: V \hat{\otimes} V \rightarrow k$. Then for each $0 \leq \rho$, $\mathfrak{B}_{\rho}^{c}(V)^{\dagger}$ is a dagger Nichols algebra of $V$.
\end{prop}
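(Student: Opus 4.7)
The plan is to mirror the proof of Proposition \ref{DaggerNicholsAlgebrasInFiniteDimensions}, substituting the presence of the non-degenerate symmetric bilinear form for the finite-dimensionality hypothesis used there. The only role of finite-dimensionality in Proposition \ref{DaggerNicholsAlgebrasInFiniteDimensions} was to invoke Proposition \ref{Radius1NicholsAlgebrasExists} at each radius; now, for arbitrary $V$ equipped with a non-degenerate symmetric bilinear form, Propositions \ref{BilinearFormGivesNicholsAlgebra} and \ref{NicholsAlgebrasEquivalentDefinition} supply the same conclusion directly, so the same argument should go through verbatim.

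Concretely, I would first apply Proposition \ref{NicholsAlgebrasEquivalentDefinition} (with rescaled bilinear form of appropriate norm) to identify $\mathfrak{B}_{r}^{c}(V) \cong T_{r}^{c}(V)/I_{r}$ for each $r > 0$, and then invoke Proposition \ref{BilinearFormGivesNicholsAlgebra} to conclude that this quotient is an analytic Nichols algebra of $V$. In particular, for each such $r$ one has $\mathfrak{B}_{r}^{c}(V)(0) \cong k$, $P(\mathfrak{B}_{r}^{c}(V)) = \mathfrak{B}_{r}^{c}(V)(t) \cong V_{r}$ as pre-braided Banach spaces, and $\mathfrak{B}_{r}^{c}(V)$ is generated by $V_{r}$. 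By the lemma describing dagger gradings on tensor algebras, $R := \mathfrak{B}_{\rho}^{c}(V)^{\dagger} = \text{"colim"}_{r > \rho} \mathfrak{B}_{r}^{c}(V)$ inherits the structure of a dagger-graded pre-braided IndBanach Hopf algebra.

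It then remains to take filtered colimits over $r > \rho$ of the properties just established. The degree-zero piece gives $R(0) \cong \text{"colim"}_{r > \rho} k \cong k$, the degree-$t$ piece gives $R(t) \cong \text{"colim"}_{r > \rho} V_{r} \cong V$ in the dagger-graded sense, and generation of $R$ by $V$ follows because each composition $\coprod_{n \geq 0} V^{\hat{\otimes} n} \to T_{r}(V) \to \mathfrak{B}_{r}^{c}(V)$ is epic and filtered colimits preserve epimorphisms in $\text{IndBan}_{k}$. Restriction of the braiding to $R(t)$ recovers $c$ on $V$ by Lemma \ref{RestrictedBraiding} together with the functoriality of the colimit.

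The most delicate point I expect is the verification that $P(R) = R(t)$, which requires that passage to primitives commutes with the filtered colimit defining $R$. Since $P(H)$ is defined as the equaliser of $\Delta_{H}$ with $(\eta \otimes \text{Id}) + (\text{Id} \otimes \eta)$, and since filtered colimits in $\text{IndBan}_{k}$ commute with finite limits and with the completed tensor product, one obtains
\[
P(R) \cong \text{"colim"}_{r > \rho} P(\mathfrak{B}_{r}^{c}(V)) \cong \text{"colim"}_{r > \rho} V_{r} \cong V \cong R(t).
\]
This is the only genuinely categorical step in the argument; all other verifications are immediate transport of the radius-$r$ statements through a filtered colimit.
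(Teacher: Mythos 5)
Your proposal is correct and takes essentially the same route as the paper, whose proof simply says it "follows as in" Proposition \ref{DaggerNicholsAlgebrasInFiniteDimensions}: establish the radius-$r$ statement (here via Propositions \ref{BilinearFormGivesNicholsAlgebra} and \ref{NicholsAlgebrasEquivalentDefinition}, or directly via Proposition \ref{Radius1NicholsAlgebrasExists}, which in fact never needed finite-dimensionality) and then pass the degree-zero, primitivity and generation properties through the filtered colimit over $r>\rho$. Your explicit remark that $P(-)$ commutes with this filtered colimit because equalisers and $\hat{\otimes}$ do is precisely the step the paper leaves implicit, so nothing genuinely different is being done.
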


\begin{proof}
This follows as in the proof of Proposition \ref{DaggerNicholsAlgebrasInFiniteDimensions}.
\end{proof}

\begin{prop}
\label{WeakClassicalNicholsAlgebrasDense}
Let $V$ be a Banach space with a pre-braiding $c$ of norm at most 1 that restricts to an algebraic braiding $V \otimes V \rightarrow V \otimes V$ of vector spaces, and let $\langle - , - \rangle: V \hat{\otimes} V \rightarrow k$ be a non-degenerate symmetric bilinear form. Then the algebraic Nichols algebra of $V$, as defined in Definition 2.1 of \cite{PHA}, is dense in the Banach space $\mathfrak{B}_{r}^{c}(V)$ for each $r> 0$.
\end{prop}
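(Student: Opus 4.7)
The plan is to leverage the bilinear form description of the analytic Nichols algebra to exhibit the algebraic Nichols algebra as the image of a dense subspace. By Proposition \ref{NicholsAlgebrasEquivalentDefinition} we have $\mathfrak{B}_{r}^{c}(V) \cong T_{r}^{c}(V)/I_{r}$, and the proof of Proposition \ref{BilinearFormGivesNicholsAlgebra} records the explicit description
$$I_{r} = \left\{x \in T_{r}^{c}(V) \;\middle|\; \langle x, y \rangle = 0 \text{ for all } y \in \bigoplus\nolimits_{n \geq 0} V^{\otimes n}\right\}.$$
It follows that $I_{r} \cap \bigoplus_{n} V^{\otimes n}$ is precisely the radical of $\langle -, - \rangle$ on the algebraic tensor algebra $T_{\mathrm{alg}}(V) := \bigoplus_{n} V^{\otimes n}$, and by the purely algebraic analogue of Propositions \ref{BilinearFormGivesNicholsAlgebra} and \ref{NicholsAlgebrasEquivalentDefinition} (i.e.\ the classical Nichols algebra construction as recalled in \cite{PHA}), the quotient of $T_{\mathrm{alg}}(V)$ by this radical is the algebraic Nichols algebra of $(V, c)$. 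Hence the composition $T_{\mathrm{alg}}(V) \hookrightarrow T_{r}^{c}(V) \twoheadrightarrow \mathfrak{B}_{r}^{c}(V)$ factors through an injection of the algebraic Nichols algebra into $\mathfrak{B}_{r}^{c}(V)$.

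Next I would verify that $T_{\mathrm{alg}}(V)$ is dense in $T_{r}^{c}(V)$. In the {\bf (NA)} setting, $T_{r}^{c}(V) = \coprod_{n \geq 0}^{\leq 1} V_{r}^{\hat{\otimes} n}$ consists, by Definition \ref{Contracting(Co)Products}, of families $(v_{n})_{n \geq 0}$ with $\|v_{n}\| \rightarrow 0$; each such family is the norm limit of its truncations, which lie in the algebraic direct sum $\bigoplus_{n} V_{r}^{\hat{\otimes} n}$. Within each $V_{r}^{\hat{\otimes} n}$ the uncompleted tensor power $V^{\otimes n}$ is dense, so $T_{\mathrm{alg}}(V) = \bigoplus_{n} V^{\otimes n}$ is dense in $T_{r}^{c}(V)$.

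Finally, since the quotient map $T_{r}^{c}(V) \twoheadrightarrow \mathfrak{B}_{r}^{c}(V)$ is a continuous epimorphism of Banach spaces, it sends dense subsets to dense subsets, and so the image of $T_{\mathrm{alg}}(V)$, identified above with the algebraic Nichols algebra, is dense in $\mathfrak{B}_{r}^{c}(V)$. The only delicate point is ensuring that $I_{r} \cap T_{\mathrm{alg}}(V)$ coincides with the usual algebraic radical (so that the image really is the algebraic Nichols algebra and not some further quotient); this is immediate from the explicit form of $I_{r}$ displayed above, so no serious obstacle is anticipated.
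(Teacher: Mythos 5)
Your proposal is correct and follows essentially the same route as the paper: identify $\mathfrak{B}_{r}^{c}(V)$ with $T_{r}^{c}(V)/I_{r}$ via the bilinear-form description, note that the restriction of the pairing to $\bigoplus_{n} V^{\otimes n}$ has radical $I_{r} \cap \bigoplus_{n} V^{\otimes n}$ so that the image of the algebraic tensor algebra is the algebraic Nichols algebra, and conclude by density of $\bigoplus_{n} V^{\otimes n}$ in $T_{r}(V)$ together with continuity of the quotient map. The extra details you supply (the truncation argument for density and the explicit check that the image is not a further quotient) are sound and merely make explicit what the paper leaves implicit.
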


\begin{proof}
The algebraic Nichols algebra can be constructed as the quotient of the tensor algebra of vector spaces, $\bigoplus_{n \geq 0} V^{\otimes n}$, by the radical of the restriction of the induced bilinear form on $T_{r}(V) \hat{\otimes} T_{s}(V)$ for some sufficiently large $s>0$. The result then follows since $\bigoplus_{n \geq 0} V^{\otimes n}$ is dense in $T_{r}(V)$ for all $r>0$.
\end{proof}

\begin{prop}
\label{ClassicalNicholsAlgebrasDense}
Let $V$ be a Banach space with pre-braiding $c$ of norm at most $1$, and suppose that $R$ is a Banach analytic Nichols algebra of $V$. Then the algebraic Nichols algebra of $V$, as defined in Definition 2.1 of \cite{PHA}, is dense in $R$. Furthermore, if $V$ is finite dimensional, $R(n)$ is isomorphic as a vector space to the $n$th graded piece of the algebraic Nichols algebra.
\end{prop}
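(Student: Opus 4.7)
The strategy is to identify the image in $R$ of the algebraic tensor algebra with the algebraic Nichols algebra, and then use the ``generated by'' property of $R$ to upgrade this to a density statement.

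First I would define the algebra homomorphism $\phi : \bigoplus_{n \geq 0} V^{\otimes n} \to R$ induced by the inclusion $V \cong R(1) \hookrightarrow R$, and set $R_{\mathrm{alg}} := \mathrm{Im}(\phi)$, the algebraic subalgebra of $R$ generated by $V$. Since $V \subset P(R)$, the coproduct of $R$ restricts to $\Delta(v) = v \otimes 1 + 1 \otimes v$ for $v \in V$. Using that $\Delta$ is a morphism of braided algebras (and that the braiding restricts to $V \otimes V \to V \otimes V$ algebraically, as is needed for the algebraic Nichols algebra of Definition 2.1 of \cite{PHA} to make sense), one verifies inductively on the word length in $V$ that $\Delta(R_{\mathrm{alg}}) \subset R_{\mathrm{alg}} \otimes R_{\mathrm{alg}}$. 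Thus $R_{\mathrm{alg}}$ is a graded braided Hopf subalgebra with $R_{\mathrm{alg}}(0) = k$, $R_{\mathrm{alg}}(1) = V$, generated in degree one. Since $R_{\mathrm{alg}} \hookrightarrow R$ respects the Hopf structure, $P(R_{\mathrm{alg}}) \subset P(R) = V$, and the reverse inclusion is trivial. By the standard characterization, this identifies $R_{\mathrm{alg}}$ with the algebraic Nichols algebra of $V$.

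Next I would establish density. By the ``generated by'' hypothesis (Definition \ref{GeneratedBy}), the induced map $\coprod_{n \geq 0}^{\leq 1} V^{\hat{\otimes} n} \to R$ is an epimorphism in $\mathrm{Ban}_k$, hence has dense image. Projecting onto the $n$th graded component (using $R \cong \coprod_n^{\leq 1} R(n)$), the induced map $V^{\hat{\otimes} n} \to R(n)$ has dense image; since $V^{\otimes n}$ is itself dense in $V^{\hat{\otimes} n}$, the image $R_{\mathrm{alg}}(n)$ is dense in $R(n)$. An approximation across graded components—using that $\bigoplus_n R(n)$ is dense in $\coprod_n^{\leq 1} R(n)$—then yields that $\bigoplus_n R_{\mathrm{alg}}(n)$ is dense in $R$.

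For the finite-dimensional case, each $V^{\hat{\otimes} n}$ is finite-dimensional, so its image in $R(n)$ is a finite-dimensional, hence closed, subspace; combined with density this shows $V^{\hat{\otimes} n} \to R(n)$ is surjective and $R(n)$ is finite-dimensional. Since a dense subspace of a finite-dimensional Banach space is the whole space, $R_{\mathrm{alg}}(n) = R(n)$. The main subtlety—rather than a genuine obstacle—is confirming that $R_{\mathrm{alg}}$ is closed under the braided coproduct, which hinges on the braiding being algebraic on $V$; this is implicit in the referenced definition of the algebraic Nichols algebra, and everything else reduces to routine applications of continuity and the graded structure.
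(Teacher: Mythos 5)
Your proposal is correct and follows essentially the same route as the paper: both take the image of the algebraic tensor algebra under the map induced by $V \cong R(1) \hookrightarrow R$, identify it as the algebraic Nichols algebra via the sandwich $V \subset P(R_{\mathrm{alg}}) \subset P(R) = V$ together with generation in degree one, and then deduce density from the generation hypothesis and equality of graded pieces in the finite-dimensional case. Your extra care about the braiding restricting algebraically to $V \otimes V$ and about projecting density onto graded components only makes explicit what the paper leaves implicit.
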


\begin{proof}
In the category of vector spaces, there is a unique braided $\mathbb{N}$-graded Hopf algebra structure on $\bigoplus_{n \geq 0} V^{\otimes n}$ for which $V$ is primitive and the braiding restricts to $c$.  The inclusion $V \hookrightarrow R$ induces a morphism of braided graded Hopf algebras $\bigoplus_{n \geq 0} V^{\otimes n} \rightarrow R$. The image of this morphism, which we denote by $R_{0}$, is a braided graded Hopf algebra, generated by $V$, that is dense in $R$. Then $V \subset P(R_{0}) \subset P(R) = V$ and so $P(R_{0})=V$. Likewise $R_{0}(0)=k$ and $R_{0}(1)=V$. Thus $R_{0}$ is an algebraic Nichols algebra. Furthermore, each $R_{0}(n)$ must be dense in $R(n)$. If these are finite dimensional then they must be equal.
\end{proof}

\subsection{Constructing non-Archimedean analytic quantum groups}
\label{Constructingnon-Archimedeananalyticquantumgroups}

The main motivation behind Majid's work in \cite{DBoBG}, which we summarised in Section \ref{Analytic Bosonisation}, is the reconstruction of Lusztig's from of the quantum enveloping algebra. We will use the same technique to construct analytic analogues of these quantum enveloping algebras.
\\

Throughout the following, we will fix an element $q \in k \setminus \{0\}$ of norm 1 that is not a root of unity. We fix the following root datum of a Lie algebra.

\begin{defn}
\label{KacMoodyRootDatum}
Let $\mathfrak{g}$ be the Lie algebra defined by the data of
\begin{itemize}
\item[i)] a free $\mathbb{Z}$-module $\Phi$, the \emph{weight lattice}, a submodule $\Psi \subset \Phi$, the \emph{root lattice}, and a free basis $\{\alpha_{i} \mid i \in I\}$ of $\Psi$, the \emph{simple roots}, indexed over some set $I$;
\item[ii)] a symmetric bilinear form $( \cdot , \cdot ) : \Phi \times \Phi \rightarrow \mathbb{Q}$ such that $( \alpha_{i}, \alpha_{i}) \in 2 \mathbb{N}$ and $( \alpha_{i}, \alpha_{j}) \leq 0$ for $i, j \in I, i \neq j$; and
\item[iii)] \emph{simple coroots} $\lambda_{i} \in \Phi^{\ast}=\text{Hom}_{\mathbb{Z}}(\Phi, \mathbb{Z})$ such that $\lambda_{i}(\alpha) = \frac{2(\alpha_{i}, \alpha)}{(\alpha_{i}, \alpha_{i})}$ for $i \in I, \alpha \in \Phi$.
\end{itemize}
Then $\mathfrak{g}$ is generated over $\mathbb{Q}$ by elements $e_{i}, f_{i}, h_{i}$ for $i \in I$ subject to the relations
$$\begin{array}{cccc}
[h_{i},h_{j}] = 0, & [e_{i},f_{i}] = \delta_{ij} h_{i}, &
[h_{i},e_{j}] = \lambda_{i}(\alpha_{j})e_{j}, & [h_{i},f_{j}] = - \lambda_{i}(\alpha_{j})f_{j} \end{array},$$
and for $i \neq j$,
$$\begin{array}{cc}
(\text{ad}e_{i})^{1-\lambda_{i}(\alpha_{j})}e_{j} = 0, & (\text{ad}f_{i})^{1-\lambda_{i}(\alpha_{j})}f_{j} = 0, \end{array}$$
where ad is the \emph{adjoint map} $(\text{ad}x)(y) = [x,y]$.
\end{defn}

\begin{defn}
\label{CartanPart}
Let $H=\coprod_{\lambda \in \Phi^{\ast}}^{\leq 1} k \cdot K_{\lambda}$ be the Banach group Hopf algebra of $\Phi^{\ast}$ with
$$K_{\lambda} \cdot K_{\lambda'}=K_{\lambda + \lambda'}, \quad \Delta_{H}(K_{\lambda})=K_{\lambda} \otimes K_{\lambda}\quad\text{and}\quad S(K_{\lambda})=K_{-\lambda}.$$
We use the notation
$$t_{i}:=K_{\frac{(\alpha_{i},\alpha_{i})}{2}\lambda_{i}}$$
which we borrow from \cite{OCB}. Let $H'$ be the closed sub-Hopf algebra generated by $\{t_{i} \mid i \in I\}$, $H'=\coprod_{\underline{n} \in \mathbb{Z}^{I}}^{\leq 1} k \cdot \underline{t}^{\underline{n}}$.
\end{defn}

\begin{lem}
\label{HAPairingWeakQuasiTriangular}
There is a duality pairing $H \hat{\otimes} H' \rightarrow k$ defined by
$$ K_{\lambda} \otimes \underline{t}^{\underline{n}} \mapsto q^{\lambda(\sum n_{i} \alpha_{i})}$$
and simultaneous algebra homomorphisms and coalgebra anti-homomorphisms
$$\mathscr{R}:H' \rightarrow H, t_{i} \mapsto t_{i}, \, \, \, \overline{\mathscr{R}}:H' \rightarrow H, t_{i} \mapsto t_{i}^{-1},$$
for $i \in I$, making $H$ and $H'$ a weakly quasi-triangular dual pair.
\end{lem}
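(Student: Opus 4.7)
The proof will split into four parts that exploit the fact that $H$ and $H'$ are both contracting coproducts of one-dimensional spaces spanned by grouplike elements. First I would establish that the formula $K_\lambda\otimes\underline{t}^{\underline{n}}\mapsto q^{\lambda(\sum n_i\alpha_i)}$ defines a bounded bilinear form of norm at most $1$: because $|q|=1$ the formula has norm $1$ on every pair of basis vectors, so the universal property of the contracting coproduct extends it uniquely to $H\hat{\otimes} H'\to k$. The same topological basis argument will bound $\mathscr{R}$ and $\overline{\mathscr{R}}$ once they are defined on generators.

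Next I would verify the four duality-pairing axioms on the dense subspace of finite sums of grouplikes, where all comultiplications reduce to $g\mapsto g\otimes g$, all counits to $g\mapsto 1$, and all antipodes to $g\mapsto g^{-1}$. The multiplicativity axioms reduce to the bilinearity identity $q^{(\lambda+\mu)(\sum n_i\alpha_i)}=q^{\lambda(\sum n_i\alpha_i)}q^{\mu(\sum n_i\alpha_i)}$ and its analogue in the second slot, while the unit/counit and antipode compatibilities are immediate from $\lambda(0)=0$ and sign-swapping in the exponent. Continuity then pushes these identities from finite sums to all of $H\hat{\otimes} H'$.

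To construct $\mathscr{R}$ and $\overline{\mathscr{R}}$ I would use that $H'$ is the commutative Banach group Hopf algebra on $\mathbb{Z}^{I}$, so the assignments $t_i\mapsto t_i^{\pm 1}$ (with $t_i^{\pm 1}\in H$ pairwise commuting) extend uniquely to bounded algebra homomorphisms of norm at most $1$. Since the images are grouplike, both maps preserve comultiplications; because the coalgebra structure on a grouplike satisfies $\tau\circ\Delta(g)=g\otimes g=\Delta(g)$, they are simultaneously coalgebra homomorphisms and coalgebra anti-homomorphisms. Convolution invertibility follows by checking $\mathscr{R}\ast\overline{\mathscr{R}}(\underline{t}^{\underline{n}})=\underline{t}^{\underline{n}}\underline{t}^{-\underline{n}}=1=(\eta\varepsilon)(\underline{t}^{\underline{n}})$ on the spanning grouplikes and extending by continuity, yielding $\overline{\mathscr{R}}=\mathscr{R}^{-1}$ in $\mathrm{Hom}(H',H)$.

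Finally, the three weakly quasi-triangular diagrams will be verified on grouplike inputs $\underline{t}^{\underline{n}}\otimes\underline{t}^{\underline{m}}$ or $K_\lambda\otimes\underline{t}^{\underline{n}}$ and then extended by continuity. The first diagram reduces, using the identity $\tfrac{(\alpha_i,\alpha_i)}{2}\lambda_i(\alpha_j)=(\alpha_i,\alpha_j)$, to $\langle\underline{t}^{-\underline{n}},\underline{t}^{\underline{m}}\rangle=q^{-\sum_{i,j}n_i m_j(\alpha_i,\alpha_j)}=\langle\underline{t}^{-\underline{m}},\underline{t}^{\underline{n}}\rangle$, which holds by symmetry of $(\cdot,\cdot)$. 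The two hexagonal diagrams each collapse, after tracking the various swaps and evaluations, to the equality $\underline{t}^{\underline{n}}K_\lambda\underline{t}^{\mp\underline{n}}\cdot q^{\lambda(\sum n_i\alpha_i)}=K_\lambda\cdot q^{\lambda(\sum n_i\alpha_i)}$, which is immediate from commutativity of $H$. I expect the main obstacle to be purely combinatorial: keeping the five- and six-fold tensor permutations straight in the hexagons, rather than any genuine mathematical content, since once the reordering is tracked carefully each factor is either a power of $q$ or a grouplike and commutativity collapses everything onto the top edge.
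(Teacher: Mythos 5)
Your proposal is correct, but it takes a different route from the paper: the paper's entire proof is a citation to Lemma 4.1 of \cite{DBoBG}, where Majid establishes exactly this weakly quasi-triangular dual pair for the group Hopf algebras $k[\Phi^{\ast}]$ and the subalgebra generated by the $t_{i}$, and the paper implicitly relies on that algebraic statement transferring to the Banach completions. What you do instead is re-prove Majid's computation from scratch on the topological basis of grouplikes and then supply precisely the analytic bookkeeping that the citation leaves unstated: the pairing has norm $1$ on basis pairs because $|q|=1$, so it extends to $H \hat{\otimes} H' \to k$ by the universal property of the contracting coproduct; $\mathscr{R}$ and $\overline{\mathscr{R}}$ send basis grouplikes to basis grouplikes, hence are contracting algebra maps, coalgebra (anti-)homomorphisms by cocommutativity, and mutually convolution inverse since $\mathscr{R}(g)\overline{\mathscr{R}}(g)=1$ on grouplikes; and the three diagrams are checked on $K_{\lambda}$ and $\underline{t}^{\underline{n}}$ and extended by continuity. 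Your reductions are the right ones: the first diagram collapses via $\tfrac{(\alpha_{i},\alpha_{i})}{2}\lambda_{i}(\alpha_{j})=(\alpha_{i},\alpha_{j})$ to the symmetry of $(\cdot,\cdot)$, and the two hexagons collapse to $\underline{t}^{\underline{n}}K_{\lambda}\underline{t}^{\mp\underline{n}}=K_{\lambda}$, i.e.\ commutativity of $H$. In short, your argument buys a self-contained proof that makes the boundedness and density/continuity hypotheses explicit (which the paper glosses over), at the cost of redoing the tensor-permutation combinatorics that the citation outsources to Majid; both are valid.
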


\begin{proof}
This is Lemma 4.1 of \cite{DBoBG}.
\end{proof}

\begin{defn}
We will say that a Banach $H$-modules $M$ is a \emph{Banach weight space} of weight $\alpha \in \Phi$ if $K_{\lambda} \cdot m = q^{\lambda(\alpha)}m$ for all $m \in M$. We say that an Banach $H$-module $M$ \emph{decomposes into Banach weight space} if it is a contracting coproduct of Banach weight spaces $M_{\alpha}$ of weights $\alpha \in \Phi$, $M \cong \coprod_{\alpha \in \Phi}^{\leq 1} M_{\alpha}$. The weights of $M$ will be those $\alpha \in \Phi$ such that $M_{\alpha} \neq 0$. We will say that an IndBanach $H$-module $M$ \emph{decomposes locally into Banach weight spaces} if it can be written as a colimit of Banach $H$-modules that decompose into Banach weight spaces. Given a subset $X \subset \Phi$ we denote by $H\text{-Mod}_{X}$ the full subcategory of $H\text{-Mod}$ consisting of modules that decompose locally into Banach weight spaces with weights in $X$.
\end{defn}

\begin{remark}
Lemma \ref{HAPairingWeakQuasiTriangular} above, along with Proposition \ref{WeaklyQTPairComodulesBraided} and Proposition 4.4 of \cite{TRTfIBS}, says precisely that $H\text{-Mod}_{\Psi}$ is braided monoidal. In the next section we shall extend this braiding, under certain conditions, to $H\text{-Mod}_{\Phi}$.
\end{remark}

\begin{defn}
\label{AnalyticQuantumGroupPositivePart}
Let $V=\coprod_{i \in I}^{\leq 1} k \cdot v_{i}$ with basis $\{v_{i} \mid i \in I \}$. This is a left $H'$-comodule with coaction $v_{i} \mapsto t_{i} \otimes v_{i}$. The weakly quasi-triangular structure of $H$ and $H'$ then induces a braiding $c$ on $V$ given by
$$c(v_{i} \otimes v_{j})=q^{(\alpha_{i},\alpha_{j})}v_{j} \otimes v_{i}$$
of norm $\|c\| = 1$. Let $\langle -,- \rangle$ be the non-degenerate bilinear form on $V$ defined by
$$\langle v_{i},v_{j} \rangle = \delta_{i,j}\frac{1}{(q_{i}-q_{i}^{-1})} \quad \text{for} \quad q_{i}=q^{\frac{(\alpha_{i},\alpha_{i})}{2}}.$$
Given $0 < r$ or $0 \leq \rho$ we denote by $\mathbf{f}_{r}^{\text{an}}$ and $\mathbf{f}_{\rho}^{\dagger}$ the Nichols algebras $\mathfrak{B}_{r}^{c}(V)$ and $\mathfrak{B}_{\rho}^{c}(V)^{\dagger}$ respectively. We will also use the notation $U_{q}^{+}(\mathfrak{g})_{r}^{\text{an}}=U_{q}^{-}(\mathfrak{g})_{r}^{\text{an}}=\mathbf{f}_{r}^{\text{an}}$ and $U_{q}^{+}(\mathfrak{g})_{\rho}^{\dagger}=U_{q}^{-}(\mathfrak{g})_{\rho}^{\dagger}=\mathbf{f}_{\rho}^{\dagger}$. Note that these are braided IndBanach Hopf algebras in $H'\text{-Comod}=\text{Comod-}H'$.
\end{defn}

\begin{lem}
\label{PositivePartDense}
For each $0<r$, the positive part of the quantum group, as defined in \cite{ItQG} and \cite{DBoBG}, is dense in the Banach space $\mathbf{f}_{r}^{\text{an}}$.
\end{lem}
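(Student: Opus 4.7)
The plan is to identify the positive part $U_q^+(\mathfrak{g})$ (in the sense of Lusztig \cite{ItQG} and Majid \cite{DBoBG}) with the algebraic Nichols algebra of the braided vector space $(V,c)$ from Definition \ref{AnalyticQuantumGroupPositivePart}, and then invoke Proposition \ref{ClassicalNicholsAlgebrasDense}.

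First I would recall Lusztig's construction: $U_q^+(\mathfrak{g})$ is defined in \cite{ItQG} as the quotient of the tensor algebra $\bigoplus_{n\geq 0}V^{\otimes n}$ by the radical of a certain bilinear form, where the bilinear form is obtained by extending the form $\langle v_i,v_j\rangle=\delta_{ij}/(q_i-q_i^{-1})$ on $V$ via the coproduct $\Delta(v)=1\otimes v+v\otimes 1$ and the braiding $c(v_i\otimes v_j)=q^{(\alpha_i,\alpha_j)}v_j\otimes v_i$. This is precisely the setup in Definition \ref{AnalyticQuantumGroupPositivePart} restricted to the underlying vector space, and by Theorem 4.3 of \cite{PHA} the resulting algebra is canonically isomorphic to the algebraic Nichols algebra of $(V,c)$ (with respect to the same braiding).

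Next I would observe that $c$ and $\langle -,-\rangle$ satisfy the hypotheses of Proposition \ref{ClassicalNicholsAlgebrasDense}: $\|c\|=1$ since $|q^{(\alpha_i,\alpha_j)}|=1$, and the braiding restricts to an algebraic braiding on the dense subspace $\bigoplus_{i\in I} k\cdot v_i$ because each $c(v_i\otimes v_j)$ lies in $k\cdot v_j\otimes v_i$. Hence $R:=\mathbf{f}_r^{\mathrm{an}}=\mathfrak{B}_r^c(V)$ is an analytic Nichols algebra of $V$ to which Proposition \ref{ClassicalNicholsAlgebrasDense} applies, producing a dense image of the algebraic Nichols algebra inside $R$.

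Combining these two steps, the algebraic Nichols algebra is simultaneously dense in $\mathbf{f}_r^{\mathrm{an}}$ and isomorphic (as a graded Hopf algebra) to $U_q^+(\mathfrak{g})$ from \cite{ItQG}, \cite{DBoBG}, giving the desired density. The main subtlety, rather than an obstacle, is simply ensuring that the identification of Lusztig's positive part with the algebraic Nichols algebra of $(V,c)$ uses exactly the same braiding and bilinear form (up to rescaling $v_i\mapsto r v_i$, which is absorbed into the definition $V_r$); once this bookkeeping is done the result is immediate from Proposition \ref{ClassicalNicholsAlgebrasDense}.
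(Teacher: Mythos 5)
Your proposal is correct and is essentially the paper's own argument: the paper proves this lemma by combining Proposition \ref{ClassicalNicholsAlgebrasDense} with Theorem 4.2 of \cite{PHA} (the identification of Lusztig's positive part with the algebraic Nichols algebra of $(V,c)$), which is exactly your two steps. The only cosmetic difference is the citation of Theorem 4.3 rather than 4.2 of \cite{PHA} and your extra verification that $\mathbf{f}_r^{\mathrm{an}}=\mathfrak{B}_r^c(V)$ is an analytic Nichols algebra, which the paper leaves implicit via Proposition \ref{DaggerNicholsAlgebrasInFiniteDimensions}.
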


\begin{proof}
This follows from Proposition \ref{ClassicalNicholsAlgebrasDense} along with Theorem 4.2 of \cite{PHA}, which is a restatement of the constructions in Chapter 1 of \cite{ItQG}.
\end{proof}

\begin{lem}
Suppose $0< r,s$ such that $1 \leq |q_{i}-q_{i}^{-1}|rs$ for all $i \in I$. Then there is a duality pairing $\mathbf{f}_{s}^{\text{an}} \hat{\otimes} \mathbf{f}_{r}^{\text{an}} \rightarrow k$ as Banach Hopf algebras in $H\text{-Mod}=\text{Mod-}H$ extending $\langle-,-\rangle$ in Definition \ref{AnalyticQuantumGroupPositivePart}. Likewise, for $0 \leq \rho, \sigma$ with $1 \leq |q_{i}-q_{i}^{-1}|\rho \sigma$ for all $i \in I$ there is a duality pairing $\mathbf{f}_{\sigma}^{\dagger} \hat{\otimes} \mathbf{f}_{\rho}^{\dagger} \rightarrow k$.
\end{lem}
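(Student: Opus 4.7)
The plan is to reduce this directly to Lemma \ref{ExtendBilinearForm} and Proposition \ref{NicholsAlgebrasEquivalentDefinition}. The bilinear form in Definition \ref{AnalyticQuantumGroupPositivePart} has norm $C = \sup_{i \in I} |q_i - q_i^{-1}|^{-1}$, so the hypothesis $1 \leq |q_i - q_i^{-1}| rs$ for all $i$ is exactly the condition $C \leq rs$. Hence Lemma \ref{ExtendBilinearForm} yields a unique Hopf algebra duality pairing $T_s^c(V) \hat{\otimes} T_r^c(V) \to k$ extending $\langle -,- \rangle$.

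Next I would descend this pairing to the Nichols algebras. By Proposition \ref{BilinearFormGivesNicholsAlgebra}, the radicals $I_s \subset T_s^c(V)$ and $I_r \subset T_r^c(V)$ (which are independent of the paired radius) are precisely the left and right kernels of this pairing, and Proposition \ref{NicholsAlgebrasEquivalentDefinition} identifies $T_r^c(V)/I_r \cong \mathbf{f}_r^{\text{an}}$ and $T_s^c(V)/I_s \cong \mathbf{f}_s^{\text{an}}$. The pairing therefore factors uniquely through $\mathbf{f}_s^{\text{an}} \hat{\otimes} \mathbf{f}_r^{\text{an}} \to k$; since the quotient maps are Hopf algebra morphisms, the commuting diagrams in the definition of a duality pairing are inherited.

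For $H$-equivariance, I would first observe that on the generators $v_i$ of weight $\alpha_i$ we have
\[ \langle K_\lambda \cdot v_i, v_j \rangle = q^{\lambda(\alpha_i)} \frac{\delta_{ij}}{q_i - q_i^{-1}} = \langle v_i, K_\lambda \cdot v_j \rangle,\]
since the only nonzero terms occur at $i = j$. Both $(x,y) \mapsto \langle K_\lambda \cdot x, y \rangle$ and $(x,y) \mapsto \langle x, K_\lambda \cdot y \rangle$ are Hopf pairings $T_s^c(V) \hat{\otimes} T_r^c(V) \to k$ extending the $V$-level form (using that $H$ acts on $\mathbf{f}^{\text{an}}$ by graded algebra automorphisms via the $H'$-coaction of Lemma \ref{HAPairingWeakQuasiTriangular}), so the uniqueness clause of Lemma \ref{ExtendBilinearForm} forces them to agree, giving the required equivariance.

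For the dagger case, the strategy is to take a filtered colimit. Whenever $r > \rho$ and $s > \sigma$ one has $|q_i - q_i^{-1}| rs > |q_i - q_i^{-1}| \rho\sigma \geq 1$, so the analytic statement provides a compatible family of pairings $\mathbf{f}_s^{\text{an}} \hat{\otimes} \mathbf{f}_r^{\text{an}} \to k$ indexed by $(r,s)$ with $r > \rho$, $s > \sigma$. The step I expect to be most delicate is verifying that these pairings are coherent with the connecting maps $T_{r'}^c(V) \to T_r^c(V)$ for $r' \geq r$, but this is again a consequence of the uniqueness clause of Lemma \ref{ExtendBilinearForm}: each restriction of the pairing at radii $(r', s')$ to $T_r^c(V) \hat{\otimes} T_s^c(V)$ is a Hopf pairing extending the same $V$-level form, hence agrees with the one constructed at $(r,s)$. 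Passing to $\text{"colim"}_{r > \rho, s > \sigma}$ then produces the desired $H$-equivariant Hopf algebra duality pairing $\mathbf{f}_\sigma^{\dagger} \hat{\otimes} \mathbf{f}_\rho^{\dagger} \to k$.
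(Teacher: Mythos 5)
Your proposal is correct and is essentially the paper's own argument: the paper proves this lemma simply by citing Lemma \ref{ExtendBilinearForm}, with the descent to the Nichols algebras via the radicals of Proposition \ref{BilinearFormGivesNicholsAlgebra} and Proposition \ref{NicholsAlgebrasEquivalentDefinition}, the $H$-equivariance, and the colimit compatibility for the dagger case left implicit, all of which you fill in correctly. (Only a cosmetic slip: the transition maps run from larger to smaller radius, so in the dagger step one pulls back the pairing at $(r,s)$ to $T_{r'}^{c}(V) \hat{\otimes} T_{s'}^{c}(V)$ rather than ``restricting'' the $(r',s')$ pairing, but your uniqueness argument gives the required compatibility either way.)
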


\begin{proof}
This follows from Lemma \ref{ExtendBilinearForm}.
\end{proof}

\begin{defn}
\label{AnalyticQuantumGroups}
\label{DaggerQuantumGroups}
For $0< r,s$ with $1 \leq |q_{i}-q_{i}^{-1}|rs$ for all $i \in I$ we denote by $U_{q}(\mathfrak{g})_{r,s}^{\text{an}}$ the \emph{analytic quantum group} $U(\mathbf{f}_{r}^{\text{an}},H,\mathbf{f}_{s}^{\text{an}})$. We will denote by $U_{q}^{\leq 0}(\mathfrak{g})_{r}^{\text{an}}$ and $U_{q}^{\geq 0}(\mathfrak{g})_{r}^{\text{an}}$ the respective sub-Hopf algebras $\mathbf{f}_{r}^{\text{an}} \rtimes H$ and $\overline{H} \ltimes \overline{\mathbf{f}_{s}^{\text{an}}}$ of $U_{q}(\mathfrak{g})_{r,s}^{\text{an}}$. Let us denote by $F_{i}$ the element $v_{i} \otimes 1 \in \mathbf{f}_{r}^{\text{an}} \rtimes H$, and by $E_{i}$ the element $1 \otimes v_{i}\in \overline{H} \ltimes \overline{\mathbf{f}_{s}^{\text{an}}}$ for $i \in I$. For $0 \leq \rho, \sigma$ with $1 \leq |q_{i}-q_{i}^{-1}|\rho \sigma$ for all $i \in I$ we denote by $U_{q}(\mathfrak{g})_{\rho,\sigma}^{\dagger}$ the \emph{dagger quantum group} $U(\mathbf{f}_{\rho}^{\dagger},H,\mathbf{f}_{\sigma}^{\dagger})$. We will denote by $U_{q}^{\leq 0}(\mathfrak{g})_{\rho}^{\dagger}$ and $U_{q}^{\geq 0}(\mathfrak{g})_{\sigma}^{\dagger}$ the respective sub-Hopf algebras $\mathbf{f}_{\rho}^{\dagger} \rtimes H$ and $\overline{H} \ltimes \overline{\mathbf{f}_{\sigma}^{\dagger}}$ of $U_{q}(\mathfrak{g})_{\rho,\sigma}^{\dagger}$.
\end{defn}

\begin{prop}
\label{NAQGGraded}
$U_{q}(\mathfrak{g})_{r,s}^{\text{an}}$ and $U_{q}(\mathfrak{g})_{\rho,\sigma}^{\dagger}$ are analytically graded by $\mathbb{Z}I \cong \Psi$ (\emph{i.e} are graded by the Banach group Hopf algebra of $\Psi$).
\end{prop}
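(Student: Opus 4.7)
The plan is to promote the intrinsic $\mathbb{N}^{I}$-analytic gradings on the Nichols algebras $\mathbf{f}_{r}^{\text{an}}$ and $\mathbf{f}_{s}^{\text{an}}$ to a combined $\mathbb{Z}I$-grading on the double-bosonisation, placing one copy in positive degrees and the other in negative degrees, with the Cartan factor $H$ sitting in degree $0$.

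First I would equip $V$ with a $\Psi$-grading via $v_{i}\mapsto\alpha_{i}$ and observe that the braiding $c(v_{i}\otimes v_{j})=q^{(\alpha_{i},\alpha_{j})}v_{j}\otimes v_{i}$ clearly preserves this bigrading on $V\hat{\otimes}V$. The induced pre-braiding on $T_{r}^{c}(V)$ is then $\Psi$-graded, and since the universal coideal $I(V_{r})$ of Proposition \ref{Radius1NicholsAlgebrasExists} is homogeneous by construction, the grading descends to $\mathbf{f}_{r}^{\text{an}}$ and $\mathbf{f}_{s}^{\text{an}}$ as graded pre-braided Banach Hopf algebras. Declare $v_{i}=E_{i}$ to have degree $\alpha_{i}$ in the upper copy $\mathbf{f}_{s}^{\text{an}}$ and $v_{i}=F_{i}$ to have degree $-\alpha_{i}$ in the lower copy $\mathbf{f}_{r}^{\text{an}}$.

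Next I would verify that the two bosonisations $\mathbf{f}_{r}^{\text{an}}\rtimes H$ and $\overline{H}\ltimes\overline{\mathbf{f}_{s}^{\text{an}}}$, with $H$ in degree $0$, are $\Psi$-graded Hopf algebras. The cross-relations involve the induced $H$-action on $V$, which is scalar multiplication by the character $\lambda\mapsto q^{\pm(\alpha_{i},\lambda)}$ determined by $\mathscr{R},\overline{\mathscr{R}}:t_{i}\mapsto t_{i}^{\pm 1}$; this preserves degrees. The third, and genuinely interesting, check concerns the cross-product relation of the full double-bosonisation in Proposition \ref{DoubleBosonisationVariation}. Each factor of that multiplication — the coproducts $\Delta_{B}$, $\Delta_{\overline{C}}$, the antipode $S_{\overline{C}}$, and the map $R_{B,\overline{C}}$ (built from $\mathscr{R}$, $\overline{\mathscr{R}}$, and $H$-actions) — is graded; the pairing $\langle-,-\rangle:\mathbf{f}_{s}^{\text{an}}\hat{\otimes}\mathbf{f}_{r}^{\text{an}}\to k$ is graded from bidegree $(\underline{m},-\underline{n})$ to degree $0$ precisely because Lemma \ref{ExtendBilinearForm} forces it to vanish unless $\underline{m}=\underline{n}$. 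Composing these homogeneous morphisms yields a graded multiplication, and an analogous assembly gives a graded comultiplication.

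Putting these steps together, $U_{q}(\mathfrak{g})_{r,s}^{\text{an}}=\coprod_{\underline{\beta}\in\mathbb{Z}I}^{\leq 1}U_{q}(\mathfrak{g})_{r,s}^{\text{an}}(\underline{\beta})$ where the $\underline{\beta}$-graded piece is $\coprod_{\underline{m}-\underline{n}=\underline{\beta}}^{\leq 1}\mathbf{f}_{s}^{\text{an}}(\underline{m})\hat{\otimes}H\hat{\otimes}\mathbf{f}_{r}^{\text{an}}(\underline{n})$, giving the required coaction over the Banach group Hopf algebra of $\Psi$. The dagger case follows by passing to the filtered colimit over radii, since every ingredient used above is natural in $r,s$. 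The principal obstacle I expect will be the bookkeeping around the opposite sign conventions on the two copies, which is forced upon us by the $t_{i}\mapsto t_{i}^{\pm 1}$ asymmetry between $\mathscr{R}$ and $\overline{\mathscr{R}}$; once this is fixed consistently, the gradedness of every structure map reduces to the graded nature of the original bilinear form and the matching assignment of degrees.
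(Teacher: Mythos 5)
Your proposal is correct and is essentially the paper's own argument written out in explicit degrees: the paper simply observes that $\mathbf{f}_{r}^{\text{an}}$ and $\mathbf{f}_{s}^{\text{an}}$ carry $H'$-comodule structures, gives $H$ the trivial $H'$-coaction, and notes that every morphism used in forming $U(\mathbf{f}_{r}^{\text{an}},H,\mathbf{f}_{s}^{\text{an}})$ is an $H'$-comodule homomorphism, which is precisely your degree-by-degree verification (gradedness of the coproducts, the $H$-action, and the pairing via Lemma \ref{ExtendBilinearForm}) phrased in comodule language. Your explicit insistence that the $F$-copy be placed in degrees $-\alpha_{i}$ — forced by $[E_{i},F_{j}]$ landing in the degree-zero part $H$ — is the one bookkeeping point the paper's one-line proof leaves implicit, so your version is a faithful, slightly more careful unpacking rather than a different route.
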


\begin{proof}
Note that $\mathbf{f}_{r}^{\text{an}}$ and $\mathbf{f}_{s}^{\text{an}}$ are $H'$-comodules, both left and right since $H'$ is cocommutative. If we give $H$ the trivial $H'$-coaction,
$$H \cong k \hat{\otimes} H \overset{\eta_{H'} \otimes \text{Id}_{H}}{\longrightarrow} H' \hat{\otimes} H,$$
then all of the morphisms involved in defining $U(\mathbf{f}_{r}^{\text{an}},H,\mathbf{f}_{s}^{\text{an}})$ are $H'$-comodule homomorphisms. The result then follows since $H'$ is isomorphic to the Banach group Hopf algebra of $\Phi$.
\end{proof}

\begin{prop}
\label{QuantumGroupDenseInNAAnalyticVersion}
For each $0< r,s$ with $1 \leq |q_{i}-q_{i}^{-1}|rs$ for all $i \in I$, the quantum enveloping algebra $U_{q}(\mathfrak{g})$ is dense in the Banach space $U_{q}(\mathfrak{g})_{r,s}^{\text{an}}$, and the Hopf structure on $U_{q}(\mathfrak{g})_{r,s}^{\text{an}}$ restricts to the usual Hopf structure on $U_{q}(\mathfrak{g})$.
\end{prop}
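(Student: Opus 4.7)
The plan is to combine the classical density results for the positive and negative parts with the explicit description of the underlying Banach space of the double bosonisation. As an IndBanach space
\[ U_q(\mathfrak{g})_{r,s}^{\text{an}} \cong \mathbf{f}_r^{\text{an}} \hat{\otimes} H \hat{\otimes} \mathbf{f}_s^{\text{an}} \]
by the definition following Proposition \ref{DoubleBosonisationVariation}. Lemma \ref{PositivePartDense} gives density of $U_q^{-}(\mathfrak{g})$ in $\mathbf{f}_r^{\text{an}}$ and of $U_q^{+}(\mathfrak{g})$ in $\mathbf{f}_s^{\text{an}}$, while $U_q^{0}(\mathfrak{g})$ is literally the algebraic group Hopf algebra on $\Phi^{\ast}$ sitting densely inside its contracting completion $H$. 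Since in the \textbf{(NA)} setting the contracting completed tensor product preserves density of algebraic dense subspaces, the algebraic triple tensor product $U_q^{-}(\mathfrak{g}) \otimes U_q^{0}(\mathfrak{g}) \otimes U_q^{+}(\mathfrak{g})$, identified with $U_q(\mathfrak{g})$ via the classical triangular decomposition, is dense in $U_q(\mathfrak{g})_{r,s}^{\text{an}}$.

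For the second assertion, by density and continuity of all the Hopf structure maps it suffices to verify agreement on the generators $F_i, E_i \in U_q(\mathfrak{g})_{r,s}^{\text{an}}$ of Definition \ref{AnalyticQuantumGroups} together with the $K_\lambda \in H$. The bosonisation sub-Hopf algebras $\mathbf{f}_r^{\text{an}} \rtimes H$ and $\overline{H} \ltimes \overline{\mathbf{f}_s^{\text{an}}}$ produce, via the formulas recalled in Section \ref{Analytic Bosonisation} applied to the $H'$-coactions $v_i \mapsto t_i \otimes v_i$ from Definition \ref{AnalyticQuantumGroupPositivePart}, precisely the usual cross-relations $K_\lambda F_i K_{-\lambda} = q^{-\lambda(\alpha_i)} F_i$ and $K_\lambda E_i K_{-\lambda} = q^{\lambda(\alpha_i)} E_i$ together with the standard comultiplications and antipodes on the $E_i$ and $F_i$. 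The quantum Serre relations amongst the $F_i$ (respectively the $E_i$) are exactly the defining relations of the Nichols algebra, and so are built into $\mathbf{f}_r^{\text{an}}$ and $\mathbf{f}_s^{\text{an}}$ by Proposition \ref{ClassicalNicholsAlgebrasDense} together with Theorem 4.3 of \cite{PHA}.

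The main obstacle is the mixed commutator relation $[E_i, F_j] = \delta_{ij}(t_i - t_i^{-1})/(q_i - q_i^{-1})$, which is not internal to either bosonisation and must instead be extracted from the explicit double bosonisation multiplication formula of Proposition \ref{DoubleBosonisationVariation}. Concretely, I would apply that formula to $F_i \otimes E_j$ and unpack the composition: the primitive coproducts $\Delta(v_i) = v_i \otimes 1 + 1 \otimes v_i$, the antipode $S_{\overline{\mathbf{f}_s^{\text{an}}}}$, the R-matrix maps $\mathscr{R}(v_j) = t_j$ and $\overline{\mathscr{R}}(v_j) = t_j^{-1}$ coming from Lemma \ref{HAPairingWeakQuasiTriangular}, together with the pairing $\langle v_i, v_j \rangle = \delta_{ij}/(q_i - q_i^{-1})$ from Definition \ref{AnalyticQuantumGroupPositivePart}, should combine to contribute the Kronecker term on the right-hand side while the remaining contributions reassemble into $E_j F_i$. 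Matching the resulting commutator to the standard presentation of $U_q(\mathfrak{g})$ identifies the dense subalgebra with $U_q(\mathfrak{g})$ as a Hopf algebra, and the Hopf structure agreement then extends uniquely to the whole of $U_q(\mathfrak{g})_{r,s}^{\text{an}}$ by continuity.
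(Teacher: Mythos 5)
Your proposal is correct and follows essentially the same route as the paper: density comes from Lemma \ref{PositivePartDense}, density of the algebraic Cartan part in $H$, the behaviour of the completed tensor product, and the PBW triangular decomposition, exactly as in the paper's proof. The only difference is that where you propose to re-derive the cross relations (in particular $[E_i,F_j]=\delta_{ij}(t_i-t_i^{-1})/(q_i-q_i^{-1})$) by unwinding the multiplication formula of Proposition \ref{DoubleBosonisationVariation}, the paper simply cites Proposition 4.3 of \cite{DBoBG}, which is precisely Majid's statement that the double bosonisation of the algebraic Nichols algebras over the group algebra recovers Lusztig's $U_{q}(\mathfrak{g})$ with its usual Hopf structure, so your computation is a (valid) re-proof of that citation rather than a new ingredient.
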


\begin{proof}
This follows from Proposition 4.3 of \cite{DBoBG}, Lemma \ref{PositivePartDense} and the triangular decomposition of $U_{q}(\mathfrak{g})$ given by the Poincar\'e-Birkhoff-Witt Theorem.
\end{proof}

\begin{prop}
\label{EpimorphismFromAnalyticQuantumGroup}
Suppose $\mathcal{U}$ is a Banach Hopf algebra in which $U_{q}(\mathfrak{g})$ is a dense sub-Hopf algebra, with $\mathcal{U}$ analytically $\mathbb{Z}I$ graded (\emph{i.e} are graded by the Banach group Hopf algebra of $\Psi$) extending the grading on $U_{q}(\mathfrak{g})$. Then there is an epimorphism of Banach Hopf algebras
$$U_{q}(\mathfrak{g})_{r,s}^{\text{an}} \rightarrow \mathcal{U}$$
for some $r,s>0$ with $|q_{i}-q_{i}^{-1}|rs\geq 1$ for all $i \in I$.
\end{prop}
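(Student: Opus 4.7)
The plan is to use the triangular decomposition $U_{q}(\mathfrak{g})_{r,s}^{\text{an}} \cong \mathbf{f}_{r}^{\text{an}} \hat{\otimes} H \hat{\otimes} \mathbf{f}_{s}^{\text{an}}$ provided by the double-bosonisation construction together with the density (Proposition~\ref{QuantumGroupDenseInNAAnalyticVersion}) of $U_{q}(\mathfrak{g})$ in the source, and density by hypothesis in the target. The epimorphism is then forced to be the unique continuous extension of the identity inclusion $U_{q}(\mathfrak{g}) \hookrightarrow \mathcal{U}$, built up piece by piece from compatible contracting algebra maps on each of the three tensor factors.

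First I would choose the radii. Since $I$ is finite, the finitely many elements $F_{i}, E_{i} \in U_{q}(\mathfrak{g}) \subset \mathcal{U}$ have finite $\mathcal{U}$-norms, so I may take $r \geq \max_{i} \|F_{i}\|_{\mathcal{U}}$ and $s \geq \max_{i} \|E_{i}\|_{\mathcal{U}}$, enlarged if needed to ensure $|q_{i}-q_{i}^{-1}|rs \geq 1$ for all $i$, and rescale the norm on $\mathcal{U}$ so that its multiplication is contracting. The contracting assignment $V_{r} \to \mathcal{U}$, $v_{i} \mapsto F_{i}$, then extends uniquely to a contracting algebra homomorphism $T_{r}(V) \to \mathcal{U}$ using the universal property of the Banach tensor algebra in the contracting category. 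Its closed kernel contains the algebraic Serre ideal, since these relations already vanish in $U_{q}^{-}(\mathfrak{g}) \subset \mathcal{U}$. Combining Propositions~\ref{WeakClassicalNicholsAlgebrasDense} and \ref{ClassicalNicholsAlgebrasDense} with the finite-dimensionality of each $V^{\otimes n}$ identifies the closure of this algebraic ideal in $T_{r}(V)$ with $I(V_{r})$, so the map descends to $\mathbf{f}_{r}^{\text{an}} = T_{r}(V)/I(V_{r})$. The identical argument produces an algebra map $\mathbf{f}_{s}^{\text{an}} \to \mathcal{U}$ via $v_{i} \mapsto E_{i}$. For the Cartan factor, the grouplikes $K_{\lambda}$ lie in the degree-$0$ piece $\mathcal{U}_{0}$ by the analytic $\mathbb{Z}I$-grading; contracting multiplication together with $K_{\lambda}K_{-\lambda} = 1$ and the boundedness of the antipode force $\|K_{\lambda}\|_{\mathcal{U}} = 1$ for every $\lambda \in \Phi^{*}$, so the Cartan inclusion extends to a contracting map $H \to \mathcal{U}$.

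Composing these three maps with the multiplication on $\mathcal{U}$ gives a bounded linear map $\mathbf{f}_{r}^{\text{an}} \hat{\otimes} H \hat{\otimes} \mathbf{f}_{s}^{\text{an}} \to \mathcal{U}$, whose restriction to the dense subalgebra $U_{q}(\mathfrak{g})$ reproduces the identity inclusion via the triangular decomposition of $U_{q}(\mathfrak{g})$. All of the Hopf structure maps on both sides are continuous and, by construction, agree on this dense sub-Hopf algebra, so the map promotes to a morphism of Banach Hopf algebras; its image contains the dense $U_{q}(\mathfrak{g}) \subset \mathcal{U}$, hence it is epic. The main obstacle I anticipate is the Cartan step: uniformly forcing $\|K_{\lambda}\|_{\mathcal{U}} = 1$ for $\lambda$ ranging over all of $\Phi^{*}$ requires careful use of the non-Archimedean Hopf structure, since the analytic $\mathbb{Z}I$-grading by itself does not bound these norms, and without such a bound the full group Banach Hopf algebra $H$ does not map into $\mathcal{U}$.
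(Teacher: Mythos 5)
Your treatment of the two Nichols-algebra factors is essentially sound and is a legitimate variant of the paper's route: where the paper takes the closures $\mathcal{U}^{\pm}$ of $U_{q}^{\pm}(\mathfrak{g})$ in $\mathcal{U}$, observes they are analytic Nichols algebras of $V$, and invokes the universal property (Proposition \ref{NicholsAlgebraUniversalProperty}), you descend directly from $T_{r}(V)\rightarrow\mathcal{U}$ by identifying the closed ideal generated by the quantum Serre relations with $I(V_{r})$; that identification is correct (it is exactly what the paper uses later, via Lusztig's Theorem 33.1.3 together with Propositions \ref{WeakClassicalNicholsAlgebrasDense} and \ref{ClassicalNicholsAlgebrasDense}, in the proof of Theorem \ref{QuantumGroupModuloh}), though you should cite Lusztig's theorem explicitly since the vanishing of the Serre relations in $\mathcal{U}$ alone only gives one inclusion of ideals. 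The final assembly (compose the three maps with multiplication, check the Hopf compatibilities on the dense copy of $U_{q}(\mathfrak{g})$, conclude epimorphy from density of the image) matches the paper.

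The genuine gap is the Cartan step, and it is exactly the point you flag as an obstacle: your proposed argument does not work. Contracting multiplication and $K_{\lambda}K_{-\lambda}=1$ only give $\|K_{\lambda}\|\cdot\|K_{-\lambda}\|\geq 1$, and boundedness of the antipode gives $\|K_{-\lambda}\|\leq\|S\|\,\|K_{\lambda}\|$; none of these produce an upper bound on $\|K_{\lambda}\|$, let alone a bound uniform over the infinite group $\Phi^{\ast}$, and without a uniform bound no bounded map $H=\coprod^{\leq 1}_{\lambda\in\Phi^{\ast}}k\cdot K_{\lambda}\rightarrow\mathcal{U}$ exists, so the whole construction collapses. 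The missing idea, which is how the paper handles it, is to use that each $K_{\lambda}$ is grouplike for the comultiplication of $\mathcal{U}$: since $\Delta$ is bounded, say $\|\Delta\|\leq C$, one gets
$$\|K_{\lambda}\|^{2}=\|K_{\lambda}\otimes K_{\lambda}\|=\|\Delta(K_{\lambda})\|\leq C\,\|K_{\lambda}\|,$$
hence $\|K_{\lambda}\|\leq C$ uniformly in $\lambda$. Note also that you do not need $\|K_{\lambda}\|=1$: a uniform bound $C$ already yields a bounded (not necessarily contracting) map $H\rightarrow\mathcal{U}^{0}$ on the contracting coproduct, which is all the final composite $\mathbf{f}^{\text{an}}_{r}\hat{\otimes}H\hat{\otimes}\mathbf{f}^{\text{an}}_{s}\rightarrow\mathcal{U}$ requires, since the statement only asks for a bounded epimorphism.
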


\begin{proof}
Let $\mathcal{U}^{-}$, $\mathcal{U}^{0}$, and $\mathcal{U}^{+}$ be the respective closured of $U_{q}^{-}(\mathfrak{g})$, $U_{q}^{0}(\mathfrak{g})$, and $U_{q}^{+}(\mathfrak{g})$ in $\mathcal{U}$. The $\mathbb{Z}I$ grading on $U_{q}^{+}(\mathfrak{g})$ is concentrated in degrees in the submonoid $\mathbb{N}I$, and hence so is the analytic grading on $\mathcal{U}^{+}$. The homomorphism of monoids from this submonoid to $\mathbb{N}$, $\sum_{i \in I}n_{i} \cdot i \mapsto \sum_{i \in I}n_{i}$ gives $\mathcal{U}^{+}$ an analytic $\mathbb{N}$ grading compatible with that of the algebraic Nichols algebra $U_{q}^{+}(\mathfrak{g})$. Since $\mathcal{U}^{+}$ is a completion of the algebraic Nichols algebra $U_{q}^{+}(\mathfrak{g})$ it must be an analytic Nichols algebra of $V$ as in Definition \ref{AnalyticQuantumGroupPositivePart}, and hence by Proposition \ref{EpimorphismFromAnalyticQuantumGroup} we have an epimorphism $\mathbf{f}_{s} \rightarrow \mathcal{U}^{+}$ for some $s>0$. Likewise we obtain an epimorphism $\mathbf{f}_{r} \rightarrow \mathcal{U}^{-}$ for some $r>0$, and without loss of generality we may take $r$ and $s$ such that $rs|q_{i}-q_{i}^{-1}| \geq 1$ for all $i \in I$. Finally, each $K_{\lambda} \in \mathcal{U}_{0}$ is grouplike, so since comultiplication on $\mathcal{U}_{0}$ is bounded by some constant $C>0$ we have $\|K_{\lambda}\|^{2}=\|K_{\lambda} \otimes K_{\lambda}\| \leq C\|K_{\lambda}\|$, so $\|K_{\lambda}\|\leq C$. Thus we can define a bounded epimorphism $H \rightarrow \mathcal{U}^{0}$ by mapping each $K_{\lambda}$ to the associated element of $U_{q}^{0}(\mathfrak{g})$ inside $\mathcal{U}^{0}$. This gives us an epimorphism
$$U_{q}(\mathfrak{g})^{\text{an}}_{r,s} = \mathbf{f}^{\text{an}}_{r} \hat{\otimes} H \hat{\otimes} \mathbf{f}^{\text{an}}_{s} \twoheadrightarrow \mathcal{U}^{-} \hat{\otimes} \mathcal{U}^{0} \hat{\otimes} \mathcal{U}^{+} \overset{\mu}{\twoheadrightarrow} \mathcal{U}$$
that restricts to a Hopf algebra homomorphism on the dense subspace $U_{q}(\mathfrak{g})$.
\end{proof}

\subsection{Quasi-triangularity and the quasi-R-matrix}
\

Suppose throughout this section that the symmetrised Cartan matrix associated to our root datum in Definition \ref{KacMoodyRootDatum}, $A=((\alpha_{i},\alpha_{j}))_{i,j \in I}$, is invertible (over $\mathbb{Q}$). Suppose further that $q=\text{exp}(\hslash)$ for some $\hslash \in k$ of sufficiently small norm such that ${\frac{1}{|n!|}(|\hslash| \cdot \text{max}_{i,j}|A^{-1}_{i,j}|)^{n}}$ converges to $0$, where $A_{i,j}^{-1}$ are the entries of the inverse of the Cartan matrix.

\begin{remark}
If $k$ is an extension of $\mathbb{Q}_{p}$, the requirement that ${\frac{1}{|n!|}(|\hslash| \cdot \text{max}_{i,j}|A^{-1}_{i,j}|)^{n}}$ converges to $0$ is equivalent to $|\hslash| \cdot \text{max}_{i,j}|A^{-1}_{i,j}|<p^{\frac{1}{1-p}}$. If $k$ is such that $|n|=1$ for all integers $n \in \mathbb{Z}$ then this is just the assumption that $|\hslash|<1$.
\end{remark}

\begin{defn}
Let us denote by $\mathscr{H}$ the Banach space ${\mathscr{H}:=\coprod_{\alpha \in \mathbb{Z}I}^{\leq 1} k \cdot H_{\alpha}}$. This has an algebra structure induced by the group structure of $\mathbb{Z}I$, and we make $\mathscr{H}$ a Hopf algebra by defining
$$\Delta_{\mathscr{H}}(H_{i})=1 \otimes H_{i} + H_{i} \otimes 1 \quad  \text{for} \quad i \in I.$$
We will denote by $e:H' \rightarrow \mathscr{H}$ the morphism of Hopf algebras determined by
$$t_{i} \mapsto \text{exp}\left(\frac{(\alpha_{i},\alpha_{i})}{2}\hslash H_{i}\right)=\sum_{n \geq 0} \frac{(\alpha_{i},\alpha_{i})^{n}\hslash^{n}}{2^{n} \cdot n!}H_{ni}.$$
\end{defn}

\begin{lem}
\label{CartanPartQuasiTriangular}
Under our assumptions, $\mathscr{H}$ is quasi-triangular with R-matrix
$$\mathscr{R}_{\mathscr{H}}=\text{exp}(\hslash \sum_{i,j}A^{-1}_{i,j}H_{i} \otimes H_{j}).$$
\end{lem}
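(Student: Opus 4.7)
The plan is to verify the three R-matrix axioms directly, using the commutativity of $\mathscr{H}$ together with the cocommutativity inherited from the primitivity of the generators $H_i$.

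First I would establish that $\mathscr{R}$ is a well-defined, convolution-invertible generalised element of $\mathscr{H} \hat{\otimes} \mathscr{H}$. Setting $X = \sum_{i,j} A^{-1}_{i,j} H_i \otimes H_j$, the strong triangle inequality gives $\|X\| \leq \max_{i,j} |A^{-1}_{i,j}|$, and since $\mathscr{H}$ (hence $\mathscr{H} \hat{\otimes} \mathscr{H}$) is a commutative algebra with $\|H_i\| = 1$, we have $\|X^n\| \leq \|X\|^n$. The standing hypothesis that $\tfrac{1}{|n!|}(|\hslash|\cdot\max_{i,j}|A^{-1}_{i,j}|)^n \to 0$ then guarantees that $\sum_{n\geq 0} \tfrac{(\hslash X)^n}{n!}$ converges to an element, which we name $\mathscr{R} = \exp(\hslash X)$. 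Since $X$ commutes with $-X$, the term-by-term binomial identity gives $\exp(\hslash X)\exp(-\hslash X) = 1_{\mathscr{H}\hat{\otimes}\mathscr{H}}$, so $\mathscr{R}^{-1} = \exp(-\hslash X)$, which also converges by the same estimate.

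Next I would handle the third axiom, $\tau \circ \Delta = \mathscr{R}(\Delta)\mathscr{R}^{-1}$. Because each $H_i$ is primitive, $\tau \Delta(H_i) = \Delta(H_i)$, and since both $\tau\circ\Delta$ and $\Delta$ are continuous algebra homomorphisms they must agree on the (dense) subalgebra generated by the $H_i$; hence $\mathscr{H}$ is cocommutative. Moreover, $\mathscr{H}\hat{\otimes}\mathscr{H}$ is commutative, so $\mathscr{R}$ commutes with every element, in particular with $\Delta(x)$, which yields $\mathscr{R}\cdot\Delta(x)\cdot\mathscr{R}^{-1} = \Delta(x) = \tau\Delta(x)$.

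For the two hexagon axioms I would use that $\Delta \otimes \text{Id}$ and $\text{Id} \otimes \Delta$ are bounded algebra homomorphisms and therefore commute with the limits defining the exponential, giving
$$(\Delta \otimes \text{Id})(\mathscr{R}) = \exp\!\Bigl(\hslash \sum_{i,j} A^{-1}_{i,j}\bigl(1 \otimes H_i + H_i \otimes 1\bigr) \otimes H_j\Bigr).$$
The summands $H_i \otimes 1 \otimes H_j$ and $1 \otimes H_i \otimes H_j$ commute in the commutative algebra $\mathscr{H}^{\hat{\otimes}3}$, so the exponential factors as $\mathscr{R}_{13} \cdot \mathscr{R}_{23} = \mathscr{R}_{13} \ast \mathscr{R}_{23}$. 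An entirely analogous computation for $\text{Id}\otimes\Delta$ gives $\mathscr{R}_{13} \cdot \mathscr{R}_{12} = \mathscr{R}_{13} \ast \mathscr{R}_{12}$.

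The main obstacle is the careful analytic bookkeeping: verifying convergence of $\exp(\hslash X)$ in $\mathscr{H}\hat{\otimes}\mathscr{H}$, transporting the formal identity $\exp(a+b) = \exp(a)\exp(b)$ for commuting $a,b$ to this Banach setting, and justifying that continuous algebra homomorphisms may be pushed inside the exponentials. Each of these depends essentially on the non-Archimedean triangle inequality together with the precise hypothesis on $|\hslash|$; once they are in place, all R-matrix axioms reduce to formal consequences of commutativity, cocommutativity, and primitivity of the $H_i$.
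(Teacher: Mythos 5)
Your proposal is correct and follows essentially the same route as the paper: the paper's proof likewise reduces all the R-matrix axioms to the commutativity and cocommutativity of $\mathscr{H}$ (with primitivity of the $H_i$ implicitly doing the work for the hexagon identities) and checks convergence via the same estimate $\|\hslash\sum_{i,j}A^{-1}_{i,j}H_{i}\otimes H_{j}\|\leq|\hslash|\cdot\max_{i,j}|A^{-1}_{i,j}|$ together with the standing hypothesis on $|\hslash|$. You simply spell out the details that the paper dismisses as trivial.
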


\begin{proof}
Since $\mathscr{H}$ is both commutative and cocommutative, it is trivial to check that $\mathscr{R}_{\mathscr{H}}$ is an R-matrix. It remains only to check that this converges, which follows from our assumptions since
$$\|\hslash \sum_{i,j}A^{-1}_{i,j}H_{i} \otimes H_{j}\| \leq |\hslash| \cdot \text{max}_{i,j}|A^{-1}_{i,j}|.$$
\end{proof}

\begin{prop}
\label{HModuleBraiding}
There is a braiding on $H\text{-Mod}_{\Phi}$ extending that of Lemma \ref{HAPairingWeakQuasiTriangular}.
\end{prop}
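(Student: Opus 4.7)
The plan is to produce the braiding on $H\text{-Mod}_\Phi$ by exhibiting it as the restriction of the braiding on $\mathscr{H}\text{-Mod}$ that comes, via Proposition \ref{QuasiTriangularBraiding}, from the R-matrix $\mathscr{R}_\mathscr{H}$ of Lemma \ref{CartanPartQuasiTriangular}. The bridge between the two categories is the Hopf algebra morphism $e:H' \to \mathscr{H}$, used to upgrade a weight decomposition over $\Phi$ into an $\mathscr{H}$-action.

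Concretely, given $M \in H\text{-Mod}_\Phi$ with local weight decomposition $M \cong \coprod_{\alpha \in \Phi} M_\alpha$, I would define an $\mathscr{H}$-module structure by letting each $H_i$ act on $M_\alpha$ by the integer scalar $\lambda_i(\alpha)$. Because the $H_\beta$ multiply according to the group law on $\mathbb{Z}I$ and the actions of the $H_i$ are commuting scalars on each weight component, this extends uniquely to a bounded action of all of $\mathscr{H}$. Compatibility with $H'$ through $e$ is the identity $e(t_i)|_{M_\alpha} = \exp(\tfrac{(\alpha_i,\alpha_i)}{2}\hslash\,\lambda_i(\alpha)) = q^{(\alpha_i,\alpha)} = t_i|_{M_\alpha}$, which ensures we recover the original $H$-action on the subalgebra $H'$. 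Morphisms in $H\text{-Mod}_\Phi$ preserve weight spaces, so they commute with each $H_i$; hence this assignment is a fully faithful functor $F:H\text{-Mod}_\Phi \to \mathscr{H}\text{-Mod}$. Monoidality of $F$ follows from the primitive coproduct $\Delta_\mathscr{H}(H_i) = 1\otimes H_i + H_i \otimes 1$: on $M_\alpha \hat{\otimes} N_\beta$, $H_i$ acts by $\lambda_i(\alpha)+\lambda_i(\beta)=\lambda_i(\alpha+\beta)$, which is exactly the action prescribed by $F(M \hat{\otimes} N)$ via the weight-$(\alpha+\beta)$ grading coming from $\Delta_H(K_\lambda)=K_\lambda \otimes K_\lambda$.

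Pulling the braiding of Proposition \ref{QuasiTriangularBraiding} back along $F$ gives natural isomorphisms $\Psi_{M,N}:M \hat{\otimes} N \to N \hat{\otimes} M$ in $H\text{-Mod}_\Phi$; these are automatically $H$-linear (not just $H'$-linear) because $\mathscr{R}_\mathscr{H}$ lies in the degree-zero part of $\mathscr{H}\hat{\otimes}\mathscr{H}$ under the analytic grading, so it acts diagonally on each bigraded summand and therefore commutes with every $K_\lambda$, while the hexagon axioms descend from $\mathscr{H}\text{-Mod}$. The main obstacle, and the last step, is verifying that this braiding actually \emph{extends} the one on $H\text{-Mod}_\Psi$ from Lemma \ref{HAPairingWeakQuasiTriangular}: one has to show that on $M_\alpha \hat{\otimes} N_\beta$ with $\alpha,\beta \in \Psi$ the scalar $\exp\bigl(\hslash\sum_{i,j}A^{-1}_{ij}\lambda_i(\alpha)\lambda_j(\beta)\bigr)$ produced by $\mathscr{R}_\mathscr{H}$ coincides with the $q^{(\alpha,\beta)}$ obtained via the pairing $\langle t^{\underline m}, t^{\underline n}\rangle = q^{(\alpha,\beta)}$. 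Writing $\lambda_i(\alpha)=\tfrac{2(\alpha_i,\alpha)}{(\alpha_i,\alpha_i)}$ and using that, for $\alpha,\beta \in \Psi$ expressed in the simple-root basis as $\alpha = \sum a_k \alpha_k$, $\beta=\sum b_l \alpha_l$, the Gram matrix $A$ satisfies the elementary identity that makes $\sum_{i,j} A^{-1}_{ij}\lambda_i(\alpha)\lambda_j(\beta) = (\alpha,\beta)$ on $\Psi$, the two braidings agree; this matches the classical fact that the Cartan-part R-matrix recovers the standard symmetric pairing on the root lattice, and completes the verification.
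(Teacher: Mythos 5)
Your overall strategy---transporting a braiding from $\mathscr{H}$-modules via the R-matrix $\mathscr{R}_{\mathscr{H}}$ of Lemma \ref{CartanPartQuasiTriangular}, by letting the $H_{i}$ act by scalars on each weight space---is exactly the paper's, but your normalisation of that action breaks the one verification that carries the content of the proposition. You let $H_{i}$ act on $M_{\alpha}$ by $\lambda_{i}(\alpha)$ (chosen so that $e(t_{i})$ reproduces the $t_{i}$-action) and then assert the ``elementary identity'' $\sum_{i,j}A^{-1}_{i,j}\lambda_{i}(\alpha)\lambda_{j}(\beta)=(\alpha,\beta)$ for $\alpha,\beta\in\Psi$. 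That identity is false unless the root datum is simply laced: writing $\lambda_{i}(\alpha)=\tfrac{2}{(\alpha_{i},\alpha_{i})}(\alpha_{i},\alpha)$, the left-hand side in simple-root coordinates is $x^{T}AD^{-1}A^{-1}D^{-1}Ay$ with $D=\mathrm{diag}\bigl(\tfrac{(\alpha_{i},\alpha_{i})}{2}\bigr)$, which equals $x^{T}Ay=(\alpha,\beta)$ only when $D=\mathrm{Id}$. Concretely, for $B_{2}$ with $(\alpha_{1},\alpha_{1})=4$, $(\alpha_{2},\alpha_{2})=2$, $(\alpha_{1},\alpha_{2})=-2$ one computes $\sum_{i,j}A^{-1}_{i,j}\lambda_{i}(\alpha_{1})\lambda_{j}(\alpha_{1})=2$ while $(\alpha_{1},\alpha_{1})=4$; so on $M_{\alpha_{1}}\hat{\otimes}N_{\alpha_{1}}$ your braiding is $q^{2}$ times the flip, whereas the braiding of Lemma \ref{HAPairingWeakQuasiTriangular} is $q^{4}$ times the flip. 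Hence your braiding does not extend the one on $H\text{-Mod}_{\Psi}$, which is precisely what the proposition claims.

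The fix is the paper's normalisation: let $H_{i}$ act on $M_{\alpha}$ by the integer $(\alpha_{i},\alpha)=\tfrac{(\alpha_{i},\alpha_{i})}{2}\lambda_{i}(\alpha)$. Then $\mathscr{R}_{\mathscr{H}}$ followed by the flip acts on $M_{\alpha}\hat{\otimes}N_{\alpha'}$ by $q^{\sum_{i,j}A^{-1}_{i,j}(\alpha_{i},\alpha)(\alpha_{j},\alpha')}$, and since $(\alpha_{i},\alpha)=(Ax)_{i}$ for $\alpha\in\Psi$ the exponent becomes $x^{T}AA^{-1}Ay=(\alpha,\alpha')$, giving the required restriction; $H$-linearity and the hexagons are immediate because the braiding is a scalar multiple of the flip on each $M_{\alpha}\hat{\otimes}N_{\alpha'}$. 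Note that compatibility with $e$ is not needed anywhere and should simply be dropped (with the correct normalisation $e(t_{i})$ no longer acts by $q^{(\alpha_{i},\alpha)}$, and nothing in the argument requires it to). Alternatively you could keep $H_{i}\mapsto\lambda_{i}(\alpha)$, but then you must replace $\mathscr{R}_{\mathscr{H}}$ by $\exp\bigl(\hslash\sum_{i,j}(DA^{-1}D)_{i,j}H_{i}\otimes H_{j}\bigr)$ and re-prove convergence as in Lemma \ref{CartanPartQuasiTriangular}; as written, your final step fails.
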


\begin{proof}
Suppose $M_{\alpha}$ and $N_{\alpha'}$ are Banach $H$-modules, and hence $H'$-modules, of weights $\alpha$ and $\alpha'$ in $\Phi$ respectively. Then $M_{\alpha}$ is a $\mathscr{H}$-module where $H_{i}$ acts by the scalar $(\alpha_{i},\alpha)$, and likewise so is $N_{\alpha'}$. The action of $\mathscr{R}_{\mathscr{H}}$ induces the braiding
$$M_{\alpha} \hat{\otimes} N_{\alpha'} \rightarrow N_{\alpha'} \hat{\otimes} M_{\alpha}, \quad m \otimes n \mapsto q^{\sum_{i,j} A_{i,j}^{-1}(\alpha_{i},\alpha)(\alpha_{j},\alpha')}n \otimes m.$$
This braiding commutes not only with the action of $H'$ but also with that of $H$. If $\alpha=\sum n_{i} \alpha_{i}$ and $\alpha'=\sum n'_{i} \alpha_{i}$ in $\Psi$ then
$$\begin{array}{rcl}
\sum_{i,j} A_{i,j}^{-1}(\alpha_{i},\alpha)(\alpha_{j},\alpha') &=& \sum_{i,j,i',j'} A_{i,j}^{-1}n_{i'}n'_{j'}(\alpha_{i},\alpha_{i'})(\alpha_{j},\alpha_{j'})\\
&=& \sum_{i,j,i',j'} A_{i,j}^{-1}n_{i'}n'_{j'}(\alpha_{i},\alpha_{i'})A_{j,j'}\\
&=& \sum_{i,i'} n_{i'}n'_{i}(\alpha_{i},\alpha_{i'}) = (\alpha,\alpha'),
\end{array}$$
so the braiding restricts to the one given by Lemma \ref{HAPairingWeakQuasiTriangular} on $H\text{-Mod}_{\Psi}$.
\end{proof}

\label{RMatrixDoesn'tConverge}
Proposition 3.6 of \cite{DBoBG} gives criterion for when a double-bosonisation is quasi-triangular. The Banach version of this would be the following.

\begin{theorem*}
Let $H$ be a quasi-triangular Banach Hopf algebra, let $B$ be a braided Banach Hopf algebra $B$ in $H\text{-Mod}$ and let $C$ be a braided Banach Hopf algebra in $\text{Mod-}H$, equipped with a duality pairing between $B$ and $C$, $\langle - , - \rangle: B \hat{\otimes} C \rightarrow k$. Suppose there exists linearly independent subsets $\{b_{n} \mid n \in \mathbb{N}\} \subset B$ and $\{c_{n} \mid n \in \mathbb{N}\} \subset C$ that span dense subspaces and satisfy
$$\langle b_{n},c_{m} \rangle = \delta_{n,m} \quad \text{and} \quad \|b_{n}\|\cdot \|c_{n}\| \rightarrow 0.$$
Then $U(C,H,B)$ is quasi-triangular with R-matrix
$$\mathscr{R}_{U(C,H,B)}:=\text{exp}_{C,B} \cdot \mathscr{R}_{H}=\sum_{x \in X} (c_{x} \otimes \mathscr{R}_{2}^{(1)}\mathscr{R}_{1}^{(1)} \otimes 1) \otimes (1 \otimes \mathscr{R}_{1}^{(2)} \otimes S_{B}(b_{x}) \cdot \mathscr{R}_{2}^{(2)})$$
where $\text{exp}_{C,B}=\sum_{x \in X} c_{x} \otimes S_{B}(b_{x})$, and $\mathscr{R}_{1}=\sum \mathscr{R}_{1}^{(1)} \otimes \mathscr{R}_{1}^{(2)}$ and $\mathscr{R}_{2}=\sum \mathscr{R}_{2}^{(1)}\otimes \mathscr{R}_{2}^{(2)}$ are copies of the R-matrix $\mathscr{R}_{H}$ on $H$.
\end{theorem*}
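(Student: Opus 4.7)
The plan is to upgrade Majid's algebraic proof of Proposition 3.6 of \cite{DBoBG} to the Banach setting. The structural content of the algebraic proof, namely the verification of the three R-matrix axioms, transfers verbatim once one knows the relevant sums converge and one can extend identities by continuity from the algebraic dense subalgebra.

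First I would establish that $\text{exp}_{C,B}=\sum_{n}c_{n}\otimes S_{B}(b_{n})$ converges in $C\hat{\otimes}B$. The antipode $S_{B}$ is a bounded linear map, so $\|c_{n}\otimes S_{B}(b_{n})\|\leq\|S_{B}\|\cdot\|c_{n}\|\cdot\|b_{n}\|\to 0$, which is precisely the condition for convergence in the contracting coproduct governing $C\hat{\otimes}B$ under assumption \textbf{(NA)}. Since $\mathscr{R}_{H}$ already converges in $H\hat{\otimes}H$ by hypothesis, the product $\text{exp}_{C,B}\cdot\mathscr{R}_{H}$, taken inside the Banach algebra $U(C,H,B)\hat{\otimes}U(C,H,B)$ whose multiplication is continuous by the construction of Proposition \ref{DoubleBosonisationVariation}, converges to a well-defined element of $U(C,H,B)\hat{\otimes}U(C,H,B)$.

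Second, I would observe that $\text{exp}_{C,B}$ is independent of the choice of dual bases: two such bases are related by mutually inverse invertible matrices, so the partial sums $\sum_{n\leq N}c_{n}\otimes S_{B}(b_{n})$ agree with their primed counterparts on the algebraic subspace $\mathrm{span}\{c_n\}\otimes\mathrm{span}\{b_n\}$, and this extends to the completion by continuity. With this in hand, I would verify the three quasi-triangularity axioms
\[ (\Delta\otimes\mathrm{Id})\circ\mathscr{R}=\mathscr{R}_{13}\ast\mathscr{R}_{23},\quad (\mathrm{Id}\otimes\Delta)\circ\mathscr{R}=\mathscr{R}_{13}\ast\mathscr{R}_{12},\quad \tau\circ\Delta=\mathscr{R}(\Delta)\mathscr{R}^{-1} \]
on the sub-Hopf algebra algebraically generated by $H$ and the elements $b_{n},c_{n}$, where Majid's original calculation applies unchanged. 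Since every operation involved, namely $\Delta$, $\mu$, $\tau$, $S$ and hence convolution, is continuous, these identities propagate from this dense subspace to the whole Banach Hopf algebra $U(C,H,B)$. Convolution invertibility of $\mathscr{R}$ is obtained by the parallel construction with $S_{B}^{-1}$ replacing $S_{B}$, which converges under the same estimate.

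The main obstacle is not the proof itself but the hypothesis: the label \ref{RMatrixDoesn'tConverge} already foreshadows that, for the analytic quantum groups $U_{q}(\mathfrak{g})^{\text{an}}_{r,s}$ constructed in the previous subsection, the natural PBW-type dual bases between $\mathbf{f}^{\text{an}}_{r}$ and $\mathbf{f}^{\text{an}}_{s}$ fail to satisfy $\|b_{n}\|\cdot\|c_{n}\|\to 0$. Consequently, while the theorem above is true as stated, it cannot be applied to produce an R-matrix for any of the analytic quantum groups of interest; this obstruction is precisely what motivates the alternative representation-theoretic approach via Drinfel'd doubles in the subsequent subsection.
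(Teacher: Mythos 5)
There is a mismatch with the paper that matters more than the internal details of your argument: the paper never proves this statement. It is recorded only as what the Banach analogue of Proposition 3.6 of \cite{DBoBG} \emph{would} say, and the whole point of Section \ref{RMatrixDoesn'tConverge} is that its hypotheses are self-contradictory whenever $B$ and $C$ are infinite dimensional: if $\|b_{n}\|\cdot\|c_{n}\| \rightarrow 0$ then $\sum_{n} b_{n}\otimes c_{n}$ converges in $B \hat{\otimes} C$, so its image under the bounded pairing, $\sum_{n}\langle b_{n},c_{n}\rangle=\sum_{n}1$, would also have to converge, which it does not. Hence for an $\mathbb{N}$-indexed family the theorem is vacuous, and your closing paragraph misidentifies the obstruction: it is not that the particular PBW-type dual bases of $\mathbf{f}^{\text{an}}_{r}$ and $\mathbf{f}^{\text{an}}_{s}$ happen to violate the norm condition, but that no biorthogonal families satisfying it can exist for \emph{any} dually paired infinite-dimensional $B$ and $C$.

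Two steps of the sketch are also gaps in their own right. The axioms $(\Delta\otimes\text{Id})\circ\mathscr{R}=\mathscr{R}_{13}\ast\mathscr{R}_{23}$ and $(\text{Id}\otimes\Delta)\circ\mathscr{R}=\mathscr{R}_{13}\ast\mathscr{R}_{12}$ are equalities between specific elements (limits of infinite sums) of the triple tensor product, not between bounded maps, so they cannot be ``verified on a dense subalgebra and propagated by continuity''; that device applies only to $\tau\circ\Delta=\mathscr{R}(\Delta)\mathscr{R}^{-1}$ and to the invertibility identities. To run Majid's computation on the sums themselves you would need the expansion property that every $c\in C$ is the limit of $\sum_{x}c_{x}\langle b_{x},c\rangle$ (together with its compatibility with $\Delta_{B}$ and $\mu_{C}$), and this does not follow from linear independence, density and biorthogonality in a Banach space; it would have to be assumed or proved. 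Given the first paragraph these repairs are moot in the only regime where the statement would have content, which is precisely why the paper abandons this route and obtains braidings via the Drinfel'd double picture in the following subsection.
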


\noindent Unfortunately, as in \cite{DBoBG}, the conditions required for this theorem do not hold if $B$ and $C$ are infinite dimensional, hence it does not apply to $U(\mathbf{f}_{r}^{\text{an}}, \mathscr{H}, \mathbf{f}_{s}^{\text{an}})$. Indeed, if such sets were to exist for infinite dimensional $B$ and $C$ then $\sum b_{n} \otimes c_{n}$ would converge in $B \otimes C$ but its image under $\langle -,- \rangle$ would not.\\

If we allow ourselves to work over formal powerseries in $\hslash$ then we may define an R-matrix for some analytic quantum groups, which we will see in Section \ref{WorkingOverk[[h]]}.

\subsection{Quantum groups as Drinfel'd doubles}

\begin{lem}
\label{Drinfel'dDoubleConstruction}
Suppose we have IndBanach Hopf algebras $B$ and $C$ with a duality pairing $\langle -,- \rangle : B \hat{\otimes} C^{\text{op}} \rightarrow k$. Then there is a Hopf algebra structure on $C \hat{\otimes} B$ such that $B \rightarrow C \hat{\otimes} B$ and $C \rightarrow C \hat{\otimes} B$ are morphisms of Hopf algebras and the multiplication restricts to the composition
$$\begin{array}{rcl}
B \hat{\otimes} C &\longrightarrow& B \hat{\otimes} B \hat{\otimes} B \hat{\otimes} C \hat{\otimes} C \hat{\otimes} C\\
&\longrightarrow& B \hat{\otimes} C \hat{\otimes} C \hat{\otimes} B \hat{\otimes} B \hat{\otimes} C\\
&\longrightarrow& C \hat{\otimes} B
\end{array}$$
where the first morphism is given by the respective comultiplications, the second is a reordering given by the permutation $(2 \quad 4)(3 \quad 5) \in S_{6}$ and the last is given by $\langle -,- \rangle \otimes \text{Id} \otimes \text{Id} \otimes \langle -,- \rangle$.
\end{lem}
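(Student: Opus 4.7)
The plan is to promote the standard algebraic Drinfel'd double to the IndBanach setting. First I would complete the specification of the Hopf algebra structure on $C \hat{\otimes} B$: take the unit to be $\eta_{C} \otimes \eta_{B}$, the counit to be $\varepsilon_{C} \otimes \varepsilon_{B}$, and the comultiplication to be the composition
\[ C \hat{\otimes} B \xrightarrow{\Delta_{C} \otimes \Delta_{B}} C \hat{\otimes} C \hat{\otimes} B \hat{\otimes} B \xrightarrow{\text{Id} \otimes \tau \otimes \text{Id}} C \hat{\otimes} B \hat{\otimes} C \hat{\otimes} B. \]
This choice makes the inclusions $c \mapsto c \otimes 1$ and $b \mapsto 1 \otimes b$ into coalgebra morphisms by construction. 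The antipode is forced by the Hopf axioms to agree with the composition $(c \otimes b) \mapsto (1 \otimes S_{B}(b)) \cdot (S_{C}(c) \otimes 1)$, where the product on the right is the multiplication given in the statement; it is a bounded morphism since each factor is.

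Next I would verify associativity and unitality of the multiplication. Unitality with $\eta_{C} \otimes \eta_{B}$ is immediate from the two diagrams in the definition of a duality pairing that involve the units, since applying the pairing on a factor of $1$ collapses via the counit. Associativity is the first substantial calculation: upon applying $\mu \circ (\mu \otimes \text{Id})$ and $\mu \circ (\text{Id} \otimes \mu)$ to $(c \otimes b) \otimes (c' \otimes b') \otimes (c'' \otimes b'')$, each side yields, after expanding the given formula, a composite involving triple comultiplications of $b, b', b'', c, c', c''$ paired against one another through $\langle -,- \rangle$. The two sides coincide by repeated application of the compatibility diagrams relating $\langle -,- \rangle$ with $\mu_{C}$, $\mu_{B}$, $\Delta_{C}$, $\Delta_{B}$. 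Since all morphisms in sight are bounded and the identity being verified is a diagrammatic equality of composites of IndBanach morphisms, it suffices to verify it schematically, where it reduces to Drinfel'd's original computation for the quantum double.

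The main obstacle is verifying the bialgebra axiom, namely that $\Delta$ is an algebra homomorphism. The strategy is the same: expand both $\Delta((c \otimes b)(c' \otimes b'))$ and $\Delta(c \otimes b) \cdot \Delta(c' \otimes b')$ using the definitions and reduce, via the duality pairing axioms, to the classical Drinfel'd double identity. The antipode identities $\mu \circ (S \otimes \text{Id}) \circ \Delta = \eta \circ \varepsilon = \mu \circ (\text{Id} \otimes S) \circ \Delta$ reduce, through the compatibility of the pairing with $S_{B}$ and $S_{C}$ (the final diagram in the definition of duality pairing), to the corresponding algebraic identities. Finally, checking that the inclusions $B \rightarrow C \hat{\otimes} B$ and $C \rightarrow C \hat{\otimes} B$ are algebra morphisms amounts to substituting $b = 1$ or $c = 1$ in the multiplication formula: the pairings collapse to counits by the unit-pairing diagrams, leaving the usual multiplications on $C$ and $B$ respectively. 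Coalgebra compatibility is built into the definition of $\Delta$.
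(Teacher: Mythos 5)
Your proposal is correct and follows essentially the same route as the paper, which simply cites the classical Drinfel'd double construction (Section 8.2.1 of Klimyk--Schm\"udgen and Section IX.5 of Kassel) and observes that it generalises to IndBanach spaces; your explicit specification of the unit, counit, comultiplication and antipode, together with the observation that all the Hopf axioms are diagrammatic identities among bounded morphisms and hence transfer verbatim from the vector space case, is exactly the content of that ``easily generalised'' remark.
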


\begin{proof}
This construction is outlined in Section 8.2.1 of \cite{QGaTR} and Section IX.5 of \cite{QG} for (finite dimensional) vector spaces, but is easily generalised to the category of IndBanach spaces.
\end{proof}

\begin{defn}
We will denote by $D(B,C)$ the Hopf algebra on $C \hat{\otimes} B$ described in the previous lemma, the \emph{relative Drinfel'd double} of $B$ and $C$.
\end{defn}

\begin{lem}
\label{DualityPairingforQuantumDouble}
There is a duality pairing $(\overline{H} \ltimes \overline{\mathbf{f}^{\text{an}}_{s}}) \hat{\otimes} (\mathbf{f}^{\text{an}}_{r} \rtimes H')^{\text{op}} \rightarrow k$ such that
$$
\begin{array}{rclrcl}
K_{\lambda} \otimes t_{j} &\mapsto& q^{-\lambda(\alpha_{j})},&
K_{\lambda} \otimes F_{j} &\mapsto& 0,\\
E_{i} \otimes t_{j} &\mapsto& 0,&
E_{i} \otimes F_{j} &\mapsto& -\delta_{i,j} \frac{1}{q_{i}-q_{i}^{-1}}.
\end{array}
$$
\end{lem}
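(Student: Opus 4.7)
The strategy is to assemble the desired pairing from the two Hopf pairings already at hand: the Cartan pairing $\langle -,- \rangle_{1} : H \hat{\otimes} H' \to k$, $K_\lambda \otimes t_j \mapsto q^{\lambda(\alpha_j)}$, of Lemma \ref{HAPairingWeakQuasiTriangular}, and the Nichols pairing $\langle -,- \rangle_{2} : \mathbf{f}_s^{\text{an}} \hat{\otimes} \mathbf{f}_r^{\text{an}} \to k$ extending the bilinear form on $V$ from Definition \ref{AnalyticQuantumGroupPositivePart}.

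First I would observe that the prescribed values on generators are exactly those obtained by precomposing one side of each pairing with the antipode: since $H'$ is cocommutative, $S_{H'}(t_j)=t_j^{-1}$ and $\langle K_\lambda, S_{H'}(t_j) \rangle_{1} = q^{-\lambda(\alpha_j)}$; and since $S(v_i)=-v_i$ in $\overline{\mathbf{f}_s^{\text{an}}}$, $\langle S(v_i), v_j \rangle_{2} = -\delta_{i,j}/(q_i-q_i^{-1})$. This accounts for the minus signs and inverses, and explains structurally why the right-hand factor carries the $(-)^{\text{op}}$.

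I would then define the full pairing on $(\overline{H} \ltimes \overline{\mathbf{f}_s^{\text{an}}}) \hat{\otimes} (\mathbf{f}_r^{\text{an}} \rtimes H')^{\text{op}}$ by
$$\langle h \otimes e,\, f \otimes t \rangle := \langle h,\, S_{H'}(t) \rangle_{1} \cdot \langle S(e),\, f \rangle_{2}$$
for $h \in \overline{H}$, $e \in \overline{\mathbf{f}_s^{\text{an}}}$, $f \in \mathbf{f}_r^{\text{an}}$, $t \in H'$. This is bounded, being a composition of bounded pairings with the multiplication map on $k$, and by direct substitution it reproduces the four prescribed values on generators. What remains is to verify the duality-pairing axioms; since all structure maps are continuous and the subalgebra generated by the $K_\lambda, E_i, F_j, t_j$ is dense in each factor, it suffices to check them on generators.

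The main bookkeeping is the mixed compatibility: one must intertwine the bosonisation cross-relations $K_\lambda E_i = q^{\lambda(\alpha_i)} E_i K_\lambda$ on the left (and the opposite-convention analogue on the right) against the bosonisation comultiplications $\Delta E_i = E_i \otimes K_{\alpha_i} + 1 \otimes E_i$ and $\Delta F_i = F_i \otimes 1 + t_i \otimes F_i$. Each such identity reduces, via the Hopf pairing axioms for $\langle -,- \rangle_1$ and $\langle -,- \rangle_2$, to the $H$-equivariance of the Nichols pairing, which holds because the bilinear form on $V$ was chosen compatible with the braiding (Definition \ref{BilinearForm}). The principal obstacle is therefore not any deep structural difficulty but the careful tracking of $(-)^{\text{op}}$ signs and braiding twists throughout this verification, together with the observation that this pairing is precisely the one realising $U_{q}(\mathfrak{g})_{r,s}^{\text{an}}$ as a quotient of the relative Drinfel'd double of its two bosonised halves.
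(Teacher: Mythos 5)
Your construction is correct, but it inverts the order of the paper's argument. The paper does not start from a closed formula: it first defines bounded functionals $\phi_{i},\psi_{i}$ on $\overline{H} \ltimes \overline{\mathbf{f}^{\text{an}}_{s}}$, uses the universal property of $T_{r}(V)$ and $H'$ to get algebra homomorphisms into the dual Banach algebra $(\overline{H} \ltimes \overline{\mathbf{f}^{\text{an}}_{s}})^{\ast}$, invokes Proposition 34 of Klimyk--Schm\"udgen to see that the first map kills the radical and hence factors through $\mathbf{f}^{\text{an}}_{r}$, and only then obtains the pairing on $\mathbf{f}^{\text{an}}_{r} \rtimes H'$, checking the duality axioms on the dense algebraic subspace by citation and extending by continuity; the factorised expression $\langle h\otimes e, f\otimes t\rangle = \langle e, S(f)\rangle\,\langle h, S(t)\rangle$ appears only in the remark afterwards. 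You instead take that expression as the definition (your variant with $S$ on the $\mathbf{f}^{\text{an}}_{s}$ factor agrees with it by the antipode axiom of the Nichols pairing), which buys you well-definedness for free, since the pairing $\mathbf{f}^{\text{an}}_{s} \hat{\otimes} \mathbf{f}^{\text{an}}_{r} \to k$ already lives on the Nichols algebras rather than on the tensor algebras; the cost is that the whole content of the lemma is shifted into verifying the duality-pairing diagrams for the bosonised Hopf structures, which in the paper's route is partly automatic (multiplicativity comes from mapping into the dual algebra) and partly outsourced to \emph{loc.\ cit.}

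One caution on your verification sketch: ``it suffices to check the axioms on generators'' is not literally true for a bilinear form given in advance, because the axiom $\langle x, ab\rangle = (\langle-,-\rangle\otimes\langle-,-\rangle)(\Delta x \otimes b \otimes a)$ for a product $ab$ is not a formal consequence of its instances on generators; you need either an induction on word length in $E_{i}, K_{\lambda}$ and $F_{j}, t_{j}$ (using that $\Delta$ is an algebra map), or the paper's device of packaging one side as an algebra homomorphism into the dual. Likewise the mixed compatibilities do not reduce solely to $H$-equivariance of the Nichols pairing: you also need that the pairing is $H'$-graded (so cross terms between different weight components vanish) and the explicit bosonisation cross relations and twisted coproducts, with the $(-)^{\text{op}}$ reversing the order in which the comultiplication legs are paired. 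None of this breaks your argument --- it is exactly the computation carried out in Proposition 34 of Klimyk--Schm\"udgen and extends by the same density argument (the algebraic quantum group is dense in each factor by Lemma \ref{PositivePartDense}) --- but as written the sketch understates where the actual work lies.
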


\begin{proof}
This follows as in Section 6.3.1 of \cite{QGaTR}. We first define bounded algebra homomorphisms $\phi_{i}$ and $\psi_{i}$ from $H \ltimes \overline{\mathbf{f}^{\text{an}}_{s}}$ to $k$ such that
$$\phi_{i}(K_{\lambda} \otimes v_{i})=\frac{1}{q_{i}^{-1}-q_{i}}, \quad \phi_{i}(K_{\lambda} \otimes x)=0$$
and
$$\psi_{i}(K_{\lambda} \otimes y)=q^{-\lambda(\alpha_{i})} \varepsilon(y)$$
for all $i \in I$, $x \in \mathbf{f}_{s}^{\text{an}}(\alpha)$, $\alpha_{i} \neq \alpha \in \Psi$, and $y \in \mathbf{f}_{s}^{\text{an}}$. Here $\varepsilon$ is the counit on $\mathbf{f}_{s}^{\text{an}}$. We define contracting morphisms $V_{r} \rightarrow (\overline{H} \ltimes \overline{\mathbf{f}^{\text{an}}_{s}})^{\ast}$ and $\coprod_{i\in I}^{\leq 1}t_{i} \rightarrow (\overline{H} \ltimes \overline{\mathbf{f}^{\text{an}}_{s}})^{\ast}$ defined respectively by $v_{i} \mapsto \phi_{i}$ and $t_{i} \mapsto \psi_{i}$. These induce algebra homomorphisms $T_{r}(V) \rightarrow (\overline{H} \ltimes \overline{\mathbf{f}^{\text{an}}_{s}})^{\ast}$, $H' \rightarrow (\overline{H} \ltimes \overline{\mathbf{f}^{\text{an}}_{s}})^{\ast}$. It follows from the proof of Proposition 34 in \emph{loc. cit.} that the map $T_{r}(V) \rightarrow (\overline{H} \ltimes \overline{\mathbf{f}^{\text{an}}_{s}})^{\ast}$ factors as a composition $T_{r}(V) \rightarrow \mathbf{f}_{r}^{\text{an}} \rightarrow (H \ltimes \overline{\mathbf{f}^{\text{an}}_{s}})^{\ast}$. From this we obtain a morphism $\mathbf{f}^{\text{an}}_{r} \rtimes H' \rightarrow (\overline{H} \ltimes \overline{\mathbf{f}^{\text{an}}_{s}})^{\ast}$ and hence a bilinear form as desired. The fact that this is a duality pairing is checked on a dense subspace in \cite{QGaTR}, and extends by continuity.
\end{proof}

\begin{remark}
It is shown in Proposition 34 of \cite{QGaTR} that this pairing can be expressed as the composition
$$
\begin{array}{rcccl}
H \hat{\otimes} \mathbf{f}^{\text{an}}_{s} \hat{\otimes} \mathbf{f}^{\text{an}}_{r} \hat{\otimes} H' &\overset{\text{Id} \otimes \text{Id} \otimes S \otimes\text{Id}}{\xrightarrow{\hspace*{1.5cm}}}& H \hat{\otimes} \mathbf{f}^{\text{an}}_{s} \hat{\otimes} \mathbf{f}^{\text{an}}_{r} \hat{\otimes} H' 
&\overset{\text{Id} \otimes \langle -,- \rangle \otimes\text{Id}}{\xrightarrow{\hspace*{1.5cm}}}& H \hat{\otimes} k \hat{\otimes} H' \\
&\overset{\text{Id} \otimes S}{\xrightarrow{\hspace*{1.5cm}}}& H \hat{\otimes} H' 
&\overset{\langle -,- \rangle}{\xrightarrow{\hspace*{1.5cm}}}&  k.
\end{array}
$$
\end{remark}

\begin{defn}
\label{CrossedBimodules}
For an IndBanach Hopf algebra $C$, we will denote by $_{C}\text{Cross}^{C}$ the category of IndBanach spaces $V$ equipped with both a left action and right coaction of $C$, $\mu_{V}: C \hat{\otimes} V \rightarrow V$ and $\Delta_{V}: V \rightarrow V \hat{\otimes} C$, such that the following diagram commutes:
\begin{center}
\begin{tikzpicture}[node distance=6cm, auto]
  \node (A) {$C \hat{\otimes} V$};
  \node (B) [right=3cm of A] {$C \hat{\otimes} C \hat{\otimes} V$};
  \node (C) [below=0.5cm of B] {$C \hat{\otimes} V$};
  \node (D) [below=0.5cm of A] {$C \hat{\otimes} C \hat{\otimes} V \hat{\otimes} C$};
  \node (E) [below=0.5cm of C] {$V \hat{\otimes} C$};
  \node (F) [below=0.5cm of D] {$C \hat{\otimes} V \hat{\otimes} C \hat{\otimes} C$};
  \node (G) [below=0.5cm of F] {$V \hat{\otimes} C$};
  \node (H) [below=0.5cm of E] {$V \hat{\otimes} C \hat{\otimes} C.$};
  \draw[->] (A) to node {$\Delta_{C} \otimes \text{Id}$} (B);
  \draw[->] (B) to node {$\text{Id} \otimes \mu_{V}$} (C);
  \draw[->] (C) to node {$\tau$} (E);
  \draw[->] (E) to node {$\Delta_{V} \otimes \text{Id}$} (H);
  \draw[->] (H) to node {$\text{Id} \otimes \mu_{C}$} (G);
  \draw[->] (A) to node {$\Delta \otimes \Delta_{V}$} (D);
  \draw[->] (D) to node {$\text{Id} \otimes \tau \otimes \text{Id}$} (F);
  \draw[->] (F) to node {$\mu_{V} \otimes \mu_{C}$} (G);
\end{tikzpicture}
\end{center}
We will refer to these as \emph{crossed bimodules}, although they are sometimes also referred to as \emph{Yetter-Drinfel'd modules}.
\end{defn}

\begin{lem}
\label{CrossedBimodulesBraided}
The category $_{C}\text{Cross}^{C}$ is pre-braided monoidal with braiding given by
$$M \hat{\otimes} N \overset{\tau}{\rightarrow} N \hat{\otimes} M \overset{\Delta_{N} \otimes \text{Id}}{\longrightarrow} N \hat{\otimes} C \hat{\otimes} M \overset{\text{Id} \otimes \mu_{M}}{\longrightarrow} N \hat{\otimes} M$$
for $M$ and $N$ in $_{C}\text{Cross}^{C}$. If $C$ has an invertible antipode then this is a braiding.
\end{lem}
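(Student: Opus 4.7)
The plan is to reinterpret the classical Yetter--Drinfel'd braided monoidal structure as a sequence of commutative diagrams in the closed symmetric monoidal category $(\text{IndBan}_{k}, \hat{\otimes})$. First I would equip ${}_{C}\text{Cross}^{C}$ with a monoidal product: for $M, N \in {}_{C}\text{Cross}^{C}$ the tensor product $M \hat{\otimes} N$ carries the diagonal left action
\[
C \hat{\otimes} M \hat{\otimes} N \xrightarrow{\Delta_{C} \otimes \text{Id} \otimes \text{Id}} C \hat{\otimes} C \hat{\otimes} M \hat{\otimes} N \xrightarrow{\text{Id} \otimes \tau \otimes \text{Id}} C \hat{\otimes} M \hat{\otimes} C \hat{\otimes} N \xrightarrow{\mu_{M} \otimes \mu_{N}} M \hat{\otimes} N
\]
and the diagonal right coaction
\[
M \hat{\otimes} N \xrightarrow{\Delta_{M} \otimes \Delta_{N}} M \hat{\otimes} C \hat{\otimes} N \hat{\otimes} C \xrightarrow{\text{Id} \otimes \tau \otimes \text{Id}} M \hat{\otimes} N \hat{\otimes} C \hat{\otimes} C \xrightarrow{\text{Id} \otimes \text{Id} \otimes \mu_{C}} M \hat{\otimes} N \hat{\otimes} C.
\]
A diagram chase using the bialgebra axioms for $C$ and the crossed bimodule compatibility for $M$ and $N$ separately shows this is indeed a crossed bimodule. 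The unit is $k$ carrying the trivial action $\varepsilon_{C}$ and trivial coaction $\eta_{C}$, and the associator and unitors are inherited from $\text{IndBan}_{k}$.

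Next I verify that $c_{M,N}$ is a morphism in ${}_{C}\text{Cross}^{C}$ and is natural. Naturality is immediate from the naturality of $\tau$, $\Delta_{N}$ and $\mu_{M}$. Compatibility of $c_{M,N}$ with the diagonal coactions on source and target reduces, via coassociativity and the bialgebra identity $\Delta_{C} \circ \mu_{C} = (\mu_{C} \otimes \mu_{C}) \circ (\text{Id} \otimes \tau \otimes \text{Id}) \circ (\Delta_{C} \otimes \Delta_{C})$, to the crossed bimodule compatibility on $N$; compatibility with the diagonal actions reduces symmetrically to the compatibility on $M$. The two hexagon axioms for a pre-braiding then follow by unpacking each side as a composite involving iterated coactions and one action on a threefold tensor, in which coassociativity and associativity of the action identify the composites. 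For invertibility when $S$ is invertible, I construct $c_{M,N}^{-1}: N \hat{\otimes} M \to M \hat{\otimes} N$ as the composition that applies $\Delta_{N}$, then $S^{-1}$ to the resulting $C$-factor, then an appropriate reordering that brings this $C$ next to $M$, and finally the action of $C$ on $M$; the identities $c_{M,N} \circ c_{M,N}^{-1} = \text{Id}$ and $c_{M,N}^{-1} \circ c_{M,N} = \text{Id}$ then follow from coassociativity combined with the antipode identities for $S^{-1}$ (equivalently, the antipode identities for $C^{\text{cop}}$) and the counit axiom for the coaction on $N$.

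The main obstacle is purely organisational: every verification is a single commutative diagram built from finitely many of the given structure morphisms, and no topological or convergence subtleties intervene because $(\text{IndBan}_{k}, \hat{\otimes})$ is closed symmetric monoidal. The arguments transfer verbatim from the classical Yetter--Drinfel'd case, which appears for example in Bespalov--Drabant (reference [AaHAiBC] of this paper), and the only genuinely new content is keeping careful track of the placement of $\tau$ so that the diagrams are interpreted in $\text{IndBan}_{k}$ rather than in vector spaces.
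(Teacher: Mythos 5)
Your overall plan (a direct diagram-chase in $\text{IndBan}_{k}$, as opposed to the paper, which simply cites Theorems 5.2 and 7.2 of Yetter's paper) is reasonable, and you are right that no convergence issues arise; but two of your concrete formulas do not fit the braiding in the statement. First, the codiagonal coaction you place on $M \hat{\otimes} N$ multiplies the two $C$-legs in the order $m_{(1)} n_{(1)}$, and this is incompatible with the braiding $c_{M,N}(m \otimes n) = n_{(0)} \otimes n_{(1)}\cdot m$: computing the hexagon expressing $c_{M, N\hat{\otimes}P}$ through $c_{M,N}$ and $c_{M,P}$ elementwise, the composite $(\mathrm{Id}_{N} \otimes c_{M,P})\circ(c_{M,N} \otimes \mathrm{Id}_{P})$ sends $m \otimes n \otimes p$ to $n_{(0)} \otimes p_{(0)} \otimes (p_{(1)} n_{(1)})\cdot m$, so the coaction on $N \hat{\otimes} P$ must be $n \otimes p \mapsto n_{(0)} \otimes p_{(0)} \otimes p_{(1)} n_{(1)}$, i.e.\ the $C$-legs multiplied in the \emph{opposite} order; with your order this hexagon fails for non-commutative $C$. (The other hexagon forces the action on tensor products to be the usual diagonal one, so it is the coaction that must be reversed; it is also with the reversed order that the verification that $M \hat{\otimes} N$ is again a crossed bimodule closes up, by applying the compatibility of $M$ and then that of $N$.)

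Second, your inverse is the wrong one for this (left action, right coaction) convention. The two-sided inverse of the stated braiding is $c^{-1}_{M,N}(n \otimes m) = S(n_{(1)})\cdot m \otimes n_{(0)}$, built from $S$ itself: using coassociativity of the coaction one finds $c^{-1}(c(m \otimes n)) = S(n_{(1)})n_{(2)}\cdot m \otimes n_{(0)} = m \otimes n$ and $c(c^{-1}(n \otimes m)) = n_{(0)} \otimes n_{(1)}S(n_{(2)})\cdot m = n \otimes m$, by the antipode axioms $S(a_{(1)})a_{(2)} = \varepsilon(a)1 = a_{(1)}S(a_{(2)})$. Your candidate with $S^{-1}$ instead produces the expressions $S^{-1}(n_{(1)})n_{(2)}$ and $n_{(1)}S^{-1}(n_{(2)})$, and neither equals $\varepsilon(n_{(1)})1$ in general, since the identities satisfied by $S^{-1}$ are $S^{-1}(a_{(2)})a_{(1)} = \varepsilon(a)1 = a_{(2)}S^{-1}(a_{(1)})$; so your composites are the identity only when $S^{2}=\mathrm{Id}$. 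The $S^{-1}$ formula is the correct one for the mirror (left action, left coaction) Yetter--Drinfel'd convention, which is presumably the source of the slip; note in particular that with the paper's convention the inverse requires only the antipode, so appealing to the invertibility of $S$ is not what makes the argument run. With these two corrections your verification scheme does go through and amounts to the proof underlying the paper's citation of Yetter.
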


\begin{proof}
This is Theorem 5.2 and Theorem 7.2 of \cite{QGaRoMC}.
\end{proof}

\begin{lem}
\label{ModulesofQuantumDouble}
There is a faithful functor $_{C}\text{Cross}^{C} \rightarrow D(B,C)\text{-Mod}$.
\end{lem}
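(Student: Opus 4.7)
The plan is to turn a crossed bimodule $V$ with left $C$-action $\mu_V : C \hat{\otimes} V \to V$ and right $C$-coaction $\delta_V : V \to V \hat{\otimes} C$ into a left $D(B,C)$-module by using the duality pairing to convert the $C$-coaction into a left $B$-action. First I would define a $B$-action $\rho_V : B \hat{\otimes} V \to V$ as the composition
\[
B \hat{\otimes} V \xrightarrow{\text{Id} \otimes \delta_V} B \hat{\otimes} V \hat{\otimes} C \xrightarrow{\text{Id} \otimes \tau} B \hat{\otimes} C \hat{\otimes} V \xrightarrow{\langle -,- \rangle \otimes \text{Id}} V,
\]
and take the $D(B,C) = C \hat{\otimes} B$ action on $V$ to be $\mu_V \circ (\text{Id}_C \otimes \rho_V)$.

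The second step is to verify that this defines a valid $D(B,C)$-module structure. Associativity and unitality of the action of $C$ on $V$ are immediate from $(V, \mu_V)$ being a $C$-module. Associativity of the action of $B$ on $V$, namely $(bb') \cdot v = b \cdot (b' \cdot v)$, follows from $\delta_V$ being a coaction together with the duality pairing axioms, which convert multiplication in $B$ into comultiplication in $C^{\text{op}}$; the unit compatibility is similarly routine. The main obstacle, and the only truly nontrivial verification, is the cross relation: given the multiplication on $D(B,C)$ written out in Lemma~\ref{Drinfel'dDoubleConstruction}, one must show that $(b \cdot c) \cdot v$ computed via that six-fold composition agrees with $c \cdot (b \cdot v) = (c \otimes b) \cdot v$. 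Unwinding that composition and substituting the definitions of $\mu_V$, $\rho_V$ and $\delta_V$, this reduces precisely to the Yetter--Drinfel'd compatibility diagram in Definition~\ref{CrossedBimodules}, postcomposed with $\langle -,- \rangle$ in the two positions where $B$ is paired with $C$.

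For functoriality, any morphism $f : V \to W$ of crossed bimodules by definition respects both $\mu_V$ and $\delta_V$, so it respects $\rho_V$ by construction, and therefore is automatically a morphism of $D(B,C)$-modules. Faithfulness is then immediate: the functor is the identity on underlying IndBanach spaces and on the underlying maps of morphisms, so distinct morphisms of crossed bimodules remain distinct in $D(B,C)\text{-Mod}$.
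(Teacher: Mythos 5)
Your construction is the same as the paper's: you convert the right $C$-coaction into a left $B$-action via the duality pairing (your $\rho_{V}$ is exactly the paper's $\mu_{V}'$), define the $D(B,C)=C\hat{\otimes}B$ action as $\mu_{V}\circ(\text{Id}\otimes\rho_{V})$, reduce the compatibility to the crossed-bimodule diagram of Definition \ref{CrossedBimodules}, and observe that faithfulness is automatic because the functor is the identity on underlying IndBanach spaces; the paper does precisely this, deferring the explicit computation to Proposition 6 of Section 13.1 of \cite{QGaTR}. One slip to fix: the cross relation you need is not that the straightened product agrees with $c\cdot(b\cdot v)$ --- the identity $(c\otimes b)\cdot v=c\cdot(b\cdot v)$ is the \emph{definition} of the action and needs no proof. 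Since $C$ and $B$ are subalgebras and $(c\otimes 1)(1\otimes b)=c\otimes b$, the only nontrivial module axiom is for the product in the other order: the element $(1\otimes b)(c\otimes 1)$, rewritten inside $C\hat{\otimes}B$ by the six-fold composition of Lemma \ref{Drinfel'dDoubleConstruction}, must act on $v$ as the successive action $b\cdot(c\cdot v)$. With the equation stated in that direction, your unwinding argument (substitute $\mu_{V}$, $\rho_{V}$, $\Delta_{V}$ and postcompose the diagram of Definition \ref{CrossedBimodules} with the pairing in the two $B$-against-$C$ slots) is exactly the verification the paper invokes.
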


\begin{proof}
This follows from the proof of Proposition 6 in Section 13.1 of \cite{QGaTR}. An object $V$ of $_{C}\text{Cross}^{C}$ already has an action of $C$. It gains an action of $B$ using the duality pairing, giving a map
$$\mu_{V}':B \hat{\otimes} V \overset{\text{Id} \otimes \Delta_{V}}{\xrightarrow{\hspace*{0.8cm}}} B \hat{\otimes} V \hat{\otimes} C \overset{(\text{Id} \hat{\otimes} \langle -,- \rangle) \circ (\tau \otimes \text{Id})}{\xrightarrow{\hspace*{2.5cm}}} V.$$
Then $B$ and $C$ generate $D(B,C)$, and the fact that $\mu_{V}$ and $\mu'_{V}$ together give a well defined action of $D(B,C)=C \hat{\otimes} B$ on $V$ (given by $\mu_{V} \circ (\text{Id} \otimes \mu'_{V})$) follows from the commutativity of the diagram in Definition \ref{CrossedBimodules} as in the proof of Proposition 6 of \emph{loc. cit.}
\end{proof}

\begin{prop}
\label{NAAnalyticQuantumGroupAsDrinfel'dDouble}
There is a strict epimorphism of braided analytically graded Banach Hopf algebras
$$D(\overline{H} \ltimes \overline{\mathbf{f}^{\text{an}}_{s}},\mathbf{f}^{\text{an}}_{r} \rtimes H') \rightarrow U_{q}(\mathfrak{g})^{\text{an}}_{r,s}$$
whose kernel is the closed two sided ideal generated by
$$\{1 \otimes t_{i} \otimes 1 \otimes 1 - 1 \otimes 1 \otimes t_{i} \otimes 1 \mid i \in I\} \subset \mathbf{f}^{\text{an}}_{r} \hat{\otimes} H \hat{\otimes} H' \hat{\otimes} \mathbf{f}^{\text{an}}_{s}.$$
\end{prop}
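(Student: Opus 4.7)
The plan is to construct the morphism by appealing to the universal property implicit in the relative Drinfel'd double (Lemma \ref{Drinfel'dDoubleConstruction}). I would first observe that we have two natural Hopf algebra homomorphisms into $U_{q}(\mathfrak{g})^{\text{an}}_{r,s}$: the inclusion $\overline{H} \ltimes \overline{\mathbf{f}^{\text{an}}_{s}} \hookrightarrow U_{q}(\mathfrak{g})^{\text{an}}_{r,s}$ of the $\geq 0$ sub-Hopf algebra, and the composition $\mathbf{f}^{\text{an}}_{r} \rtimes H' \hookrightarrow \mathbf{f}^{\text{an}}_{r} \rtimes H = U_{q}^{\leq 0}(\mathfrak{g})^{\text{an}}_{r} \hookrightarrow U_{q}(\mathfrak{g})^{\text{an}}_{r,s}$. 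To glue these into an algebra map from $D(\overline{H} \ltimes \overline{\mathbf{f}^{\text{an}}_{s}},\mathbf{f}^{\text{an}}_{r} \rtimes H')$ it suffices to show that the cross-relation in the Drinfel'd double, dictated by the pairing of Lemma \ref{DualityPairingforQuantumDouble}, is already satisfied inside $U_{q}(\mathfrak{g})^{\text{an}}_{r,s}$.

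To do this, I would reduce both sides to checking commutation relations between the generators $E_i$, $F_j$, $K_\lambda$, and $t_j$ on the dense algebraic subalgebra. On the Drinfel'd-double side, the pairing was defined in Lemma \ref{DualityPairingforQuantumDouble} (following Section 6.3.1 of \cite{QGaTR}) precisely so that the relative double reproduces the classical relations $[E_i, F_j] = \delta_{ij}(t_i - t_i^{-1})/(q_i - q_i^{-1})$ and $t_j E_i t_j^{-1} = q^{(\alpha_i,\alpha_j)} E_i$, etc. On the double-bosonisation side, these same relations hold inside $U_{q}(\mathfrak{g})^{\text{an}}_{r,s}$ by Proposition \ref{QuantumGroupDenseInNAAnalyticVersion}, since $U_q(\mathfrak{g})$ is dense and the Hopf structure extends the standard one. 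Since both constructions give bounded bilinear operations on the underlying Banach spaces, agreement on a dense subspace gives agreement everywhere, producing the desired algebra homomorphism; that it is also a coalgebra map follows because the comultiplications on both the Drinfel'd double and the double-bosonisation restrict to the ones on $\mathbf{f}^{\text{an}}_r \rtimes H'$ and $\overline{H} \ltimes \overline{\mathbf{f}^{\text{an}}_s}$.

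Next I would show the map is a strict epimorphism by exhibiting an explicit section of Banach spaces. The underlying Banach space of the Drinfel'd double is $\mathbf{f}^{\text{an}}_{r} \hat{\otimes} H' \hat{\otimes} H \hat{\otimes} \mathbf{f}^{\text{an}}_{s}$, while $U_{q}(\mathfrak{g})^{\text{an}}_{r,s} = \mathbf{f}^{\text{an}}_{r} \hat{\otimes} H \hat{\otimes} \mathbf{f}^{\text{an}}_{s}$, and the morphism is induced by the (contracting) multiplication $\mu_{H'\hat{\otimes} H}: H' \hat{\otimes} H \to H$; the embedding $H \hookrightarrow H' \hat{\otimes} H$, $K_\lambda \mapsto 1 \otimes K_\lambda$, extends to a bounded section on the whole space, showing strictness and surjectivity.

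Finally, I would identify the kernel as $\ker(\mathbf{f}^{\text{an}}_{r} \hat{\otimes} \mu_{H' \hat{\otimes} H} \hat{\otimes} \mathbf{f}^{\text{an}}_{s})$, which is exactly $\mathbf{f}^{\text{an}}_{r} \hat{\otimes} \ker(\mu_{H' \hat{\otimes} H}) \hat{\otimes} \mathbf{f}^{\text{an}}_{s}$. Since $H'$ is generated as a Banach group algebra by the $t_i^{\pm 1}$ and is commutative, $\ker(\mu_{H' \hat{\otimes} H})$ is the closed ideal generated by $t_i \otimes 1 - 1 \otimes t_i$. Translating this into the double and using that the $t_i$ are grouplike and central in $H'$, the closed two-sided ideal in the Drinfel'd double generated by the elements $1 \otimes t_i \otimes 1 \otimes 1 - 1 \otimes 1 \otimes t_i \otimes 1$ has underlying Banach space exactly $\mathbf{f}^{\text{an}}_{r} \hat{\otimes} \ker(\mu_{H' \hat{\otimes} H}) \hat{\otimes} \mathbf{f}^{\text{an}}_{s}$, as required. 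The main obstacle will be the cross-relation verification in the second step: the reordering formula of Proposition \ref{DoubleBosonisationVariation} must be matched termwise against the Drinfel'd-double multiplication coming from Lemma \ref{Drinfel'dDoubleConstruction}, and keeping track of the duality pairing (together with the $S_B$ and weak quasi-triangular $\mathscr{R},\overline{\mathscr{R}}$ corrections) through the reordering is where the calculation is most delicate.
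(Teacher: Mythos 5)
Your proposal is correct and follows essentially the same route as the paper: the morphism is exactly $\text{Id} \otimes \mu_{H} \otimes \text{Id}$ on $\mathbf{f}^{\text{an}}_{r} \hat{\otimes} H' \hat{\otimes} H \hat{\otimes} \mathbf{f}^{\text{an}}_{s}$, the Hopf-algebra compatibility and the identification of the kernel are checked on the dense algebraic subalgebra (the paper simply cites Corollary 15 of Section 8.2.4 of \cite{QGaTR} together with Proposition \ref{QuantumGroupDenseInNAAnalyticVersion}, where you re-derive the cross-relations by hand), and everything is then extended by continuity. Your explicit bounded section $K_{\lambda} \mapsto 1 \otimes K_{\lambda}$ and the resulting Banach-level identification of the kernel with $\mathbf{f}^{\text{an}}_{r} \hat{\otimes} \ker(\mu_{H'\hat{\otimes}H}) \hat{\otimes} \mathbf{f}^{\text{an}}_{s}$ usefully make the strictness claim precise, which the paper leaves implicit in "the result follows by continuity."
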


\begin{proof}
This strict epimorphism can be written as
$$\mathbf{f}^{\text{an}}_{r} \hat{\otimes} H' \hat{\otimes} H \hat{\otimes} \mathbf{f}^{\text{an}}_{s} \xrightarrow{\text{Id} \otimes \mu_{H} \otimes \text{Id}} \mathbf{f}^{\text{an}}_{r} \hat{\otimes} H \hat{\otimes} \mathbf{f}^{\text{an}}_{s}.$$
Corollary 15 of Section 8.2.4 of \cite{QGaTR}, along with Proposition \ref{QuantumGroupDenseInNAAnalyticVersion}, show that this restricts to a morphism of braided graded Hopf algebras between the dense subspaces
$$U_{q}^{\leq 0}(\mathfrak{g}) \otimes U_{q}^{\geq 0}(\mathfrak{g}) \rightarrow U_{q}(\mathfrak{g})$$
whose kernel is generated by $t_{i} \otimes 1 - 1 \otimes t_{i}$. The result then follows by continuity.
\end{proof}

\begin{defn}
Let us denote by $\mathcal{C}$ the full subcategory of $_{(\mathbf{f}^{\text{an}}_{r} \rtimes H')}\text{Cross}^{(\mathbf{f}^{\text{an}}_{r} \rtimes H')}$ consisting of IndBanach spaces $V$ equipped with both a left action and right coaction of $\mathbf{f}^{\text{an}}_{r} \rtimes H'$, $\mu_{V}: (\mathbf{f}^{\text{an}}_{r} \rtimes H') \hat{\otimes} V \rightarrow V$ and $\Delta_{V}: V \rightarrow V \hat{\otimes} (\mathbf{f}^{\text{an}}_{r} \rtimes H')$, such that the following additional diagram commutes:
\begin{center}
\begin{tikzpicture}[node distance=6cm, auto]
  \node (A) {$H' \hat{\otimes} V$};
  \node (B) [below=1cm of A] {$(\mathbf{f}^{\text{an}}_{r} \rtimes H') \hat{\otimes} V$};
  \node (C) [right=1cm of A] {$(\overline{H} \ltimes \overline{\mathbf{f}^{\text{an}}_{s}}) \hat{\otimes} V$};
  \node (D) [below=1.05cm of C] {$V$};
  \draw[->] (A) to node {} (B);
  \draw[->] (C) to node {$\mu_{V}'$} (D);
  \draw[->] (A) to node {} (C);
  \draw[->] (B) to node {$\mu_{V}$} (D);
\end{tikzpicture}
\end{center}
where $\mu_{V}'$ is the action described in the proof of Lemma \ref{ModulesofQuantumDouble}.
\end{defn}

\begin{lem}
\label{FullyFaithfulFunctorCtoUqanMod}
There is a fully faithful functor $\mathcal{C} \rightarrow U_{q}(\mathfrak{g})^{\text{an}}_{r,s}\text{-Mod}$.
\end{lem}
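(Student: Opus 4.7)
The plan is to construct the functor by combining Lemma \ref{ModulesofQuantumDouble} with the surjection of Proposition \ref{NAAnalyticQuantumGroupAsDrinfel'dDouble}, and then verify fullness by a non-degeneracy argument for the duality pairing.

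\textbf{Construction of the functor.} Given $V \in \mathcal{C}$, write $C = \mathbf{f}^{\text{an}}_{r} \rtimes H'$ and $B = \overline{H} \ltimes \overline{\mathbf{f}^{\text{an}}_{s}}$, and apply Lemma \ref{ModulesofQuantumDouble} with the duality pairing of Lemma \ref{DualityPairingforQuantumDouble} to turn $V$ into a module over the relative Drinfel'd double $D(B,C) = C \hat{\otimes} B$. The left $C$-action is the given $\mu_V$, while the left $B$-action $\mu'_V$ is the one built from the right $C$-coaction $\Delta_V$ and the pairing as in the proof of Lemma \ref{ModulesofQuantumDouble}. The additional commuting square that defines $\mathcal{C}$ is precisely the statement that the two copies of $H'$---embedded in $C$ via its bosonisation factor, and in $B$ via $H' \subset H \cong \overline{H}$---act on $V$ identically. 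This is exactly the condition that the generators $1 \otimes t_i \otimes 1 \otimes 1 - 1 \otimes 1 \otimes t_i \otimes 1$ of the kernel in Proposition \ref{NAAnalyticQuantumGroupAsDrinfel'dDouble} annihilate $V$, so the $D(B,C)$-action descends to a $U_{q}(\mathfrak{g})^{\text{an}}_{r,s}$-module structure. A morphism in $\mathcal{C}$ preserves both $\mu_V$ and $\Delta_V$, so it automatically preserves the derived action $\mu'_V$, and hence defines a morphism of $U_{q}(\mathfrak{g})^{\text{an}}_{r,s}$-modules. This yields the desired functor.

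\textbf{Faithfulness} is immediate: both $\mathcal{C}$ and $U_{q}(\mathfrak{g})^{\text{an}}_{r,s}\text{-Mod}$ carry faithful forgetful functors to $\text{IndBan}_{k}$, and the functor just constructed commutes with these.

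\textbf{Fullness} is the main obstacle. Given a $U_{q}(\mathfrak{g})^{\text{an}}_{r,s}$-module morphism $f : V \to W$ between objects of $\mathcal{C}$, one already knows $f$ intertwines $\mu_V, \mu_W$ and $\mu'_V, \mu'_W$; the task is to deduce that $f$ also intertwines the right $C$-coactions $\Delta_V, \Delta_W$. The strategy is to argue that the duality pairing of Lemma \ref{DualityPairingforQuantumDouble} induces a strict monomorphism $C \hookrightarrow \underline{\text{Hom}}(B,k)$, and hence a strict monomorphism $V \hat{\otimes} C \hookrightarrow \underline{\text{Hom}}(B, V)$. Under this embedding the coaction $\Delta_V$ is identified, by construction of $\mu'_V$ in Lemma \ref{ModulesofQuantumDouble}, with the adjoint of $\mu'_V$, so that $\Delta_V$ is uniquely recovered from $\mu'_V$. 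Since $f$ preserves $\mu'_V$ it must then preserve $\Delta_V$. The essential point to verify is the non-degeneracy of the pairing on $C$, which I expect to reduce to the non-degeneracy of the bilinear form on $V$ from Definition \ref{AnalyticQuantumGroupPositivePart} (and hence of the induced Nichols algebra pairing of Proposition \ref{BilinearFormGivesNicholsAlgebra}), together with faithfulness of the character pairing $H \hat{\otimes} H' \to k$ from Lemma \ref{HAPairingWeakQuasiTriangular}, combined with the triangular decomposition of $C$ inherited from the bosonisation.
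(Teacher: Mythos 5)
Your construction of the functor and the faithfulness argument are essentially the paper's: combine Lemma \ref{ModulesofQuantumDouble} with Proposition \ref{NAAnalyticQuantumGroupAsDrinfel'dDouble}, noting (as you correctly spell out) that the extra commuting square in the definition of $\mathcal{C}$ is exactly the condition that the generators $1 \otimes t_{i} \otimes 1 \otimes 1 - 1 \otimes 1 \otimes t_{i} \otimes 1$ of the kernel act by zero, so the $D(B,C)$-action descends to $U_{q}(\mathfrak{g})^{\text{an}}_{r,s}$.

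The gap is in your fullness step. You want to recover the whole coaction $\Delta_{V}$ of $C = \mathbf{f}^{\text{an}}_{r} \rtimes H'$ from the induced action $\mu'_{V}$ of $B = \overline{H} \ltimes \overline{\mathbf{f}^{\text{an}}_{s}}$ via a strict monomorphism $C \hookrightarrow \underline{\text{Hom}}(B,k)$, but that monomorphism is precisely what is not available. For the Nichols-algebra part the injectivity $\mathbf{f}^{\text{an}}_{r} \to (\mathbf{f}^{\text{an}}_{s})^{\ast}$ does hold by construction (the radical has been quotiented out), but for the Cartan part you would need injectivity of the completed map $H' \to H^{\ast}$, i.e.\ that a $c_{0}$-sum $\sum_{\underline{n}} a_{\underline{n}} \underline{t}^{\underline{n}}$ pairing to zero against every $K_{\lambda}$ has all $a_{\underline{n}}=0$. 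Linear independence of characters is automatic only for finite sums; for completed sums this is a genuine non-Archimedean analytic statement (in rank one it amounts to a zero-counting argument for a Laurent series on the affinoid $|z|=1$ vanishing on the orbit of $q$), which the paper neither states nor proves, and which in the multivariable case you would still have to establish. On top of that, even granting it, you need the monomorphism to survive $-\hat{\otimes}\, V$ and the canonical map from a completed tensor product of duals into the dual of the completed tensor product, neither of which is free in $\text{IndBan}_{k}$. The paper's proof avoids all of this: since objects in the image of $\mathcal{C}$ decompose locally into Banach weight spaces, any $U_{q}(\mathfrak{g})^{\text{an}}_{r,s}$-module map commutes with the $H$-action and therefore preserves the weight space decomposition, hence the $H'$-coaction, with no non-degeneracy needed on the Cartan side; only the non-degeneracy of the $\mathbf{f}^{\text{an}}_{r}$--$\mathbf{f}^{\text{an}}_{s}$ pairing is then invoked to recover the $\mathbf{f}^{\text{an}}_{r}$-coaction from the $\mathbf{f}^{\text{an}}_{s}$-action. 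If you replace your Cartan-side non-degeneracy claim with this weight-space argument, your proof goes through.
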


\begin{proof}
By Lemma \ref{ModulesofQuantumDouble} and Proposition \ref{NAAnalyticQuantumGroupAsDrinfel'dDouble} there is a faithful functor $\mathcal{C} \rightarrow U_{q}(\mathfrak{g})^{\text{an}}_{r,s}\text{-Mod}$. Let $f:M \rightarrow N$ be a morphism in $U_{q}(\mathfrak{g})^{\text{an}}_{r,s}\text{-Mod}$ between two objects in the image of $\mathcal{C}$. Then the action of $\overline{H} \ltimes \overline{\mathbf{f}_{s}^{\text{an}}}$ on both $M$ and $N$ is induced by a coaction of $\mathbf{f}_{r}^{\text{an}} \rtimes H'$ via their dual pairing. Furthermore, $f$ commutes with the action of $U_{q}(\mathfrak{g})^{\text{an}}_{r,s}$, and hence with the actions of $\mathbf{f}_{r}^{\text{an}} \rtimes H'$ and $\overline{H} \ltimes \overline{\mathbf{f}_{s}^{\text{an}}}$. The morphism $f$ must therefore preserve the locally Banach weight space decompositions of $M$ and $N$, so it preserves their respective coactions of $H'$. Since the pairing of $\mathbf{f}_{r}^{\text{an}}$ and $\mathbf{f}_{s}^{\text{an}}$ is non-degenerate, $f$ must also preserve the coaction of $\mathbf{f}_{r}^{\text{an}}$ that induces the action of $\mathbf{f}_{s}^{\text{an}}$. Hence $f$ preserves the coaction of $\mathbf{f}_{r}^{\text{an}} \rtimes H'$, so comes from a morphism in $\mathcal{C}$. So the functor is fully faithful.
\end{proof}

\begin{defn}
\label{IntegrableRepresentations}
We will denote by $\mathcal{O}_{\Psi}$ the essential image of $\mathcal{C}$ in $U_{q}(\mathfrak{g})^{\text{an}}_{r,s}\text{-Mod}$. This is the full subcategory of $U_{q}(\mathfrak{g})^{\text{an}}_{r,s}$ modules whose action of $H$ gives a representation in $\text{Comod-}H' = H\text{-Mod}_{\Psi}$ and whose action of $U_{q}^{-}(\mathfrak{g})^{\text{an}}_{s}$ comes from a coaction of $\mathbf{f}_{r}^{\text{an}}$ via their dual pairing. Let us denote by $\mathcal{O}$ the full subcategory of $U_{q}(\mathfrak{g})^{\text{an}}_{r,s}\text{-Mod}$ consisting of modules whose action of $U_{q}^{-}(\mathfrak{g})^{\text{an}}_{s}$ comes from a coaction of $\mathbf{f}_{r}^{\text{an}}$ and whose action of $H$ gives a representation in $H\text{-Mod}_{\Phi}$. Note that $\mathcal{O}_{\Psi} \subset \mathcal{O}$.
\end{defn}

\begin{remark}
Note that the conditions for a $U_{q}(\mathfrak{g})^{\text{an}}_{r,s}$ module to be in $\mathcal{O}$ resemble the conditions for a $U_{q}(\mathfrak{g})$ module to be integrable, with the requirement that the action of $U_{q}^{+}(\mathfrak{g})^{\text{an}}_{s}$ comes from a coaction of $\mathbf{f}_{r}^{\text{an}}$ taking the place of a locally finite dimensional action of $U_{q}^{+}(\mathfrak{g})$.
\end{remark}

\begin{corollary}
The category $\mathcal{O}_{\Psi}$ is braided monoidal.
\end{corollary}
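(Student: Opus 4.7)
The plan is to transport the braided monoidal structure from a suitable subcategory of crossed bimodules via the identification of Lemma \ref{FullyFaithfulFunctorCtoUqanMod}. By Lemma \ref{CrossedBimodulesBraided}, with $C := \mathbf{f}_{r}^{\text{an}} \rtimes H'$, the category $_{C}\text{Cross}^{C}$ is pre-braided monoidal, and is a genuine braided monoidal category provided the antipode of $C$ is invertible. The core tasks are therefore threefold: first, show that $\mathcal{C}$ is closed under the tensor product and braiding of $_{C}\text{Cross}^{C}$; second, show that the equivalence $\mathcal{C} \simeq \mathcal{O}_{\Psi}$ induced by Lemma \ref{FullyFaithfulFunctorCtoUqanMod} is monoidal; and third, verify invertibility of the pre-braiding.

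For the first task, given $M, N \in \mathcal{C}$, the tensor product $M \hat{\otimes} N$ in $_{C}\text{Cross}^{C}$ carries the diagonal action and coaction of $C$ built from the coproduct. The extra compatibility defining $\mathcal{C}$ involves two actions of $H'$: one as a subalgebra of $C$, and one, namely $\mu'_{M \hat{\otimes} N}$, induced via the duality pairing with $\overline{H} \ltimes \overline{\mathbf{f}_{s}^{\text{an}}}$. The first uses the coproduct $\Delta_{H'}$, and the second, expanded using the pairing, also factors through $\Delta_{H'}$ because the pairing is a Hopf pairing. Term by term, both reduce to $\mu_M(x_{(1)}, -) \otimes \mu_N(x_{(2)}, -)$ and $\mu'_M(x_{(1)}, -) \otimes \mu'_N(x_{(2)}, -)$, which agree by the assumed compatibility on $M$ and $N$. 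The braiding of Lemma \ref{CrossedBimodulesBraided} is manifestly built from the action and coaction of $C$, so it preserves the compatibility condition and restricts to $\mathcal{C}$.

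For the second task, the monoidality of $\mathcal{C} \simeq \mathcal{O}_{\Psi}$ follows because $_{C}\text{Cross}^{C}$ is equivalent, as a monoidal category, to the modules of the Drinfel'd double $D(\overline{H} \ltimes \overline{\mathbf{f}_{s}^{\text{an}}}, C)$, and Proposition \ref{NAAnalyticQuantumGroupAsDrinfel'dDouble} provides a Hopf algebra quotient $D(\overline{H} \ltimes \overline{\mathbf{f}_{s}^{\text{an}}}, C) \twoheadrightarrow U_{q}(\mathfrak{g})_{r,s}^{\text{an}}$ through which the fully faithful functor of Lemma \ref{FullyFaithfulFunctorCtoUqanMod} factors. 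Pulling back modules along a Hopf surjection preserves tensor products, so the tensor product in $\mathcal{O}_{\Psi}$ inherited from $U_{q}(\mathfrak{g})_{r,s}^{\text{an}}\text{-Mod}$ agrees with the tensor product transported from $\mathcal{C}$.

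The main obstacle is the third task, the invertibility of the pre-braiding, which reduces to invertibility of the antipode of $C = \mathbf{f}_{r}^{\text{an}} \rtimes H'$. The antipode of $H'$ is invertible since $H'$ is the Banach group Hopf algebra of $\Phi^{\ast}$. For $\mathbf{f}_{r}^{\text{an}}$, the braiding on $V$ from Definition \ref{AnalyticQuantumGroupPositivePart} acts by scalars of norm one on a basis and hence is an isometric isomorphism, so the construction in Proposition \ref{TensorHopfAlgebra} applied to $c^{-1}$ yields an antipode for the opposite braided structure, and the standard identity relating $S$ to $S^{-1}$ in a braided graded connected Hopf algebra with invertible braiding then shows $S$ is bijective on $\mathbf{f}_{r}^{\text{an}}$. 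The bosonisation of two Hopf algebras with invertible antipodes has invertible antipode, so $C$ does too, and the pre-braiding on $_{C}\text{Cross}^{C}$, restricted to $\mathcal{C}$ and transported to $\mathcal{O}_{\Psi}$, becomes a genuine braiding.
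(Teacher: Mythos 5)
Your proposal is correct and follows essentially the same route as the paper, which simply cites Lemma \ref{CrossedBimodulesBraided} together with the fully faithful functor of Lemma \ref{FullyFaithfulFunctorCtoUqanMod} to transport the braided monoidal structure on crossed bimodules to $\mathcal{O}_{\Psi}$. The additional verifications you spell out (closure of $\mathcal{C}$ under the tensor product and braiding, monoidality of the identification, and invertibility of the antipode of $\mathbf{f}_{r}^{\text{an}} \rtimes H'$) are exactly the details the paper leaves implicit, and your arguments for them are sound.
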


\begin{proof}
This is a consequence of Lemma \ref{CrossedBimodulesBraided} and Lemma \ref{FullyFaithfulFunctorCtoUqanMod}.
\end{proof}

\begin{remark}
A short computation shows that the braiding on $\mathcal{O}_{\Psi}$ can be expressed as performing the braiding on $H'$-comodules given by Lemma \ref{HAPairingWeakQuasiTriangular} followed by the action of $\text{exp}_{\mathbf{f}_{r}^{\text{an}},\mathbf{f}_{r}^{\text{an}}}$ as described at the end of Section \ref{RMatrixDoesn'tConverge}, which is well defined despite $\text{exp}_{\mathbf{f}_{r}^{\text{an}},\mathbf{f}_{r}^{\text{an}}}$ not converging. This is expected given the description of the R-matrix at the end of Section \ref{RMatrixDoesn'tConverge}.
\end{remark}

\begin{prop}
Suppose that the symmetrised Cartan matrix associated to our root datum, $A=((\alpha_{i},\alpha_{j}))_{i,j \in I}$, has an inverse over $\mathbb{Q}$ with entries $A_{i,j}^{-1}$. Suppose further that $q=\text{exp}(\hslash)$ for some $\hslash \in k$ of sufficiently small norm such that ${\frac{1}{|n!|}(|\hslash| \cdot \text{max}_{i,j}|A^{-1}_{i,j}|)^{n}}$ converges to $0$. Then there is a braiding on the category $\mathcal{O}$ extending that of $\mathcal{O}_{\Psi}$.
\end{prop}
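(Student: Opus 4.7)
The plan is to mimic the construction of the braiding on $\mathcal{O}_{\Psi}$ outlined in the remark preceding the proposition, simply replacing the weak-quasi-triangular ``$H'$-comodule'' half of the braiding (which only sees weights in $\Psi$) with the honest quasi-triangular braiding on $H\text{-Mod}_{\Phi}$ constructed in Proposition \ref{HModuleBraiding}, while leaving the ``quantum exponential'' half $\exp_{\mathbf{f}_{r}^{\text{an}},\mathbf{f}_{r}^{\text{an}}}$ unchanged. Concretely, given $M,N\in\mathcal{O}$, decompose them locally into Banach weight spaces $M=\coprod_{\alpha\in\Phi}^{\leq 1}M_{\alpha}$, $N=\coprod_{\beta\in\Phi}^{\leq 1}N_{\beta}$ and define the candidate braiding $c_{M,N}$ as the composition
$$M\hat{\otimes} N \xrightarrow{\;\Psi_{\mathscr{H}}\;} N\hat{\otimes} M \xrightarrow{\;\exp_{\mathbf{f}_{r}^{\text{an}},\mathbf{f}_{r}^{\text{an}}}\;} N\hat{\otimes} M,$$
where $\Psi_{\mathscr{H}}$ is the braiding from Proposition \ref{HModuleBraiding}, acting on weight vectors by the scalar $q^{\sum_{i,j} A^{-1}_{i,j}(\alpha_{i},\alpha)(\alpha_{j},\beta)}$, and the second map is the action on $N\hat{\otimes} M$ coming from the dual pairing of $\mathbf{f}_{r}^{\text{an}}$ with itself, exactly as in the remark following the corollary.

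Next I would check that both factors are well-defined morphisms of IndBanach spaces. For $\Psi_{\mathscr{H}}$ this is Proposition \ref{HModuleBraiding}, whose proof relies only on the convergence of $\mathscr{R}_{\mathscr{H}}$ established in Lemma \ref{CartanPartQuasiTriangular}; the hypothesis on $|\hslash|$ is precisely what makes this work in the Banach setting. For the exponential factor, the defining condition of $\mathcal{O}$ — that the $U_{q}^{+}(\mathfrak{g})_{s}^{\text{an}}$-action on each object arises from a coaction of $\mathbf{f}_{r}^{\text{an}}$ via the pairing — provides the triangular convergence needed: on each pair of weight vectors only finitely many terms of $\exp_{\mathbf{f}_{r}^{\text{an}},\mathbf{f}_{r}^{\text{an}}}$ contribute non-trivially in each graded degree, and the bounded duality pairing combined with the definition of the coaction ensures convergence in norm, reproducing the mechanism already used to make sense of this exponential on $\mathcal{O}_{\Psi}$.

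The main obstacle is the verification of the hexagon axioms for $c_{M,N}$. The strategy is to reduce them to known identities by density and continuity: the subspace of vectors supported on finitely many weight spaces and acted on by finitely many modes of $\mathbf{f}_{r}^{\text{an}}$ is dense, and on such vectors the hexagon axioms become a finite algebraic identity in $U_{q}(\mathfrak{g})$ extended by $\mathscr{H}$. This identity is exactly the classical hexagon for the $h$-adic quantum group's universal R-matrix $\Theta\cdot q^{\sum A^{-1}_{ij}H_{i}\otimes H_{j}}$ of Drinfel'd–Jimbo, whose two factors are known to satisfy the compatibilities $(\Delta\otimes\text{Id})(\Theta)=\Theta_{13}\Theta_{23}$ etc. between themselves and with $\mathscr{R}_{\mathscr{H}}$. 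Once the hexagons hold on the dense subspace they extend by continuity to all of $M\hat{\otimes} N\hat{\otimes} P$, and invertibility of $c_{M,N}$ follows from invertibility of $\mathscr{R}_{\mathscr{H}}$ and of $\exp_{\mathbf{f}_{r}^{\text{an}},\mathbf{f}_{r}^{\text{an}}}$ (the latter because its inverse is the analogous exponential built from the antipode).

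Finally, to see that this braiding extends the one on $\mathcal{O}_{\Psi}$, it suffices to observe that, by the computation at the end of the proof of Proposition \ref{HModuleBraiding}, on weight spaces with weights in $\Psi$ the scalar $q^{\sum_{i,j}A^{-1}_{i,j}(\alpha_{i},\alpha)(\alpha_{j},\beta)}$ coincides with $q^{(\alpha,\beta)}$, so $\Psi_{\mathscr{H}}$ restricts on $\mathcal{O}_{\Psi}$ to the $H'$-comodule braiding of Lemma \ref{HAPairingWeakQuasiTriangular}; the second factor $\exp_{\mathbf{f}_{r}^{\text{an}},\mathbf{f}_{r}^{\text{an}}}$ is identical in the two constructions. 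Hence $c_{M,N}$ agrees with the braiding of the corollary on $\mathcal{O}_{\Psi}\subset\mathcal{O}$, as required.
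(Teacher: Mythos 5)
Your proposal takes essentially the same route as the paper: the braiding on $\mathcal{O}$ is assembled from the same two ingredients --- the Cartan R-matrix action of Proposition \ref{HModuleBraiding} and the quasi-R-matrix/exponential made sense of through the $\mathbf{f}_{r}^{\text{an}}$-coaction and action that membership in $\mathcal{O}$ supplies --- with the braiding axioms reduced to known R-matrix computations (the paper cites the computations in Proposition 3.6 of Majid's double-bosonisation paper together with the preceding remark, while you invoke the Drinfel'd--Jimbo hexagon identities on a dense subspace) and the extension of the $\mathcal{O}_{\Psi}$ braiding checked on weights in $\Psi$. The only deviation is the order in which the two factors are composed (you apply the Cartan factor before the exponential, following the remark; the paper's displayed composition applies $\mathscr{R}_{\mathscr{H}}$ last), a conventions-level point rather than a different method.
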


\begin{proof}
Given $M$ and $N$ in $\mathcal{O}$, the braiding is given by the composition
$$M \hat{\otimes} N \xrightarrow{\tau} N \hat{\otimes} M \longrightarrow N \hat{\otimes} \mathbf{f}_{r}^{\text{an}} \hat{\otimes} M \longrightarrow N \hat{\otimes} M \xrightarrow{\mathscr{R}_{\mathscr{H}}} N \hat{\otimes} M$$
where the second morphism is the coaction of $\mathbf{f}_{r}^{\text{an}}$ on $N$, the third is its action on $M$, and the last in the action of the R-matrix as in the proof of Proposition \ref{HModuleBraiding}. The computations in the proof of Proposition 3.6 of \cite{DBoBG}, alongside the previous remark, ensure that this is a braiding. This is an extension of the braiding in Lemma \ref{CrossedBimodulesBraided}.
\end{proof}

\begin{example}
Let $\mathfrak{g}=\mathfrak{sl}_{2}$, and fix $r,s>0$ with $|q_{i}-q_{i}^{-1}|rs\geq 1$ for all $i \in I$. Let $\lambda \in \Phi \cong \mathbb{Z}$, and let $W_{\lambda}=k\{\frac{x}{r}\}=\coprod_{n \geq 0}^{\leq 1} k_{r^{n}} \cdot x^{n}$ with the action of $U_{q}(\mathfrak{sl}_{2})_{r,s}^{\text{an}}$ given by
$$K \cdot x^{n} = q^{\lambda -2n}x^{n}, \quad F \cdot x^{n}= x^{n+1}, \quad E \cdot x^{n} = [n][\lambda - (n-1)] x^{n-1}$$
where $x^{-1}:=0$. Alternatively, taking $y^{n}=\frac{x^{n}}{[n]!}$, $W=\{\sum a_{n}y^{n} \mid \frac{|a_{n}|r^{n}}{[n]!} \rightarrow 0\}$ with action
$$K \cdot y^{n} = q^{\lambda -2n}y^{n}, \quad F \cdot y^{n}= [n+1]y^{n+1}, \quad E \cdot y^{n} = [\lambda - (n-1)] y^{n-1}$$
where $y^{-1}:=0$. Note that $W_{\lambda}$ is isomorphic to the quotient of $U_{q}(\mathfrak{sl}_{2})_{r,s}^{\text{an}}$ by the closed left ideal generated by $E$ and $K-q^{\lambda}$, and so we call it an \emph{analytic Verma module} of weight $\lambda$. This gives a representation in $\mathcal{O}$ where the coaction of $\mathbf{f}_{r}^{an}$ is given by
$$\begin{array}{rrl}
x^{n} &\longmapsto& \sum (-1)^{k}\frac{(q-q^{-1})^{k}}{[k]!} F^{k} \otimes E^{k}x^{n}\\
&& =\sum_{k=0}^{n} (-1)^{k}(q-q^{-1})^{k}\frac{[n]![\lambda -n +k]!}{[k]![n-k]![\lambda-n]!} F^{k} \otimes x^{n-k},\\
y^{n} &\longmapsto& \sum_{k=0}^{n} (-1)^{k}(q-q^{-1})^{k}\frac{[\lambda -n +k]!}{[k]![\lambda-n]!} F^{k} \otimes y^{n-k}.
\end{array}$$
Given $\lambda, \lambda' \in \Phi$ the braiding $W_{\lambda} \hat{\otimes} W_{\lambda'} \rightarrow W_{\lambda'} \hat{\otimes} W_{\lambda}$ is given by
$$\begin{array}{rrl}
x^{n} \otimes x^{m} &\longmapsto& \sum_{k=0}^{n} (-1)^{k}(q-q^{-1})^{k}\frac{[n]![\lambda -n +k]!}{[k]![n-k]![\lambda-n]!} q^{2(m+k)(n-k)} \ x^{m+k} \otimes x^{n-k},\\
y^{n} \otimes y^{m} &\longmapsto& \sum_{k=0}^{n} (-1)^{k}(q-q^{-1})^{k}\frac{[m+k]![\lambda -n +k]!}{[k]![m]![\lambda-n]!} q^{2(m+k)(n-k)} \ y^{m+k} \otimes y^{n-k}.
\end{array}$$
\end{example}

\begin{example}
Note that objects in $\mathcal{O}$ are not necessarily generated by highest weight vectors. We give an example of a representation with no such highest weights. Let $\mathfrak{g}=\mathfrak{sl}_{2}$, fix $r,s>0$ with $|q_{i}-q_{i}^{-1}|rs\geq 1$ for all $i \in I$, and suppose that $q=\text{exp}(\hslash)$ for $|\hslash| \ll 1$. Then
$$q-q^{-1}=\left(\sum_{n=0}^{\infty}\frac{(\hslash)^{n}}{n!}\right)-\left(\sum_{n=0}^{\infty}\frac{(-\hslash)^{n}}{n!}\right)=2\hslash \left(\sum_{k=0}^{\infty}\frac{(\hslash)^{2k}}{k!}\right)$$
so $|q-q^{-1}|=|2\hslash|$, and
$$\begin{array}{rcl}
[n]-n&=&q^{n-1}+q^{n-3}+...+q^{-n+1}-n\\
&=& \hslash\sum_{k=1}^{\infty} \frac{1}{k!}[(n-1)^{k}+(n-3)^{k}+...+(-n+1)^{k}]\hslash^{k-1}
\end{array}$$
has norm strictly smaller than 1 for $|\hslash|$ sufficiently small, so $|[n]|=|n|$. Fix some $\lambda \in \mathbb{Z}_{\geq 0}$. Let us define
$$M_{\lambda}:= \coprod\nolimits^{\leq 1}_{i,j \geq 0} k_{r^{j-i}} \cdot x_{i,j}=\left\lbrace\sum \alpha_{i,j}x_{i,j} \middle| |\alpha_{i,j}|r^{j-i} \rightarrow 0\right\rbrace.$$
Then $M_{\lambda}$ becomes a $U_{q}(\mathfrak{sl}_{2})^{\text{an}}_{r,s}$ module with
$$\begin{array}{rclr}
K \cdot x_{i,j} &=& q^{\lambda+2i-2j}x_{i,j},\\
E \cdot x_{i,j} &=& x_{i+1,j},\\
F \cdot x_{0,j} &=& x_{0,j+1},\\
F \cdot x_{i,j} &=& x_{i,j+1}-[i][\lambda+i-1-2j]x_{i-1,j} & \text{for } i>0,
\end{array}$$
so that $x_{i,j} = E^{i}F^{j}x_{0,0}$ and $x_{0,0}$ is of weight $\lambda$. Note that this action is bounded since
$$\|K \cdot x_{i,j}\| = \|x_{i,j}\|,$$
$$\|E \cdot x_{i,j}\| = r^{j-i-1} = \frac{1}{rs} \|E\| \cdot \|x_{i,j}\| \leq \|E\| \cdot \|x_{i,j}\|,$$
$$\|F \cdot x_{i,j}\| \leq \text{max}\{1, |[i][\lambda+i-1-2j]|\}r^{j+1-i} \leq r^{j+1-i} = \|F\|\cdot \|x_{i,j}\|.$$
The map $M_{\lambda} \rightarrow \mathbf{f}^{\text{an}}_{r} \hat{\otimes} M_{\lambda}$ given by
$$x_{i,j} \mapsto \sum_{k \geq 0} (-1)^{k} \frac{(q-q^{-1})^{k}}{[k]!} F^{k} \otimes x_{i+k,j}$$
is well defined and bounded since $|\frac{(q-q^{-1})^{k}}{[k]!}|\frac{r^{k}r^{j-i-k}}{r^{j-i}}=\frac{|2\hslash|^{k}}{|k!|} \rightarrow 0$. This shows that $M_{\lambda}$ is indeed in $\mathcal{O}$, since
$$\sum_{k \geq 0} (-1)^{k} \frac{(q-q^{-1})^{k}}{[k]!} \langle E^{n},F^{k} \rangle \otimes x_{i+k,j}=x_{i+n,j}=E^{n}\cdot x_{i,j}.$$
\end{example}

\

\subsection{Rigidity results}
\

Classical rigidity results of Chevalley, Eilenberg and Cartan from the 1940s assert that there are no non-trivial formal deformations (as an algebra) of the universal enveloping algebra of a semisimple Lie algebra $\mathfrak{g}$. The proof relies on the vanishing of the second Lie algebra cohomology group. In this section we prove an analogous result that relies on a bounded cohomology vanishing result that has yet to be proven. We proceed as in Chapter XVIII of \cite{QG}.\\

We fix a set of root datum as in Definition \ref{KacMoodyRootDatum}.

\begin{defn}
Let $\mathscr{H}_{0}:=\coprod^{\leq 1}_{\alpha \in \mathbb{Z}I}k H_{\alpha}$ be the Banach Hopf algebra generated by $H_{i}$ for $i \in I$, where
$$\Delta_{\mathscr{H}_{0}}(H_{i})=1 \otimes H_{i} + H_{i} \otimes 1.$$
Let $V_{0}:=\coprod^{\leq 1}_{i \in I}k \cdot v_{i}$ and let $T_{r}(V_{0})=\coprod_{n \geq 0}^{\leq 1} (V_{0}^{\hat{\otimes} n})_{r^{n}}$. For $r>0$ we will denote by both $U^{-}(\mathfrak{g})_{r}^{\text{an}}$ and $U^{+}(\mathfrak{g})_{r}^{\text{an}}$ the quotient of $T_{r}(V_{0})$ by the closed ideal generated by the Serre relations
$$\sum_{k=0}^{1-(\alpha_{i},\alpha_{j})}(-1)^{k} {1-(\alpha_{i},\alpha_{j}) \choose k} v_{i}^{1-(\alpha_{i},\alpha_{j})-k}v_{j}v_{i}^{k}=0$$
for $i \neq j$.
\end{defn}

\begin{theorem}
Let $r,s>0$ such that $1 \leq rs$. Then there is a Banach algebra structure on
$$U(\mathfrak{g})_{r,s}^{\text{an}}:=U^{-}(\mathfrak{g})_{r}^{\text{an}} \hat{\otimes} \mathscr{H}_{0} \hat{\otimes} U^{+}(\mathfrak{g})_{s}^{\text{an}}$$
such that $U^{-}(\mathfrak{g})_{r}^{\text{an}}$, $\mathscr{H}_{0}$ and $U^{+}(\mathfrak{g})_{s}^{\text{an}}$ are all subalgebras, and
$$[H_{i},E_{j}]=(\alpha_{i},\alpha_{j})E_{j}, \quad [H_{i},F_{j}]=-(\alpha_{i},\alpha_{j})F_{j}, \quad [E_{i},F_{j}]=\delta_{i,j} H_{i},$$
where $F_{i}=v_{i} \otimes 1 \otimes 1$ and $E_{i}=1 \otimes 1 \otimes v_{i}$. This becomes a Banah Hopf algebra with $\mathscr{H}_{0}$ as a sub-Hopf algebra and
$$\begin{array}{rclrcl}
\Delta(E_{i})&=&E_{i} \otimes 1 + 1 \otimes E_{i},& S(E_{i})&=&-E_{i},\\
\Delta(F_{i}) &=& F_{i} \otimes 1 + 1 \otimes F_{i},& S(F_{i})&=&-F_{i}.
\end{array}$$
\end{theorem}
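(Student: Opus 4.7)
The plan is to build $U(\mathfrak{g})^{\text{an}}_{r,s}$ by transcribing the construction of $U_q(\mathfrak{g})^{\text{an}}_{r,s}$ from Subsection 3.2 to the classical setting: the Nichols algebras $\mathbf{f}^{\text{an}}_{r/s}$ are replaced by the Serre-quotients $U^{\pm}(\mathfrak{g})^{\text{an}}_{r/s}$ just defined, the group Hopf algebra Cartan $H$ is replaced by $\mathscr{H}_0$, and the three pieces are glued using a Drinfel'd double as in Proposition \ref{NAAnalyticQuantumGroupAsDrinfel'dDouble}.

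First I would verify that $U^{\pm}(\mathfrak{g})^{\text{an}}_r$ are Banach Hopf algebras. Equipping $V_0$ with the flip braiding $\tau$ of norm $1$, Proposition \ref{TensorHopfAlgebra} makes $T_r^{\tau}(V_0)$ a Banach Hopf algebra in which each $v_i$ is primitive; since iterated brackets of primitives are primitive, the Serre elements $(\text{ad}\,v_i)^{1-(\alpha_i,\alpha_j)}(v_j)$ are themselves primitive, so the closed two-sided ideal they generate is also a coideal and the quotient inherits the required Hopf structure. Next I would turn each $U^{\pm}(\mathfrak{g})^{\text{an}}_r$ into an $\mathscr{H}_0$-module by extending $H_i\cdot v_j:=\pm(\alpha_i,\alpha_j)v_j$ to a derivation of $T_r(V_0)$, which preserves the Serre ideal and so descends. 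In case \textbf{(NA)} each integer $(\alpha_i,\alpha_j)$ has norm at most $1$, so the action is contracting on each weight space and hence bounded on all of $U^{\pm}(\mathfrak{g})^{\text{an}}_r$; forming smash products as in Section 2 yields bounded Banach Hopf algebras $U^-(\mathfrak{g})^{\text{an}}_r\rtimes\mathscr{H}_0$ and $\mathscr{H}_0\ltimes U^+(\mathfrak{g})^{\text{an}}_s$ realising the relations $[H_i,E_j]=(\alpha_i,\alpha_j)E_j$ and $[H_i,F_j]=-(\alpha_i,\alpha_j)F_j$.

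To obtain the cross-relation $[E_i,F_j]=\delta_{ij}H_i$ I would apply the Drinfel'd double construction. The bilinear form $\langle v_i,v_j\rangle=\delta_{ij}$ on $V_0$ has norm $1$, so the analogue of Lemma \ref{ExtendBilinearForm} for the flip braiding extends it to a bounded pairing $T_s(V_0)\hat{\otimes}T_r(V_0)\to k$ exactly when $rs\geq 1$; since the Serre elements are iterated commutators of primitives they lie in the radical, so the pairing descends to a duality pairing of $U^{+}(\mathfrak{g})^{\text{an}}_s$ with $U^{-}(\mathfrak{g})^{\text{an}}_r$. Combining this with the self-pairing of $\mathscr{H}_0$ determined by $\langle H_i,H_j\rangle=(\alpha_i,\alpha_j)$ assembles a duality pairing of the two smash products exactly as in Lemma \ref{DualityPairingforQuantumDouble}. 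Applying Lemma \ref{Drinfel'dDoubleConstruction} and then quotienting by the closed ideal identifying the two copies of $\mathscr{H}_0$, precisely as in Proposition \ref{NAAnalyticQuantumGroupAsDrinfel'dDouble}, produces a Banach Hopf algebra whose underlying Banach space is $U^-(\mathfrak{g})^{\text{an}}_r\hat{\otimes}\mathscr{H}_0\hat{\otimes}U^+(\mathfrak{g})^{\text{an}}_s$; the cross-multiplication is forced by the pairing to yield $[E_i,F_j]=\delta_{ij}H_i$, and the coproduct and antipode formulas on the generators descend from their primitivity in $T_r(V_0)$.

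The central technical point is boundedness, and the hypothesis $rs\geq 1$ is used twice: once to extend the bilinear form (so that the Drinfel'd double pairing is well defined) and once to keep the Drinfel'd double multiplication contracting. The essential estimate, exploiting the strong triangle inequality of \textbf{(NA)}, is $\|F_jE_i+\delta_{ij}H_i\|\leq\max(rs,1)=rs=\|E_i\|\,\|F_j\|$, so that rewriting $E_iF_j$ in PBW order is norm-preserving term-by-term and iteration to arbitrary monomials does not blow up.
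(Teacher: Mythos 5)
There is a genuine gap at the step where you claim the cross-relation $[E_i,F_j]=\delta_{ij}H_i$ is ``forced by the pairing'' in the relative Drinfel'd double. In the double of Lemma \ref{Drinfel'dDoubleConstruction} the straightened product $E_iF_j$ is a sum of terms of the form $\langle (E_i)_{(1)},(F_j)_{(1)}\rangle\,(F_j)_{(2)}(E_i)_{(2)}\,\langle (E_i)_{(3)},(F_j)_{(3)}\rangle$, so the only elements that can appear besides $F_jE_i$ are built from the coproduct components of $E_i$ and $F_j$ themselves. Since you insist (correctly, for the classical coproduct you want) that $E_i$ and $F_j$ are primitive in $\overline{H}\ltimes U^{+}$ and $U^{-}\rtimes\mathscr{H}_0$, those components are only $E_i$, $F_j$ and $1$; hence $[E_i,F_j]$ can only be a scalar multiple of $1$, and with consistent conventions the two outer pairing terms cancel, giving $[E_i,F_j]=0$. (Compare the quantum case, where the Cartan contribution $\tfrac{t_i-t_i^{-1}}{q_i-q_i^{-1}}$ enters precisely through the group-like legs $t_i$ in $\Delta E_i$, $\Delta F_i$, equivalently through the group-like-valued maps $\mathscr{R},\overline{\mathscr{R}}$ in Proposition \ref{DoubleBosonisationVariation}; with primitive generators this mechanism is simply absent — classically the double of the two cocommutative Borels is a smash product by a coadjoint-type action which is trivial on these generators.) The same problem hits the Cartan identification: in your double the $A$-side copy of $\mathscr{H}_0$ commutes with $F_j$ (again because $\Delta F_j$ has no Cartan leg to pair against), while the $B$-side copy satisfies $[H_i,F_j]=-(\alpha_i,\alpha_j)F_j$, so quotienting by the ideal identifying the two copies, as in Proposition \ref{NAAnalyticQuantumGroupAsDrinfel'dDouble}, forces $(\alpha_i,\alpha_j)F_j=0$ rather than producing $U(\mathfrak{g})^{\text{an}}_{r,s}$. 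So the double/double-bosonisation route does not transcribe to the classical setting at fixed $q$; it is exactly why this theorem has to be handled separately from Subsection 3.2.

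The paper's actual proof is much more elementary and you already have its key ingredient in your last paragraph: by the Poincar\'e--Birkhoff--Witt triangular decomposition, $U(\mathfrak{g})$ sits densely in $U^{-}(\mathfrak{g})^{\text{an}}_{r}\hat{\otimes}\mathscr{H}_0\hat{\otimes}U^{+}(\mathfrak{g})^{\text{an}}_{s}$ with the desired Hopf structure already defined on it; one then checks the generator estimates $\|H_iF_j\|\leq\|H_i\|\|F_j\|$, $\|E_iH_j\|\leq\|E_i\|\|H_j\|$, $\|E_iF_j\|=\|F_jE_i-\delta_{ij}H_i\|\leq rs$ and $\|\Delta(x)\|\leq\|x\|$, and uses them to define bounded maps out of $T_r(V_0)\hat{\otimes}\mathscr{H}_0\hat{\otimes}T_s(V_0)$ (and its tensor square) that descend to the multiplication and comultiplication by continuity. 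Your closing norm estimate is essentially this verification, but on its own it cannot replace the double construction you use to define the product; replace the double step by the density-and-continuity argument and the proof goes through.
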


\begin{proof}
By construction and the triangular decomposition given by the Poincar\'e-Birkhoff-Witt Theorem the enveloping algebra $U(\mathfrak{g})$ sits as a dense subspace of $U(\mathfrak{g})_{r,s}^{\text{an}}$ on which this Hopf algebra structure is well defined. It is therefore enough to check that it extends continuously. Since
$$\begin{array}{rcccccl}
\|H_{i} \cdot F_{j}\| &=& \|F_{j}H_{i} - (\alpha_{i},\alpha_{j})F_{j}\| &\leq& r &=& \|H_{i}\| \|F_{j}\|\\
\|E_{i} \cdot H_{j}\| &=& \|H_{j}E_{i} + (\alpha_{i},\alpha_{j})E_{j}\| &\leq& s &=& \|E_{i}\| \|H_{j}\|\\
\|E_{i} \cdot F_{j}\| &=& \|F_{j}E_{i} - \delta_{i,j}H_{i}\| &\leq& rs &=& \|E_{i}\| \|F_{j}\|
\end{array}$$
and since
$$\|\Delta(x)\|=\|1 \otimes x + x \otimes 1\| \leq \|x\|$$
for each generator $x \in \{F_{i},H_{i},E_{i}\mid i \in I\}$ we may define bounded linear transformations
$$\left( T_{r}(V_{0}) \hat{\otimes} \mathscr{H}_{0} \hat{\otimes} T_{s}(V_{0}) \right) \hat{\otimes} \left( T_{r}(V_{0}) \hat{\otimes} \mathscr{H}_{0} \hat{\otimes} T_{s}(V_{0}) \right) \rightarrow U(\mathfrak{g})_{r,s}^{\text{an}}$$
$$T_{r}(V_{0}) \hat{\otimes} \mathscr{H}_{0} \hat{\otimes} T_{s}(V_{0}) \rightarrow U(\mathfrak{g})_{r,s}^{\text{an}} \hat{\otimes} U(\mathfrak{g})_{r,s}^{\text{an}}$$
which descend to the described multiplication and comultiplication maps.
\end{proof}

\begin{defn}
We will denote by $\mathfrak{g}_{r,s}$ the closed Lie subalgebra of $U(\mathfrak{g})_{r,s}^{\text{an}}$ generated by $\{F_{i},H_{i},E_{i} \mid i \in I\}$. This is a Banach Lie algebra with $\| [x,y]\| \leq \|x\| \cdot \|y\|$ for $x,y \in \mathfrak{g}_{r,s}$.
\end{defn}

\begin{prop}
Suppose we have a Banach algebra $A$ and a morphism of Banach Lie algebras $\mathfrak{g}_{r,s} \rightarrow A$ of norm at most 1. Then this extends to a unique contracting morphism of Banach algebras $U(\mathfrak{g})_{r,s}^{\text{an}} \rightarrow A$. In particular, given a Banach $\mathfrak{g}_{r,s}$ module $M$ whose action satisfies $\| x \cdot m\| \leq \|x\| \cdot \|m\|$ for $x \in \mathfrak{g}$, $m \in M$, this extends to a unique action of $U(\mathfrak{g})_{r,s}^{\text{an}}$ on $M$ such that the morphism $U(\mathfrak{g})_{r,s}^{\text{an}} \hat{\otimes} M \rightarrow M$ is contracting.
\end{prop}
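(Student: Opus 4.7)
The plan is to build the extension by universal properties on each tensor factor of the triangular decomposition, combine them via multiplication, and verify the result is an algebra homomorphism by density of $U(\mathfrak{g})$ inside $U(\mathfrak{g})_{r,s}^{\text{an}}$.

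First I would define contracting maps $V_{0,r} \to A$ and $V_{0,s} \to A$ by $v_i \mapsto \phi(F_i)$ and $v_i \mapsto \phi(E_i)$ respectively; these are contracting since $\|\phi(F_i)\|\leq\|F_i\|=r$ and $\|\phi(E_i)\|\leq\|E_i\|=s$. By the universal property of $T_r(V_0)$ and $T_s(V_0)$ as Banach algebras, these extend to contracting algebra homomorphisms $\tilde{\alpha}:T_r(V_0)\to A$ and $\tilde{\gamma}:T_s(V_0)\to A$. Similarly, using that the $\phi(H_i)$ pairwise commute (since $[H_i,H_j]=0$ in $\mathfrak{g}_{r,s}$ and $\phi$ is a Lie algebra morphism), one obtains a contracting algebra homomorphism $\beta:\mathscr{H}_0\to A$ sending $H_i \mapsto \phi(H_i)$.

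Next I would verify that $\tilde{\alpha}$ and $\tilde{\gamma}$ factor through $U^{-}(\mathfrak{g})^{\text{an}}_r$ and $U^{+}(\mathfrak{g})^{\text{an}}_s$. The Serre relators in $T_r(V_0)$ are the expansions of $(\mathrm{ad}\, F_i)^{1-(\alpha_i,\alpha_j)}(F_j)$ inside $U(\mathfrak{g})^{\text{an}}_{r,s}$, and hence already vanish in $\mathfrak{g}_{r,s}$ as Lie brackets. Applying $\phi$, which respects brackets and has norm at most $1$, these images vanish in $A$. Since the Serre-ideal is generated by these elements and is closed, continuity of $\tilde\alpha$ lets the map descend to a contracting algebra homomorphism $\alpha:U^{-}(\mathfrak{g})^{\text{an}}_r\to A$; the same argument gives $\gamma:U^{+}(\mathfrak{g})^{\text{an}}_s\to A$.

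Then I would assemble the three pieces into the candidate map
\[
\Phi \; := \; \mu_A \circ (\alpha \hat{\otimes} \beta \hat{\otimes} \gamma) \;:\; U^{-}(\mathfrak{g})^{\text{an}}_r \hat{\otimes}\mathscr{H}_0 \hat{\otimes} U^{+}(\mathfrak{g})^{\text{an}}_s \;\longrightarrow\; A,
\]
where $\mu_A$ denotes iterated multiplication. It is contracting as a composition of contracting maps. The main obstacle is showing that $\Phi$ is multiplicative: this is where the nontrivial commutation relations $[H_i,E_j]$, $[H_i,F_j]$ and $[E_i,F_j]$ of $U(\mathfrak{g})^{\text{an}}_{r,s}$ have to match those in $A$. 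To handle this, I would restrict to the algebraic enveloping algebra $U(\mathfrak{g})$, which by the Poincar\'e-Birkhoff-Witt theorem decomposes as $U^{-}(\mathfrak{g})\otimes U^{0}(\mathfrak{g})\otimes U^{+}(\mathfrak{g})$ and sits densely inside $U(\mathfrak{g})^{\text{an}}_{r,s}$ (by construction of each factor as a completed tensor algebra quotient). On this dense subalgebra, $\Phi$ agrees on generators with the classical universal property extension of $\phi|_{\mathfrak{g}}:\mathfrak{g}\to A$ to an algebra morphism $U(\mathfrak{g})\to A$, and is therefore multiplicative there. Because multiplication on both $U(\mathfrak{g})^{\text{an}}_{r,s}$ and $A$ is continuous, $\Phi$ is multiplicative on the whole completion.

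Uniqueness is immediate: any contracting algebra morphism extending $\phi$ must agree with $\Phi$ on $\{F_i,H_i,E_i\}$, hence on the dense subalgebra $U(\mathfrak{g})$, and hence everywhere by continuity. For the module statement, a contracting Banach $\mathfrak{g}_{r,s}$-module $M$ corresponds exactly to a Lie algebra morphism $\mathfrak{g}_{r,s}\to \underline{\mathrm{End}}(M)$ of norm at most $1$ (where $\underline{\mathrm{End}}(M)$ is a Banach algebra under the operator norm); applying the first part of the proposition with $A=\underline{\mathrm{End}}(M)$ produces the required contracting action of $U(\mathfrak{g})^{\text{an}}_{r,s}$ on $M$, uniquely determined.
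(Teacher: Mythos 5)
Your argument is essentially the paper's own proof: define the contracting map on the completed triple tensor product $T_{r}(V_{0}) \hat{\otimes} \mathscr{H}_{0} \hat{\otimes} T_{s}(V_{0})$ from the images of $F_{i}, H_{i}, E_{i}$, observe that the Serre relations are killed because they already hold as Lie identities in $\mathfrak{g}_{r,s}$ and $\phi$ respects brackets, verify multiplicativity on the dense subalgebra $U(\mathfrak{g})$ via the PBW decomposition and conclude by continuity, with the module statement obtained by taking $A=\underline{\mathrm{End}}(M)$. The only difference is presentational (you descend each factor to $U^{\pm}(\mathfrak{g})^{\text{an}}$ before assembling, while the paper descends the assembled map), so the proposal is correct and follows the same route.
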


\begin{proof}
Taking the images of $F_{i}$, $H_{i}$ and $E_{i}$ in $A$ gives us a contracting map
$$(T_{r}(V_{0}) \hat{\otimes} \mathscr{H}_{0} \hat{\otimes} T_{s}(V_{0})) \rightarrow A.$$
The images of $F_{i}$ and $E_{i}$ must satisfy the Serre relations in $A$, and hence this descends to a map $U(\mathfrak{g})_{r,s} \rightarrow A$. This restricts to a well defined algebra homomorphism on the dense subspace $U(\mathfrak{g})$, hence the result follows by continuity. Applying this to a morphism $\mathfrak{g}_{r,s} \rightarrow \text{Hom}(M,M)$ gives the rest of this result. 
\end{proof}

\begin{defn}
Let $M$ be a left Banach $\mathfrak{g}_{r,s}$ module whose action satisfies $\| x \cdot m\| \leq \|x\| \cdot \|m\|$ for $x \in \mathfrak{g}_{r,s}$, $m \in M$. Then we define the complex
$$C_{b}^{n}(\mathfrak{g}_{r,s},M):=\text{Hom}(\Lambda^{n}\mathfrak{g}_{r,s},M),$$
the space of bounded antisymmetric n-linear maps from $\mathfrak{g}_{r,s}$ to $M$, and let $\delta_{n}:C_{b}^{n}(\mathfrak{g}_{r,s},M) \rightarrow C_{b}^{n+1}(\mathfrak{g}_{r,s},M)$ be the map
$$\begin{array}{rcl}
\delta_{n}(f)(x_{1},.., x_{n+1})&=&\sum_{1 \leq i \leq j \leq n+1} (-1)^{i+j}f([x_{i},x_{j}],x_{1},..,\hat{x_{i}},..,\hat{x_{j}},..,x_{n+1})\\
&& \quad + \sum_{1 \leq i \leq n+1} (-1)^{i+1}x_{i}f(x_{1},..,\hat{x_{i}},..,x_{n+1}).
\end{array}$$
Note that $\|\delta_{n}\| \leq 1$ and $\delta_{n+1} \circ \delta_{n}=0$ for all $n \geq 0$. We define $H_{b}^{n}(\mathfrak{g}_{r,s},M)$ to be the seminormed space $\text{Ker}(\delta_{n})/\text{Im}(\delta_{n-1})$, the \emph{$n$th bounded Lie algebra cohomology of $\mathfrak{g}_{r,s}$ with coefficients in $M$}. We will say that $C_{b}^{\bullet}(\mathfrak{g}_{r,s},M)$ is \emph{strictly exact} at $C_{b}^{n}(\mathfrak{g}_{r,s},M)$ if $H_{b}^{n}(\mathfrak{g}_{r,s},M)=0$ and the induced map
$$C_{b}^{n-1}(\mathfrak{g}_{r,s},M)/\text{Ker}(\delta_{n-1}) \rightarrow \text{Im}(\delta_{n-1})=\text{Ker}(\delta_{n})$$
is an isomorphism.
\end{defn}

\begin{defn}
Let $M$ be as above. Then we say that a strict epimorphism $p:\tilde{\mathfrak{g}} \rightarrow \mathfrak{g}_{r,s}$ of Banach Lie algebras is an extension with kernel $M$ if $\text{Ker}(p) \cong M$ as $\mathfrak{g}_{r,s}$ modules, where $\text{Ker}(p) \subset \tilde{\mathfrak{g}}$ has the adjoint action. This extension is split if there exists a morphism of Banach Lie algebras $s:\mathfrak{g}_{r,s} \rightarrow \tilde{\mathfrak{g}}$ with $p \circ s = \text{Id}$.
\end{defn}

\begin{lem}
\label{SplittingLemma}
If $H^{2}_{b}(\mathfrak{g}_{r,s},M)=0$ then any extension $p:\tilde{\mathfrak{g}} \rightarrow \mathfrak{g}_{r,s}$ with kernel $M$ and $\|p\| \leq 1$ which is already split in $\text{Ban}_{k}$ is split as an extension of Lie algebras. Moreover, if $C_{b}^{\bullet}(\mathfrak{g}_{r,s},M)$ is strictly exact at $C_{b}^{2}(\mathfrak{g}_{r,s},M)$, the splitting $s$ has norm $\|s\| \leq C$ where $C$ is the norm of the isomorphism $\text{Ker}(\delta_{2})=\text{Im}(\delta_{1}) \overset{\sim}{\rightarrow} C_{b}^{1}(\mathfrak{g}_{r,s},M)/\text{Ker}(\delta_{1})$.
\end{lem}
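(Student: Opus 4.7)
The plan is to carry out the classical cohomological argument for splitting an abelian Lie extension, adapted to Banach Lie algebras. First, using that $p$ is a strict epimorphism of norm at most $1$ and that the extension is split in $\text{Ban}_{k}$, I choose a bounded $k$-linear section $\sigma : \mathfrak{g}_{r,s} \to \tilde{\mathfrak{g}}$ with $\|\sigma\| \leq 1$. Define the cocycle defect
$$f(x,y) := [\sigma(x),\sigma(y)]_{\tilde{\mathfrak{g}}} - \sigma([x,y]).$$
Applying $p$ shows $f$ takes values in $\text{Ker}(p) \cong M$, and since the bracket on $\tilde{\mathfrak{g}}$ is submultiplicative, $f \in C_{b}^{2}(\mathfrak{g}_{r,s},M)$ with $\|f\| \leq 1$.

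The isomorphism $\text{Ker}(p) \cong M$ of $\mathfrak{g}_{r,s}$-modules forces $[M,M] = 0$ in $\tilde{\mathfrak{g}}$, because only then does the adjoint action of $\tilde{\mathfrak{g}}$ on its ideal $\text{Ker}(p)$ descend to the quotient $\mathfrak{g}_{r,s}$. In particular $[\sigma(x),m]$ depends only on $p(\sigma(x)) = x$, and the $\mathfrak{g}_{r,s}$-module structure on $M$ is $x \cdot m = [\sigma(x),m]$. The Jacobi identity in $\tilde{\mathfrak{g}}$ then gives $\delta_{2}f = 0$ by the standard $2$-cocycle calculation; abelian-ness of $M$ is precisely what makes the $[M,M]$-terms vanish. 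By $H^{2}_{b}(\mathfrak{g}_{r,s},M) = 0$ there then exists $g \in C_{b}^{1}(\mathfrak{g}_{r,s},M)$ with $\delta_{1}g = f$.

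I next set $s(x) := \sigma(x) - g(x)$. Since $g$ is valued in $M = \text{Ker}(p)$, $s$ is still a section of $p$. Expanding
$$[s(x),s(y)] - s([x,y]) = [\sigma(x),\sigma(y)] - [\sigma(x),g(y)] - [g(x),\sigma(y)] + [g(x),g(y)] - \sigma([x,y]) + g([x,y])$$
and using $[g(x),g(y)] = 0$ (since $[M,M] = 0$), $[\sigma(x),g(y)] = x \cdot g(y)$, and $[g(x),\sigma(y)] = -y \cdot g(x)$, this collapses to $f(x,y) - \delta_{1}g(x,y) = 0$, so $s$ is a Lie algebra homomorphism as required.

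For the norm bound, strict exactness at $C_{b}^{2}$ promotes the algebraic isomorphism $C_{b}^{1}/\text{Ker}(\delta_{1}) \overset{\sim}{\to} \text{Ker}(\delta_{2})$ to a bounded Banach isomorphism whose inverse has norm $C$. Hence the class of $g$ in $C_{b}^{1}/\text{Ker}(\delta_{1})$ has quotient-norm at most $C\|f\| \leq C$, and after adjusting $g$ by an element of $\text{Ker}(\delta_{1})$ (which does not alter $\delta_{1}g = f$) we may arrange $\|g\| \leq C$, so that $\|s\| \leq \max(\|\sigma\|,\|g\|) \leq C$ provided $C \geq 1$. The main technical point is this norm bookkeeping: vanishing of $H^{2}_{b}$ alone gives only the existence of $g$, and strict exactness is precisely what is needed to control $\|g\|$ in terms of $\|f\|$. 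A minor subtlety is realising the quotient-norm infimum exactly on a representative $g$, which in the non-Archimedean setting is either immediate from spherical completeness or handled by allowing an arbitrarily small slack $\epsilon > 0$ absorbed into $C$.
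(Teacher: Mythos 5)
Your proof is correct and follows essentially the same route as the paper's (which itself follows Kassel, Proposition XVIII.1.2): fix a bounded linear splitting, form the $2$-cocycle defect $f$, use $H^{2}_{b}(\mathfrak{g}_{r,s},M)=0$ to write $f=\delta_{1}(\alpha)$, correct the section by $\alpha$, and invoke strict exactness at $C_{b}^{2}(\mathfrak{g}_{r,s},M)$ for the norm bound. Your added care about the kernel being abelian and about realising the quotient-norm infimum only makes explicit what the paper leaves implicit (the paper, like you, also tacitly takes the Banach-space splitting to be contracting so that $\|f\| \leq 1$).
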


\begin{proof}
This proceeds as in Proposition XVIII.1.2. in \cite{QG}.  Fix a splitting $\tilde{\mathfrak{g}}\cong \mathfrak{g}_{r,s} \oplus M$ as Banach spaces, and let
$$f(x,y)=p([(x,0),(y,0)]) \text{ for } x,y \in \mathfrak{g}_{r,s},$$
so that $f \in C_{b}^{2}(\mathfrak{g}_{r,s},M)$ and $\|f\| \leq 1$. Then it is easily checked that $f \in \text{Ker}(\delta_{2})$ and hence $f=\delta_{1}(\alpha)$ for some $\alpha \in C_{b}^{1}(\mathfrak{g}_{r,s},M)$. If $C_{b}^{\bullet}(\mathfrak{g}_{r,s},M)$ is strictly exact at $C_{b}^{2}(\mathfrak{g}_{r,s},M)$ then $\|\alpha\| \leq C\|f\| \leq C$. Then, as in \emph{loc. cit.}, $s(x)=(x,-\alpha(x))$ gives a splitting, and $\|s\| \leq \text{max}\{C,1\}$. Note that, as the inverse $C_{b}^{1}(\mathfrak{g}_{r,s},M)/\text{Ker}(\delta_{1}) \overset{\sim}{\rightarrow} \text{Ker}(\delta_{2})$ is contracting, we automatically have $C \geq 1$.
\end{proof}

\begin{defn}
Let $M$ be a Banach $\mathfrak{g}_{r,s}$ bimodule. Then we denote by $\overline{M}$ the $\mathfrak{g}_{r,s}$ module whose underlying Banach space is $M$ with action $x \cdot m := xm-mx$.
\end{defn}

\begin{lem}
\label{VanishingofHb2Consequence}
Let $M$ be a Banach $\mathfrak{g}_{r,s}$ bimodule with $\|x m\| \leq \|x\| \|m \|$ and $\|m  x\| \leq \|x\|  \|m \|$ for all $x \in \mathfrak{g}_{r,s}$, $m \in M$. Let $f:U(\mathfrak{g})_{r,s}^{\text{an}} \hat{\otimes} U(\mathfrak{g})_{r,s}^{\text{an}} \rightarrow M$ be a bounded linear map such that
$$f(1,x)=f(x,1)=0, \quad xf(y,z)-f(xy,z)+f(x,yz)-f(x,y)z=0,$$
for all $x,y,z \in U(\mathfrak{g})_{r,s}^{\text{an}}$, and $\|f\| \leq 1$. Suppose that $H_{b}^{2}(\mathfrak{g}_{r,s},\overline{M})=0$. Then there is a bounded bilinear map $\alpha:U(\mathfrak{g})_{r,s}^{\text{an}} \rightarrow M$ such that
$$\alpha(1)=0 \text{ and } f(x,y)=x\alpha(y)-\alpha(xy)+\alpha(x)y \text{ for all } x,y \in U(\mathfrak{g})_{r,s}^{\text{an}}.$$
Furthermore, if we assume that $C_{b}^{\bullet}(\mathfrak{g}_{r,s},\overline{M})$ is strictly exact at $C_{b}^{2}(\mathfrak{g}_{r,s},\overline{M})$, and the isomorphism $\text{Ker}(\delta_{2})=\text{Im}(\delta_{2}) \overset{\sim}{\rightarrow} C_{b}^{1}(\mathfrak{g}_{r,s},\overline{M})/\text{Ker}(\delta_{1})$ is contracting, then we can take $\alpha$ such that $\|\alpha\| \leq 1$ .
\end{lem}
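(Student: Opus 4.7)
The plan is to realise $f$ as a Hochschild $2$-coboundary by constructing a Banach algebra extension of $U(\mathfrak{g})_{r,s}^{\text{an}}$ by $M$, restricting to a Banach Lie algebra extension of $\mathfrak{g}_{r,s}$ by $\overline{M}$, splitting the latter via Lemma \ref{SplittingLemma}, and then lifting the Lie splitting back through the universal property of $U(\mathfrak{g})_{r,s}^{\text{an}}$ proved above.

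Concretely, I would equip $\tilde{A}:=U(\mathfrak{g})_{r,s}^{\text{an}}\oplus M$ with unit $(1,0)$, the sup norm $\|(x,m)\|=\max(\|x\|,\|m\|)$, and the twisted multiplication
$$(x,m)(y,n):=(xy,\ xn+my+f(x,y)).$$
The normalisation $f(1,-)=f(-,1)=0$ makes $(1,0)$ a unit, the cocycle identity on $f$ forces associativity, and $\|f\|\leq 1$ together with the contracting bimodule action on $M$ ensures that the multiplication is contracting in the {\bf (NA)} sup norm, so $\tilde A$ is a Banach algebra. The projection $p:\tilde A\to U(\mathfrak{g})_{r,s}^{\text{an}}$ is then a contracting algebra map with kernel $M$, and its preimage $\tilde{\mathfrak g}:=p^{-1}(\mathfrak{g}_{r,s})$ is a closed Banach Lie subalgebra of $\tilde A$ fitting into a short exact sequence
$$0\to M \to \tilde{\mathfrak g}\to \mathfrak{g}_{r,s}\to 0$$
which is already split in $\text{Ban}_{k}$ by $x\mapsto(x,0)$. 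A direct calculation of $[(x,0),(0,m)]$ in $\tilde A$ identifies the induced adjoint action of $\mathfrak{g}_{r,s}$ on the kernel $M$ with precisely the $\overline M$-action $x\cdot m = xm-mx$.

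I then invoke Lemma \ref{SplittingLemma} (using $H^{2}_{b}(\mathfrak{g}_{r,s},\overline M)=0$) to obtain a bounded Lie-algebra splitting $s_{0}:\mathfrak{g}_{r,s}\to\tilde{\mathfrak g}$, which I write as $s_{0}(x)=(x,-\alpha_{0}(x))$ for a bounded linear $\alpha_{0}:\mathfrak{g}_{r,s}\to M$. After rescaling the norm on $M$ by $1/\|s_{0}\|$ if necessary to force $\|s_{0}\|\leq 1$ (which only changes the multiplication bound on $\tilde A$ by a constant and hence preserves its Banach algebra structure), the universal property of $U(\mathfrak{g})_{r,s}^{\text{an}}$ established earlier extends $s_{0}$ to a contracting algebra homomorphism $\tilde s:U(\mathfrak{g})_{r,s}^{\text{an}}\to \tilde A$. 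The composite $p\circ\tilde s$ is a contracting algebra endomorphism of $U(\mathfrak{g})_{r,s}^{\text{an}}$ agreeing with the identity on $\mathfrak{g}_{r,s}$, so the uniqueness clause of that universal property forces $p\circ\tilde s=\text{Id}$. Writing $\tilde s(x)=(x,-\alpha(x))$ defines the desired bounded linear map $\alpha:U(\mathfrak{g})_{r,s}^{\text{an}}\to M$, and unwinding the identity $\tilde s(xy)=\tilde s(x)\tilde s(y)$ yields $f(x,y)=x\alpha(y)-\alpha(xy)+\alpha(x)y$, while $\tilde s(1)=(1,0)$ gives $\alpha(1)=0$.

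For the refined norm bound, the strict exactness hypothesis with contracting isomorphism $\text{Ker}(\delta_{2})\overset{\sim}{\to} C_{b}^{1}(\mathfrak{g}_{r,s},\overline M)/\text{Ker}(\delta_{1})$ lets the second half of Lemma \ref{SplittingLemma} produce $s_{0}$ with $\|s_{0}\|\leq 1$; no rescaling is then required, the universal property yields a contracting $\tilde s$, and consequently $\|\alpha\|\leq 1$. The main obstacle I anticipate is the norm bookkeeping: one must verify that the twisted multiplication on $\tilde A$ really is contracting under the given hypotheses and that the rescaling of $M$ in the general case interacts cleanly with the universal property while preserving the identification of the kernel of $\tilde{\mathfrak g}\to\mathfrak{g}_{r,s}$ with $\overline M$.
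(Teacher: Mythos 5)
Your proposal is correct and follows essentially the same route as the paper: equip $U(\mathfrak{g})_{r,s}^{\text{an}}\oplus M$ with the twisted contracting multiplication determined by $f$, restrict to the Banach Lie extension $\mathfrak{g}_{r,s}\oplus \overline{M}$, split it using Lemma \ref{SplittingLemma}, and extend the splitting to an algebra homomorphism via the universal property of $U(\mathfrak{g})_{r,s}^{\text{an}}$, reading off $\alpha$ from $x\mapsto(x,-\alpha(x))$. Your rescaling of the norm on $M$ in the case where only $H_{b}^{2}(\mathfrak{g}_{r,s},\overline{M})=0$ is assumed is a reasonable way to make the appeal to the contracting universal property explicit, a point the paper's proof passes over without comment.
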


\begin{proof}
We proceed as in Proposition XVIII.1.3. in \cite{QG}. We may define a contracting multiplication on $U(\mathfrak{g})_{r,s}^{\text{an}} \oplus M$ by
$$(x,m) \cdot (y,n) = (xy, xn+my+f(x,y)) \text{ for } x,y \in U(\mathfrak{g})_{r,s}^{\text{an}}, m,n \in M.$$
Under the commutator bracket, $\tilde{\mathfrak{g}}=\mathfrak{g}_{r,s} \oplus M \subset U(\mathfrak{g})_{r,s}^{\text{an}} \oplus M$ is a Banach Lie algebra, and an extension of $\mathfrak{g}_{r,s}$ with kernel $\overline{M}$. Thus by Lemma \ref{SplittingLemma}, there exists $s:\mathfrak{g}_{r,s} \rightarrow \tilde{\mathfrak{g}}$ splitting the projection $p:\tilde{\mathfrak{g}} \rightarrow \mathfrak{g}_{r,s}$, with $\|s\| \leq 1$ under the stronger assumption. Then the map $\mathfrak{g}_{r,s} \overset{s}{\rightarrow} \tilde{\mathfrak{g}} \rightarrow U(\mathfrak{g})_{r,s}^{\text{an}} \oplus M$ induces a unique algebra homomorphism $s':U(\mathfrak{g})_{r,s}^{\text{an}} \rightarrow U(\mathfrak{g})_{r,s}^{\text{an}} \oplus M$ which splits the first projection. This map must be of the form $s'(x)=(x,-\alpha(x))$ for some $\alpha:U(\mathfrak{g})_{r,s}^{\text{an}} \rightarrow M$. But then, for all $x,y \in U(\mathfrak{g})_{r,s}^{\text{an}}$,
$$(xy,-\alpha(xy))=s'(xy)=s'(x)s'(y)=(xy,-x\alpha(y)-\alpha(x)y+f(x,y))$$
which completes the proof.
\end{proof}

\begin{theorem}
\label{Rigidity1}
Let $\mathfrak{g}_{r,s}$ and $\mathfrak{g}'_{r',s'}$ be Banach Lie algebras, each coming from some root datum. Let $1>\varepsilon>0$. Suppose we have two morphisms of $k\{\frac{\hslash}{\epsilon}\}$ algebras $\alpha, \alpha':U(\mathfrak{g})_{r,s}^{\text{an}}\{\frac{\hslash}{\varepsilon}\} \rightarrow U(\mathfrak{g}')_{r',s'}^{\text{an}}\{\frac{\hslash}{\varepsilon}\}$ such that $\alpha \equiv \alpha'$ modulo $\hslash$ and $\|\alpha\| \leq 1$, $\|\alpha'\| \leq 1$. Suppose that $C_{b}^{\bullet}(\mathfrak{g}_{r,s},U(\mathfrak{g}')_{r',s'}^{\text{an}})$ is strictly exact at $C_{b}^{1}(\mathfrak{g}_{r,s},U(\mathfrak{g}')_{r',s'}^{\text{an}})$ and the isomorphism
$$\text{Ker}(\delta_{1})=\text{Im}(\delta_{0}) \overset{\sim}{\rightarrow} U(\mathfrak{g}')_{r',s'}^{\text{an}}/\text{Ker}(\delta_{0})$$
is contracting. Then, for any $\varepsilon>\varepsilon'>0$, there exists an invertible $F \in U(\mathfrak{g}')_{r',s'}^{\text{an}}\{\frac{\hslash}{\varepsilon'}\}$ such that $\alpha'(x)=F\alpha(x)F^{-1}$ for all $x \in U(\mathfrak{g})_{r,s}^{\text{an}}\{\frac{\hslash}{\varepsilon'}\}$ where we now view $\alpha$ and $\alpha'$ as maps from $U(\mathfrak{g})_{r,s}^{\text{an}}\{\tfrac{\hslash}{\varepsilon'}\} = U(\mathfrak{g})_{r,s}^{\text{an}}\{\tfrac{\hslash}{\varepsilon}\} \hat{\otimes}_{k\{\frac{\hslash}{\varepsilon}\}}k\{\tfrac{\hslash}{\varepsilon'}\}$ to $U(\mathfrak{g}')_{r',s'}^{\text{an}}\{\tfrac{\hslash}{\varepsilon'}\} = U(\mathfrak{g}')_{r',s'}^{\text{an}}\{\tfrac{\hslash}{\varepsilon}\} \hat{\otimes}_{k\{\frac{\hslash}{\varepsilon}\}}k\{\tfrac{\hslash}{\varepsilon'}\}$.
\end{theorem}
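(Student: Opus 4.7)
The plan is to construct the conjugating element $F$ inductively in powers of $\hslash$, following Kassel's strategy for the analogous algebraic rigidity result (Theorem XVIII.2.2 of \cite{QG}). The key idea is that at each stage the obstruction to extending $F$ one further order in $\hslash$ is a bounded Hochschild $1$-cocycle on $U(\mathfrak{g})_{r,s}^{\text{an}}$ with values in $U(\mathfrak{g}')_{r',s'}^{\text{an}}$; restricted to the generating Lie subalgebra $\mathfrak{g}_{r,s}$ this becomes a bounded Lie algebra $1$-cocycle, and the assumed strict exactness at $C_{b}^{1}(\mathfrak{g}_{r,s}, \overline{U(\mathfrak{g}')_{r',s'}^{\text{an}}})$ turns it into an inner derivation with norm control.

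I would induct on $n \geq 1$, maintaining an element $F_{n} = 1 + \sum_{i=1}^{n-1} \hslash^{i} u_{i}$ with $u_{i} \in U(\mathfrak{g}')_{r',s'}^{\text{an}}$ such that $\alpha'(x) \equiv F_{n}\alpha(x)F_{n}^{-1} \pmod{\hslash^{n}}$ for all $x$. The base $n=1$ is the hypothesis $\alpha \equiv \alpha' \pmod{\hslash}$. For the step, set $\phi_{n}(x) := \alpha'(x) - F_{n}\alpha(x)F_{n}^{-1}$, which takes values in $\hslash^{n} U(\mathfrak{g}')_{r',s'}^{\text{an}}\{\tfrac{\hslash}{\varepsilon}\}$, and let $\delta_{n}(x)$ be the coefficient of $\hslash^{n}$ in $\phi_{n}(x)$ reduced modulo $\hslash$. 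Since both $\alpha'$ and $F_{n}\alpha(-)F_{n}^{-1}$ are algebra homomorphisms, expanding $\phi_{n}(xy)$ and collecting terms of order $\hslash^{n}$ gives
$$\delta_{n}(xy) = \alpha_{0}(x)\,\delta_{n}(y) + \delta_{n}(x)\,\alpha_{0}(y), \qquad \alpha_{0} := \alpha \bmod \hslash,$$
so $\delta_{n}$ is a bounded derivation from $U(\mathfrak{g})_{r,s}^{\text{an}}$ into $U(\mathfrak{g}')_{r',s'}^{\text{an}}$ viewed as a bimodule via $\alpha_{0}$. Its restriction $\delta_{n}|_{\mathfrak{g}_{r,s}}$ lies in $\text{Ker}(\delta_{1}) \subset C_{b}^{1}(\mathfrak{g}_{r,s}, \overline{U(\mathfrak{g}')_{r',s'}^{\text{an}}})$, and the contracting isomorphism from $\text{Ker}(\delta_{1})$ to $U(\mathfrak{g}')_{r',s'}^{\text{an}}/\text{Ker}(\delta_{0})$ produces $u_{n} \in U(\mathfrak{g}')_{r',s'}^{\text{an}}$ with $\|u_{n}\| \leq \|\delta_{n}|_{\mathfrak{g}_{r,s}}\|$ satisfying $\delta_{n}(x) = [u_{n}, \alpha_{0}(x)]$ for all $x \in \mathfrak{g}_{r,s}$. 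Both $\delta_{n}$ and $y \mapsto [u_{n}, \alpha_{0}(y)]$ are bounded derivations on $U(\mathfrak{g})_{r,s}^{\text{an}}$ agreeing on the generating set $\mathfrak{g}_{r,s}$, so they agree on the dense subalgebra $U(\mathfrak{g})$ and hence on all of $U(\mathfrak{g})_{r,s}^{\text{an}}$ by continuity. Setting $F_{n+1} := F_{n} + \hslash^{n} u_{n}$ one computes $F_{n+1}\alpha(x)F_{n+1}^{-1} \equiv F_{n}\alpha(x)F_{n}^{-1} + \hslash^{n}[u_{n}, \alpha_{0}(x)] \equiv \alpha'(x) \pmod{\hslash^{n+1}}$, closing the induction.

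The main obstacle is quantitative: controlling $\|u_{n}\|$ well enough that the series $F := 1 + \sum_{n \geq 1} \hslash^{n} u_{n}$ converges in $U(\mathfrak{g}')_{r',s'}^{\text{an}}\{\tfrac{\hslash}{\varepsilon'}\}$. The $n$th coefficient of any element of $A\{\tfrac{\hslash}{\varepsilon}\}$ has norm at most the $\{\tfrac{\hslash}{\varepsilon}\}$-norm of that element divided by $\varepsilon^{n}$; applied to $\phi_{n}$, whose $\{\tfrac{\hslash}{\varepsilon}\}$-norm is bounded independently of $n$ by the norms of $\alpha$, $\alpha'$ and $F_{n}$, this gives $\|\delta_{n}\| \leq C\varepsilon^{-n}$ for a constant $C$. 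Combined with the contracting hypothesis this produces $\|u_{n}\| \leq C\varepsilon^{-n}$, whence $\|u_{n}\|(\varepsilon')^{n} \leq C(\varepsilon'/\varepsilon)^{n} \to 0$ for any $\varepsilon' < \varepsilon$. Thus $F \in U(\mathfrak{g}')_{r',s'}^{\text{an}}\{\tfrac{\hslash}{\varepsilon'}\}$, and since $F \equiv 1 \pmod{\hslash}$ the geometric series for $F^{-1}$ converges on the same radius. Passing to the limit in $F_{n}\alpha(x)F_{n}^{-1} \equiv \alpha'(x) \pmod{\hslash^{n}}$ yields $F\alpha(x)F^{-1} = \alpha'(x)$ for all $x \in U(\mathfrak{g})_{r,s}^{\text{an}}\{\tfrac{\hslash}{\varepsilon'}\}$, completing the proof.
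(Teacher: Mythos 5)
Your argument is essentially the paper's own proof of Theorem \ref{Rigidity1}: the same order-by-order induction in $\hslash$, with the order-$n$ obstruction identified as a bounded $1$-cocycle for the action $x\cdot m=[\alpha_{0}(x),m]$ of $\mathfrak{g}_{r,s}$ on $U(\mathfrak{g}')_{r',s'}^{\text{an}}$, killed using the contracting strict exactness at $C_{b}^{1}$, and with the loss of radius from $\varepsilon$ to $\varepsilon'<\varepsilon$ absorbing the growth $\|u_{n}\|\leq \varepsilon^{-n}$ of the correction terms. The only organizational difference is that you conjugate $\alpha$ directly toward $\alpha'$ by a single additive series $F_{n}=1+\sum_{i<n}\hslash^{i}u_{i}$, whereas the paper normalizes $\alpha$ and $\alpha'$ separately to their common reduction $\alpha_{0}$ by products of elementary factors $U_{n}=(1+u_{n}\hslash^{n})\cdots(1+u_{0})$ and sets $F=U'^{-1}U$; this is cosmetic. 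One quantitative step should be tightened: your claim that the $\{\tfrac{\hslash}{\varepsilon}\}$-norm of $\phi_{n}=\alpha'-F_{n}\alpha(\cdot)F_{n}^{-1}$ is bounded independently of $n$ presupposes that $F_{n}$ is invertible, with controlled inverse, at radius exactly $\varepsilon$, but the induction only gives $\|u_{i}\|\varepsilon^{i}\leq 1$ (not $<1$), so invertibility at radius $\varepsilon$ is not guaranteed. Two standard fixes: either bound the $\hslash^{n}$-coefficient of $F_{n}\alpha(\cdot)F_{n}^{-1}$ directly, noting that the $\hslash^{j}$-coefficient of $F_{n}^{\pm 1}$ is a finite sum of products $\pm u_{i_{1}}\cdots u_{i_{m}}$ with $i_{1}+\cdots+i_{m}=j$, hence of norm at most $\varepsilon^{-j}$ by the ultrametric inequality, which yields $\|$your obstruction at order $n\|\leq\varepsilon^{-n}$ and closes the induction with constant $C=1$; or interpose a strictly decreasing sequence $\varepsilon>\varepsilon_{1}>\varepsilon_{2}>\cdots>\varepsilon'$ and work at radius $\varepsilon_{n}$ at stage $n$, which is precisely the device used in the paper. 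With either repair your convergence argument at radius $\varepsilon'$ and the passage to the limit are exactly as in the paper.
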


\begin{proof}
We proceed as in Theorem XVIII.2.1 of \cite{QG}. Fix $1>\varepsilon>\varepsilon'>0$ and a sequence $(\varepsilon_{n})_{n \geq 0}$ with
$$\varepsilon>\varepsilon_{1}>\varepsilon_{2}>...>\varepsilon_{n}>\varepsilon_{n+1}>...>\varepsilon'.$$
Since $\alpha$ is $k\{\frac{\hslash}{\varepsilon}\}$-linear, it is uniquely determined by its restriction to $U(\mathfrak{g})_{r,s}^{\text{an}}$. We may write $\alpha$ in the form
$$\alpha(x)=\sum \alpha_{i}(x)\hslash^{i} \text{ for } x \in U(\mathfrak{g})_{r,s}^{\text{an}}$$
for $\alpha_{i}:U(\mathfrak{g})_{r,s}^{\text{an}} \rightarrow U(\mathfrak{g}')_{r',s'}^{\text{an}}$, $\|\alpha_{i}\|\varepsilon^{i} \leq 1$. Now, suppose we have $u_{0},u_{1},...,u_{n} \in U(\mathfrak{g}')_{r',s'}^{\text{an}}$ such that $U_{i}\alpha \equiv \alpha_{0} U_{i}$ modulo $\hslash^{i+1}$, where
$$U_{i}=(1+u_{i}\hslash^{i})(1+u_{i-1}\hslash^{i-1})...(1+u_{0}),$$
and $\|u_{i}\|\varepsilon_{i}^{i}< 1$. Note that $(1+u_{i}\hslash^{i})$ is then invertible in $U(\mathfrak{g})_{r,s}^{\text{an}}\{\tfrac{\hslash}{\varepsilon_{n}}\}$ for each $i=0,..,n$, with inverse $\sum_{j\geq 0} (-u_{i}\hslash^{i})^{j}$, and hence so is $U_{i}$, in which case
$$\alpha^{(i)}(x):=U_{i}\alpha(x) U_{i}^{-1} \equiv \alpha_{0}(x) \text{ modulo } \hslash^{i+1}$$
for all $x \in U(\mathfrak{g})_{r,s}^{\text{an}}$. Note also that
$$\|1+u_{i}\hslash^{i}\| = \|(1+u_{i}\hslash^{i})^{-1}\| = \|U_{i}\| = \|U_{i}^{-1}\|=1,$$
and so $\|\alpha^{(n)}\| \leq \|\alpha\|$ as maps $U(\mathfrak{g})_{r,s}^{\text{an}}\{\frac{\hslash}{\varepsilon_{n}}\} \rightarrow U(\mathfrak{g}')_{r',s'}^{\text{an}}\{\frac{\hslash}{\varepsilon_{n}}\}$. Again, $\alpha^{(n)}$ is $k\{\frac{\hslash}{\varepsilon_{n}}\}$-linear, and we may write the restricted map $\alpha^{(n)}:U(\mathfrak{g})_{r,s}^{\text{an}} \rightarrow U(\mathfrak{g}')_{r',s'}^{\text{an}}\{\tfrac{\hslash}{\varepsilon_{n}}\}$ as
$$\alpha^{(n)}(x)=\sum \alpha^{(n)}_{i}(x)\hslash^{i} \text{ for } x \in U(\mathfrak{g})_{r,s}^{\text{an}}$$
for $\alpha_{i}^{(n)}:U(\mathfrak{g})_{r,s}^{\text{an}} \rightarrow U(\mathfrak{g}')_{r',s'}^{\text{an}}$, $\|\alpha_{i}^{(n)}\|\varepsilon_{n}^{i} \leq \|\alpha^{(n)}\|\leq\|\alpha\|\leq 1$. By assumption, $\alpha_{0}^{(n)}=\alpha_{0}$ and $\alpha_{i}^{(n)}=0$ for $i=1,...,n$. Looking at the $\hslash^{n+1}$ coefficient of $\alpha^{(n)}(xy)=\alpha^{(n)}(x)\alpha^{(n)}(y)$ we see that
$$\begin{array}{rcl}
\alpha^{(n)}_{n+1}(xy)&=&\alpha^{(n)}_{0}(x)\alpha^{(n)}_{n+1}(y)+\alpha^{(n)}_{n+1}(x)\alpha^{(n)}_{0}(y)\\
&=&\alpha_{0}(x)\alpha^{(n)}_{n+1}(y)+\alpha^{(n)}_{n+1}(x)\alpha_{0}(y)
\end{array}$$
for all $x,y \in U(\mathfrak{g})_{r,s}^{\text{an}}$. Thus
$$\alpha^{(n)}_{n+1}([x,y])=[\alpha_{0}(x),\alpha^{(n)}_{n+1}(y)]-[\alpha_{0}(y),\alpha^{(n)}_{n+1}(x)]$$ for all $x,y \in \mathfrak{g}_{r,s}$. Given that $x \in \mathfrak{g}_{r,s}$ acts on $U(\mathfrak{g}')_{r',s'}^{\text{an}}$ via $[\alpha_{0}(x),-]$, this is precisely the fact that $\alpha_{n+1}^{(n)}$ restricted to $\mathfrak{g}_{r,s}$ is in $\text{Ker}(\delta_{1})$. Hence there is a $u_{n+1} \in U(\mathfrak{g}')_{r',s'}^{\text{an}}$ such that $\alpha^{(n)}_{n+1}(x)=[\alpha_{0}(x),u_{n+1}]$ for all $x \in \mathfrak{g}_{r,s}$ and $\|u_{n+1}\| \leq \|\alpha_{n+1}^{(n)}\|$, so that $\|u_{n+1}\|\varepsilon_{n}^{n+1} \leq 1$. Then, in $U(\mathfrak{g}')_{r',s'}^{\text{an}}\{\tfrac{\hslash}{\varepsilon_{n+1}}\}$, $\|u_{n+1}\|\varepsilon_{n+1}^{n+1} < 1$ and $(1+u_{n+1}\hslash^{n+1})$ is invertible with
$$\begin{array}{rcll}
\alpha^{(n+1)}(x)&:=& (1+u_{n+1}\hslash^{n+1})\alpha(x)(1+u_{n+1}\hslash^{n+1})^{-1}&\\
&\equiv& \alpha_{0}(x) + (u_{n+1}\alpha_{0}(x)-\alpha_{0}(x)u_{n+1}+\alpha_{n+1}^{(n)}(x))\hslash^{n+1} &\text{ mod } \hslash^{n+2}\\
&\equiv&\alpha_{0}(x) &\text{ mod } \hslash^{n+2}.
\end{array}$$
Taking $u_{0}:=0$ as our base case, we obtain inductively sequences $(u_{n})_{n \geq 0}$ and $(U_{n})_{n \geq 0}$ in $U(\mathfrak{g}')_{r',s'}^{\text{an}}\{\tfrac{\hslash}{\varepsilon'}\}$. The sequence $(U_{n})_{n \geq 0}$ converges to $U:=1+\sum_{n=1}^{\infty} v_{n}\hslash^{n}$ where $v_{n}=\sum u_{i_{1}}u_{i_{2}}...u_{i_{k}}$ whose sum is taken over all finite sequences $i_{1}>i_{2}>...>i_{k}$ with $i_{1}+i_{2}+...+i_{k}=n$. Since $\|v_{n}\|(\varepsilon')^{n}<1$ for all $n \geq 0$, $U$ is invertible in $U(\mathfrak{g}')_{r',s'}^{\text{an}}\{\tfrac{\hslash}{\varepsilon'}\}$ with inverse $U^{-1}=\sum_{i=0}^{\infty}(-\sum_{n=1}^{\infty} v_{n}\hslash^{n})^{i}$. It follows from the fact that
$$U_{n}\alpha(x) U_{n}^{-1} \equiv \alpha_{0}(x) \text{ modulo } \hslash^{n+1}$$
for each $n\geq 0$ that $U\alpha(x)U^{-1}=\alpha_{0}(x)$ for each $x \in U(\mathfrak{g})_{r,s}^{\text{an}}\{\tfrac{\hslash}{\varepsilon'}\}$. Similarly, there exist mutual inverses $U'$, $U'^{-1}$ in $U(\mathfrak{g}')_{r',s'}^{\text{an}}\{\tfrac{\hslash}{\varepsilon'}\}$ such that $U'\alpha'(x)U'^{-1}=\alpha'_{0}(x)=\alpha_{0}(x)$ for each $x \in U(\mathfrak{g})_{r,s}^{\text{an}}\{\tfrac{\hslash}{\varepsilon'}\}$. Thus taking $F=U'^{-1}U$ gives our result.
\end{proof}

\begin{theorem}
\label{Rigidity2}
Let $\varepsilon >0$. Suppose $A$ is a Banach $k\{\frac{\hslash}{\varepsilon}\}$ algebra with contracting multiplication such that there is a bounded $k\{\frac{\hslash}{\varepsilon}\}$-linear isomorphism of Banach spaces $A \cong U(\mathfrak{g})_{r,s}^{\text{an}}\{\frac{\hslash}{\varepsilon}\}$ that preserves the unit, and that $A/\hslash A \cong U(\mathfrak{g})_{r,s}^{\text{an}}$ is an isomorphism of Banach algebras. Suppose that $C_{b}^{\bullet}(\mathfrak{g}_{r,s},U(\mathfrak{g})_{r,s}^{\text{an}})$ is strictly exact at $C_{b}^{2}(\mathfrak{g}_{r,s},U(\mathfrak{g})_{r,s}^{\text{an}})$ and the isomorphism
$$\text{Ker}(\delta_{2})=\text{Im}(\delta_{1}) \overset{\sim}{\rightarrow} C_{b}^{1}(\mathfrak{g}_{r,s},U(\mathfrak{g})_{r,s}^{\text{an}})/\text{Ker}(\delta_{1})$$
is contracting. Then for any $\varepsilon > \varepsilon' > 0$, there is an isomorphism of $k\{\frac{\hslash}{\varepsilon'}\}$ algebras
$$\alpha:A_{\varepsilon'}:=A \hat{\otimes}_{k\{\frac{\hslash}{\varepsilon}\}}k\{\tfrac{\hslash}{\varepsilon'}\} \overset{\sim}{\rightarrow} U(\mathfrak{g})_{r,s}^{\text{an}}\{\tfrac{\hslash}{\varepsilon'}\}$$
inducing the given isomorphism $A_{\varepsilon'}/\hslash A_{\varepsilon'} \cong A/\hslash A \cong U(\mathfrak{g})_{r,s}^{\text{an}}$.
\end{theorem}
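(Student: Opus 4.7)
The plan is to adapt the argument of Theorem \ref{Rigidity1} — itself an adaptation of Kassel's proof of the classical rigidity theorem XVIII.2.2 of \cite{QG} — and build an algebra isomorphism $A_{\varepsilon'} \xrightarrow{\sim} U(\mathfrak{g})_{r,s}^{\text{an}}\{\tfrac{\hslash}{\varepsilon'}\}$ order-by-order in $\hslash$, killing at each order a bounded normalized Hochschild $2$-cocycle by a coboundary produced by Lemma \ref{VanishingofHb2Consequence}.

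First I would transport the multiplication of $A$ to $U(\mathfrak{g})_{r,s}^{\text{an}}\{\tfrac{\hslash}{\varepsilon}\}$ along the given bounded unit-preserving $k\{\tfrac{\hslash}{\varepsilon}\}$-linear isomorphism $\phi$, obtaining a bounded multiplication $\mu$ satisfying $\mu \equiv \mu_{0} \pmod{\hslash}$, where $\mu_{0}$ denotes the original multiplication (this congruence expresses that $\phi$ becomes an algebra isomorphism modulo $\hslash$). After fixing a strictly decreasing sequence of radii $\varepsilon = \varepsilon_{0} > \varepsilon_{1} > \cdots$ converging to some intermediate $\varepsilon'' \in (\varepsilon', \varepsilon)$, I would construct inductively bounded linear maps $\beta_{n}: U(\mathfrak{g})_{r,s}^{\text{an}} \to U(\mathfrak{g})_{r,s}^{\text{an}}$ and partial products
$$\tau_{n} := (\text{Id} + \hslash^{n}\beta_{n}) \circ \cdots \circ (\text{Id} + \hslash \beta_{1}),$$
ensuring that each factor $\text{Id} + \hslash^{i}\beta_{i}$ is invertible on $U(\mathfrak{g})_{r,s}^{\text{an}}\{\tfrac{\hslash}{\varepsilon_{i}}\}$ and that the conjugated multiplication
$$\mu^{(n)}(x,y) := \tau_{n}\bigl(\mu(\tau_{n}^{-1}(x), \tau_{n}^{-1}(y))\bigr)$$
agrees with $\mu_{0}$ modulo $\hslash^{n+1}$.

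For the inductive step, writing $\mu^{(n)} = \mu_{0} + \hslash^{n+1} f_{n+1} + O(\hslash^{n+2})$ and expanding the associativity identities for both $\mu^{(n)}$ and $\mu_{0}$ at order $\hslash^{n+1}$ forces $f_{n+1}$ to satisfy the normalized Hochschild $2$-cocycle equation
$$x \cdot f_{n+1}(y,z) - f_{n+1}(xy, z) + f_{n+1}(x, yz) - f_{n+1}(x,y) \cdot z = 0,$$
together with $f_{n+1}(1, x) = f_{n+1}(x, 1) = 0$ (from unit-preservation). Applying Lemma \ref{VanishingofHb2Consequence} with $M = U(\mathfrak{g})_{r,s}^{\text{an}}$ regarded as a bimodule over itself — so that $\overline{M}$ is the adjoint representation, whose bounded cohomology vanishes by hypothesis — and rescaling, I obtain a bounded $\beta_{n+1}$ with $\|\beta_{n+1}\| \leq \|f_{n+1}\|$ and $f_{n+1}(x,y) = x\beta_{n+1}(y) - \beta_{n+1}(xy) + \beta_{n+1}(x)y$. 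A direct expansion, in the same spirit as the computation closing the proof of Lemma \ref{VanishingofHb2Consequence}, then confirms that $\tau_{n+1} := (\text{Id} + \hslash^{n+1}\beta_{n+1}) \circ \tau_{n}$ kills the $\hslash^{n+1}$-term in $\mu^{(n+1)} - \mu_{0}$.

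The hard part will be the norm bookkeeping across the shrinking radii. Boundedness of $\mu$ propagates through the induction to yield bounds $\|\beta_{n+1}\|\varepsilon_{n}^{n+1} \leq C$ for some constant $C$ depending only on $\|\phi\|$ and $\|\phi^{-1}\|$. Choosing $\varepsilon''$ sufficiently far from $\varepsilon'$ inside $(\varepsilon', \varepsilon)$ to absorb $C$, and the $\varepsilon_{n}$ approaching $\varepsilon''$ slowly enough, one can arrange the infinite product $\tau_{\infty} := \prod_{n \geq 1}(\text{Id} + \hslash^{n}\beta_{n})$ to converge in the endomorphism ring of $U(\mathfrak{g})_{r,s}^{\text{an}}\{\tfrac{\hslash}{\varepsilon''}\}$ to an invertible operator, by a telescoping estimate analogous to the one at the end of the proof of Theorem \ref{Rigidity1}. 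Restricting scalars from $k\{\tfrac{\hslash}{\varepsilon''}\}$ to $k\{\tfrac{\hslash}{\varepsilon'}\}$, the composition $\alpha := \tau_{\infty} \circ \phi$ is then the desired $k\{\tfrac{\hslash}{\varepsilon'}\}$-algebra isomorphism, reducing modulo $\hslash$ to the given one by construction.
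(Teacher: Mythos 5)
Your proposal follows essentially the same route as the paper's proof: transport the multiplication of $A$ along the given isomorphism, kill the order-$\hslash^{n+1}$ deviation from $\mu_{0}$ at each step by conjugating with $\text{Id}+\hslash^{n+1}\alpha_{n+1}$, where $\alpha_{n+1}$ is obtained from the Hochschild $2$-cocycle identity via Lemma \ref{VanishingofHb2Consequence}, and let the composite conjugations converge on a strictly smaller radius. The only cosmetic differences are that the paper lets the radii $\varepsilon_{n}$ decrease directly towards $\varepsilon'$ rather than an intermediate $\varepsilon''$, and its norm bookkeeping is immediate because in the non-Archimedean setting $\|\text{Id}+\hslash^{i}\alpha_{i}\|=\|(\text{Id}+\hslash^{i}\alpha_{i})^{-1}\|=1$, so $\|\mu^{(n)}\|\leq\|\mu\|\leq 1$ persists and your constant $C$ is just $1$ at every stage.
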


\begin{proof}
We proceed as in Theorem XVIII.2.2 of \cite{QG}. As before, fix $1>\varepsilon>\varepsilon'>0$ and a sequence $(\varepsilon_{n})_{n \geq 0}$ with
$$\varepsilon>\varepsilon_{1}>\varepsilon_{2}>...>\varepsilon_{n}>\varepsilon_{n+1}>...>\varepsilon'.$$
The given isomorphism $A \cong U(\mathfrak{g})_{r,s}^{\text{an}}\{\frac{\hslash}{\varepsilon}\}$ induces a multiplication
$$\mu:(U(\mathfrak{g})_{r,s}^{\text{an}} \hat{\otimes} U(\mathfrak{g})_{r,s}^{\text{an}})\{\tfrac{\hslash}{\varepsilon}\} \cong U(\mathfrak{g})_{r,s}^{\text{an}}\{\tfrac{\hslash}{\varepsilon}\} \hat{\otimes} U(\mathfrak{g})_{r,s}^{\text{an}}\{\tfrac{\hslash}{\varepsilon}\} \rightarrow U(\mathfrak{g})_{r,s}^{\text{an}}\{\tfrac{\hslash}{\varepsilon}\}$$
which is $k\{\frac{\hslash}{\varepsilon}\}$-linear and hence is determined by its restriction to $U(\mathfrak{g})_{r,s}^{\text{an}} \hat{\otimes} U(\mathfrak{g})_{r,s}^{\text{an}}$. We may write this restriction as
$$\mu = \sum \mu_{i} \hslash^{i}, \quad \mu_{i}:U(\mathfrak{g})_{r,s}^{\text{an}} \hat{\otimes} U(\mathfrak{g})_{r,s}^{\text{an}} \rightarrow U(\mathfrak{g})_{r,s}^{\text{an}}, \quad \|\mu_{i}\|\varepsilon^{i} \leq 1.$$
Suppose we have maps $\alpha_{0},\alpha_{1},...,\alpha_{n}:U(\mathfrak{g})_{r,s}^{\text{an}} \rightarrow U(\mathfrak{g})_{r,s}^{\text{an}}$ such that $\|\alpha_{i}\|\varepsilon_{i}^{i}< 1$ and $V_{i}(\mu(x,y)) \equiv \mu_{0}(V_{i}(x),V_{i}(y))$ modulo $\hslash^{i+1}$ for all $x,y \in U(\mathfrak{g})_{r,s}^{\text{an}}$, where $V_{i}$ is the endomorphism of $U(\mathfrak{g})_{r,s}^{\text{an}}\{\frac{\hslash}{\varepsilon}\}$ with
$$V_{i}|_{U(\mathfrak{g})_{r,s}^{\text{an}}}=(\text{Id}+\hslash^{i}\alpha_{i})(\text{Id}+\hslash^{i-1}\alpha_{i-1})...(\text{Id}+\alpha_{0}).$$
Note that, since $\|\alpha_{i}\|\varepsilon_{n}^{i} < \|\alpha_{i}\|\varepsilon_{i}^{i}\leq 1$, the endomorphism of $U(\mathfrak{g})_{r,s}^{\text{an}}\{\frac{\hslash}{\varepsilon_{n}}\}$ whose restriction to $U(\mathfrak{g})_{r,s}^{\text{an}}$ is $(\text{Id}+\hslash^{i}\alpha_{i})$ is then invertible for each $i=0,..,n$, with inverse $\sum_{j\geq 0} (-\hslash^{i}\alpha_{i})^{j}$, and hence so is $V_{i}$ as an endomorphism of $U(\mathfrak{g})_{r,s}^{\text{an}}\{\frac{\hslash}{\varepsilon_{n}}\}$, in which case
$$\mu^{(i)}(x,y):=V_{i}(\mu(V_{i}^{-1}(x),V_{i}^{-1}(y))) \equiv \mu_{0}(x) \text{ modulo } \hslash^{i+1}$$
for all $x,y \in U(\mathfrak{g})_{r,s}^{\text{an}}$. Note also that
$$\|1+\hslash^{i}\alpha_{i}\| = \|(1+\hslash^{i}\alpha_{i})^{-1}\| = \|V_{i}\| = \|V_{i}^{-1}\|=1,$$
and so $\|\mu^{(n)}\| \leq \|\mu\|$ as multiplication maps on $U(\mathfrak{g})_{r,s}^{\text{an}}\{\frac{\hslash}{\varepsilon_{n}}\}$.  Again, $\mu^{(n)}$ is $k\{\frac{\hslash}{\varepsilon_{n}}\}$-linear, and we may write
$$\mu^{(n)}(x,y)=\sum \mu^{(n)}_{i}(x,y)\hslash^{i} \text{ for } x,y \in U(\mathfrak{g})_{r,s}^{\text{an}}$$
where $\mu_{i}^{(n)}:U(\mathfrak{g})_{r,s}^{\text{an}}\hat{\otimes}U(\mathfrak{g})_{r,s}^{\text{an}} \rightarrow U(\mathfrak{g})_{r,s}^{\text{an}}$, $\|\mu_{i}^{(n)}\|\varepsilon_{n}^{i} \leq \|\mu^{(n)}\|\leq\|\mu\|\leq 1$. By assumption, $\mu_{0}^{(n)}=\mu_{0}$ and $\mu_{i}^{(n)}=0$ for $i=1,...,n$. Looking at the $\hslash^{n+1}$ coefficient of $\mu^{(n)}(\mu^{(n)}(x,y),z)=\mu^{(n)}(x,\mu^{(n)}(y,z))$ we see that
$$\mu^{(n)}_{n+1}(xy,z)+\mu_{n+1}^{(n)}(x,y)z=\mu^{(n)}_{n+1}(x,yz)+x\mu^{(n)}_{n+1}(y,z)$$
for all $x,y,z \in U(\mathfrak{g})_{r,s}^{\text{an}}$. Here, we are using the simplified notation $xy:=\mu_{0}(x,y)$. So, by Lemma \ref{VanishingofHb2Consequence}, there exists $\alpha_{n+1}:U(\mathfrak{g})_{r,s}^{\text{an}} \rightarrow U(\mathfrak{g})_{r,s}^{\text{an}}$ such that $\|\alpha_{n+1}\| \leq \|\mu^{(n)}_{n+1}\|$, $\alpha_{n+1}(1)=0$ and
$$\mu_{n+1}^{(n)}(x,y)=x\alpha_{n+1}(y)-\alpha_{n+1}(xy)+\alpha_{n+1}(x)y.$$
Setting $V_{n+1}$ as the endomorphism of $U(\mathfrak{g})_{r,s}^{\text{an}}\{\frac{\hslash}{\varepsilon_{n+1}}\}$ whose restriction to $U(\mathfrak{g})_{r,s}^{\text{an}}$ is $(\text{Id}+\hslash^{n+1}\alpha_{n+1})V_{n}$, we have that $V_{n+1}$ is invertible since $\|\alpha_{n+1}\|\varepsilon_{n+1}^{n+1} < 1$. Let
$$\mu^{(n+1)}(x,y):=V_{n+1}(\mu(V_{n+1}^{-1}(x),V_{n+1}^{-1}(y))).$$
Then, modulo $\hslash^{n+2}$,
$$\begin{array}{rcl}
\mu^{(n+1)}(x,y) &\equiv& (\text{Id}+\alpha_{n+1}\hslash^{n+1}) \circ (\mu_{0}+\mu^{(n)}_{n+1}\hslash^{n+1})\\
&& \qquad (x-\alpha_{n+1}(x)\hslash^{n+1},y-\alpha_{n+1}(y)\hslash^{n+1})\\
&\equiv& xy + (\alpha_{n+1}(xy) + \mu^{(n)}_{n+1}(x,y)-\alpha_{n+1}(x)y-x\alpha_{n+1}(y))\hslash^{n+1}\\
&\equiv& xy.
\end{array}$$
Taking $\alpha_{0}=0$ as a base case, we inductively obtain sequences $(\alpha_{n})_{n \geq 0}$ and $(V_{n})_{n \geq 0}$. The sequence $(V_{n})_{n \geq 0}$ converges on $U(\mathfrak{g})_{r,s}^{\text{an}}\{\frac{\hslash}{\varepsilon'}\}$ to $V:=\text{Id}+\sum_{n=1}^{\infty} \hslash^{n}\beta_{n}$ where $\beta_{n}=\sum \alpha_{i_{1}}\alpha_{i_{2}}...\alpha_{i_{k}}$ whose sum is taken over all finite sequences $i_{1}>i_{2}>...>i_{k}$ with $i_{1}+i_{2}+...+i_{k}=n$. Since $\|\alpha_{n}\|(\varepsilon')^{n}<1$, $V$ is invertible on $U(\mathfrak{g})_{r,s}^{\text{an}}\{\tfrac{\hslash}{\varepsilon'}\}$ with inverse
$$V^{-1}=\sum_{i=0}^{\infty}(-\sum_{n=1}^{\infty} \hslash^{n}\beta_{n})^{i}.$$
It follows from the fact that $V_{n}\mu(V_{n}^{-1}(x),V_{n}^{-1}(y)) \equiv \mu_{0}(x,y) \text{ modulo } \hslash^{n+1}$ for each $n\geq 0$ that $V\mu(V^{-1}(x),V^{-1}(y)) = \mu_{0}(x,y)$ for each $x,y \in U(\mathfrak{g})_{r,s}^{\text{an}}\{\tfrac{\hslash}{\varepsilon'}\}$. It then follows that
$$A_{\varepsilon'} \cong U(\mathfrak{g})_{r,s}^{\text{an}}\{\tfrac{\hslash}{\varepsilon'}\} \overset{V}{\longrightarrow} U(\mathfrak{g})_{r,s}^{\text{an}}\{\tfrac{\hslash}{\varepsilon'}\}$$
is our desired isomorphism of algebras.
\end{proof}

The following are slight variations of the above theorems.

\begin{corollary}
\label{Rigidity1a}
Let $\mathfrak{g}_{r,s}$ and $\mathfrak{g}'_{r',s'}$ be Banach Lie algebras, each coming from some root datum. Let $\varepsilon \geq 0$. Suppose we have two morphisms of $k\{\frac{\hslash}{\varepsilon}\}^{\dagger}$ algebras $\alpha, \alpha':U(\mathfrak{g})_{r,s}^{\text{an}}\{\frac{\hslash}{\varepsilon}\}^{\dagger} \rightrightarrows U(\mathfrak{g}')_{r',s'}^{\text{an}}\{\frac{\hslash}{\varepsilon}\}^{\dagger}$ such that $\alpha \equiv \alpha'$ modulo $\hslash$. Suppose that $C_{b}^{\bullet}(\mathfrak{g}_{r,s},U(\mathfrak{g}')_{r',s'}^{\text{an}})$ is strictly exact at $C_{b}^{1}(\mathfrak{g}_{r,s},U(\mathfrak{g}')_{r',s'}^{\text{an}})$ and the isomorphism
$$\text{Ker}(\delta_{1})=\text{Im}(\delta_{0}) \overset{\sim}{\rightarrow} U(\mathfrak{g}')_{r',s'}^{\text{an}}/\text{Ker}(\delta_{0})$$
is contracting. Then there exists a convolution invertible generalised element $F:k \rightarrow U(\mathfrak{g}')_{r',s'}^{\text{an}}\{\frac{\hslash}{\varepsilon}\}^{\dagger}$ such that $\alpha'=F \ast \alpha \ast F^{-1}$.
\end{corollary}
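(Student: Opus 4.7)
My approach is to restrict the given dagger morphisms to Banach-algebra pieces, apply Theorem \ref{Rigidity1} in an adapted form, and then lift the resulting conjugator back to a generalised element of the dagger algebra.

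The first step is to unpack the colimit definition $U(\mathfrak{g})_{r,s}^{\text{an}}\{\hslash/\varepsilon\}^{\dagger} = \text{"colim"}_{\varepsilon_{1} > \varepsilon} U(\mathfrak{g})_{r,s}^{\text{an}}\{\hslash/\varepsilon_{1}\}$ and similarly on the target. Since morphisms out of a filtered colimit in $\text{IndBan}_{k}$ are determined by a compatible system of bounded morphisms between the Banach constituents, the given algebra morphisms $\alpha, \alpha'$ restrict, for each $\varepsilon_{1} > \varepsilon$, to bounded $k\{\hslash/\varepsilon_{1}\}$-algebra morphisms $\alpha_{\varepsilon_{1}}, \alpha'_{\varepsilon_{1}} : U(\mathfrak{g})_{r,s}^{\text{an}}\{\hslash/\varepsilon_{1}\} \to U(\mathfrak{g}')_{r',s'}^{\text{an}}\{\hslash/\varepsilon_{2}\}$ for some $\varepsilon_{2} > \varepsilon$ depending on $\varepsilon_{1}$. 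After shrinking to a common radius $\tilde{\varepsilon} \in (\varepsilon, \min(\varepsilon_{1},\varepsilon_{2}))$ and post-composing with the norm-decreasing inclusion $U(\mathfrak{g}')_{r',s'}^{\text{an}}\{\hslash/\varepsilon_{2}\} \hookrightarrow U(\mathfrak{g}')_{r',s'}^{\text{an}}\{\hslash/\tilde{\varepsilon}\}$, we obtain bounded endomorphisms $\alpha_{\tilde{\varepsilon}}, \alpha'_{\tilde{\varepsilon}}$ of the Banach algebra $U(\mathfrak{g})_{r,s}^{\text{an}}\{\hslash/\tilde{\varepsilon}\}$ (via a contracting isomorphism of the underlying spaces with the target), still congruent modulo $\hslash$.

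Next, I would run the iterative construction from the proof of Theorem \ref{Rigidity1} on these Banach-algebra representatives, but with the norm hypothesis $\|\alpha\| \leq 1$ relaxed to $\|\alpha_{\tilde{\varepsilon}}\| \leq C$ for some finite $C$. Writing $\alpha_{\tilde{\varepsilon}} = \sum_{i} \alpha_{i}\hslash^{i}$, we obtain $\|\alpha_{i}\|\tilde{\varepsilon}^{i} \leq C$, and the strict exactness and contraction hypotheses produce $u_{n+1}$ with $\|u_{n+1}\| \leq \|\alpha^{(n)}_{n+1}\|$ as before. Choosing a decreasing sequence of radii $\tilde{\varepsilon} = \varepsilon_{0} > \varepsilon_{1} > \cdots$ in the iteration with $\varepsilon_{n+1}/\varepsilon_{n}$ small enough relative to $C^{1/(n+1)}$, the convergence of the telescoping product $U = \prod_{n \geq 1}(1 + u_{n}\hslash^{n})$ occurs in $U(\mathfrak{g}')_{r',s'}^{\text{an}}\{\hslash/\varepsilon^{**}\}$ for some $\varepsilon^{**} > \varepsilon$. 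This produces an invertible $F \in U(\mathfrak{g}')_{r',s'}^{\text{an}}\{\hslash/\varepsilon^{**}\}$ satisfying $\alpha'_{\tilde{\varepsilon}} = F \alpha_{\tilde{\varepsilon}} F^{-1}$ on an appropriately smaller radius algebra.

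Finally, since $\varepsilon^{**} > \varepsilon$, the element $F$ canonically defines a generalised element $k \to U(\mathfrak{g}')_{r',s'}^{\text{an}}\{\hslash/\varepsilon\}^{\dagger}$ via the colimit inclusion $U(\mathfrak{g}')_{r',s'}^{\text{an}}\{\hslash/\varepsilon^{**}\} \hookrightarrow U(\mathfrak{g}')_{r',s'}^{\text{an}}\{\hslash/\varepsilon\}^{\dagger}$, and the identity $\alpha' = F \ast \alpha \ast F^{-1}$ is obtained from the Banach-algebra identity by passing to the colimit. The main obstacle is the technical adaptation of the norm bookkeeping in Theorem \ref{Rigidity1} when $C > 1$: the original proof chooses any strictly decreasing sequence $(\varepsilon_{n})$ bounded below by a fixed $\varepsilon'$, but in our setting one must choose $(\varepsilon_{n})$ aggressively enough that $C(\varepsilon_{n+1}/\varepsilon_{n})^{n+1} < 1$, while simultaneously ensuring the limit $\varepsilon^{**} > \varepsilon$; this balancing act is what makes the dagger hypothesis essential, since it allows $\varepsilon^{**}$ to be arbitrarily close to $\varepsilon$ (and in particular allows $\varepsilon = 0$) without the uniform lower bound required in the Banach-algebra version.
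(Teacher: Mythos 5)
Your overall architecture is the same as the paper's: restrict to a Banach stage (the paper restricts to $U(\mathfrak{g})_{r,s}^{\text{an}}$ and factors through some $U(\mathfrak{g}')_{r',s'}^{\text{an}}\{\frac{\hslash}{\varepsilon'}\}$ with $\varepsilon'>\varepsilon$), run the iteration from the proof of Theorem \ref{Rigidity1} to get an invertible $F$ at some radius $\varepsilon''$ with $\varepsilon<\varepsilon''<\varepsilon'$, and then push $F$ along the colimit inclusion to obtain the generalised element and conclude by $k\{\frac{\hslash}{\varepsilon}\}^{\dagger}$-linearity. Your first and last steps are fine and match the paper.

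The gap is in your norm bookkeeping, i.e.\ precisely the step you single out as the "main obstacle". With only $\|\alpha_{\tilde{\varepsilon}}\|\leq C$ available, the iteration gives $\|u_{n+1}\|\varepsilon_{n}^{n+1}\leq C$ at every stage (the constant does not improve, since $\|\alpha^{(n)}\|\leq\|U_{n}\|\,\|\alpha\|\,\|U_{n}^{-1}\|\leq C$ and the degree-zero coefficient $\alpha_{0}$ is unaffected by rescaling the radius in $\hslash$), so invertibility of $1+u_{n+1}\hslash^{n+1}$ at radius $\varepsilon_{n+1}$ forces $\varepsilon_{n+1}/\varepsilon_{n}<C^{-1/(n+1)}$, exactly as you say. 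But then $\log(\varepsilon_{0}/\varepsilon_{N})>\log C\cdot\sum_{n=1}^{N}\tfrac{1}{n}\to\infty$ when $C>1$, so $\varepsilon_{n}\to 0$: there is no $\varepsilon^{**}>\varepsilon$ when $\varepsilon>0$, and the claimed balancing act cannot be carried out. Nor does the case $\varepsilon=0$ rescue the argument as you suggest: a generalised element $k\rightarrow U(\mathfrak{g}')_{r',s'}^{\text{an}}\{\frac{\hslash}{0}\}^{\dagger}$ must factor through a single stage $U(\mathfrak{g}')_{r',s'}^{\text{an}}\{\frac{\hslash}{\rho}\}$ with $\rho>0$, so letting the working radii shrink to $0$ only produces a formal power series — that is the setting of Theorem \ref{Rigidity1b}, not of this corollary. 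The paper's proof sidesteps this by invoking the proof of Theorem \ref{Rigidity1} for the restricted morphisms as if its norm hypotheses hold; if you wish to dispense with $\|\alpha\|\leq 1$, you need an actual reduction to the contracting case (or a genuinely better estimate on the $u_{n}$ than $\|u_{n+1}\|\leq\|\alpha_{n+1}^{(n)}\|$), not a more aggressive choice of the radii $\varepsilon_{n}$.
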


\begin{proof}
The morphisms $\alpha$ and $\alpha'$ are $k\{\frac{\hslash}{\varepsilon}\}^{\dagger}$-linear, so are determined by their restrictions $U(\mathfrak{g})_{r,s}^{\text{an}} \rightrightarrows U(\mathfrak{g}')_{r',s'}^{\text{an}}\{\frac{\hslash}{\varepsilon}\}^{\dagger}$. Since $U(\mathfrak{g})_{r,s}^{\text{an}}$ is Banach, there is $\varepsilon'>\varepsilon$ such that the restrictions of $\alpha$ and $\alpha'$ are determined by morphisms of Banach algebras $U(\mathfrak{g})_{r,s}^{\text{an}} \rightrightarrows U(\mathfrak{g}')_{r',s'}^{\text{an}}\{\frac{\hslash}{\varepsilon'}\}$. By the proof of Theorem \ref{Rigidity1} there is $\varepsilon' > \varepsilon'' > \varepsilon$ and an invertible element $F \in U(\mathfrak{g}')_{r',s'}^{\text{an}}\{\frac{\hslash}{\varepsilon''}\}$ such that $\alpha'(x)=F\alpha(x)F^{-1} \in U(\mathfrak{g}')_{r',s'}^{\text{an}}\{\frac{\hslash}{\varepsilon''}\}$ for all $x \in U(\mathfrak{g})_{r,s}^{\text{an}}$. It then follows that $\alpha = F \ast \alpha' \ast F^{-1}$ as maps $U(\mathfrak{g})_{r,s}^{\text{an}}\{\frac{\hslash}{\varepsilon}\}^{\dagger} \rightrightarrows U(\mathfrak{g}')_{r',s'}^{\text{an}}\{\frac{\hslash}{\varepsilon}\}^{\dagger}$, where we denote by $F$ the generalised element $k \xrightarrow{1 \mapsto F} U(\mathfrak{g}')_{r',s'}^{\text{an}}\{\frac{\hslash}{\varepsilon''}\} \rightarrow U(\mathfrak{g}')_{r',s'}^{\text{an}}\{\frac{\hslash}{\varepsilon}\}^{\dagger}$.
\end{proof}

\begin{corollary}
\label{Rigidity2a}
Let $\varepsilon \geq 0$. Suppose we have a Banach $k\{\frac{\hslash}{\varepsilon'}\}$-algebra $A_{\varepsilon'}$, for some $\varepsilon'>\varepsilon$, equipped with a $k\{\frac{\hslash}{\varepsilon'}\}$-linear isomorphism $A_{\varepsilon'} \cong U(\mathfrak{g})_{r,s}^{\text{an}}\{\frac{\hslash}{\varepsilon'}\}$ that preserves the unit such that $A_{\varepsilon'}/\hslash A_{\varepsilon'} \cong U(\mathfrak{g})_{r,s}^{\text{an}}$ is an isomorphism of Banach algebras. Suppose further that $C_{b}^{\bullet}(\mathfrak{g}_{r,s},U(\mathfrak{g})_{r,s}^{\text{an}})$ is strictly exact at $C_{b}^{2}(\mathfrak{g}_{r,s},U(\mathfrak{g})_{r,s}^{\text{an}})$ and the isomorphism
$$\text{Ker}(\delta_{2})=\text{Im}(\delta_{1}) \overset{\sim}{\rightarrow} C_{b}^{1}(\mathfrak{g}_{r,s},U(\mathfrak{g})_{r,s}^{\text{an}})/\text{Ker}(\delta_{1})$$
is contracting. Then there is an isomorphism of $k\{\frac{\hslash}{\varepsilon}\}^{\dagger}$ algebras
$$\alpha:A \overset{\sim}{\rightarrow} U(\mathfrak{g})_{r,s}^{\text{an}}\{\tfrac{\hslash}{\varepsilon}\}^{\dagger},$$
where $A=k\{\frac{\hslash}{\varepsilon}\}^{\dagger} \hat{\otimes}_{k\{\frac{\hslash}{\varepsilon'}\}} A_{\varepsilon'}$, inducing the given isomorphism $A/\hslash A \cong A_{\varepsilon'}/\hslash A_{\varepsilon'} \cong U(\mathfrak{g})_{r,s}^{\text{an}}$.
\end{corollary}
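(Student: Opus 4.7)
The plan is to mimic the dagger-reduction trick used in Corollary \ref{Rigidity1a}, replacing the appeal to Theorem \ref{Rigidity1} with an appeal to Theorem \ref{Rigidity2}. The idea is that since the dagger algebra $k\{\frac{\hslash}{\varepsilon}\}^{\dagger}$ is a filtered colimit of the Banach algebras $k\{\frac{\hslash}{\eta}\}$ over $\eta>\varepsilon$, any isomorphism obtained at a single intermediate radius $\varepsilon''\in(\varepsilon,\varepsilon')$ will, after base change, yield the desired dagger-level isomorphism.

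First I would choose some $\varepsilon''$ with $\varepsilon<\varepsilon''<\varepsilon'$ and, after rescaling the norm on $A_{\varepsilon'}$ if necessary to ensure that its multiplication is contracting (which only changes the bound on the linear isomorphism with $U(\mathfrak{g})_{r,s}^{\text{an}}\{\tfrac{\hslash}{\varepsilon'}\}$ by a constant), apply Theorem \ref{Rigidity2} directly to $A_{\varepsilon'}$ with $\varepsilon''$ playing the role of $\varepsilon'$ in the statement of that theorem. This produces an isomorphism of $k\{\frac{\hslash}{\varepsilon''}\}$-algebras
\[
\alpha_{\varepsilon''}:\ A_{\varepsilon'} \hat{\otimes}_{k\{\frac{\hslash}{\varepsilon'}\}} k\{\tfrac{\hslash}{\varepsilon''}\} \overset{\sim}{\longrightarrow} U(\mathfrak{g})_{r,s}^{\text{an}}\{\tfrac{\hslash}{\varepsilon''}\}
\]
which recovers the given mod-$\hslash$ isomorphism on the quotients, exactly as in the conclusion of Theorem \ref{Rigidity2}.

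Next I would base-change $\alpha_{\varepsilon''}$ along $k\{\tfrac{\hslash}{\varepsilon''}\} \rightarrow k\{\tfrac{\hslash}{\varepsilon}\}^{\dagger}$. The left-hand side collapses, via associativity of the completed tensor product, to
\[
A_{\varepsilon'} \hat{\otimes}_{k\{\frac{\hslash}{\varepsilon'}\}} k\{\tfrac{\hslash}{\varepsilon''}\} \hat{\otimes}_{k\{\frac{\hslash}{\varepsilon''}\}} k\{\tfrac{\hslash}{\varepsilon}\}^{\dagger} \;\cong\; A_{\varepsilon'} \hat{\otimes}_{k\{\frac{\hslash}{\varepsilon'}\}} k\{\tfrac{\hslash}{\varepsilon}\}^{\dagger} = A.
\]
For the right-hand side, using that the completed tensor product in $\text{IndBan}_{k}$ commutes with filtered colimits and that $k\{\tfrac{\hslash}{\varepsilon}\}^{\dagger}=\text{"colim"}_{\eta>\varepsilon}k\{\tfrac{\hslash}{\eta}\}$, the base change is the filtered colimit of the $U(\mathfrak{g})_{r,s}^{\text{an}}\{\tfrac{\hslash}{\varepsilon''}\}\hat{\otimes}_{k\{\frac{\hslash}{\varepsilon''}\}}k\{\tfrac{\hslash}{\eta}\}$ over $\varepsilon<\eta<\varepsilon''$; each such term is canonically $U(\mathfrak{g})_{r,s}^{\text{an}}\{\tfrac{\hslash}{\eta}\}$, and the indexing system $\{\eta:\varepsilon<\eta<\varepsilon''\}$ is cofinal in $\{\eta:\eta>\varepsilon\}$, so the colimit is $U(\mathfrak{g})_{r,s}^{\text{an}}\{\tfrac{\hslash}{\varepsilon}\}^{\dagger}$. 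Thus the base change of $\alpha_{\varepsilon''}$ yields the required isomorphism $\alpha:A\overset{\sim}{\to}U(\mathfrak{g})_{r,s}^{\text{an}}\{\tfrac{\hslash}{\varepsilon}\}^{\dagger}$, and since reducing modulo $\hslash$ commutes with all the tensor products and colimits in play, $\alpha$ induces the prescribed isomorphism $A/\hslash A\cong U(\mathfrak{g})_{r,s}^{\text{an}}$.

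The main obstacle is the bookkeeping in the second step: one must verify that the Banach base-change $U(\mathfrak{g})_{r,s}^{\text{an}}\{\tfrac{\hslash}{\varepsilon''}\}\hat{\otimes}_{k\{\frac{\hslash}{\varepsilon''}\}}k\{\tfrac{\hslash}{\eta}\}$ is indeed $U(\mathfrak{g})_{r,s}^{\text{an}}\{\tfrac{\hslash}{\eta}\}$ for each $\eta<\varepsilon''$, and that the colimit of these over $\eta$ does assemble into the dagger algebra rather than something larger. Both points are formal once one unwinds the definition of $U(\mathfrak{g})_{r,s}^{\text{an}}\{\tfrac{\hslash}{\rho}\}$ as a radius-$\rho$ Tate algebra over $U(\mathfrak{g})_{r,s}^{\text{an}}$, but they are where the proof could most easily break if the IndBanach tensor product did not interact well with the radius parameters. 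Beyond that, compatibility with the mod-$\hslash$ reduction and with the algebra structure is inherited directly from $\alpha_{\varepsilon''}$ by the functoriality of the base change.
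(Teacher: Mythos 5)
Your proposal is correct and follows essentially the same route as the paper: apply Theorem \ref{Rigidity2} at an intermediate radius $\varepsilon''$ with $\varepsilon<\varepsilon''<\varepsilon'$ and then base-change the resulting isomorphism along $k\{\tfrac{\hslash}{\varepsilon''}\}\rightarrow k\{\tfrac{\hslash}{\varepsilon}\}^{\dagger}$, using that the dagger algebra is the filtered colimit of the Banach Tate algebras at radii $\eta>\varepsilon$. Your extra remarks on rescaling the norm to make the multiplication contracting and on checking that the base change of $U(\mathfrak{g})_{r,s}^{\text{an}}\{\tfrac{\hslash}{\varepsilon''}\}$ recovers the dagger algebra are sensible bookkeeping that the paper leaves implicit.
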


\begin{proof}
By Theorem \ref{Rigidity2} there is $\varepsilon' > \varepsilon'' > \varepsilon$ and an isomorphism of $k\{\frac{\hslash}{\varepsilon''}\}$-algebras
$$\alpha_{\varepsilon''}:A_{\varepsilon''} \overset{\sim}{\rightarrow} U(\mathfrak{g})_{r,s}^{\text{an}}\{\tfrac{\hslash}{\varepsilon''}\},$$
where $A_{\varepsilon''}=k\{\frac{\hslash}{\varepsilon''}\} \hat{\otimes}_{k\{\frac{\hslash}{\varepsilon'}\}} A_{\varepsilon'}$, inducing the given isomorphism $A_{\varepsilon''}/\hslash A_{\varepsilon''} \cong A_{\varepsilon'}/\hslash A_{\varepsilon'} \cong U(\mathfrak{g})_{r,s}^{\text{an}}$. Then $\alpha$ is the composition
$$A \cong k\{ \tfrac{\hslash}{\varepsilon}\}^{\dagger} \hat{\otimes}_{k\{\frac{\hslash}{\varepsilon''}\}} A_{\varepsilon''} \xrightarrow{\text{Id} \otimes \alpha_{\varepsilon''}} k\{\tfrac{\hslash}{\varepsilon}\}^{\dagger} \hat{\otimes}_{k\{\frac{\hslash}{\varepsilon''}\}} U(\mathfrak{g})_{r,s}^{\text{an}}\{\tfrac{\hslash}{\varepsilon''}\} \cong U(\mathfrak{g})_{r,s}^{\text{an}}\{\tfrac{\hslash}{\varepsilon}\}^{\dagger}.$$
\end{proof}

\begin{remark}
If we let $k=\mathbb{C}$ with the trivial valuation then $k\{\frac{\hslash}{\varepsilon}\}=k\{\frac{\hslash}{\varepsilon}\}^{\dagger}=\mathbb{C}\llbracket\hslash\rrbracket$ and we recover the classical rigidity results as stated in Section XVIII.2 of \cite{QG}.
\end{remark}
\

We may construct analytic quantum groups over $k\{\frac{\hslash}{\varepsilon}\}$ and $k\{\frac{\hslash}{\varepsilon}\}^{\dagger}$ for a formal parameter $\hslash$, where $q=e^{\hslash}$, to which the above deformation theory applies. For the remainder of this section, assume that $\varepsilon>0$ is sufficiently small such that $\text{exp}(\hslash)$ converges in $k\{\frac{\hslash}{\varepsilon}\}$.

\begin{defn}
Let $\mathscr{H}_{\frac{\hslash}{\varepsilon}}:=\coprod^{\leq 1}_{\alpha \in \mathbb{Z}I}k\{\frac{\hslash}{\varepsilon}\} \cdot H_{\alpha}$ be the Banach Hopf algebra over $k\{\frac{\hslash}{\varepsilon}\}$ generated by $H_{i}$ for $i \in I$, where
$$\Delta_{\mathscr{H}}(H_{i})=1 \otimes H_{i} + H_{i} \otimes 1.$$
Let $V_{\frac{\hslash}{\varepsilon}}:=\coprod^{\leq 1}_{i \in I}k\{\frac{\hslash}{\varepsilon}\} \cdot v_{i}$ and define
$$c:V_{\frac{\hslash}{\varepsilon}} \hat{\otimes}_{k\{\frac{\hslash}{\varepsilon}\}} V_{\frac{\hslash}{\varepsilon}} \rightarrow V_{\frac{\hslash}{\varepsilon}} \hat{\otimes}_{k\{\frac{\hslash}{\varepsilon}\}} V_{\frac{\hslash}{\varepsilon}}, \quad v_{i} \otimes v_{j} \mapsto q^{\lambda_{i}(\alpha_{j})} v_{j} \otimes v_{i},$$
where $q=e^{\hslash}$. Let $T_{r}(V_{\frac{\hslash}{\varepsilon}})$ be the resulting braided analytically graded Hopf algebra on $\coprod_{n \geq 0}^{\leq 1} (V_{\frac{\hslash}{\varepsilon}}^{\hat{\otimes} n})_{r^{n}}$ defined using the tensor product $\hat{\otimes}_{k\{\frac{\hslash}{\varepsilon}\}}$ over $k\{\tfrac{\hslash}{\varepsilon}\}$. We define a bilinear form $\langle-,- \rangle: V_{\frac{\hslash}{\varepsilon}} \hat{\otimes}_{k\{\frac{\hslash}{\varepsilon}\}} V_{\frac{\hslash}{\varepsilon}} \rightarrow k\{\frac{\hslash}{\varepsilon}\}$ by
$$\langle v_{i},v_{j} \rangle = \delta_{i,j} \frac{\hslash}{(q_{i}-q_{i}^{-1})}$$
where $q_{i}=q^{\frac{(\alpha_{i},\alpha_{i})}{2}}$. By Lemma \ref{ExtendBilinearForm} this extends to a bilinear form on $T_{r}(V_{\frac{\hslash}{\varepsilon}})$. Let $\mathfrak{B}_{r}(V_{\frac{\hslash}{\varepsilon}})$ be the quotient of $T_{r}(V_{\frac{\hslash}{\varepsilon}})$ by the radical of this bilinear form, which again is a braided analytically graded Banach Hopf algebra.
\end{defn}

\begin{remark}
Note that
$$q_{i}-q_{i}^{-1}=(\alpha_{i},\alpha_{i})\hslash \left(\sum_{k=0}^{\infty}\frac{(\frac{(\alpha_{i},\alpha_{i})}{2}\hslash)^{2k}}{k!}\right)$$
is not invertible in $k\{\frac{\hslash}{\varepsilon}\}$, but $\frac{1}{\hslash}(q_{i}-q_{i}^{-1})$ is. Thus we have had to rescale the inner product from Definition \ref{AnalyticQuantumGroupPositivePart} in order to define it over $k\{\frac{\hslash}{\varepsilon}\}$.
\end{remark}

\begin{theorem}
Let $r,s>0$ such that $1 \leq |q_{i}-q_{i}^{-1}|rs$. Then there is a Banach algebra structure on
$$U_{\frac{\hslash}{\varepsilon}}(\mathfrak{g})_{r,s}^{\text{an}}:=\mathfrak{B}_{r}(V_{\frac{\hslash}{\varepsilon}}) \hat{\otimes}_{k\{\frac{\hslash}{\varepsilon}\}} \mathscr{H}_{\frac{\hslash}{\varepsilon}} \hat{\otimes}_{k\{\frac{\hslash}{\varepsilon}\}} \mathfrak{B}_{s}(V_{\frac{\hslash}{\varepsilon}})$$
such that $\mathfrak{B}_{r}(V_{\frac{\hslash}{\varepsilon}})$, $\mathscr{H}_{\frac{\hslash}{\varepsilon}}$ and $\mathfrak{B}_{s}(V_{\frac{\hslash}{\varepsilon}})$ are all subalgebras, and
$$[H_{i},E_{j}]=(\alpha_{i},\alpha_{j})E_{j}, \quad [H_{i},F_{j}]=-(\alpha_{i},\alpha_{j})F_{j}, \quad [E_{i},F_{j}]=\delta_{i,j} \frac{t_{i}-t_{i}^{-1}}{q_{i}-q_{i}^{-1}},$$
where $F_{i}=v_{i} \otimes 1 \otimes 1$, $E_{i}=1 \otimes 1 \otimes v_{i}$ and $t_{i}=\text{exp}(\frac{(\alpha_{i},\alpha_{i})}{2} \hslash H_{i})$. This becomes a Banah Hopf algebra with $\mathscr{H}_{\frac{\hslash}{\varepsilon}}$ as a sub-Hopf algebra and
$$\begin{array}{rclrcl}
\Delta(E_{i})&=&E_{i} \otimes t_{i} + 1 \otimes E_{i},& S(E_{i})&=&-E_{i}t_{i}^{-1},\\
\Delta(F_{i}) &=& F_{i} \otimes 1 + t_{i}^{-1} \otimes F_{i},& S(F_{i})&=&-t_{i}F_{i}.
\end{array}$$
\end{theorem}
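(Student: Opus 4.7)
The plan is to construct this Hopf algebra as an application over the Banach base ring $k\{\tfrac{\hslash}{\varepsilon}\}$ of the machinery of Section~\ref{Analytic Bosonisation} and Subsection~\ref{Constructingnon-Archimedeananalyticquantumgroups}, with $\mathscr{H}_{\frac{\hslash}{\varepsilon}}$ replacing the group Hopf algebra $H$ of Definition~\ref{CartanPart} and the exponentials $t_i=\exp(\tfrac{(\alpha_i,\alpha_i)}{2}\hslash H_i)$ playing the role of the grouplike generators $K_{\lambda}$.

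The first step is to establish the analogue of Lemma~\ref{HAPairingWeakQuasiTriangular}. Let $H'_{\frac{\hslash}{\varepsilon}}\subset\mathscr{H}_{\frac{\hslash}{\varepsilon}}$ denote the closed sub-Hopf algebra generated by the grouplike elements $t_i$ and $t_i^{-1}$, which converge by the choice of $\varepsilon$. Since $H_i$ is primitive and each $t_j$ is grouplike, there is a unique duality pairing $\mathscr{H}_{\frac{\hslash}{\varepsilon}}\hat{\otimes}_{k\{\hslash/\varepsilon\}} H'_{\frac{\hslash}{\varepsilon}}\to k\{\tfrac{\hslash}{\varepsilon}\}$ determined by $\langle H_i,t_j\rangle=\lambda_i(\alpha_j)$, and with the algebra maps $\mathscr{R},\overline{\mathscr{R}}:H'_{\frac{\hslash}{\varepsilon}}\to\mathscr{H}_{\frac{\hslash}{\varepsilon}}$ sending $t_i\mapsto t_i^{\pm 1}$ this makes $(\mathscr{H}_{\frac{\hslash}{\varepsilon}},H'_{\frac{\hslash}{\varepsilon}})$ a weakly quasi-triangular dual pair. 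Under this setup $V_{\frac{\hslash}{\varepsilon}}$ is an $H'_{\frac{\hslash}{\varepsilon}}$-comodule via $v_i\mapsto t_i\otimes v_i$, and the induced braiding is exactly the prescribed $c$.

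Next I would apply Proposition~\ref{BilinearFormGivesNicholsAlgebra} and Proposition~\ref{DoubleBosonisationVariation} verbatim over $k\{\tfrac{\hslash}{\varepsilon}\}$: the given bilinear form extends via Lemma~\ref{ExtendBilinearForm} to a duality pairing between $\mathfrak{B}_r(V_{\frac{\hslash}{\varepsilon}})$ and $\mathfrak{B}_s(V_{\frac{\hslash}{\varepsilon}})$, realising them as Nichols algebras with dual pairing, and the hypothesis $|q_i-q_i^{-1}|rs\geq 1$ combined with the fact (noted in the preceding remark) that $\tfrac{\hslash}{q_i-q_i^{-1}}$ is a unit of bounded norm in $k\{\tfrac{\hslash}{\varepsilon}\}$ ensures that the pairing extends with the required boundedness. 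Proposition~\ref{DoubleBosonisationVariation} then produces the desired Banach Hopf algebra structure on $U_{\frac{\hslash}{\varepsilon}}(\mathfrak{g})^{\text{an}}_{r,s}=\mathfrak{B}_r\hat{\otimes}\mathscr{H}_{\frac{\hslash}{\varepsilon}}\hat{\otimes}\mathfrak{B}_s$, into which both bosonisations $\mathfrak{B}_r\rtimes\mathscr{H}_{\frac{\hslash}{\varepsilon}}$ and $\mathscr{H}_{\frac{\hslash}{\varepsilon}}\ltimes\mathfrak{B}_s$ embed as sub-Hopf algebras.

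The remaining task is to verify the explicit formulas. The Cartan commutators $[H_i,E_j]=(\alpha_i,\alpha_j)E_j$ and $[H_i,F_j]=-(\alpha_i,\alpha_j)F_j$ arise by differentiating the grouplike relations $t_iF_jt_i^{-1}=q^{-(\alpha_i,\alpha_j)}F_j$ (coming from the right action of $H'_{\frac{\hslash}{\varepsilon}}$ on $\mathfrak{B}_r$ built into the bosonisation multiplication) with respect to the primitive generator $H_i$. The coproduct and antipode formulas $\Delta(F_i)=F_i\otimes 1+t_i^{-1}\otimes F_i$ and $S(F_i)=-t_iF_i$ are dictated directly by the bosonisation of a braided-primitive $v_i$ with coaction by $t_i$, and the formulas for $E_i$ are symmetric via $\overline{H}\ltimes\overline{\mathfrak{B}_s}$. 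The main obstacle, where careful bookkeeping is required, is the cross-relation $[E_i,F_j]=\delta_{ij}\tfrac{t_i-t_i^{-1}}{q_i-q_i^{-1}}$: this must be extracted from the multiplication formula of Proposition~\ref{DoubleBosonisationVariation} restricted to $B\hat{\otimes}\overline{C}$, where the dual pairing $\langle v_j,v_i\rangle=\delta_{ij}\tfrac{\hslash}{q_i-q_i^{-1}}$ combined with $\mathscr{R}(t_i)=t_i$ and $\overline{\mathscr{R}}(t_i)=t_i^{-1}$ produces the two terms $t_i$ and $-t_i^{-1}$, and the extra factor of $\hslash$ in the rescaled pairing must be tracked through the weakly quasi-triangular action to cancel the $\hslash$ implicit in $q_i-q_i^{-1}$ and recover the stated identity; this is precisely the rescaling motivating the modified inner product in the preceding remark.
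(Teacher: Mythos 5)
Your reduction of everything to Majid's double-bosonisation breaks down precisely at the step you flag as ``careful bookkeeping'': the cross-relation. In Proposition \ref{DoubleBosonisationVariation} the scalar that multiplies the $\mathscr{R}$- and $\overline{\mathscr{R}}$-images (here $t_i$ and $t_i^{-1}$) in the product $E_iF_j$ is literally the value of the duality pairing between the Nichols algebras. Over $k\{\frac{\hslash}{\varepsilon}\}$ you are forced to use the rescaled pairing $\langle v_i,v_j\rangle=\delta_{ij}\frac{\hslash}{q_i-q_i^{-1}}$, so the double-bosonisation you build yields $[E_i,F_j]=\delta_{ij}\,\hslash\,\frac{t_i-t_i^{-1}}{q_i-q_i^{-1}}$, not the stated relation. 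There is no mechanism for the cancellation you assert: $\mathscr{R}(t_i)=t_i$ and $\overline{\mathscr{R}}(t_i)=t_i^{-1}$ supply no factor of $\hslash^{-1}$, and since $\hslash$ is not invertible in $k\{\frac{\hslash}{\varepsilon}\}$ no admissible renormalisation of $E_i$ or $F_i$ inside the algebra removes it. The discrepancy is not cosmetic: with the extra $\hslash$ the reduction modulo $\hslash$ would give $[E_i,F_j]\equiv 0$ rather than $\delta_{ij}H_i$, which would destroy Theorem \ref{QuantumGroupModuloh} and the whole rigidity application. (A secondary point: invoking the Section 2 machinery ``verbatim'' over the Banach ring $k\{\frac{\hslash}{\varepsilon}\}$ rather than the field $k$ also needs justification, though that is a lesser issue.)

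The paper avoids this trap by not deriving the relations from double-bosonisation at all. The rescaled pairing is used only to cut out the Nichols algebras $\mathfrak{B}_r(V_{\frac{\hslash}{\varepsilon}})$, $\mathfrak{B}_s(V_{\frac{\hslash}{\varepsilon}})$ (its radical is insensitive to the factor $\hslash$); the Hopf algebra structure with the stated relations is taken from the known structure on the dense subspace $U_q(\mathfrak{g})$ (Proposition 7 of Section 6.1.3 of \cite{QGaTR}), and the proof consists in checking that it extends continuously. Since $\mathfrak{B}_r(V_{\frac{\hslash}{\varepsilon}})\rtimes\mathscr{H}_{\frac{\hslash}{\varepsilon}}$ and $\mathscr{H}_{\frac{\hslash}{\varepsilon}}\ltimes\mathfrak{B}_s(V_{\frac{\hslash}{\varepsilon}})$ are sub-Hopf algebras by construction, the only estimate needed is for the cross term $E_i\otimes F_j\mapsto F_jE_i+\delta_{ij}\frac{t_i-t_i^{-1}}{q_i-q_i^{-1}}$, where $\bigl\|\frac{t_i-t_i^{-1}}{q_i-q_i^{-1}}\bigr\|=|q_i-q_i^{-1}|^{-1}\leq rs$; this is where the hypothesis $1\leq|q_i-q_i^{-1}|rs$ actually enters, rather than through extending the pairing as in your outline. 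If you want to keep a double-bosonisation-style argument you would need to explain how to encode the unrescaled pairing over $k\{\frac{\hslash}{\varepsilon}\}$, which is exactly what the remark preceding the theorem says cannot be done directly.
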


\begin{proof}
The fact that this is a Hopf algebra is checked on a dense subspace in Proposition 7 of Section 6.1.3 of \cite{QGaTR}. We must check that it  extends continuously to $U_{\frac{\hslash}{\varepsilon}}(\mathfrak{g})_{r,s}^{\text{an}}$. By construction
$$U_{\frac{\hslash}{\varepsilon}}^{\leq 1}(\mathfrak{g})_{r,s}^{\text{an}}:=\mathfrak{B}_{r}(V_{\frac{\hslash}{\varepsilon}}) \rtimes \mathscr{H}_{\frac{\hslash}{\varepsilon}} \quad \text{and} \quad U_{\frac{\hslash}{\varepsilon}}^{\geq 1}(\mathfrak{g})_{r,s}^{\text{an}}:=\mathscr{H}_{\frac{\hslash}{\varepsilon}} \ltimes \mathfrak{B}_{s}(V_{\frac{\hslash}{\varepsilon}})$$
sit as sub-Hopf algebras in $U_{\frac{\hslash}{\varepsilon}}(\mathfrak{g})_{r,s}^{\text{an}}$, it is enough to check that the restriction of the multiplication map
$$\mathfrak{B}_{s}(V_{\frac{\hslash}{\varepsilon}}) \hat{\otimes} \mathfrak{B}_{r}(V_{\frac{\hslash}{\varepsilon}}) \rightarrow U_{\frac{\hslash}{\varepsilon}}(\mathfrak{g})_{r,s}^{\text{an}}, \quad E_{i} \otimes F_{j} \mapsto F_{i}E_{j} + \delta_{i,j} \frac{t_{i}-t_{i}^{-1}}{q_{i}-q_{i}^{-1}},$$
is continuous. This follows from the assumption that
$$\left\|\frac{t_{i}-t_{i}^{-1}}{q_{i}-q_{i}^{-1}}\right\|=|q_{i}-q_{i}^{-1}|^{-1} \leq rs.$$
\end{proof}

\begin{theorem}
\label{QuantumGroupModuloh}
Let $r,s>0$ such that $1 \leq |q_{i}-q_{i}^{-1}|rs$. Then there is an isomorphism of $k\{\frac{\hslash}{\varepsilon}\}$-modules $U_{\frac{\hslash}{\varepsilon}}(\mathfrak{g})_{r,s}^{\text{an}} \cong U(\mathfrak{g})_{r,s}^{\text{an}}\{\frac{\hslash}{\varepsilon}\}$ that descends to an isomorphism of Banach Hopf algebras $U_{\frac{\hslash}{\varepsilon}}(\mathfrak{g})_{r,s}^{\text{an}} / \hslash U_{\frac{\hslash}{\varepsilon}}(\mathfrak{g})_{r,s}^{\text{an}} \cong U(\mathfrak{g})_{r,s}^{\text{an}}$.
\end{theorem}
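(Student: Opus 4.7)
The plan is to establish the $k\{\tfrac{\hslash}{\varepsilon}\}$-module isomorphism factor by factor using the triangular decomposition, then check that the Hopf structure reduces correctly modulo $\hslash$.

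First I would handle the Cartan factor: by construction $\mathscr{H}_{\frac{\hslash}{\varepsilon}} = \coprod^{\leq 1}_{\alpha \in \mathbb{Z}I} k\{\tfrac{\hslash}{\varepsilon}\} \cdot H_\alpha$ is literally the base change $\mathscr{H}_0 \{\tfrac{\hslash}{\varepsilon}\}$, with the same primitive comultiplication on the generators $H_i$. The elements $t_i = \exp(\tfrac{(\alpha_i,\alpha_i)}{2}\hslash H_i)$ are defined by their exponential series and satisfy $t_i \equiv 1 \mod \hslash$, so the additional structure on $\mathscr{H}_{\frac{\hslash}{\varepsilon}}$ coming from the $t_i$ becomes trivial modulo $\hslash$.

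For the Nichols-algebra factors, the strategy is to exhibit a common PBW-type basis on a dense subalgebra of both sides. By (the $k\{\tfrac{\hslash}{\varepsilon}\}$-analogue of) Proposition \ref{WeakClassicalNicholsAlgebrasDense} together with Theorem 4.3 of \cite{PHA}, the algebraic Nichols algebra of $V_{\frac{\hslash}{\varepsilon}}$ is dense in $\mathfrak{B}_r(V_{\frac{\hslash}{\varepsilon}})$ and is presented as Lusztig's form of $U_q^+(\mathfrak{g})$ over $k\{\tfrac{\hslash}{\varepsilon}\}$, namely the quotient of $\bigoplus_n V_{\frac{\hslash}{\varepsilon}}^{\otimes n}$ by the ideal generated by the quantum Serre relations. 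Lusztig's PBW theorem gives this algebraic subalgebra the structure of a free $k\{\tfrac{\hslash}{\varepsilon}\}$-module on ordered monomials in root vectors. Since $[n]_{q_i} \equiv n \mod \hslash$, each quantum Serre relation reduces modulo $\hslash$ to the corresponding classical Serre relation, and the PBW basis reduces to the classical PBW basis of the dense subalgebra $U(\mathfrak{n}^+) \subset U^+(\mathfrak{g})_r^{\text{an}}$. Both Banach completions carry the same analytic $\mathbb{N}I$-grading with matching norms on homogeneous pieces, so once one checks that the PBW basis elements have identical norms on both sides, the identification of algebraic subalgebras extends by continuity to a Banach $k\{\tfrac{\hslash}{\varepsilon}\}$-module isomorphism $\mathfrak{B}_r(V_{\frac{\hslash}{\varepsilon}}) \cong U^+(\mathfrak{g})_r^{\text{an}}\{\tfrac{\hslash}{\varepsilon}\}$, and similarly on the negative side. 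Tensoring the three factors then gives the module isomorphism.

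For the induced Hopf-algebra isomorphism modulo $\hslash$, one checks each structural relation directly on generators. From $t_i \equiv 1 \mod \hslash$ we get $\Delta(E_i) = E_i \otimes t_i + 1 \otimes E_i \equiv E_i \otimes 1 + 1 \otimes E_i$ and $\Delta(F_i) \equiv F_i \otimes 1 + 1 \otimes F_i$ modulo $\hslash$, matching the classical primitive coproducts. The mixed commutator $[E_i, F_j] = \delta_{i,j}\tfrac{t_i - t_i^{-1}}{q_i - q_i^{-1}}$ becomes $\delta_{i,j} H_i$ modulo $\hslash$ by expanding the numerator $(\alpha_i,\alpha_i)\hslash H_i + O(\hslash^3)$ against the denominator $(\alpha_i,\alpha_i)\hslash + O(\hslash^3)$. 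The remaining relations $[H_i, E_j]$ and $[H_i, F_j]$ agree already on the nose. Verifying these equalities on the dense sub-Hopf algebra $U_q(\mathfrak{g})$ (which reduces to $U(\mathfrak{g})$) and extending by continuity yields the Hopf algebra isomorphism $U_{\frac{\hslash}{\varepsilon}}(\mathfrak{g})_{r,s}^{\text{an}} / \hslash U_{\frac{\hslash}{\varepsilon}}(\mathfrak{g})_{r,s}^{\text{an}} \cong U(\mathfrak{g})_{r,s}^{\text{an}}$.

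The main obstacle I anticipate is the first step: controlling the interaction between the Banach completion (in the analytic norm) and the coefficient ring $k\{\tfrac{\hslash}{\varepsilon}\}$, so that the PBW basis of the algebraic Nichols algebra has norms matching those of the analogous basis of $U^+(\mathfrak{g})_r^{\text{an}}$. The rescaling of the bilinear form by $\hslash$ together with the constraint $1 \leq |q_i - q_i^{-1}| rs$ are arranged precisely to make this norm matching hold, but one must track the analytic $\mathbb{Z}I$-grading and the effect of $\hslash$-adic reduction on the bilinear form carefully; this is a topological issue absent from the classical algebraic setting of \cite{QG}.
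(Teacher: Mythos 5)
Your proposal is correct and follows essentially the same route as the paper: identify each Nichols-algebra factor with the quotient by the closed ideal of quantum Serre relations via the density of the algebraic Nichols algebra (Proposition \ref{WeakClassicalNicholsAlgebrasDense} together with Lusztig's Theorem 33.1.3), use the Poincar\'e--Birkhoff--Witt theorem to match the analytically graded pieces with those of $U^{\pm}(\mathfrak{g})_{r}^{\text{an}}\{\frac{\hslash}{\varepsilon}\}$, observe $\mathscr{H}_{\frac{\hslash}{\varepsilon}}\cong\mathscr{H}_{0}\{\frac{\hslash}{\varepsilon}\}$ by construction, and then obtain the Hopf isomorphism modulo $\hslash$ by checking on a dense subspace (where $t_{i}\equiv 1$ and $[E_{i},F_{j}]\equiv\delta_{i,j}H_{i}$) and extending by continuity. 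The norm-matching concern you raise about the graded pieces is the same point the paper handles implicitly through the PBW identification of the (finite-dimensional) homogeneous components, so there is no substantive divergence.
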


\begin{proof}
By Theorem 33.1.3 of \cite{ItQG} and Proposition \ref{WeakClassicalNicholsAlgebrasDense}, $\mathfrak{B}_{r}(V_{\frac{\hslash}{\varepsilon}})$ is the quotient of $T_{r}(V_{\frac{\hslash}{\varepsilon}})$ by the closed homogeneous ideal generated by the quantum Serre relations
$$\sum_{k=0}^{1-(\alpha_{i},\alpha_{j})}(-1)^{k} \frac{[1-(\alpha_{i},\alpha_{j})]_{q}}{[k]_{q}[1-(\alpha_{i},\alpha_{j})-k]_{q}} v_{i}^{1-(\alpha_{i},\alpha_{j})-k}v_{j}v_{i}^{k}=0$$
for $i \neq j$. Since $V_{\frac{\hslash}{\varepsilon}} \rightarrow V_{0}\{\frac{\hslash}{\varepsilon}\}$, $v_{i} \mapsto v_{i}$, is an isomorphism there is an isomorphism of $k\{\frac{\hslash}{\varepsilon}\}$-modules $T_{r}(V_{\frac{\hslash}{\varepsilon}}) \cong T_{r}(V_{0})\{\frac{\hslash}{\varepsilon}\}$. By the Poincar\'e-Birkhoff-Witt Theorem this descends to isomorphisms between the graded pieces $\mathfrak{B}_{r}(V_{\frac{\hslash}{\varepsilon}})(n) \cong U^{-}(\mathfrak{g})_{r}^{\text{an}}(n)\{\frac{\hslash}{\varepsilon}\}$, hence $\mathfrak{B}_{r}(V_{\frac{\hslash}{\varepsilon}}) \cong U^{-}(\mathfrak{g})_{r}^{\text{an}}\{\frac{\hslash}{\varepsilon}\}$. Likewise $\mathscr{H}_{\frac{\hslash}{\varepsilon}} \cong \mathscr{H}_{0}\{\frac{\hslash}{\varepsilon}\}$. So, as Banach spaces,
$$\mathfrak{B}_{r}(V_{\frac{\hslash}{\varepsilon}})/\hslash \mathfrak{B}_{r}(V_{\frac{\hslash}{\varepsilon}}) \cong U^{-}(\mathfrak{g})_{r}^{\text{an}} \quad \text{and} \quad \mathscr{H}_{\frac{\hslash}{\varepsilon}}/\hslash \mathscr{H}_{\frac{\hslash}{\varepsilon}} \cong \mathscr{H}_{0},$$
and so $U_{\frac{\hslash}{\varepsilon}}(\mathfrak{g})_{r,s}^{\text{an}} / \hslash U_{\frac{\hslash}{\varepsilon}}(\mathfrak{g})_{r,s}^{\text{an}} \cong U(\mathfrak{g})_{r,s}^{\text{an}}$. By Remark 4 of Section 6.1.3 this restricts to a Hopf algebra isomorphism on a dense subspace, hence is a Hopf algebra isomorphism by continuity.
\end{proof}

\begin{defn}
Let $r,s>0$ such that $1 \leq |q_{i}-q_{i}^{-1}|rs$ and let $\varepsilon >0$. Then for any $\varepsilon'>\varepsilon$ we define
$$U_{\frac{\hslash}{\varepsilon}}^{\dagger}(\mathfrak{g})_{r,s}^{\text{an}}:=U_{\frac{\hslash}{\varepsilon'}}(\mathfrak{g})_{r,s}^{\text{an}} \hat{\otimes}_{k\{\frac{\hslash}{\varepsilon'}\}} k\{\hslash/\varepsilon\}^{\dagger}.$$
\end{defn}

\begin{corollary}
\label{QuantumGroupModuloDaggerh}
Let $r,s>0$ such that $1 \leq |q_{i}-q_{i}^{-1}|rs$. Then there is an isomorphism of $k\{\frac{\hslash}{\varepsilon}\}^{\dagger}$-modules $U_{\frac{\hslash}{\varepsilon}}^{\dagger}(\mathfrak{g})_{r,s}^{\text{an}} \cong U(\mathfrak{g})_{r,s}^{\text{an}}\{\frac{\hslash}{\varepsilon}\}^{\dagger}$ that descends to an isomorphism of Banach Hopf algebras $U^{\dagger}_{\frac{\hslash}{\varepsilon}}(\mathfrak{g})_{r,s}^{\text{an}} / \hslash U^{\dagger}_{\frac{\hslash}{\varepsilon}}(\mathfrak{g})_{r,s}^{\text{an}} \cong U(\mathfrak{g})_{r,s}^{\text{an}}$.
\end{corollary}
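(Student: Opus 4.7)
The plan is to deduce this corollary directly from Theorem \ref{QuantumGroupModuloh} by base change. Fix any $\varepsilon' > \varepsilon$ with $\varepsilon'$ sufficiently small so that $\text{exp}(\hslash)$ converges in $k\{\hslash/\varepsilon'\}$. By definition,
$$U_{\hslash/\varepsilon}^{\dagger}(\mathfrak{g})_{r,s}^{\text{an}} = U_{\hslash/\varepsilon'}(\mathfrak{g})_{r,s}^{\text{an}} \hat{\otimes}_{k\{\hslash/\varepsilon'\}} k\{\hslash/\varepsilon\}^{\dagger},$$
and Theorem \ref{QuantumGroupModuloh} supplies a $k\{\hslash/\varepsilon'\}$-module isomorphism $U_{\hslash/\varepsilon'}(\mathfrak{g})_{r,s}^{\text{an}} \cong U(\mathfrak{g})_{r,s}^{\text{an}}\{\hslash/\varepsilon'\}$. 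Applying $(-)\hat{\otimes}_{k\{\hslash/\varepsilon'\}} k\{\hslash/\varepsilon\}^{\dagger}$ reduces the first claim to the identification
$$U(\mathfrak{g})_{r,s}^{\text{an}}\{\hslash/\varepsilon'\} \hat{\otimes}_{k\{\hslash/\varepsilon'\}} k\{\hslash/\varepsilon\}^{\dagger} \cong U(\mathfrak{g})_{r,s}^{\text{an}}\{\hslash/\varepsilon\}^{\dagger}.$$

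To prove this identification, I would write $k\{\hslash/\varepsilon\}^{\dagger} = \text{"colim"}_{\varepsilon < \varepsilon'' \leq \varepsilon'} k\{\hslash/\varepsilon''\}$, noting that this subsystem is cofinal in the usual defining system $\text{"colim"}_{\varepsilon''>\varepsilon} k\{\hslash/\varepsilon''\}$ and hence gives the same Ind-object. Completed tensor products commute with filtered colimits in $\text{IndBan}_{k}$, and for any Banach space $V$ and $\varepsilon'' \leq \varepsilon'$ one has $V\{\hslash/\varepsilon'\} \hat{\otimes}_{k\{\hslash/\varepsilon'\}} k\{\hslash/\varepsilon''\} \cong V\{\hslash/\varepsilon''\}$. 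Combining these gives the desired isomorphism, and in particular shows the result is independent of the choice of $\varepsilon'$.

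For the second statement, I would apply the right-exact functor $(-)/\hslash(-)$ to the first isomorphism. Since the structural maps of the filtered colimit defining $k\{\hslash/\varepsilon\}^{\dagger}$ reduce to the identity modulo $\hslash$, one has $k\{\hslash/\varepsilon\}^{\dagger}/\hslash \cdot k\{\hslash/\varepsilon\}^{\dagger} \cong k$. Combined with right-exactness of the base change,
$$U_{\hslash/\varepsilon}^{\dagger}(\mathfrak{g})_{r,s}^{\text{an}} / \hslash U_{\hslash/\varepsilon}^{\dagger}(\mathfrak{g})_{r,s}^{\text{an}} \cong \bigl(U_{\hslash/\varepsilon'}(\mathfrak{g})_{r,s}^{\text{an}}/\hslash U_{\hslash/\varepsilon'}(\mathfrak{g})_{r,s}^{\text{an}}\bigr) \hat{\otimes}_{k} k \cong U(\mathfrak{g})_{r,s}^{\text{an}},$$
where the last step is Theorem \ref{QuantumGroupModuloh}. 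To upgrade this to a Hopf algebra isomorphism, it suffices to note that the Hopf structure on $U_{\hslash/\varepsilon}^{\dagger}(\mathfrak{g})_{r,s}^{\text{an}}$ is inherited from $U_{\hslash/\varepsilon'}(\mathfrak{g})_{r,s}^{\text{an}}$ via a Hopf algebra morphism, and Theorem \ref{QuantumGroupModuloh} already asserts that the induced map on the $\hslash$-quotient matches the Hopf structure on $U(\mathfrak{g})_{r,s}^{\text{an}}$; uniqueness follows as both structures restrict to the standard Hopf structure on the common dense subalgebra $U_q(\mathfrak{g}) \to U(\mathfrak{g})$.

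There is no real obstacle here; the potential subtlety is merely bookkeeping around the cofinality argument and the right-exactness of $(-)\hat{\otimes}_{k\{\hslash/\varepsilon'\}}(-)$ needed to commute the quotient by $\hslash$ past the base change. Both are standard manipulations in $\text{IndBan}_{k}$ given the theory developed in Section 1 and the preceding Theorem \ref{QuantumGroupModuloh}.
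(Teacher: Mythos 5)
Your proposal is correct and follows essentially the same route as the paper, whose proof is simply to cite Theorem \ref{QuantumGroupModuloh} and base change along $k\{\frac{\hslash}{\varepsilon'}\} \rightarrow k\{\frac{\hslash}{\varepsilon}\}^{\dagger}$; you have merely spelled out the cofinality and right-exactness bookkeeping that the paper leaves implicit.
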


\begin{proof}
This follows from Theorem \ref{QuantumGroupModuloh}.
\end{proof}

\begin{corollary}
\label{RigidityApplied}
Suppose that the complex $C_{b}^{\bullet}(\mathfrak{g}_{r,s},U(\mathfrak{g})_{r,s}^{\text{an}})$ is strictly exact at both $C_{b}^{1}(\mathfrak{g}_{r,s},U(\mathfrak{g})_{r,s}^{\text{an}})$ and $C_{b}^{2}(\mathfrak{g}_{r,s},U(\mathfrak{g})_{r,s}^{\text{an}})$, and that the isomorphisms
$$\text{Ker}(\delta_{1})=\text{Im}(\delta_{0}) \overset{\sim}{\longrightarrow} U(\mathfrak{g})_{r,s}^{\text{an}}/\text{Ker}(\delta_{0})$$
and
$$\text{Ker}(\delta_{2})=\text{Im}(\delta_{1}) \overset{\sim}{\longrightarrow} C_{b}^{1}(\mathfrak{g}_{r,s},U(\mathfrak{g})_{r,s}^{\text{an}})/\text{Ker}(\delta_{1})$$
are contracting. Then there is an isomorphism of algebras
$$U_{\frac{\hslash}{\varepsilon}}^{\dagger}(\mathfrak{g})_{r,s}^{\text{an}} \overset{\sim}{\longrightarrow} U(\mathfrak{g})_{r,s}^{\text{an}}\{\tfrac{\hslash}{\varepsilon}\}^{\dagger}$$
that induces the isomorphism $U_{\frac{\hslash}{\varepsilon}}^{\dagger}(\mathfrak{g})_{r,s}^{\text{an}} / \hslash U_{\frac{\hslash}{\varepsilon}}^{\dagger}(\mathfrak{g})_{r,s}^{\text{an}} \cong U(\mathfrak{g})_{r,s}^{\text{an}}$ of Corollary \ref{QuantumGroupModuloDaggerh}. Furthermore, this isomorphism is unique up to conjugation by a convolution invertible generalised element of $U(\mathfrak{g})_{r,s}^{\text{an}}\{\tfrac{\hslash}{\varepsilon}\}^{\dagger}$.
\end{corollary}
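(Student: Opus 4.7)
The plan is to assemble Corollaries \ref{QuantumGroupModuloDaggerh}, \ref{Rigidity2a} and \ref{Rigidity1a}: the existence clause is a direct application of Corollary \ref{Rigidity2a} to the Banach algebra $U_{\frac{\hslash}{\varepsilon'}}(\mathfrak{g})_{r,s}^{\text{an}}$ for any $\varepsilon' > \varepsilon$ appearing in the defining colimit of $U_{\frac{\hslash}{\varepsilon}}^{\dagger}(\mathfrak{g})_{r,s}^{\text{an}}$, while the uniqueness clause is extracted from Corollary \ref{Rigidity1a} by comparing two candidate isomorphisms after composition with the inverse of a chosen one.

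For existence, fix $\varepsilon' > \varepsilon$ and set $A_{\varepsilon'} := U_{\frac{\hslash}{\varepsilon'}}(\mathfrak{g})_{r,s}^{\text{an}}$. By Theorem \ref{QuantumGroupModuloh} this is a Banach $k\{\tfrac{\hslash}{\varepsilon'}\}$-algebra equipped with a $k\{\tfrac{\hslash}{\varepsilon'}\}$-linear isomorphism $A_{\varepsilon'} \cong U(\mathfrak{g})_{r,s}^{\text{an}}\{\tfrac{\hslash}{\varepsilon'}\}$ whose reduction modulo $\hslash$ is the Banach Hopf algebra isomorphism $A_{\varepsilon'}/\hslash A_{\varepsilon'} \cong U(\mathfrak{g})_{r,s}^{\text{an}}$. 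This isomorphism preserves the unit because it is built diagonally from isomorphisms of the graded pieces given by the triangular decomposition, with the unit lying in the degree-$0$ piece. The cohomological hypotheses of our statement are precisely the hypotheses of Corollary \ref{Rigidity2a} at level $2$, so the corollary furnishes an isomorphism of $k\{\tfrac{\hslash}{\varepsilon}\}^{\dagger}$-algebras
\[
\alpha \colon U_{\frac{\hslash}{\varepsilon}}^{\dagger}(\mathfrak{g})_{r,s}^{\text{an}} = k\{\tfrac{\hslash}{\varepsilon}\}^{\dagger} \hat{\otimes}_{k\{\frac{\hslash}{\varepsilon'}\}} A_{\varepsilon'} \xrightarrow{\sim} U(\mathfrak{g})_{r,s}^{\text{an}}\{\tfrac{\hslash}{\varepsilon}\}^{\dagger}
\]
whose reduction modulo $\hslash$ is the isomorphism of Corollary \ref{QuantumGroupModuloDaggerh}.

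For uniqueness, suppose $\alpha_{1}$ and $\alpha_{2}$ are two such algebra isomorphisms. Form the automorphisms $\beta_{i} := \alpha_{i} \circ \alpha^{-1}$ of $U(\mathfrak{g})_{r,s}^{\text{an}}\{\tfrac{\hslash}{\varepsilon}\}^{\dagger}$ as a $k\{\tfrac{\hslash}{\varepsilon}\}^{\dagger}$-algebra. Since all three of $\alpha,\alpha_{1},\alpha_{2}$ induce the same Banach Hopf algebra isomorphism modulo $\hslash$, each $\beta_{i}$ reduces to the identity, so in particular $\beta_{1} \equiv \beta_{2}$ modulo $\hslash$. The cohomological hypotheses at level $1$ supply precisely the hypotheses of Corollary \ref{Rigidity1a} (taken with $\mathfrak{g}' = \mathfrak{g}$, $r' = r$, $s' = s$), yielding a convolution invertible generalised element $F \colon k \to U(\mathfrak{g})_{r,s}^{\text{an}}\{\tfrac{\hslash}{\varepsilon}\}^{\dagger}$ such that $\beta_{2} = F \ast \beta_{1} \ast F^{-1}$. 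Composing with $\alpha$ on the right and using that $F \ast (-)$ and $(-) \ast F^{-1}$ act as multiplication on the left by $F$ and on the right by $F^{-1}$ in the target algebra, we obtain $\alpha_{2} = F \ast \alpha_{1} \ast F^{-1}$, as required.

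There is essentially no creative work in this corollary; the real content lives in Theorems \ref{Rigidity1} and \ref{Rigidity2} and their $\dagger$-versions. The only step that demands a moment of care is verifying that the module isomorphism from Theorem \ref{QuantumGroupModuloh} can be taken to preserve the unit, which is immediate from inspection of its construction, and that the hypotheses of Corollary \ref{Rigidity2a} do not impose a contracting-multiplication constraint on $A_{\varepsilon'}$ — that technical passage is already absorbed into the proof of \ref{Rigidity2a} via base change to an intermediate radius $\varepsilon''$ on which the deformed multiplication becomes contracting.
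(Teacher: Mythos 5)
Your proposal is correct and takes essentially the same approach as the paper, whose entire proof is the citation of Corollaries \ref{Rigidity1a}, \ref{Rigidity2a} and \ref{QuantumGroupModuloDaggerh}; you simply spell out how these combine, applying \ref{Rigidity2a} to $U_{\frac{\hslash}{\varepsilon'}}(\mathfrak{g})_{r,s}^{\text{an}}$ for existence and \ref{Rigidity1a} for uniqueness. Your device of comparing two candidate isomorphisms via composition with $\alpha^{-1}$ so that \ref{Rigidity1a} applies to endomorphisms of $U(\mathfrak{g})_{r,s}^{\text{an}}\{\tfrac{\hslash}{\varepsilon}\}^{\dagger}$ is a sound way to make the conjugation statement precise.
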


\begin{proof}
This follows from Corollary \ref{Rigidity1a}, Corollary \ref{Rigidity2a} and Corollary \ref{QuantumGroupModuloDaggerh}.
\end{proof}

If $\mathfrak{g}$ is finite dimensional then, as vector spaces, bounded Lie algebra cohomology $H_{b}^{n}(\mathfrak{g}_{r,s},M)$ agrees with the unbounded Lie algebra cohomology $H^{n}(\mathfrak{g},M)$ as defined in Chapter XVIII of \cite{QG}. Furthermore, if we also assume that $k$ is algebraically closed and $\mathfrak{g}$ is semisimple then $H^{n}(\mathfrak{g},U(\mathfrak{g}))=0$ by Corollary XVIII.3.3 of \cite{QG} for $n=1,2$. Unfortunately, it is unclear whether the same argument can be extended to show that $H_{b}^{n}(\mathfrak{g}_{r,s},U(\mathfrak{g})^{\text{an}}_{r,s})$ vanishes when $n=1,2$, or whether we have the strict exactness required for Corollary \ref{RigidityApplied} to hold. This is a goal of future work by the author. At the end of the following section we show that if we allow ourselves to work over formal powerseries in $\hslash$ then we may weaken these assumptions on bounded cohomology to remove the requirement of strict exactness.\\

In \cite{QG}, Kassel uses an algebraic analogue of the rigidity theorems of this Section, alongside another based on results of Drinfel'd from 1989 regarding quasi-Hopf algebra structures on universal enveloping algebra, to present a proof of the Drinfel'd-Khono theorem. This theorem states that the category of representations of the quantum enveloping algebra is equivalent, as a braided monoidal category, to the category of $U(\mathfrak{g})$-modules with associativity constraint given by the Drinfel'd associator and braiding given by the associated R-matrix. As a result of this, the associated braid group representations are equivalent. This can be interpreted as a statement about the monodromy of the Knizhnik-Zamolodchikov (KZ) equations that govern the Drinfel'd associator. In \cite{pMPatpKZE}, Furusho uses $p$-adic multiple polylogarithms to construct solutions to the $p$-adic KZ equations and a $p$-adic Drinfel'd associator. In the future the author hopes to prove a $p$-adic analogue of the Drinfel'd-Khono theorem and to investigate this link to Furusho's work.

\subsection{Analytic quantum groups over $k\llbracket \hslash \rrbracket$}
\label{WorkingOverk[[h]]}

\begin{defn}
Let $k \llbracket \hslash \rrbracket$ denote the IndBanach algebra of powerseries in $\hslash$,
$$k \llbracket \hslash \rrbracket := \lim_{n \geq 0} k[\hslash]/(\hslash^{n}) = \prod_{n \geq 0} k \cdot \hslash^{n}.$$
For an IndBanach space $V$ let $V\llbracket \hslash \rrbracket := \prod\nolimits _{n\geq 0} V\cdot \hslash^{n}$, which forms an IndBanach algebra if $V$ is an algebra.
\end{defn}

\begin{lem}
\label{CountableProductsCommuteWithTensor}
Let $(V(n))_{n \in \mathbb{N}}$ be a countable collection of Banach spaces, and $W$ be a Banach space. Then the natural map
$$\left(\prod_{n \geq 0}V(n)\right) \hat{\otimes} W \rightarrow \prod_{n \geq 0} \left(V(n) \hat{\otimes} W\right)$$
is an isomorphism.
\end{lem}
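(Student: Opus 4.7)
The plan is to reduce the statement to the finite case, where the tensor product distributes over direct sums, and then argue that the countable cofiltered limit computing the product is preserved by $-\hat{\otimes}W$ in this particular situation.

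First, I would rewrite the countable contracting-style product as a cofiltered limit of finite direct sums in $\text{IndBan}_k$. Namely, using that finite products and finite coproducts coincide in $\text{Ban}_k$ (and in $\text{IndBan}_k$) as direct sums, and that a countable product is the cofiltered limit of its finite sub-products, we have
$$\prod_{n \geq 0} V(n) \;=\; \lim_N \bigoplus_{n \leq N} V(n),$$
and similarly for the right-hand side. At the level of finite stages, $-\hat{\otimes}W$ is a left adjoint in the closed monoidal structure on $\text{IndBan}_k$, so it commutes with finite coproducts, giving a canonical isomorphism
$$\Bigl(\bigoplus_{n \leq N} V(n)\Bigr) \hat{\otimes} W \;\cong\; \bigoplus_{n \leq N} \bigl(V(n) \hat{\otimes} W\bigr)$$
for each $N$, natural in the transition maps of the $N$-system.

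Next, I would argue that the natural comparison morphism
$$\Bigl(\lim_N \bigoplus_{n \leq N} V(n)\Bigr) \hat{\otimes} W \;\longrightarrow\; \lim_N \Bigl(\bigoplus_{n \leq N} V(n) \hat{\otimes} W\Bigr)$$
is an isomorphism. Although $-\hat{\otimes}W$ is a left adjoint and so need not commute with limits in general, the inverse system $\bigl(\bigoplus_{n \leq N} V(n)\bigr)_N$ is a countable cofiltered system whose transition maps are split surjections (the obvious projections from $\bigoplus_{n \leq N+1}$ to $\bigoplus_{n \leq N}$). For such Mittag–Leffler countable cofiltered limits in the quasi-abelian category $\text{IndBan}_k$, the tensor product does preserve the limit; I would verify this either by direct inspection of the universal property of the countable product in $\text{IndBan}_k$ (as described in Section 1.4.1 of \cite{LaACH}) or by unwinding that $\prod_n V(n)$ admits a presentation as a formal filtered colimit $\text{"colim"}_{(M_n)} \prod^{\leq 1}_n (V(n))_{M_n}$ over sequences of positive bounds, and then exchanging this filtered colimit with $\hat{\otimes} W$ (which it commutes with as a left adjoint) and with the level-wise identifications above.

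The main obstacle will be this interchange step: showing that in our specific sequential cofiltered system the tensor product passes through. The point is genuinely special to $\text{IndBan}_k$ — it is the formal-colimit description of countable products in $\text{IndBan}_k$, rather than any feature of Banach $\ell^\infty$-products, that makes the interchange work. Once it is established, composing the three natural identifications yields the isomorphism in the statement, and by construction the composite coincides with the canonical map $(\prod_n V(n)) \hat{\otimes} W \to \prod_n (V(n) \hat{\otimes} W)$.
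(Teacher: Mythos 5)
Your reduction to finite stages is harmless but does not touch the real difficulty, and the step that carries all the weight is asserted rather than proved. The claim that $-\hat{\otimes}W$ preserves a countable cofiltered limit because the transition maps are split surjections ("Mittag--Leffler") is not a valid principle here: the system $\bigl(\bigoplus_{n\leq N}V(n)\bigr)_N$ is just as split inside $\text{Ban}_{k}$, where the analogous statement is false (e.g.\ $\ell^{\infty}\hat{\otimes}W \not\cong \ell^{\infty}(W)$ in general). Splitness of the tower is therefore not what makes the lemma true, and asserting preservation of the limit for such towers is essentially a restatement of the lemma in at least the same generality. Your fallback route (b) stalls at exactly the same point: after writing $\prod_{n}V(n)\cong\text{colim}_{r_{n}>0}\prod\nolimits^{\leq 1}_{n}V(n)_{r_{n}}$ and commuting $\hat{\otimes}W$ past the formal colimit, you are left comparing $\bigl(\prod\nolimits^{\leq 1}_{n}V(n)_{r_{n}}\bigr)\hat{\otimes}W$ with $\prod\nolimits^{\leq 1}_{n}(V(n)\hat{\otimes}W)_{r_{n}}$, and at the Banach level the contracting product does \emph{not} commute with the completed tensor product; that is precisely the obstruction you were trying to avoid.

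The missing idea, which is the actual content of the paper's proof, is a cofinality trick inside the colimit over rescaling sequences: fix a summable sequence $a_{n}>0$ and use the norm-decreasing maps
$$\coprod\nolimits^{\leq 1}_{n\geq 0}V(n)_{r_{n}} \longrightarrow \prod\nolimits^{\leq 1}_{n\geq 0}V(n)_{r_{n}} \longrightarrow \coprod\nolimits^{\leq 1}_{n\geq 0}V(n)_{a_{n}r_{n}}$$
(a bounded family becomes summable, resp.\ null, after multiplying by $a_{n}$) to conclude that $\text{colim}_{r_{n}>0}\prod\nolimits^{\leq 1}_{n}V(n)_{r_{n}}\cong \text{colim}_{r_{n}>0}\coprod\nolimits^{\leq 1}_{n}V(n)_{r_{n}}$ as IndBanach spaces. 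Since the contracting coproduct ($\ell^{1}$-sum, resp.\ $c_{0}$-sum) \emph{does} commute with $\hat{\otimes}W$, and formal filtered colimits do as well, one then chains the identifications to get $\bigl(\prod_{n}V(n)\bigr)\hat{\otimes}W\cong\prod_{n}\bigl(V(n)\hat{\otimes}W\bigr)$. Without this interchange between $\prod^{\leq 1}$ and $\coprod^{\leq 1}$ under rescaling (or some equivalent mechanism), your argument has a genuine gap at its central step.
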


\begin{proof}
Fix a summable sequence of positive real numbers $a_{n} \in \mathbb{R}_{>0}$. By the explicit description of products in Section 1.4.1 in \cite{LaACH}, we have that
$$\prod\nolimits_{n \geq 0}V(n) \cong \text{colim}_{r_{n}>0} \prod\nolimits^{\leq 1}_{n \geq 0} V(n)_{r_{n}}.$$
The maps
$$\coprod\nolimits^{\leq 1}_{n \geq 0} V(n)_{r_{n}} \rightarrow \prod\nolimits^{\leq 1}_{n \geq 0} V(n)_{r_{n}}, \quad (v_{n})_{n \geq 0} \mapsto (v_{n})_{n \geq 0},$$
and
$$\prod\nolimits^{\leq 1}_{n \geq 0} V(n)_{r_{n}} \rightarrow \coprod\nolimits^{\leq 1}_{n \geq 0} V(n)_{a_{n}r_{n}}, \quad (v_{n})_{n \geq 0} \mapsto (v_{n})_{n \geq 0},$$
induce an isomorphism
$$\text{colim}_{r_{n}>0}\prod\nolimits^{\leq 1}_{n \geq 0} V(n)_{r_{n}} \overset{\sim}{\longrightarrow} \text{colim}_{r_{n}>0}\coprod\nolimits^{\leq 1}_{n \geq 0} V(n)_{r_{n}}.$$
Hence
$$\begin{array}{rcl}
\left(\prod_{n \geq 0}V(n)\right) \hat{\otimes} W &\cong& \left( \text{colim}_{r_{n}>0}\prod\nolimits^{\leq 1}_{n \geq 0} V(n)_{r_{n}} \right) \hat{\otimes} W\\
&\cong& \left( \text{colim}_{r_{n}>0}\coprod\nolimits^{\leq 1}_{n \geq 0} V(n)_{r_{n}} \right) \hat{\otimes} W\\
&\cong& \text{colim}_{r_{n}>0}\coprod\nolimits^{\leq 1}_{n \geq 0} (V(n) \hat{\otimes} W)_{r_{n}}\\
&\cong& \text{colim}_{r_{n}>0}\prod\nolimits^{\leq 1}_{n \geq 0} (V(n) \hat{\otimes} W)_{r_{n}}\\
&\cong& \prod_{n \geq 0} (V(n) \hat{\otimes} W).
\end{array}$$
\end{proof}

\begin{corollary}
\label{V[[h]]=Votimesk[[h]]}
For a Banach space $V$, $V\llbracket \hslash \rrbracket \cong V \hat{\otimes}_{k} k \llbracket \hslash \rrbracket$. Hence
$$V\llbracket \hslash \rrbracket \hat{\otimes}_{k\llbracket \hslash \rrbracket} W\llbracket \hslash \rrbracket \cong (V \hat{\otimes} W)\llbracket \hslash \rrbracket$$
for all Banach spaces $V$ and $W$.
\end{corollary}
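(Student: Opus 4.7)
The plan is to derive both assertions from Lemma~\ref{CountableProductsCommuteWithTensor} with essentially no further work. First I would rewrite $k\llbracket \hslash \rrbracket = \prod_{n \geq 0} k\cdot \hslash^{n}$ as a countable product and apply Lemma~\ref{CountableProductsCommuteWithTensor} with the collection $V(n) = k\cdot \hslash^{n}$ and the second factor taken to be the given Banach space $V$; this yields the chain of natural isomorphisms
\[
V \hat{\otimes}_{k} k\llbracket \hslash \rrbracket \;\cong\; V \hat{\otimes}_{k} \prod_{n\geq 0} k\cdot \hslash^{n} \;\cong\; \prod_{n\geq 0} \bigl(V\hat{\otimes}_{k} k\cdot \hslash^{n}\bigr) \;\cong\; \prod_{n\geq 0} V\cdot \hslash^{n} \;=\; V\llbracket \hslash \rrbracket,
\]
where symmetry of $\hat{\otimes}$ is used to move the product into the second argument. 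This gives the first isomorphism.

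For the second statement I would apply the first isomorphism twice, to both $V$ and $W$, to obtain
\[
V\llbracket \hslash \rrbracket \hat{\otimes}_{k\llbracket \hslash \rrbracket} W\llbracket \hslash \rrbracket \;\cong\; \bigl(V\hat{\otimes}_{k} k\llbracket \hslash \rrbracket\bigr) \hat{\otimes}_{k\llbracket \hslash \rrbracket} \bigl(W\hat{\otimes}_{k} k\llbracket \hslash \rrbracket\bigr).
\]
Then I would invoke the standard base-change identity for the closed symmetric monoidal category $\text{IndBan}_{k}$, namely $(A\hat{\otimes}_{k} R)\hat{\otimes}_{R} (B\hat{\otimes}_{k} R) \cong (A\hat{\otimes}_{k} B)\hat{\otimes}_{k} R$ for a commutative algebra object $R$, applied with $R = k\llbracket \hslash \rrbracket$. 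This reduces the right-hand side to $(V\hat{\otimes}_{k} W)\hat{\otimes}_{k} k\llbracket \hslash \rrbracket$, and a third application of the first isomorphism (with $V\hat{\otimes}_{k} W$ in place of $V$) completes the identification with $(V\hat{\otimes}_{k} W)\llbracket \hslash \rrbracket$.

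There is no serious obstacle here, since Lemma~\ref{CountableProductsCommuteWithTensor} has already done all the analytic work. The only step requiring any care is the base-change identity, which rests on the closed symmetric monoidal structure on $\text{IndBan}_{k}$ together with the associativity of the completed tensor product; both are formal consequences of the adjunctions established earlier in the section. Functoriality of each isomorphism ensures that no naturality issues arise when composing the three applications of the first isomorphism, so the argument is genuinely a one-line deduction from the preceding lemma.
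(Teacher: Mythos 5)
Your proposal is correct and is essentially the argument the paper intends: the first isomorphism is exactly Lemma \ref{CountableProductsCommuteWithTensor} applied to the collection $k\cdot\hslash^{n}$ (using symmetry of $\hat{\otimes}$), and the second follows from it together with the formal base-change identity $(A\hat{\otimes}_{k}R)\hat{\otimes}_{R}(B\hat{\otimes}_{k}R)\cong(A\hat{\otimes}_{k}B)\hat{\otimes}_{k}R$ for free modules over a commutative algebra object, which is valid in the closed symmetric monoidal, cocomplete category $\text{IndBan}_{k}$. The paper's proof is just a citation of that lemma, so you have merely spelled out the same one-line deduction.
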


\begin{proof}
This follows from Lemma \ref{CountableProductsCommuteWithTensor}.
\end{proof}

\begin{defn}
Let $r,s>0$ such that $1 \leq |q_{i}-q_{i}^{-1}|rs$. Then for any sufficiently small $\varepsilon>0$ we define
$$U_{\llbracket \hslash \rrbracket}(\mathfrak{g})_{r,s}^{\text{an}}:=U_{\frac{\hslash}{\varepsilon}}(\mathfrak{g})_{r,s}^{\text{an}} \hat{\otimes}_{k\{\frac{\hslash}{\varepsilon}\}} k\llbracket \hslash \rrbracket$$
as an IndBanach Hopf algebra over $k \llbracket \hslash \rrbracket$.
\end{defn}

\subsubsection{Quasi-triangularity}
\

The following is essentially a restatement of a well known result of Drinfel'd.

\begin{prop}
Suppose that $\mathfrak{g}$ is a simple Lie algebra. Then the IndBanach Hopf algebra $U_{\llbracket \hslash \rrbracket}(\mathfrak{g})_{r,s}^{\text{an}}$ is quasi-triangular.
\end{prop}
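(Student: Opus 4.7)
The plan is to establish quasi-triangularity by an $\hslash$-adic adaptation of the sketched theorem from Section \ref{RMatrixDoesn'tConverge}, exploiting the fact that passage to $k\llbracket \hslash \rrbracket$ allows the ``universal element'' $\exp_{C,B}=\sum c_x \otimes S_B(b_x)$ to converge even when $B$ and $C$ are infinite dimensional, provided the series is summed along the analytic grading and each homogeneous term sits in an increasing power of the ideal $(\hslash)$.

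First I would fix dually paired homogeneous bases of $\mathbf{f}_r^{\text{an}}$ and $\mathbf{f}_s^{\text{an}}$ (or rather, of their $\hslash$-adic counterparts inside $U_{\llbracket \hslash \rrbracket}(\mathfrak{g})_{r,s}^{\text{an}}$), graded by $\mathbb{N}I$. For each positive root degree $\beta$, the corresponding piece $\Theta_\beta \in \mathfrak{B}_r(V_{\llbracket \hslash \rrbracket})_{-\beta} \hat{\otimes}_{k\llbracket \hslash \rrbracket} \mathfrak{B}_s(V_{\llbracket \hslash \rrbracket})_{\beta}$ is, up to a Cartan factor, the canonical element of the pairing restricted to this graded piece. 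Because the rescaled bilinear form of Definition \ref{AnalyticQuantumGroupPositivePart} is $\langle v_i,v_j\rangle = \delta_{ij}\hslash/(q_i-q_i^{-1})$ and $q_i-q_i^{-1}$ is a unit multiple of $\hslash$, the inverse Gram matrix on degree $\beta$ acquires a factor of $(q_i - q_i^{-1})^{|\beta|}$, which lies in $\hslash^{|\beta|}k\llbracket \hslash \rrbracket$. Consequently $\Theta_\beta \in \hslash^{|\beta|}\bigl(\mathfrak{B}_r(V) \hat{\otimes} \mathfrak{B}_s(V)\bigr)\llbracket \hslash \rrbracket$, so the series $\Theta = \sum_\beta \Theta_\beta$ converges $\hslash$-adically inside
\[ U_{\llbracket \hslash \rrbracket}^{\leq 0}(\mathfrak{g})_{r}^{\text{an}} \hat{\otimes}_{k\llbracket \hslash \rrbracket} U_{\llbracket \hslash \rrbracket}^{\geq 0}(\mathfrak{g})_{s}^{\text{an}}, \]
where I use Corollary \ref{V[[h]]=Votimesk[[h]]} to identify this $\hslash$-adic completion with the IndBanach tensor product of the $\llbracket \hslash \rrbracket$-completed factors.

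Next I would multiply $\Theta$ by the Cartan R-matrix $\mathscr{R}_{\mathscr{H}} = \exp\bigl(\hslash \sum_{i,j} A^{-1}_{i,j} H_i \otimes H_j\bigr)$, noting that although over $k\{\hslash/\varepsilon\}$ the convergence of this exponential required the smallness hypotheses of Section \ref{RMatrixDoesn'tConverge}, over $k\llbracket \hslash \rrbracket$ it converges unconditionally by the $\hslash$-adic topology and sits inside $\mathscr{H}_{\llbracket \hslash \rrbracket}^{\hat{\otimes} 2}$. Setting $\mathscr{R} := \mathscr{R}_{\mathscr{H}} \cdot \Theta$ gives a candidate R-matrix. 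The axioms $(\Delta \otimes \mathrm{Id})\mathscr{R} = \mathscr{R}_{13}\ast\mathscr{R}_{23}$, $(\mathrm{Id}\otimes \Delta)\mathscr{R} = \mathscr{R}_{13}\ast\mathscr{R}_{12}$ and $\tau \circ \Delta = \mathscr{R}(\Delta)\mathscr{R}^{-1}$ can be checked on the dense subalgebra $U_q(\mathfrak{g}) \subset U_{\llbracket \hslash \rrbracket}(\mathfrak{g})_{r,s}^{\text{an}}$ using the classical computation of Drinfel'd (or equivalently Theorem 4.3 of \cite{DBoBG}, applied graded piece by graded piece), since on finite graded components the identities reduce to finite-dimensional identities already established in the algebraic setting. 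Continuity of all structure maps in the $\hslash$-adic topology then propagates these identities to the full $\hslash$-adic completion.

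The main obstacle is the convergence bookkeeping for $\Theta$: one has to verify cleanly that the coefficients of the $\beta$-graded component of the canonical element, expressed in terms of any chosen monomial basis of the quantum Serre quotient $\mathfrak{B}_r(V_{\llbracket \hslash \rrbracket})$, carry a total factor of $\hslash^{|\beta|}$. This is where the assumption that $\mathfrak{g}$ is simple enters, via the non-degeneracy and computability of the Lusztig form, allowing one to write the dual-basis expansion explicitly using Lusztig's inductive formulas and to track $\hslash$-divisibility through the Gram matrix inversion. Once this divisibility estimate is in hand, everything else follows by continuity and by Drinfel'd's classical argument.
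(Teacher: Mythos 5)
Your overall strategy is the one the paper itself uses: the paper's proof simply quotes the explicit formula for the R-matrix of the $\hslash$-adic quantum enveloping algebra (Theorem 17 of Section 8.3.2 of \cite{QGaTR}) and observes, via the colimit description of $-\hat{\otimes}_{k\{\hslash/\varepsilon\}}k\llbracket\hslash\rrbracket$ supplied by the proof of Lemma \ref{CountableProductsCommuteWithTensor}, that this formula defines a generalised element of the completed tensor square; you are reconstructing that formula (Cartan exponential times quasi-R-matrix) and reproving its convergence by hand, and your remarks about the Cartan factor $\exp(\hslash\sum A^{-1}_{i,j}H_{i}\otimes H_{j})$ converging coefficientwise over $k\llbracket\hslash\rrbracket$ without smallness hypotheses, and about checking the axioms on the dense algebraic quantum group, are fine.

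The step carrying all the weight, however, is wrong as stated. You derive $\Theta_{\beta}\in\hslash^{|\beta|}(\cdots)$ from the \emph{rescaled} pairing $\langle v_{i},v_{j}\rangle=\delta_{ij}\hslash/(q_{i}-q_{i}^{-1})$. But since $q_{i}-q_{i}^{-1}$ is a unit multiple of $\hslash$, this rescaled pairing is a unit on generators; its degree-$\beta$ Gram matrices are invertible over $k\llbracket\hslash\rrbracket$ with invertible reduction modulo $\hslash$ (this is exactly what makes Theorem \ref{QuantumGroupModuloh} work), so their inverses acquire no positive powers of $\hslash$ at all. Dual bases taken with respect to this pairing give components $\Theta_{\beta}$ with unit-sized coefficients, the coefficient of each fixed power of $\hslash$ in $\sum_{\beta}\Theta_{\beta}$ becomes an infinite sum with no norm decay, and you hit precisely the non-convergence obstruction recorded in Section \ref{RMatrixDoesn'tConverge}. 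The divisibility you need comes instead from the \emph{unrescaled} pairing $\delta_{ij}/(q_{i}-q_{i}^{-1})$ of Lemma \ref{DualityPairingforQuantumDouble} --- the pairing actually dual to the cross-relation $[E_{i},F_{j}]=\delta_{ij}(t_{i}-t_{i}^{-1})/(q_{i}-q_{i}^{-1})$, and hence the one the R-matrix formula must use: its degree-$\beta$ Gram matrix has entries with a pole of order $|\beta|$ in $\hslash$, so its inverse lies in $\hslash^{|\beta|}$ times matrices over $k\llbracket\hslash\rrbracket$, and then $\hslash$-adic summability follows as you intend (each $\hslash^{n}$-coefficient is a finite sum since the graded pieces are finite dimensional and only finitely many $\beta$ have height at most $n$). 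With that correction your argument goes through and reproduces the content of the cited result; note also that simplicity of $\mathfrak{g}$ is used mainly to guarantee finite type (invertible Cartan matrix, finitely many positive roots, non-degenerate Lusztig form), which is what the quoted formula in \cite{QGaTR} requires, rather than any deeper ``computability'' input.
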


\begin{proof}
Fix $\varepsilon>0$. By the proof of Lemma \ref{CountableProductsCommuteWithTensor} we may write $U_{\llbracket \hslash \rrbracket}(\mathfrak{g})_{r,s}^{\text{an}}$ as the colimit
$$U_{\llbracket \hslash \rrbracket}(\mathfrak{g})_{r,s}^{\text{an}} = \text{"colim"}_{(\varepsilon_{n})} \ U_{\frac{\hslash}{\varepsilon}}(\mathfrak{g})_{r,s}^{\text{an}} \hat{\otimes}_{k\{\frac{\hslash}{\varepsilon}\}} \left( \coprod\nolimits_{n \geq 0}^{\leq 1} k_{\varepsilon_{n}} \cdot \hslash^{n} \right)$$
where the colimit is taken over all sequences of positive real numbers $(\varepsilon_{n})_{n \geq 0}$ such that $\varepsilon^{n} \varepsilon_{n'} \geq \varepsilon_{n+n'}$ for all $n,n' \geq 0$. Note that the requirement on $(\varepsilon_{n})_{n \geq 0}$ ensures that $\coprod\nolimits_{n \geq 0}^{\leq 1} k_{\varepsilon_{n}} \cdot \hslash^{n}$ is naturally a $k\{\frac{\hslash}{\varepsilon}\}$-module. Theorem 17 of Section 8.3.2 of \cite{QGaTR} exhibits a formula for an R-matrix in the $\hslash$-adic qunatum enveloping algebra over $\mathbb{C}$. As in \emph{loc. cit.} this formula naturally converges in $U_{\frac{\hslash}{\varepsilon}}(\mathfrak{g})_{r,s}^{\text{an}} \hat{\otimes}_{k\{\frac{\hslash}{\varepsilon}\}} \left( \coprod\nolimits_{n \geq 0}^{\leq 1} k_{\varepsilon_{n}} \cdot \hslash^{n} \right)$ for some sufficiently small sequence $(\varepsilon_{n})_{n \geq 0}$, which gives a generalised element of $U_{\llbracket \hslash \rrbracket}(\mathfrak{g})_{r,s}^{\text{an}}$ that makes $U_{\llbracket \hslash \rrbracket}(\mathfrak{g})_{r,s}^{\text{an}}$ quasi-triangular.
\end{proof}

\subsubsection{Rigidity}

\begin{theorem}
\label{Rigidity1b}
Let $\mathfrak{g}$ and $\mathfrak{g}'$ be Banach Lie algebras. Suppose we have two morphisms of $k\llbracket \hslash \rrbracket$ algebras $\alpha, \alpha':U(\mathfrak{g})_{r,s}^{\text{an}}\llbracket \hslash \rrbracket \rightarrow U(\mathfrak{g}')_{r',s'}^{\text{an}}\llbracket \hslash \rrbracket$ such that $\alpha \equiv \alpha'$ modulo $\hslash$. Suppose that $H_{b}^{1}(\mathfrak{g},U(\mathfrak{g}')_{r',s'}^{\text{an}})=0$. Then there exists a convolution invertible generalised element $F: k \rightarrow U(\mathfrak{g}')_{r',s'}^{\text{an}}\llbracket \hslash \rrbracket$ such that $\alpha'=F \ast \alpha \ast F^{-1}$.
\end{theorem}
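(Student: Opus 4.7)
The plan is to follow the inductive scheme of Theorem \ref{Rigidity1}, taking advantage of the fact that $k\llbracket \hslash \rrbracket$ is $\hslash$-adically complete, so that no norm control on the successive corrections is needed; this is precisely why the hypotheses can be weakened. Since $\alpha$ and $\alpha'$ are $k\llbracket \hslash \rrbracket$-linear, they are determined by their restrictions to $U(\mathfrak{g})_{r,s}^{\text{an}}$, which take values in $U(\mathfrak{g}')_{r',s'}^{\text{an}}\llbracket \hslash \rrbracket = \prod_{n \geq 0} U(\mathfrak{g}')_{r',s'}^{\text{an}} \cdot \hslash^n$. I would write $\alpha = \sum_n \alpha_n \hslash^n$ with bounded $\alpha_n:U(\mathfrak{g})_{r,s}^{\text{an}} \to U(\mathfrak{g}')_{r',s'}^{\text{an}}$, and similarly for $\alpha'$, noting that $\alpha_0 = \alpha'_0$ by hypothesis. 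The goal reduces to showing that both $\alpha$ and $\alpha'$ are conjugate via invertible elements of $U(\mathfrak{g}')_{r',s'}^{\text{an}}\llbracket \hslash \rrbracket$ to the $k\llbracket\hslash\rrbracket$-linear extension of $\alpha_0$, since then $F := (U')^{-1} \ast U$ does the job.

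To construct the conjugator $U$ for $\alpha$, I would inductively produce $u_n \in U(\mathfrak{g}')_{r',s'}^{\text{an}}$ and form $U_n = (1 + u_n \hslash^n)(1 + u_{n-1}\hslash^{n-1}) \cdots (1 + u_1 \hslash)$, arranging that $\alpha^{(n)} := U_n \ast \alpha \ast U_n^{-1}$ satisfies $\alpha^{(n)} \equiv \alpha_0 \pmod{\hslash^{n+1}}$. Each factor $1 + u_n \hslash^n$ is invertible in $U(\mathfrak{g}')_{r',s'}^{\text{an}}\llbracket \hslash \rrbracket$ with inverse $\sum_{j \geq 0}(-u_n \hslash^n)^j$, the key point being that, regardless of the size of $\|u_n\|$, only finitely many terms contribute modulo any fixed power of $\hslash$, so the sum is unambiguously an element of the formal powerseries algebra.

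The inductive step is the cohomological one. Expanding $\alpha^{(n)} = \sum \alpha^{(n)}_i \hslash^i$ and equating $\hslash^{n+1}$-coefficients in $\alpha^{(n)}(xy) = \alpha^{(n)}(x)\alpha^{(n)}(y)$ gives
\[
\alpha^{(n)}_{n+1}(xy) = \alpha_0(x)\alpha^{(n)}_{n+1}(y) + \alpha^{(n)}_{n+1}(x)\alpha_0(y),
\]
and restricting to $x,y \in \mathfrak{g}_{r,s}$ shows that $\alpha^{(n)}_{n+1}|_{\mathfrak{g}_{r,s}} \in \text{Ker}(\delta_1) \subset C_b^1(\mathfrak{g}_{r,s}, U(\mathfrak{g}')_{r',s'}^{\text{an}})$, where $\mathfrak{g}_{r,s}$ acts on $U(\mathfrak{g}')_{r',s'}^{\text{an}}$ via $x \mapsto [\alpha_0(x),-]$. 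Invoking $H_b^1(\mathfrak{g}_{r,s}, U(\mathfrak{g}')_{r',s'}^{\text{an}}) = 0$, I obtain $u_{n+1} \in U(\mathfrak{g}')_{r',s'}^{\text{an}}$ with $\alpha^{(n)}_{n+1}(x) = [\alpha_0(x), u_{n+1}]$ for $x \in \mathfrak{g}_{r,s}$, and a direct computation as in Theorem \ref{Rigidity1} confirms that $\alpha^{(n+1)} \equiv \alpha_0 \pmod{\hslash^{n+2}}$.

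Finally, since each $u_{n+1}$ enters only in degree $\geq n+1$, the sequence $(U_n)$ stabilises modulo every $\hslash^N$ and hence converges $\hslash$-adically to a well-defined invertible element $U \in U(\mathfrak{g}')_{r',s'}^{\text{an}}\llbracket \hslash \rrbracket$ satisfying $U \ast \alpha \ast U^{-1} = \alpha_0$. Running the same argument for $\alpha'$ gives $U'$, and $F := (U')^{-1} \ast U$ is the required convolution invertible generalised element. The main obstacle in the analogue over $k\{\hslash/\varepsilon\}$ was controlling the norms $\|u_{n+1}\|$ well enough to survive the shrinking radii, which forced the strict-exactness hypothesis; over $k\llbracket \hslash \rrbracket$ this obstacle disappears entirely, and mere vanishing of $H_b^1$ suffices.
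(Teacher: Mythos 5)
Your proposal is correct and follows essentially the same route as the paper's proof: restrict by $k\llbracket\hslash\rrbracket$-linearity, run the inductive conjugation argument of Theorem \ref{Rigidity1} with the $\hslash^{n+1}$-coefficient giving a bounded 1-cocycle on $\mathfrak{g}_{r,s}$ (with action $x\mapsto[\alpha_{0}(x),-]$) that is a coboundary by the vanishing of $H_{b}^{1}$, and use $\hslash$-adic completeness so that invertibility and convergence of the $U_{n}$ are automatic, ending with $F=(U')^{-1}\ast U$. Your remark that the disappearance of norm control is exactly what allows the strict-exactness hypothesis to be dropped matches the paper's reasoning.
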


\begin{proof}
This follows as in the proof of Theorem \ref{Rigidity1}. By Corollary \ref{V[[h]]=Votimesk[[h]]}, $\alpha$ is uniquely determined by its restriction to $U(\mathfrak{g})_{r,s}^{\text{an}}$, which may be written as a formal sum $\sum \hslash^{i}\alpha_{i}$ for $\alpha_{i}:U(\mathfrak{g})_{r,s}^{\text{an}} \rightarrow U(\mathfrak{g}')_{r',s'}^{\text{an}}$ bounded. We suppose we have $u_{0},u_{1},...,u_{n} \in U(\mathfrak{g}')_{r',s'}^{\text{an}}$ such that $U_{i}\alpha \equiv \alpha_{0} U_{i}$ modulo $\hslash^{i+1}$ as before, where $U_{i}=(1+u_{i}\hslash^{i})(1+u_{i-1}\hslash^{i-1})...(1+u_{0})$ is now a generalised element of $U(\mathfrak{g}')_{r',s'}^{\text{an}}\llbracket \hslash \rrbracket$. Since we are working with formal powerseries, both $(1+u_{i}\hslash^{i})$ and $U_{i}$ are automatically convolution invertible. Again, if $\alpha^{(n)}:=U_{n} \ast \alpha \ast U_{n}^{-1}$ whose restriction to $U(\mathfrak{g})_{r,s}^{\text{an}}$ is given by the formal sum $\sum \hslash^{i}\alpha^{(n)}_{i}$, then $\alpha_{n+1}^{(n)}$ restricts to a bounded 1-cocycle on $\mathfrak{g}$, hence is a 1-coboundary. We therefore obtain $u_{n+1} \in U(\mathfrak{g}')_{r',s'}^{\text{an}}$ such that $\alpha^{(n)}_{n}(x)=[\alpha_{0}(x),u_{n+1}]$ for all $x \in \mathfrak{g}_{r,s}$. Then $(1+u_{n+1}\hslash^{n+1})$ is an invertible generalised element of $U(\mathfrak{g}')_{r',s'}^{\text{an}}\llbracket \hslash \rrbracket$ and as in the proof of Theorem \ref{Rigidity1} we have
$$(1+u_{n+1}\hslash^{n+1})\alpha(x)(1+u_{n+1}\hslash^{n+1})^{-1}\equiv\alpha_{0}(x) \text{ mod } \hslash^{n+2}.$$
Taking $u_{0}:=0$ as our base case, as before, inductively gives a sequence of convolution invertible generalised elements $(U_{n})_{n \geq 0}$ that converge in each $U(\mathfrak{g})_{r,s}^{\text{an}} \cdot  \hslash^{n}$ to give $U:k \rightarrow U(\mathfrak{g}')_{r',s'}^{\text{an}}\llbracket \hslash \rrbracket$ such that $U \ast \alpha \ast U^{-1}=\alpha_{0}$. Similarly there is a convolution invertible generalised element $U'$ of $U(\mathfrak{g}')_{r',s'}^{\text{an}}\llbracket \hslash \rrbracket$ such that $U' \ast \alpha' \ast U'^{-1}=\alpha'_{0}=\alpha_{0}$. Taking $F=U'^{-1} \ast U$ gives our result.
\end{proof}

\begin{theorem}
\label{Rigidity2b}
Suppose $A$ is an IndBanach $k\llbracket \hslash \rrbracket$ algebra equipped with a $k\llbracket \hslash \rrbracket$-linear isomorphism $A \cong U(\mathfrak{g})_{r,s}^{\text{an}}\llbracket \hslash \rrbracket$ that preserves the unit, and that the induced isomorphism $A/\hslash A \cong U(\mathfrak{g})_{r,s}^{\text{an}}$ is an isomorphism of algebras. Suppose that $H_{b}^{2}(\mathfrak{g},U(\mathfrak{g})_{r,s}^{\text{an}})=0$. Then there is an isomorphism of $k\llbracket \hslash \rrbracket$-algebras $A \cong U(\mathfrak{g})_{r,s}^{\text{an}}\llbracket \hslash \rrbracket$ inducing the given algebra isomorphism modulo $\hslash$.
\end{theorem}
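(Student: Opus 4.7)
The plan is to adapt the proof of Theorem \ref{Rigidity2} to the formal powerseries setting, where all invertibility of terms of the form $\mathrm{Id} + \hslash^{i}\alpha_{i}$ is automatic and no norm estimates on $\alpha_{i}$ are required. Using Corollary \ref{V[[h]]=Votimesk[[h]]} and the given $k\llbracket\hslash\rrbracket$-linear isomorphism, the multiplication on $A$ transfers to a $k\llbracket\hslash\rrbracket$-linear map $\mu$ on $U(\mathfrak{g})_{r,s}^{\text{an}}\llbracket\hslash\rrbracket \hat{\otimes}_{k\llbracket\hslash\rrbracket} U(\mathfrak{g})_{r,s}^{\text{an}}\llbracket\hslash\rrbracket$, determined by its restriction to $U(\mathfrak{g})_{r,s}^{\text{an}} \hat{\otimes} U(\mathfrak{g})_{r,s}^{\text{an}}$, which I write as $\mu = \sum_{i \geq 0} \mu_{i}\hslash^{i}$ for bounded $\mu_{i}$, with $\mu_{0}$ the original multiplication.

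I then build inductively bounded linear maps $\alpha_{n}: U(\mathfrak{g})_{r,s}^{\text{an}} \to U(\mathfrak{g})_{r,s}^{\text{an}}$ with $\alpha_{0}=0$ and $\alpha_{n}(1)=0$, such that the endomorphism $V_{n}$ of $U(\mathfrak{g})_{r,s}^{\text{an}}\llbracket\hslash\rrbracket$ whose restriction to $U(\mathfrak{g})_{r,s}^{\text{an}}$ is $(\mathrm{Id}+\hslash^{n}\alpha_{n})(\mathrm{Id}+\hslash^{n-1}\alpha_{n-1})\cdots(\mathrm{Id}+\alpha_{0})$ — automatically convolution invertible since $\hslash^{n}\alpha_{n}$ is nilpotent modulo higher powers of $\hslash$ — satisfies $\mu^{(n)}(x,y) := V_{n}(\mu(V_{n}^{-1}(x), V_{n}^{-1}(y))) \equiv \mu_{0}(x,y) \mod \hslash^{n+1}$. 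At the inductive step, comparing the $\hslash^{n+1}$-coefficient of the associativity of $\mu^{(n)}$ shows that $\mu_{n+1}^{(n)}$ is a bounded Hochschild 2-cocycle with coefficients in $U(\mathfrak{g})_{r,s}^{\text{an}}$ (viewed as a bimodule over itself) that vanishes on the unit. Applying Lemma \ref{VanishingofHb2Consequence} (after rescaling to ensure $\|\mu_{n+1}^{(n)}\| \leq 1$), the hypothesis $H_{b}^{2}(\mathfrak{g}_{r,s}, U(\mathfrak{g})_{r,s}^{\text{an}})=0$ yields a bounded $\alpha_{n+1}$ with $\alpha_{n+1}(1)=0$ and
$$\mu_{n+1}^{(n)}(x,y) = x\alpha_{n+1}(y) - \alpha_{n+1}(xy) + \alpha_{n+1}(x)y,$$
from which a direct calculation verifies $\mu^{(n+1)} \equiv \mu_{0} \mod \hslash^{n+2}$.

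The formal limit $V := \mathrm{Id} + \sum_{n \geq 1} \hslash^{n}\beta_{n}$, where $\beta_{n} = \sum \alpha_{i_{1}}\alpha_{i_{2}}\cdots\alpha_{i_{k}}$ ranges over strictly decreasing sequences with $i_{1}+\cdots+i_{k}=n$, then defines a $k\llbracket\hslash\rrbracket$-linear convolution invertible endomorphism of $U(\mathfrak{g})_{r,s}^{\text{an}}\llbracket\hslash\rrbracket$ with inverse $V^{-1} = \sum_{i \geq 0}(-\sum_{n \geq 1}\hslash^{n}\beta_{n})^{i}$; this is well-defined in $\mathrm{End}(U(\mathfrak{g})_{r,s}^{\text{an}})\llbracket\hslash\rrbracket$ by Corollary \ref{V[[h]]=Votimesk[[h]]} and the fact that only finitely many terms contribute to each $\hslash^{n}$-coefficient. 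Passing to the limit in $\mu^{(n)} \equiv \mu_{0} \mod \hslash^{n+1}$ yields $V \circ \mu \circ (V^{-1} \hat{\otimes} V^{-1}) = \mu_{0}$, and composing $V$ with the original identification $A \cong U(\mathfrak{g})_{r,s}^{\text{an}}\llbracket\hslash\rrbracket$ gives the desired algebra isomorphism, which reduces to the identity on $A/\hslash A \cong U(\mathfrak{g})_{r,s}^{\text{an}}$ since $V \equiv \mathrm{Id} \mod \hslash$.

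There is no substantive obstacle: the only real difference from the proof of Theorem \ref{Rigidity2} is that in the formal powerseries setting, boundedness of each $\alpha_{n}$ — which follows automatically from $H_{b}^{2}=0$ via Lemma \ref{VanishingofHb2Consequence}, without any norm control — is sufficient to ensure that each $\mathrm{Id} + \hslash^{n}\alpha_{n}$ is convolution invertible and that the formal product $V$ makes sense. This is precisely what allows us to drop the strict exactness hypothesis required in Theorem \ref{Rigidity2}.
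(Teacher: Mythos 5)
Your proposal follows essentially the same route as the paper's proof: transfer the multiplication via Corollary \ref{V[[h]]=Votimesk[[h]]}, write it as a formal sum $\sum \hslash^{i}\mu_{i}$, kill the higher coefficients inductively by applying Lemma \ref{VanishingofHb2Consequence} under $H_{b}^{2}(\mathfrak{g},U(\mathfrak{g})_{r,s}^{\text{an}})=0$, and assemble the automatically invertible formal limit $V$ conjugating $\mu$ to $\mu_{0}$ (your remark on rescaling to meet the $\|f\|\leq 1$ hypothesis of that lemma is a legitimate small refinement the paper leaves implicit). The only quibble is terminological: the maps $\mathrm{Id}+\hslash^{n}\alpha_{n}$ and $V_{n}$ are invertible under composition in $\mathrm{End}(U(\mathfrak{g})_{r,s}^{\text{an}})\llbracket\hslash\rrbracket$, not ``convolution invertible'' in the Hopf-algebraic sense.
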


\begin{proof}
This proof follows as for Theorem \ref{Rigidity2}. The alternate multiplication $\mu$ on $U(\mathfrak{g})_{r,s}^{\text{an}}$ induced by $A$ is again determined by its restriction to $U(\mathfrak{g})_{r,s}^{\text{an}} \hat{\otimes} U(\mathfrak{g})_{r,s}^{\text{an}}$, which may written as a formal sum $\mu = \sum \hslash^{i}\mu_{i}$ for
$$\mu_{i}:U(\mathfrak{g})_{r,s}^{\text{an}} \hat{\otimes} U(\mathfrak{g})_{r,s}^{\text{an}} \rightarrow U(\mathfrak{g})_{r,s}^{\text{an}}.$$
We again assume we have maps $\alpha_{0},\alpha_{1},...,\alpha_{n}:U(\mathfrak{g})_{r,s}^{\text{an}} \rightarrow U(\mathfrak{g})_{r,s}^{\text{an}}$ such that $V_{i}(\mu(x,y)) \equiv \mu_{0}(V_{i}(x),V_{i}(y))$ modulo $\hslash^{i+1}$ for all $x,y \in U(\mathfrak{g})_{r,s}^{\text{an}}$, where
$$V_{i}=(\text{Id}+\hslash^{i}\alpha_{i})(\text{Id}+\hslash^{i-1}\alpha_{i-1})...(\text{Id}+\alpha_{0}):U(\mathfrak{g})_{r,s}^{\text{an}}\llbracket \hslash \rrbracket \rightarrow U(\mathfrak{g})_{r,s}^{\text{an}}\llbracket \hslash \rrbracket.$$
Again, since we are working with formal powerseries, each $V_{i}$ is automatically invertible and give new multiplication maps $\mu^{(i)}:=V_{i} \circ \mu \circ (V_{i}^{-1} \otimes V_{i}^{-1})$ on $U(\mathfrak{g})_{r,s}^{\text{an}}\llbracket \hslash \rrbracket$. We write the restriction of $\mu^{(i)}$ to $U(\mathfrak{g})_{r,s}^{\text{an}}$ as a formal sum $\sum \hslash^{j} \mu^{(i)}_{j}$. As in the proof of Theorem \ref{Rigidity2}, $\mu_{n+1}^{(n)}$ satisfies the conditions of Lemma \ref{VanishingofHb2Consequence}, so gives $\alpha_{n+1}:U(\mathfrak{g})_{r,s}^{\text{an}} \rightarrow U(\mathfrak{g})_{r,s}^{\text{an}}$ such that
$$(\text{Id}+\alpha_{n+1}\hslash^{n+1}) \circ \mu^{(n)} \circ \left( (\text{Id}+\alpha_{n+1}\hslash^{n+1})^{-1} \otimes (\text{Id}+\alpha_{n+1}\hslash^{n+1})^{-1} \right)$$
is equivalent to $\mu_{0}$ modulo $\hslash^{n+1}$. Taking $\alpha_{0}=0$ as a base case, we inductively obtain a sequence of morphisms $(V_{n})_{n \geq 0}$. Their restrictions to $U(\mathfrak{g})_{r,s}^{\text{an}}$ converges in each $U(\mathfrak{g})_{r,s}^{\text{an}} \cdot \hslash^{n}$ to give a morphism $V: U(\mathfrak{g})_{r,s}^{\text{an}} \llbracket \hslash \rrbracket \rightarrow U(\mathfrak{g})_{r,s}^{\text{an}} \llbracket \hslash \rrbracket$. It then follows that
$$A \cong U(\mathfrak{g})_{r,s}^{\text{an}} \llbracket \hslash \rrbracket \xrightarrow{V} U(\mathfrak{g})_{r,s}^{\text{an}}\llbracket \hslash \rrbracket$$
is our desired isomorphism of algebras.
\end{proof}

\begin{lem}
\label{QuantumGroupModuloPowerseriesh}
Let $r,s>0$ such that $1 \leq |q_{i}-q_{i}^{-1}|rs$. Then there is an isomorphism of $k\llbracket \hslash \rrbracket$-modules $U_{\llbracket \hslash \rrbracket}(\mathfrak{g})_{r,s}^{\text{an}} \cong U(\mathfrak{g})_{r,s}^{\text{an}}\llbracket \hslash \rrbracket$ that descends to an isomorphism of Banach Hopf algebras $U_{\llbracket \hslash \rrbracket}(\mathfrak{g})_{r,s}^{\text{an}} / \hslash U_{\llbracket \hslash \rrbracket}(\mathfrak{g})_{r,s}^{\text{an}} \cong U(\mathfrak{g})_{r,s}^{\text{an}}$.
\end{lem}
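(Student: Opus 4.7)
The plan is to transport the analogous isomorphism at the convergent level, namely Theorem \ref{QuantumGroupModuloh}, along the natural base change from $k\{\tfrac{\hslash}{\varepsilon}\}$ to $k\llbracket \hslash \rrbracket$. Fix some sufficiently small $\varepsilon > 0$ for which $U_{\frac{\hslash}{\varepsilon}}(\mathfrak{g})_{r,s}^{\text{an}}$ is defined, so that by definition $U_{\llbracket \hslash \rrbracket}(\mathfrak{g})_{r,s}^{\text{an}} = U_{\frac{\hslash}{\varepsilon}}(\mathfrak{g})_{r,s}^{\text{an}} \hat{\otimes}_{k\{\frac{\hslash}{\varepsilon}\}} k\llbracket \hslash \rrbracket$. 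Theorem \ref{QuantumGroupModuloh} supplies a $k\{\tfrac{\hslash}{\varepsilon}\}$-linear unit-preserving isomorphism $\phi: U_{\frac{\hslash}{\varepsilon}}(\mathfrak{g})_{r,s}^{\text{an}} \overset{\sim}{\longrightarrow} U(\mathfrak{g})_{r,s}^{\text{an}}\{\tfrac{\hslash}{\varepsilon}\}$ that descends modulo $\hslash$ to an isomorphism of Banach Hopf algebras $U_{\frac{\hslash}{\varepsilon}}(\mathfrak{g})_{r,s}^{\text{an}}/\hslash \cong U(\mathfrak{g})_{r,s}^{\text{an}}$.

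Next, I would apply the functor $- \hat{\otimes}_{k\{\frac{\hslash}{\varepsilon}\}} k\llbracket \hslash \rrbracket$ to both sides of $\phi$. The left-hand side becomes $U_{\llbracket \hslash \rrbracket}(\mathfrak{g})_{r,s}^{\text{an}}$ by definition, and one obtains a $k\llbracket \hslash \rrbracket$-linear isomorphism
\[
\phi \hat{\otimes} \mathrm{Id}: U_{\llbracket \hslash \rrbracket}(\mathfrak{g})_{r,s}^{\text{an}} \overset{\sim}{\longrightarrow} U(\mathfrak{g})_{r,s}^{\text{an}}\{\tfrac{\hslash}{\varepsilon}\} \hat{\otimes}_{k\{\frac{\hslash}{\varepsilon}\}} k\llbracket \hslash \rrbracket.
\]
I would then identify the target with $U(\mathfrak{g})_{r,s}^{\text{an}}\llbracket \hslash \rrbracket$ by writing $U(\mathfrak{g})_{r,s}^{\text{an}}\{\tfrac{\hslash}{\varepsilon}\} \cong U(\mathfrak{g})_{r,s}^{\text{an}} \hat{\otimes}_{k} k\{\tfrac{\hslash}{\varepsilon}\}$ and applying associativity of the completed tensor product together with Corollary \ref{V[[h]]=Votimesk[[h]]}, which gives $U(\mathfrak{g})_{r,s}^{\text{an}} \hat{\otimes}_{k} k\llbracket \hslash \rrbracket \cong U(\mathfrak{g})_{r,s}^{\text{an}}\llbracket \hslash \rrbracket$. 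Composing these yields the desired $k\llbracket \hslash \rrbracket$-linear isomorphism.

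Finally, reducing modulo $\hslash$ amounts to applying $- \hat{\otimes}_{k\llbracket \hslash \rrbracket} k$ to both sides, and by construction this recovers the Hopf algebra isomorphism of Theorem \ref{QuantumGroupModuloh}. Both the unit and the Hopf structure mod $\hslash$ are preserved because tensoring is functorial and the relevant quotient maps commute with the base change.

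The only delicate point, and what I would flag as the main obstacle, is the identification $U(\mathfrak{g})_{r,s}^{\text{an}}\{\tfrac{\hslash}{\varepsilon}\} \hat{\otimes}_{k\{\frac{\hslash}{\varepsilon}\}} k\llbracket \hslash \rrbracket \cong U(\mathfrak{g})_{r,s}^{\text{an}}\llbracket \hslash \rrbracket$ at the level of IndBanach spaces, since the interaction between completed tensor product, convergent powerseries, and formal powerseries is subtle. Here one wants to use that, via the associativity $V \hat{\otimes}_{k} k\{\tfrac{\hslash}{\varepsilon}\} \hat{\otimes}_{k\{\frac{\hslash}{\varepsilon}\}} k\llbracket \hslash \rrbracket \cong V \hat{\otimes}_{k} k\llbracket \hslash \rrbracket$ for any Banach $V$, the statement reduces to Corollary \ref{V[[h]]=Votimesk[[h]]}, which in turn reduces to Lemma \ref{CountableProductsCommuteWithTensor}. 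Once this identification is in place, the remainder of the argument is formal.
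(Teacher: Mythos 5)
Your proposal is correct and is essentially the paper's argument: the paper's proof is simply the one-line citation ``This follows from Theorem \ref{QuantumGroupModuloh},'' and your base-change argument, identifying $U(\mathfrak{g})_{r,s}^{\text{an}}\{\tfrac{\hslash}{\varepsilon}\} \hat{\otimes}_{k\{\frac{\hslash}{\varepsilon}\}} k\llbracket \hslash \rrbracket$ with $U(\mathfrak{g})_{r,s}^{\text{an}}\llbracket \hslash \rrbracket$ via Corollary \ref{V[[h]]=Votimesk[[h]]} and then reducing modulo $\hslash$, is exactly the implicit content of that citation. The delicate point you flag is handled by precisely the tools you name (Corollary \ref{V[[h]]=Votimesk[[h]]}, hence Lemma \ref{CountableProductsCommuteWithTensor}), so no gap remains.
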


\begin{proof}
This follows from Theorem \ref{QuantumGroupModuloh}.
\end{proof}

\begin{corollary}
Suppose that
$$H_{b}^{1}(\mathfrak{g},U(\mathfrak{g})_{r,s}^{\text{an}})=0=H_{b}^{2}(\mathfrak{g},U(\mathfrak{g})_{r,s}^{\text{an}}).$$
Then there is an isomorphism of algebras
$$U_{\llbracket \hslash \rrbracket}(\mathfrak{g})_{r,s}^{\text{an}} \overset{\sim}{\longrightarrow} U(\mathfrak{g})_{r,s}^{\text{an}} \llbracket \hslash \rrbracket$$
that induces the isomorphism $U_{\llbracket \hslash \rrbracket}(\mathfrak{g})_{r,s}^{\text{an}} / \hslash U_{\llbracket \hslash \rrbracket}(\mathfrak{g})_{r,s}^{\text{an}} \cong U(\mathfrak{g})_{r,s}^{\text{an}}$. Furthermore, this isomorphism is unique up to conjugation by a convolution invertible generalised element of $U(\mathfrak{g})_{r,s}^{\text{an}}\llbracket \hslash \rrbracket$.
\end{corollary}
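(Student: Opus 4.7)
The plan is to assemble the three immediately preceding results. By Lemma \ref{QuantumGroupModuloPowerseriesh}, there is a unit-preserving $k\llbracket \hslash \rrbracket$-linear isomorphism
$$U_{\llbracket \hslash \rrbracket}(\mathfrak{g})_{r,s}^{\text{an}} \cong U(\mathfrak{g})_{r,s}^{\text{an}}\llbracket \hslash \rrbracket$$
of IndBanach spaces descending modulo $\hslash$ to an isomorphism of Banach Hopf algebras. Setting $A := U_{\llbracket \hslash \rrbracket}(\mathfrak{g})_{r,s}^{\text{an}}$, this places us in the hypotheses of Theorem \ref{Rigidity2b}. Combined with the assumed vanishing $H_{b}^{2}(\mathfrak{g},U(\mathfrak{g})_{r,s}^{\text{an}})=0$, that theorem directly produces an isomorphism of $k\llbracket \hslash \rrbracket$-algebras
$$\alpha : U_{\llbracket \hslash \rrbracket}(\mathfrak{g})_{r,s}^{\text{an}} \overset{\sim}{\longrightarrow} U(\mathfrak{g})_{r,s}^{\text{an}}\llbracket \hslash \rrbracket$$
inducing the fixed algebra isomorphism of Lemma \ref{QuantumGroupModuloPowerseriesh} modulo $\hslash$.

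For the uniqueness statement, suppose $\alpha$ and $\alpha'$ are two such algebra isomorphisms. Then the composite $\alpha' \circ \alpha^{-1}$ is an algebra automorphism of $U(\mathfrak{g})_{r,s}^{\text{an}}\llbracket \hslash \rrbracket$ that reduces to the identity modulo $\hslash$, since both $\alpha$ and $\alpha'$ induce the same algebra isomorphism on $A/\hslash A$. We apply Theorem \ref{Rigidity1b} with $\mathfrak{g}' = \mathfrak{g}$ and the two morphisms $\alpha' \circ \alpha^{-1}$ and $\text{Id}$, which agree modulo $\hslash$. Using the hypothesis $H_{b}^{1}(\mathfrak{g},U(\mathfrak{g})_{r,s}^{\text{an}})=0$, this yields a convolution invertible generalised element $F : k \to U(\mathfrak{g})_{r,s}^{\text{an}}\llbracket \hslash \rrbracket$ with $\alpha' \circ \alpha^{-1} = F \ast \text{Id} \ast F^{-1}$; composing on the right with $\alpha$ gives $\alpha' = F \ast \alpha \ast F^{-1}$, as required.

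Since the substantive content of both the existence and uniqueness claims is already encapsulated in Theorems \ref{Rigidity2b} and \ref{Rigidity1b}, together with the structural identification of $U_{\llbracket \hslash \rrbracket}(\mathfrak{g})_{r,s}^{\text{an}}$ modulo $\hslash$ in Lemma \ref{QuantumGroupModuloPowerseriesh}, the proof itself presents no real obstacle once the reduction is stated cleanly. The genuine difficulty lies upstream, in verifying the bounded cohomology vanishing hypotheses $H_{b}^{1}(\mathfrak{g},U(\mathfrak{g})_{r,s}^{\text{an}})=0$ and $H_{b}^{2}(\mathfrak{g},U(\mathfrak{g})_{r,s}^{\text{an}})=0$ — the author flags this as a goal of future work, noting that the classical (unbounded) vanishing follows from Corollary XVIII.3.3 of \cite{QG} when $\mathfrak{g}$ is finite-dimensional semisimple over an algebraically closed field, but that the passage to the bounded analytic setting is not automatic.
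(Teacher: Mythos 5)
Your proposal is correct and follows essentially the same route as the paper, whose proof of this corollary is simply the citation of Theorem \ref{Rigidity2b} (existence, via the hypothesis $H_{b}^{2}=0$ and the module identification of Lemma \ref{QuantumGroupModuloPowerseriesh}) together with Theorem \ref{Rigidity1b} (uniqueness up to conjugation, via $H_{b}^{1}=0$). Your spelled-out reduction of the uniqueness claim --- comparing $\alpha'\circ\alpha^{-1}$ with $\text{Id}$ on $U(\mathfrak{g})_{r,s}^{\text{an}}\llbracket\hslash\rrbracket$ so that Theorem \ref{Rigidity1b} applies with its stated domain --- is exactly the intended reading of the paper's terse argument.
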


\begin{proof}
This follows from Theorem \ref{Rigidity1b}, Theorem \ref{Rigidity2b} and Corollary \ref{QuantumGroupModuloPowerseriesh}.
\end{proof}

\section{Archimedean analytic quantum groups}
\label{AQGSection}

Throughout this section we shall assume {\bf (A)}.

\subsection{Constructing Archimedean Analytic Nichols algebras}
\label{ANicholsAlegbraSection}

\begin{lem}
\label{ACoidealsOfTensorAlgebras}
Let $V$ be a Banach space with pre-braiding $c$ of norm at most 1. Suppose we have closed homogeneous subspaces $I_{r} \subset J_{r} \subset T_{r}(V)$ for each $r>0$ such that, whenever $r \geq r'$, $T_{r}(V) \rightarrow T_{r'}(V)$ maps $I_{r}$ and $J_{r}$ to $I_{r'}$ and $J_{r'}$ respectively. Suppose further that $\Delta(I_{r}) \subset \overline{I_{\frac{r}{2}} \hat{\otimes} T_{\frac{r}{2}}(V) + T_{\frac{r}{2}}(V) \hat{\otimes} I_{\frac{r}{2}}}$ and $\Delta(J_{r}) \subset \overline{J_{\frac{r}{2}} \hat{\otimes} T_{\frac{r}{2}}(V) + T_{\frac{r}{2}}(V) \hat{\otimes} J_{\frac{r}{2}}}$. If the induced map
$$P(\text{colim}_{r>0} T_{r}(V)/I_{r}) \rightarrow \text{colim}_{r>0} T_{r}(V)/I_{r} \rightarrow \text{colim}_{r>0} T_{r}(V)/J_{r}$$
is injective then $I_{r}=J_{r}$ for all $r>0$.
\end{lem}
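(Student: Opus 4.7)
The plan is to adapt the inductive argument of Lemma \ref{CoidealsOfTensorAlgebras} to the Archimedean setting. The two new complications are that comultiplication shifts the radius from $r$ to $r/2$ (by Proposition \ref{ArchimedeanTensorBialgebra}), and that flatness of IndBanach spaces, used in the \textbf{(NA)} proof via Lemma 3.49 of \cite{SDiBAG}, is no longer automatic in \textbf{(A)}.

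First I would set $R^r := T_r(V)/I_r$ and $R'^r := T_r(V)/J_r$. The given coideal conditions ensure these are analytically graded Banach coalgebras, and the compatibility under restriction yields a strict epimorphism $f := \text{"colim"}_{r>0} f^r : R \twoheadrightarrow R'$ between the dagger-$0$ graded IndBanach Hopf algebras $R := \text{"colim"}_{r>0} R^r$ and $R' := \text{"colim"}_{r>0} R'^r$. Write $R(\leq n)$ for the colimit over $r$ of the images of $\coprod_{i \leq n}^{\leq 1} V_r^{\hat{\otimes} i}$ in $R^r$, so that $R$ is exhausted by the filtration $R(\leq n)$.

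The core inductive claim is that for every $n \geq 1$, the restriction $f|_{R(\leq n)} : R(\leq n) \to R'(\leq n)$ is a strict isomorphism of IndBanach spaces. The base case $n = 1$ follows because $R(1) \subset P(R)$ — degree-one generators are primitive in $T_r(V)$ and primitivity descends to quotients by coideals — so the hypothesis gives $\ker(f) \cap R(1) = 0$, while strict surjectivity comes from $I_r \subset J_r$. For the inductive step, given $\bar{x} \in R(\leq n+1)$ with $f(\bar{x}) = 0$, the radius-shifted coideal condition together with the Archimedean analogue of Lemma \ref{GradedPiecesOfTensorAlgebraCoideals} (same induction as in Proposition \ref{TensorBialgebra}, with the shift $r \mapsto r/2$) allows me to write $\Delta(\bar{x}) = \bar{x} \otimes 1 + 1 \otimes \bar{x} + \bar{w}$ with $\bar{w} \in R(\leq n) \hat{\otimes} R(\leq n)$, after absorbing primitive corrections into $\bar{x}$ by applying $\text{Id} \otimes \varepsilon$ and $\varepsilon \otimes \text{Id}$ exactly as in the \textbf{(NA)} proof. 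Applying $f \hat{\otimes} f$ yields $(f \hat{\otimes} f)(\bar{w}) = 0$, and the inductive hypothesis supplies a bounded inverse to $f \hat{\otimes} f$ on $R(\leq n) \hat{\otimes} R(\leq n)$, so $\bar{w} = 0$; thus $\bar{x}$ is primitive in $R$, and the injectivity hypothesis forces $\bar{x} = 0$.

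The main obstacle will be this substitute for flatness: in \textbf{(NA)} the injectivity of $f$ on $R(\leq n)$ is transported to $f \hat{\otimes} f$ by flatness, whereas in \textbf{(A)} I am forced to strengthen the inductive hypothesis to \emph{strict isomorphism} on each filtration piece, so that the completed tensor square of the bounded inverse inverts $f \hat{\otimes} f$ on $R(\leq n) \hat{\otimes} R(\leq n)$. Once the induction closes, taking contracting colimits over $n$ gives that $f$ itself is a strict monomorphism on $R$; this injectivity then descends to each $f^r : R^r \to R'^r$ via the injections $T_r(V) \hookrightarrow T_{r'}(V)$ for $r \geq r'$ and the compatibility of the $I_r$, yielding $I_r = J_r$ for every $r > 0$.
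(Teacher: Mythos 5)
Your outline follows the paper's strategy (the induction of Lemma \ref{CoidealsOfTensorAlgebras} with the radius shift $r \mapsto r/2$), and your substitute for flatness is sound in spirit: once the restriction of $f$ to degrees $\leq n$ is known to be a bounded bijection with bounded inverse, tensoring the inverse does invert $f \hat{\otimes} f$, so you avoid the appeal to flatness (the paper instead uses that each $R_{r/2}(\leq n)$ is finite dimensional, which is what the intended application supplies). But there is a genuine gap in your final step. You run the induction at the level of the colimits $R=\text{"colim"}_{r>0}T_{r}(V)/I_{r}$ and $R'=\text{"colim"}_{r>0}T_{r}(V)/J_{r}$, so what you obtain at the end is only that $f$ is a (strict) monomorphism of Ind-objects. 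That statement does not descend to the individual radii: for $x \in J_{r}\setminus I_{r}$, the class of $x$ in $T_{r}(V)/I_{r}$ maps to zero in $T_{r}(V)/J_{r}$, and injectivity of the colimit map only tells you that this class dies after restriction, i.e.\ that $x$ lands in $I_{r'}$ for some $r'<r$. The compatibility hypothesis says the $I_{r}$ map \emph{into} the $I_{r'}$, not that $I_{r'} \cap T_{r}(V)=I_{r}$, so membership in $I_{r'}$ cannot be pulled back up to radius $r$, and the inclusions $T_{r}(V)\hookrightarrow T_{r'}(V)$ you invoke do not help. The same problem infects your inductive hypothesis: an isomorphism of Ind-objects on $R(\leq n)$ gives no control of $f^{r/2}\otimes f^{r/2}$ at a \emph{fixed} radius, which is what is needed to kill the correction term $\bar{w}$ living at radius $r/2$ before you can conclude anything about the given radius $r$.

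The fix, which is how the paper proceeds, is to keep the whole induction radius-wise: the inductive statement is "for \emph{all} $r>0$, $f_{r}$ restricts to an isometry $R_{r}(\leq n)\rightarrow R'_{r}(\leq n)$", the universal quantifier over $r$ absorbing the shift $r \mapsto r/2$ coming from $\Delta$ (Proposition \ref{ArchimedeanTensorBialgebra} and the analogue of Lemma \ref{GradedPiecesOfTensorAlgebraCoideals}). Injectivity of $f_{r}$ on degrees $\leq n$ immediately gives $I_{r}(\leq n)=J_{r}(\leq n)$, hence equality of the quotient norms inherited from $\coprod_{i\leq n}^{\leq 1}V_{r}^{\hat{\otimes}i}$, i.e.\ an isometry, and letting $n \to \infty$ at fixed $r$ yields $I_{r}=J_{r}$ for that $r$. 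Your bounded-inverse trick can then be transplanted verbatim to this radius-wise setting (the isometric inverse of $f_{r/2}|_{R_{r/2}(\leq n)}$ tensors to an inverse of $f_{r/2}\otimes f_{r/2}$), and in that form it genuinely improves on the paper's argument by removing the implicit finite-dimensionality assumption; but as written, with the induction and the injectivity statement formulated only on the colimit, the conclusion $I_{r}=J_{r}$ for every $r>0$ does not follow.
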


\begin{proof}
This is similar to Lemma \ref{CoidealsOfTensorAlgebras}. We will denote by $R_{r}=T_{r}(V)/I_{r}$, $R_{r}'=T_{r}(V)/J_{r}$, by $R_{r}(n)$, $R'_{r}(n)$ their respective $n$th graded pieces, and by $R_{r}(\leq n)=\bigoplus_{i \leq n}R_{r}(i)$ and $R'_{r}(\leq n)=\bigoplus_{i \leq n}R'_{r}(i)$. Since $I_{r} \subset J_{r}$ we have strict analytically graded epimorphisms $f_{r}:R_{r} \rightarrow R'_{r}$ which restricts to isometries $R_{r}(1) \rightarrow R'_{r}(1)$. Suppose that $f_{r}$ restricts also to an isometry $R_{r}(\leq n) \rightarrow R'_{r}(\leq n)$ for all $r>0$. Let $r>0$ and $x \in {R_{r}(\leq n+1)}$. By the proof of Lemma \ref{GradedPiecesOfTensorAlgebraCoideals}, we see that
$$
\begin{array}{rcl}
\Delta(R_{r}(n+1)) &\subset& \sum_{i=0}^{n+1} R_{\frac{r}{2}}(i) \hat{\otimes} R_{\frac{r}{2}}(n-i)\\
&\subset& R_{\frac{r}{2}}(n+1) \hat{\otimes} k + k \hat{\otimes} R_{\frac{r}{2}}(n+1) + R_{\frac{r}{2}}(\leq n) \hat{\otimes} R_{\frac{r}{2}}(\leq n),
\end{array}
$$
so $\Delta(x)=y \otimes 1 + 1 \otimes y' + z$ for some $y,y' \in R_{\frac{r}{2}}(n+1)$, $z \in R_{\frac{r}{2}}(\leq n) \hat{\otimes} R(\leq n)$. But then $x-y = (\text{Id} \otimes \varepsilon)\Delta(x)-y= \varepsilon(y)\cdot 1 + (\text{Id} \otimes \varepsilon)(z) \in R_{\frac{r}{2}}(\leq n)$ and likewise $x-y' \in R_{\frac{r}{2}}(\leq n)$. So $\Delta(x)=x \otimes 1 + 1 \otimes x + z'$ for some $z' \in {R_{\frac{r}{2}}(\leq n)} \hat{\otimes} {R_{\frac{r}{2}}(\leq n)}$. If $f_{r}(x)=0$ then $(f_{\frac{r}{2}} \otimes f_{\frac{r}{2}})(z')=0$, but, since each $R_{\frac{r}{2}}(\leq n)$ is finite dimensional and hence flat, $f_{\frac{r}{2}} \otimes f_{\frac{r}{2}}$ is injective on $R_{\frac{r}{2}}(\leq n) \hat{\otimes} R_{\frac{r}{2}}(\leq n)$. So $z'=0$ and $x$ is primitive, hence $x=0$. Thus $f_{r}$ is an isometry ${R_{r}(\leq n)} \rightarrow {R'_{r}(\leq n)}$ since the norms on $R_{r}(\leq n)$ and $R'_{r}(\leq n)$ are the quotient norms from $\coprod_{i \leq n}^{\leq 1} V_{r}^{\hat{\otimes} i}$. Taking contracting colimits over $n$ we see that $f_{r}$ is isometric. Hence $I_{r}=J_{r}$.
\end{proof}

\begin{lem}
\label{ANicholsAlgebrasAreFlat}
Let $W=\coprod_{n \geq 0}^{\leq 1} W(n)$ be a Banach space where each $W(n)$ is finite dimensional. Then $W$ is flat.
\end{lem}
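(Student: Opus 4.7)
The plan is to verify that the functor $-\hat{\otimes} W$ preserves strict monomorphisms in $\text{IndBan}_k$. Since $\hat{\otimes}$ commutes with filtered colimits, I would reduce immediately to checking that for every strict monomorphism $V' \hookrightarrow V$ in $\text{Ban}_k$ the induced map $V' \hat{\otimes} W \to V \hat{\otimes} W$ is again a strict monomorphism, which under hypothesis {\bf (A)} means an isometric embedding.

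The argument then rests on three ingredients. First, I would establish the distributivity
$$V \hat{\otimes} \coprod\nolimits_{n \geq 0}^{\leq 1} W(n) \;\cong\; \coprod\nolimits_{n \geq 0}^{\leq 1}(V \hat{\otimes} W(n)),$$
which follows from the adjunction in Lemma \ref{ContractingUniversalProperty} together with the fact that $V \hat{\otimes} -$ is itself a left adjoint and hence respects the colimit universal property that defines the contracting coproduct. Second, I would observe that each finite-dimensional $W(n)$ is flat: picking a basis identifies $W(n)$ with $k^{d_n}$ up to equivalence of norms, and $V \hat{\otimes} k^{d_n}$ is just a finite direct sum of copies of $V$, on which strict monomorphisms are obviously preserved. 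Third, I would verify that the contracting coproduct preserves isometric embeddings: given $f_n : A_n \hookrightarrow B_n$ isometric, the $\ell^1$-type norm of case {\bf (A)} gives $\|(f_n(x_n))\| = \sum \|f_n(x_n)\| = \sum \|x_n\| = \|(x_n)\|$.

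Assembling these, the composite
$$V' \hat{\otimes} W \;\cong\; \coprod\nolimits_n^{\leq 1}(V' \hat{\otimes} W(n)) \;\hookrightarrow\; \coprod\nolimits_n^{\leq 1}(V \hat{\otimes} W(n)) \;\cong\; V \hat{\otimes} W$$
is a strict monomorphism, establishing flatness of $W$.

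The main obstacle, and really the only nontrivial step, is the distributivity formula: the contracting coproduct is a colimit in $\text{Ban}_k^{\leq 1}$ rather than in $\text{Ban}_k$, so one must be careful that $V \hat{\otimes} -$, viewed through the closed monoidal structure on IndBan$_k$, interacts correctly with this particular coproduct. This amounts to bookkeeping between the contracting and bounded morphism categories via the adjunctions of Lemma \ref{ContractingUniversalProperty}, but once it is in place the rest of the proof is immediate.
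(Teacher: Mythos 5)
Your skeleton is the same as the paper's: decompose $W$ as the contracting coproduct of its finite-dimensional graded pieces, commute $\hat{\otimes}$ past $\coprod\nolimits^{\leq 1}$, and reassemble (the paper runs this through kernels rather than strict monomorphisms, which is only a cosmetic difference). But as written there is a genuine gap, and it is not where you locate it. The distributivity $V\hat{\otimes}\coprod\nolimits^{\leq 1}W(n)\cong\coprod\nolimits^{\leq 1}(V\hat{\otimes}W(n))$ is indeed fine. The problem is the isometry claims. First, a strict monomorphism in $\text{Ban}_{k}$ under {\bf (A)} is a closed embedding with an \emph{equivalent} norm, not an isometric one; that alone is repairable by renorming $V'$. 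The real issue is the interface between your second and third ingredients: even if $\iota:V'\hookrightarrow V$ is an isometric closed embedding, the map $\text{Id}_{W(n)}\otimes\iota:V'\hat{\otimes}W(n)\to V\hat{\otimes}W(n)$ is in general \emph{not} isometric, because the Archimedean projective tensor norm does not respect subspaces. It is a strict monomorphism, but the lower bound one gets (say via an Auerbach basis of $W(n)$ and Hahn--Banach extension of the coordinate functionals) a priori degrades with $\dim W(n)$, and no uniform bound holds in general. Your third step only covers coproducts of isometric maps, so it does not apply; and an $\ell^{1}$-coproduct of injections whose lower bounds are not uniform in $n$ need not be bounded below, i.e.\ need not be a strict monomorphism. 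So the displayed composite is only shown to be a bounded injection, and flatness does not follow from what you have established.

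To close the gap you must address uniformity over $n$ explicitly: either prove a lower bound for $\text{Id}_{W(n)}\otimes\iota$ that is independent of $n$, or reorganise the argument as the paper does, verifying directly that $W\hat{\otimes}-$ preserves kernels. In that formulation the coproduct step, $\text{Ker}\bigl(\coprod\nolimits^{\leq 1}g_{n}\bigr)\cong\coprod\nolimits^{\leq 1}\text{Ker}(g_{n})$, is an honest isometry with no constants, and the use of finite-dimensionality is confined to identifying $\text{Ker}(\text{Id}_{W(n)}\otimes f)$ with $W(n)\hat{\otimes}\text{Ker}(f)$ on each graded piece. Note that even there the identification carries an $n$-dependent comparison of norms, so whichever route you take, the step that genuinely needs an argument is this uniformity — precisely the point your write-up passes over by declaring the maps isometric, while flagging the (unproblematic) distributivity as the main obstacle.
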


\begin{proof}
Let $f:A \rightarrow B$ be a morphism of Banach spaces. Then
$$\begin{array}{rcl}
\text{Ker}\left(W \hat{\otimes} A \xrightarrow{\text{Id} \otimes f} W \hat{\otimes} B \right) &\cong& \text{Ker}\left(\coprod^{\leq 1} (W(n) \hat{\otimes} A) \rightarrow \coprod^{\leq 1} (W(n) \hat{\otimes} B)\right)\\
&\cong& \coprod^{\leq 1}\text{Ker}\left( W(n) \hat{\otimes} A \xrightarrow{\text{Id} \otimes f} W(n) \hat{\otimes} B\right)\\
&\cong& \coprod^{\leq 1} \left( W(n) \hat{\otimes} \text{Ker}\left( A \xrightarrow{f} B\right) \right)\\
&\cong& W \hat{\otimes} \text{Ker}\left( A \xrightarrow{f} B\right),
\end{array}$$
where the second isomorphism is an easy computation and the third is because each $W(n)$ is finite dimensional. Hence $W$ is flat.
\end{proof}

\begin{prop}
\label{Radius1NicholsAlgebrasExistsA}
Let $V$ be a finite dimensional Banach space with pre-braiding $c$ of norm at most $1$. Then a dagger-0 Nichols algebra of $V$ exists.
\end{prop}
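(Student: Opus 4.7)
The plan is to mimic the construction of Proposition \ref{Radius1NicholsAlgebrasExists}, but now with compatible systems of ideals indexed by $r > 0$ rather than a single ideal. Since in the Archimedean setting the comultiplication on $T_0(V)^\dagger$ is realized by maps $T_r(V) \to T_{r/2}(V)\hat\otimes T_{r/2}(V)$, a coideal has to be encoded as a family $\{I_r\}_{r>0}$ of closed homogeneous ideals $I_r \subset T_r(V)$ contained in $\coprod_{n\geq 2}^{\leq 1} V_r^{\hat\otimes n}$ that are compatible with the transition maps $T_r(V)\to T_{r'}(V)$ for $r\geq r'$ and satisfy $\Delta(I_r)\subset\overline{I_{r/2}\hat\otimes T_{r/2}(V)+T_{r/2}(V)\hat\otimes I_{r/2}}$. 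Let $\mathcal{I}(V)$ be the collection of all such families, and let $\mathcal{I}'(V)$ be the subcollection for which, in addition, $\overline{I_r\hat\otimes T_r(V)+T_r(V)\hat\otimes I_r}$ is preserved by $\tilde c$ at every radius.

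I would then define $I(V)_r$ (resp.\ $I'(V)_r$) to be the closure of the sum of all $I_r$ arising from members of $\mathcal{I}(V)$ (resp.\ $\mathcal{I}'(V)$). A routine check, using continuity of $\Delta$ and of the transition maps together with $\overline{A+B}\subset\overline{A}+\overline{B}$ in the strong sense one has on graded pieces, shows that $\{I(V)_r\}$ and $\{I'(V)_r\}$ both lie in $\mathcal{I}(V)$ and $\mathcal{I}'(V)$ respectively. Next I would form the dagger-0 graded IndBanach object
\[
R := \text{"colim"}_{r>0}\, T_r(V)/I(V)_r,
\]
verify that the multiplication, comultiplication, unit, counit and antipode on $T_0(V)^\dagger$ of Proposition \ref{TensorHopfAlgebraA} descend level-wise, and check $R(0)\cong k$ and that $R$ is generated by $R(1)\cong V$ (the latter is immediate since each $T_r(V)/I(V)_r$ is generated by its degree-$1$ part, and the colimit of these epimorphisms is epic).

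The key step, as in the non-Archimedean case, is the identification $P(R)=R(1)$, which is where Lemma \ref{ACoidealsOfTensorAlgebras} enters. First I would argue that the family obtained by enlarging $I(V)_r$ by all $x\in\coprod_{n\geq 2}^{\leq 1}V_r^{\hat\otimes n}$ whose image in $T_{r/2}(V)/I(V)_{r/2}$ is primitive modulo $I(V)$ still lies in $\mathcal{I}(V)$; this forces $P(R)\subset V$, and the reverse inclusion is automatic. Then one has $I'(V)_r\subset I(V)_r$, the induced strict epimorphism on colimits is injective on primitives (since both have primitive subspace equal to $V$), and Lemma \ref{ACoidealsOfTensorAlgebras} yields $I'(V)_r=I(V)_r$ for all $r$. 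Hence $\tilde c$ descends to a pre-braiding on $R$.

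Finally, flatness of $R$ follows from Lemma \ref{ANicholsAlgebrasAreFlat}: since $V$ is finite dimensional, each graded piece $R(n)$ is a quotient of the finite dimensional space $V^{\hat\otimes n}$ and is hence itself finite dimensional, so $R$ has a decomposition into finite dimensional graded pieces. Flatness then lets Lemma \ref{RestrictedBraiding} apply, so the pre-braiding on $R$ restricts to $R(1)\cong V$ reproducing the original $c$. Putting everything together, $R$ is a dagger-0 Nichols algebra of $V$. I expect the main obstacle to be managing the radius-shift $r\mapsto r/2$ in the Archimedean comultiplication consistently with taking closures of sums of ideals; concretely, one needs enough care to know that the closure of $\sum_\alpha I^\alpha_r$ still maps into the closure of $\sum_\alpha I^\alpha_{r/2}\hat\otimes T_{r/2}(V)+T_{r/2}(V)\hat\otimes\sum_\alpha I^\alpha_{r/2}$ under $\Delta$, but this is the kind of continuity-plus-boundedness argument that goes through once the structure maps are contracting.
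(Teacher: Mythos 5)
Your proposal is correct and follows essentially the same route as the paper: the same reindexed notion of coideal as a compatible family $(I_{r})_{r>0}$ respecting the transition maps and the radius-halving comultiplication, the same maximal-sum construction of $I(V)_{r}$ and $I'(V)_{r}$, the same primitivity argument and appeal to Lemma \ref{ACoidealsOfTensorAlgebras} to identify them, and the same use of finite dimensionality of $V$ with Lemma \ref{ANicholsAlgebrasAreFlat} to obtain flatness. The closure-of-sums point you flag as the main obstacle is handled in the paper exactly as in the non-Archimedean case (Proposition \ref{Radius1NicholsAlgebrasExists}), so your argument matches the intended one.
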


\begin{proof}
As a variation on the proof of Proposition \ref{Radius1NicholsAlgebrasExists}, let $\mathcal{I}(V)$ be the set consisting of collections $(I_{r})_{r \in \mathbb{R}_{>0}}$ where each $I_{r}$ is a homogeneous ideals of $T_{r}(V)$ contained in $\coprod_{n \geq 2}^{\leq 1} V_{r}^{\hat{\otimes}n}$ such that the maps $T_{r}(V) \rightarrow T_{r'}(V)$ for $r>r'$ send $I_{r}$ to a subspace of $I_{r'}$ and the maps $\Delta:T_{r}(V) \rightarrow T_{\frac{r}{2}}(V) \hat{\otimes} T_{\frac{r}{2}}(V)$ giving the comultiplication send $I_{r}$ to a subspace of $I_{\frac{r}{2}} \hat{\otimes} T_{\frac{r}{2}}(V) + T_{\frac{r}{2}}(V) \hat{\otimes} I_{\frac{r}{2}}$. Let $\mathcal{I}'(V)$ be the subset of $\mathcal{I}(V)$ consisting of $(I_{r})_{r \in \mathbb{R}_{>0}}$ for which $\overline{I_{r}} \hat{\otimes} T(V) + T(V) \hat{\otimes} \overline{I_{r}}$ is preserved by $\tilde{c}$. Let $I_{r}(V)$ be the sum of all ideals $I_{r}$ for $(I_{r})_{r \in \mathbb{R}_{>0}} \in \mathcal{I}(V)$, which is closed by a similar argument to the proof of Proposition \ref{Radius1NicholsAlgebrasExists}. Likewise let $I'_{r}(V)$ be the sum of all ideals $I'_{r}$ for $(I'_{r})_{r \in \mathbb{R}_{>0}} \in \mathcal{I}'(V)$, which is again closed. We will denote by $I(V)$ and $I'(V)$ the collection $(I_{r}(V))_{r \in \mathbb{R}_{>0}}$ and $(I'_{r}(V))_{r \in \mathbb{R}_{>0}}$, and let $T_{0}(V)^{\dagger}/I(V):=\text{"colim"}_{r>0}T_{r}(V)/I_{r}(V)$ and $T_{0}(V)^{\dagger}/I'(V):=\text{"colim"}_{r>0}T_{r}(V)/I'_{r}(V)$. We must check that
$$P(T_{0}(V)^{\dagger}/I(V))=(T_{0}(V)^{\dagger}/I(V))(1).$$
As before, the closed ideals generated by $I_{r}(V)$ and
$$\left\lbrace x \in \coprod\nolimits_{n \geq 2}^{\leq 1} \middle| \substack{\Delta(x) \in x \otimes 1 + 1 \otimes x + I_{r'}(V) \otimes T_{r'}(V) + T_{r'}(V) \otimes I_{r'}(V) \\ \text{for some sufficiently small } \tfrac{r}{2} \geq r'>0} \right\rbrace$$
form an element of $\mathcal{I}(V)$. So $P(T_{0}(V)^{\dagger}/I(V))=(T_{0}(V)^{\dagger}/I(V))(1)$, and likewise $P(T_{0}(V)^{\dagger}/I'(V))=(T_{0}(V)^{\dagger}/I'(V))(1)$. So, by Lemma \ref{ACoidealsOfTensorAlgebras}, we have $I_{r}(V)=I'_{r}(V)$ for all $r>0$, and so the braiding $\tilde{c}$ descends to braidings of $T_{r}(V)/I_{r}(V)$. Hence $T_{0}(V)^{\dagger}/I(V):=\text{"colim"}_{r>0}T_{r}(V)/I_{r}(V)$ forms a $k\{t\}^{\dagger}$-graded braided IndBanach Hopf algebra with $(T_{0}(V)^{\dagger}/I(V))(0) \cong k$ and generated by $(T_{0}(V)^{\dagger}/I(V))(1) \cong V$.  Finally, by Lemma \ref{ANicholsAlgebrasAreFlat}, $T_{r}(V)/I_{r}(V)$ is flat for each $r>0$. Hence the filtered colimit $\text{"colim"}_{r>0}T_{r}(V)/I_{r}(V)$ is flat.
\end{proof}

\begin{defn}
Let $\mathfrak{B}_{r}(V):=T_{r}(V)/I_{r}(V)$. We shall use the notation $\mathfrak{B}_{0}(V)^{\dagger}$ for the dagger Nichols algebra $\text{colim}_{r>0} \mathfrak{B}_{r}(V)$ defined in the proof of Proposition \ref{Radius1NicholsAlgebrasExistsA}.
\end{defn}

\begin{remark}
Note that, in the proof of Proposition \ref{Radius1NicholsAlgebrasExistsA}, each $I_{r}(V)$ is a closed ideal of $T_{r}(V)$, and hence each $\mathfrak{B}_{r}(V)$ is a Banach algebra. So $\mathfrak{B}_{0}(V)^{\dagger}$ is locally Banach as an algebra.
\end{remark}

\begin{lem}
\label{ExtendBilinearFormA}
Let $V$ be a Banach space with pre-braiding $c$ of norm at most 1, and suppose we have a symmetric non-degenerate bilinear form $\langle - , - \rangle: V \hat{\otimes} V \rightarrow k$. Then this extends to a dual pairing $(\coprod_{n \geq 0} V^{\hat{\otimes} n}) \hat{\otimes} T_{0}^{c}(V)^{\dagger} \rightarrow k$ such that the composition
$$V^{\hat{\otimes} n} \hat{\otimes} V^{\hat{\otimes} m} \longrightarrow \left( \coprod_{n \geq 0} V^{\hat{\otimes} n} \right) \hat{\otimes} T_{0}^{c}(V)^{\dagger} \longrightarrow k$$
is symmetric when $n=m$ and is $0$ if $n \neq m$.
\end{lem}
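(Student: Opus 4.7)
The plan is to mirror the argument of Lemma \ref{ExtendBilinearForm}, but to separate the construction into a degree-by-degree one and then reassemble into a single morphism targeting the ind-object $T_0^c(V)^\dagger$. First I would define, for each $n \geq 0$, a bounded symmetric bilinear form
\[
\langle -, - \rangle_n : V^{\hat{\otimes} n} \hat{\otimes} V^{\hat{\otimes} n} \longrightarrow k
\]
by induction on $n$, following the recipe in Chapter 1 of \cite{ItQG}: set $\langle 1, 1 \rangle_0 = 1$, $\langle -, - \rangle_1 = \langle -, - \rangle$, and for $n \geq 2$ define $\langle -, - \rangle_n$ by pairing the first factor on each side with $\langle -, - \rangle$ and then pairing the remaining $n-1$ factors using $\langle -, - \rangle_{n-1}$, with the appropriate shuffle carried out by iterated applications of $c$. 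That each $\langle -, - \rangle_n$ is bounded and symmetric follows by induction using the symmetry of $\langle -, - \rangle$, its compatibility with $c$ from Definition \ref{BilinearForm}, and the hexagon axiom.

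Next I would assemble these graded forms into the desired pairing. For each $r > 0$ and each $n$, the bilinear form $\langle -, - \rangle_n$ composes with the contracting projection $T_r(V) \twoheadrightarrow V_r^{\hat{\otimes} n}$ to give a bounded map $V^{\hat{\otimes} n} \hat{\otimes} T_r(V) \to k$ of norm at most $\|\langle -, - \rangle_n\| \cdot r^{-n}$. These maps are compatible with the structure maps of the filtered diagram $T_0^c(V)^\dagger = \text{"colim"}_{r > 0} T_r(V)$, so by the universal property of the ind-completion they induce a bounded morphism $V^{\hat{\otimes} n} \hat{\otimes} T_0^c(V)^\dagger \to k$. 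The universal property of the coproduct in the first factor then yields the desired pairing
\[
\Big( \coprod\nolimits_{n \geq 0} V^{\hat{\otimes} n} \Big) \hat{\otimes} T_0^c(V)^\dagger \longrightarrow k,
\]
and the graded property (symmetry on $n = m$, vanishing on $n \neq m$) is immediate from the construction. The Hopf-pairing axioms then reduce, by density of the algebraic tensor algebra in each $T_r(V)$, to identities on generators in $V$, where they hold by the hypotheses.

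The main obstacle I anticipate is the recursive bookkeeping in the first step: one must carefully track how iterated applications of the braiding interleave with the bilinear form so that $\langle -, - \rangle_n$ remains compatible with $c_{n,n}$ in the sense of Definition \ref{BilinearForm}, since this compatibility is essential both for the symmetry induction and for the subsequent promotion to a pairing of graded Hopf algebras. This parallels the Lusztig/Nichols analysis in the purely algebraic setting, but must now respect the weak triangle inequality of \textbf{(A)} and the ind-structure of $T_0^c(V)^\dagger$, whose comultiplication only lands in $T_{r/2}(V) \hat{\otimes} T_{r/2}(V)$ after halving the radius; a direct dualisation of the coalgebra structure on a single $T_r(V)$ (as in the non-Archimedean proof) is therefore not available, and the degree-by-degree construction above is the natural workaround.
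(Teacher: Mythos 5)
Your construction is essentially correct, but it takes a different route from the paper. The paper's proof is a verbatim transplant of the non-Archimedean Lemma \ref{ExtendBilinearForm}: the form gives $V \rightarrow V^{\ast}$, the projections give $V^{\ast} \rightarrow (T_{0}(V)^{\dagger})^{\ast}$, and since the comultiplication of $T_{0}(V)^{\dagger}$ (radius halving and all) dualises to a genuine algebra structure on $(T_{0}(V)^{\dagger})^{\ast} = \lim_{r>0} T_{r}(V)^{\ast}$, the universal property of the free IndBanach algebra $\coprod_{n \geq 0} V^{\hat{\otimes} n}$ produces a unique algebra homomorphism $\coprod_{n \geq 0} V^{\hat{\otimes} n} \rightarrow (T_{0}(V)^{\dagger})^{\ast}$, which is the pairing; the Hopf-pairing diagrams, gradedness and symmetry then follow exactly as in the (NA) case, with one compatibility built in by the algebra-homomorphism property and the other checked on $V$. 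So your claim that the dualisation strategy is unavailable in the Archimedean setting is not quite right: it fails for a single $T_{r}(V)$, but the ind-structure absorbs the radius halving and the paper dualises the whole dagger object. Your degree-by-degree Lusztig-style construction, assembled first over the colimit in $r$ (fine, since $-\hat{\otimes} T_{0}(V)^{\dagger}$ commutes with the filtered colimit and no uniform bound in $n$ is needed because the first factor carries the plain coproduct) and then over the coproduct in $n$, does yield the same pairing, and it has the virtue of making the graded orthogonality and the norm bounds completely explicit. The cost is that the work the paper gets for free from the algebra-homomorphism property migrates into your first and last steps: the recursion must pair against the full $(1,n-1)$-component of the braided comultiplication (a sum of $n$ shuffle terms, not a single ``first factor with first factor'' contraction, as your phrasing might suggest), and the Hopf-pairing axioms are then not literally ``identities on generators'' but an induction showing that the recursively defined $\langle -,- \rangle_{n}$ intertwine multiplication on one side with comultiplication on the other; this is the classical Lusztig bookkeeping and goes through, but it should be stated as such rather than delegated to density.
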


\begin{proof}
Again, we proceed as in the proof of Proposition 1.2.3 of \cite{ItQG}, as we did for Lemma \ref{ExtendBilinearForm}. The given bilinear form induces a morphism $V \rightarrow V^{\ast}$ whilst the projections $T_{r}(V) \twoheadrightarrow V$ induce a morphism $V^{\ast} \rightarrow (T_{0}(V)^{\dagger})^{\ast}$. The coalgebra structure on $T_{0}(V)^{\dagger}$ gives $(T_{0}(V)^{\dagger})^{\ast}$ an algebra structure, and so the composition $V \rightarrow V^{\ast} \rightarrow (T_{0}(V)^{\dagger})^{\ast}$ induces a unique algebra homomorphism $\coprod_{n \geq 0} V^{\hat{\otimes} n} \rightarrow (T_{0}(V)^{\dagger})^{\ast}$ that gives our bilinear form. The fact that this is is a dual pairing that is symmetric on $V^{\hat{\otimes} n} \hat{\otimes} V^{\hat{\otimes} n}$ with $V^{\hat{\otimes} n}$ perpendicular to $V^{\hat{\otimes} m}$ for $m \neq n$ follows as in the proof of Lemma \ref{ExtendBilinearForm}.
\end{proof}

\begin{remark}
Suppose we have a Banach space $V$ with pre-braiding $c$ of norm at most 1 and a non-degenerate symmetric bilinear form $\langle - , - \rangle: V \hat{\otimes} V \rightarrow k$. Let $r,s>0$ with $\|\langle - , - \rangle\| \leq rs$. The $n$-fold comultiplication $T_{r}(V) \rightarrow T_{r/2^{n}}(V)^{\hat{\otimes} n}$ given by Proposition \ref{ArchimedeanTensorBialgebra} induces an $n$-fold multiplication $(T_{r/2^{n}}(V)^{\ast})^{\hat{\otimes} n} \rightarrow T_{r}(V)^{\ast}$. Also, for each $n >0$ the bilinear form $\langle - , - \rangle$ and the projection $T_{r/2^{n}}(V) \twoheadrightarrow V_{r/2^{n}}$ induce contracting homomorphisms $V_{2^{n}s} \rightarrow V_{r/2^{n}}^{\ast}$ and $V_{r/2^{n}}^{\ast} \twoheadrightarrow T_{r/2^{n}}(V)^{\ast}$. Then the compositions
$$V_{2^{n}s}^{\hat{\otimes} n} \rightarrow (V_{r/2^{n}}^{\ast})^{\hat{\otimes} n} \rightarrow (T_{r/2^{n}}(V)^{\ast})^{\hat{\otimes} n} \rightarrow T_{r}(V)^{\ast}$$
induce a bilinear pairing $\mathscr{T}_{s}(V) \hat{\otimes} T_{r}(V) \rightarrow k$, where
$$\mathscr{T}_{s}(V):=\coprod\nolimits^{\leq 1}_{n \geq 0} (V_{2^{n}s})^{\hat{\otimes} n} = \left\lbrace \sum x_{n} \middle| x_{n} \in V^{\hat{\otimes} n} \text{ and } \sum_{n}\|x\|2^{n^{2}}s^{n} < \infty \right\rbrace,$$
and hence a bilinear pairing $\mathscr{T}_{\infty}(V) \hat{\otimes} T_{0}(V)^{\dagger} \rightarrow k$, where $\mathscr{T}_{\infty}(V):= \text{lim}_{s>0} \mathscr{T}_{s}(V)$. Furthermore, the restriction of these bilinear forms to $\bigoplus_{n \geq 0} V^{\otimes n}$ is a dual pairing by Proposition 1.2.3 of \cite{ItQG}. Unfortunately, the algebra structure of $\bigoplus_{n \geq 0} V^{\otimes n}$ neither extends to $\mathscr{T}_{s}(V)$ nor $\mathscr{T}_{\infty}(V)$. It does, however, extend to
$$\text{lim}_{s_{n}>0} \coprod\nolimits^{\leq 1}_{n \geq 0} (V_{s_{n}})^{\hat{\otimes} n} = \text{lim}_{s_{n}>0} \left\lbrace \sum x_{n} \middle| x_{n} \in V^{\hat{\otimes} n} \text{ and } \sum_{n}\|x\|s_{n}^{n} < \infty \right\rbrace$$
which we may pair with $T_{0}(V)^{\dagger}$ via the composition
$$\left( \text{lim}_{s_{n}>0} \coprod\nolimits^{\leq 1}_{n \geq 0} (V_{s_{n}})^{\hat{\otimes} n}\right) \hat{\otimes} T_{0}(V)^{\dagger} \rightarrow \mathscr{T}_{\infty}(V) \hat{\otimes} T_{0}(V)^{\dagger} \rightarrow k.$$
The following lemma shows that $\text{lim}_{s_{n}>0} \coprod\nolimits^{\leq 1}_{n \geq 0} (V_{s_{n}})^{\hat{\otimes} n}$ is just $\coprod_{n \geq 0} V^{\hat{\otimes} n}$ as in the previous lemma.
\end{remark}

\begin{lem}
The natural morphism
$$\coprod_{n \geq 0} V^{\hat{\otimes} n} \rightarrow \text{lim}_{s_{n}>0} \coprod\nolimits^{\leq 1}_{n \geq 0} (V_{s_{n}})^{\hat{\otimes} n}$$
is an isomorphism.
\end{lem}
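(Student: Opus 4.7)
The plan is to verify, via Yoneda, that both sides of the claimed isomorphism represent the same functor on $\text{IndBan}_k$, and that the natural map realises this identification.

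First I would construct the natural map. For each sequence $\underline{s} = (s_n)_{n \geq 0}$ of positive reals and each $N \geq 0$, the inclusion $\bigoplus_{n=0}^{N} V^{\hat{\otimes} n} \hookrightarrow \coprod_{n \geq 0}^{\leq 1} (V_{s_n})^{\hat{\otimes} n}$ is bounded with operator norm at most $\max_{n \leq N} s_n^n$. Taking the colimit over $N$ yields a morphism $\coprod_n V^{\hat{\otimes} n} \to \coprod_n^{\leq 1} (V_{s_n})^{\hat{\otimes} n}$ in $\text{IndBan}_k$. These morphisms are compatible with the contracting inclusions $\coprod_n^{\leq 1} (V_{s_n'})^{\hat{\otimes} n} \hookrightarrow \coprod_n^{\leq 1} (V_{s_n})^{\hat{\otimes} n}$ for $\underline{s} \leq \underline{s}'$ coordinatewise, so they assemble into the required morphism to the limit.

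Next I would compute Hom-sets from an arbitrary Banach space $X$ to both sides. Using that $\coprod_n V^{\hat{\otimes} n} = \text{"colim"}_N \bigoplus_{n \leq N}^{\leq 1} V^{\hat{\otimes} n}$ as an object of $\text{Ind}(\text{Ban}_k)$,
\[
\text{Hom}(X, \coprod_n V^{\hat{\otimes} n}) = \text{colim}_N \prod_{n \leq N} \text{Hom}(X, V^{\hat{\otimes} n}),
\]
which is precisely the set of families $(f_n)_{n \geq 0}$ of bounded maps with $f_n = 0$ for all but finitely many $n$. For the limit side,
\[
\text{Hom}\bigl(X,\; \text{lim}_{\underline{s}} \coprod_n^{\leq 1} (V_{s_n})^{\hat{\otimes} n}\bigr) = \lim_{\underline{s}} \text{Hom}\bigl(X, \coprod_n^{\leq 1} (V_{s_n})^{\hat{\otimes} n}\bigr),
\]
and a morphism $X \to \coprod_n^{\leq 1} (V_{s_n})^{\hat{\otimes} n}$ corresponds to a family $(f_n)$ with $\sum_n s_n^n \|f_n\| < \infty$. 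Compatibility across all $\underline{s}$ gives the set of families $(f_n)$ satisfying $\sum_n s_n^n \|f_n\| < \infty$ for every positive sequence $\underline{s}$.

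The key step is then the elementary observation that such a family must have finite support: if $\|f_n\| > 0$ for $n$ in some infinite subset $S \subset \mathbb{N}$, setting $s_n := (n/\|f_n\|)^{1/n}$ for $n \in S$ and $s_n := 1$ otherwise yields a positive sequence with $\sum_n s_n^n \|f_n\| \geq \sum_{n \in S} n = \infty$, a contradiction. So both Hom sets coincide, and the identification is induced by the natural map by construction. Extending to arbitrary $X = \text{"colim"}_j X_j$ via $\text{Hom}(\text{"colim"}_j X_j, -) = \lim_j \text{Hom}(X_j, -)$ and swapping the two limits $\lim_j$ and $\lim_{\underline{s}}$ concludes the proof. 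The main obstacle is purely bookkeeping, namely verifying that the identified Hom sets are realised precisely by the natural map rather than just abstractly being in bijection; this is forced by the fact that both identifications originate from the same inclusions $\bigoplus_{n \leq N} V^{\hat{\otimes} n} \hookrightarrow \coprod_n^{\leq 1} (V_{s_n})^{\hat{\otimes} n}$.
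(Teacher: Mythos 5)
Your overall strategy (construct the natural map, test against Banach spaces $X$, show that compatibility over all weight sequences forces the components to vanish for $n \gg 0$, and conclude by Yoneda, extending from Banach test objects to all of $\text{IndBan}_{k}$) is the same as the paper's. However, there is a false step in your computation of the right-hand Hom set: you assert that a morphism $X \rightarrow \coprod\nolimits^{\leq 1}_{n \geq 0}(V_{s_{n}})^{\hat{\otimes} n}$ \emph{corresponds} to a family $(f_{n})$ with $\sum_{n} s_{n}^{n}\|f_{n}\| < \infty$. In the Archimedean setting the contracting coproduct is an $\ell^{1}$-type sum, and summability of the operator norms of the components is sufficient but not necessary for boundedness: already for $V=k$ and $s_{n}=1$ the identity of $\ell^{1} \cong \coprod\nolimits^{\leq 1}_{n \geq 0} k$ has as components the coordinate functionals, each of norm $1$, so $\sum_{n}\|f_{n}\| = \infty$. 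Hence $\text{Hom}(X, \coprod\nolimits^{\leq 1}_{n}(V_{s_{n}})^{\hat{\otimes} n})$ is strictly larger than the set you describe, and your identification of $\lim_{\underline{s}}\text{Hom}$ with the families satisfying $\sum_{n} s_{n}^{n}\|f_{n}\| < \infty$ for all $\underline{s}$ does not follow as written.

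The gap is local and repairable, and it is exactly the point the paper's proof is engineered to avoid. The paper first interleaves, inside the limit over $\underline{s}$, the contracting coproduct with the contracting product: the contracting maps $\coprod\nolimits^{\leq 1}_{n}(V_{s_{n}})^{\hat{\otimes} n} \rightarrow \prod\nolimits^{\leq 1}_{n}(V_{s_{n}})^{\hat{\otimes} n}$ and $\prod\nolimits^{\leq 1}_{n}(V_{s_{n}})^{\hat{\otimes} n} \rightarrow \coprod\nolimits^{\leq 1}_{n}(V_{a_{n}s_{n}})^{\hat{\otimes} n}$, for $(a_{n}^{n})$ summable, are mutually cofinal, so the two limits agree; and Hom into a contracting product \emph{is} a contracting product of Homs by the adjunction with the diagonal, yielding families with $\sup_{n} s_{n}^{n}\|f_{n}\| < \infty$ for every $\underline{s}$, after which your diagonal choice of $s_{n}$ forces finite support. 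Alternatively you could keep the coproduct and use only the implication you actually need: composing with the norm-$\leq 1$ projections gives $\sup_{n} s_{n}^{n}\|f_{n}\| \leq \|f\|$, and this sup-bound for every $\underline{s}$ already rules out infinite support via the same choice $s_{n} = (n/\|f_{n}\|)^{1/n}$, while finitely supported families clearly define compatible morphisms. With either repair your argument coincides with the paper's; as stated, the claimed description of morphisms into the contracting coproduct is incorrect.
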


\begin{proof}
As in the proof of Lemma \ref{CountableProductsCommuteWithTensor}, for a sequence of positive real numbers $a_{n}>0$ such that $(a_{n}^{n})_{n \geq 0}$ is summable, the maps
$$\coprod\nolimits^{\leq 1}_{n \geq 0} (V_{s_{n}})^{\hat{\otimes} n} \rightarrow \prod\nolimits^{\leq 1}_{n \geq 0} (V_{s_{n}})^{\hat{\otimes} n}, \quad (x_{n})_{n \geq 0} \mapsto (x_{n})_{n \geq 0},$$
and
$$\prod\nolimits^{\leq 1}_{n \geq 0} (V_{s_{n}})^{\hat{\otimes} n} \rightarrow \coprod\nolimits^{\leq 1}_{n \geq 0} (V_{a_{n}s_{n}})^{\hat{\otimes} n}, \quad (x_{n})_{n \geq 0} \mapsto (x_{n})_{n \geq 0},$$
induce an isomorphism
$$\text{lim}_{s_{n}>0}\coprod\nolimits^{\leq 1}_{n \geq 0} (V_{s_{n}})^{\hat{\otimes} n} \cong \text{lim}_{s_{n}>0}\prod\nolimits^{\leq 1}_{n \geq 0} (V_{s_{n}})^{\hat{\otimes} n}.$$
For a Banach space $W$,
$$\begin{array}{rcl}
\text{Hom}(W,\text{lim}_{s_{n}>0}\coprod\nolimits^{\leq 1}_{n \geq 0} (V_{s_{n}})^{\hat{\otimes} n}) &\cong& \text{Hom}(W,\text{lim}_{s_{n}>0}\prod\nolimits^{\leq 1}_{n \geq 0} (V_{s_{n}})^{\hat{\otimes} n})\\
&\cong& \text{lim}_{s_{n}>0}\prod\nolimits^{\leq 1}_{n \geq 0} \text{Hom}(W,(V_{s_{n}})^{\hat{\otimes} n})\\
&=& \left\lbrace (f_{n})_{n \geq 0} \middle| \substack{f_{n}:W \rightarrow V^{\hat{\otimes} n} \text{ and } (\|f_{n}\|s_{n})_{n \geq 0}\\ \text{ is bounded for all } s_{n}>0} \right\rbrace\\
&=& \left\lbrace (f_{n})_{n \geq 0} \middle| \substack{f_{n}:W \rightarrow V^{\hat{\otimes} n} \text{ and}\\ f_{n} = 0 \text{ for } n\gg 0} \right\rbrace
\end{array}$$
where the last equality is because if $s_{n}:=\frac{n}{\|f_{n}\|}$ when $f_{n} \neq 0$ and $s_{n}:=1$ if $f_{n}=0$ then $(\|f_{n}\|s_{n})_{n \geq 0}$ is only bounded if $f_{n}=0$ for $n \gg 0$. Hence the map of sets
$$\text{Hom}(W,\coprod_{n \geq 0}V^{\hat{\otimes} n}) \cong \bigoplus_{n \geq 0}\text{Hom}(W,V^{\hat{\otimes} n}) \rightarrow \text{Hom}(W,\text{lim}_{s_{n}>0}\coprod\nolimits^{\leq 1}_{n \geq 0} (V_{s_{n}})^{\hat{\otimes} n})$$
is bijective. Since this holds for all Banach spaces $W$ we must have
$$\coprod_{n \geq 0} V^{\hat{\otimes} n} \cong \text{lim}_{s_{n}>0} \coprod\nolimits^{\leq 1}_{n \geq 0} (V_{s_{n}})^{\hat{\otimes} n}.$$
\end{proof}

\begin{prop}
\label{BilinearFormGivesNicholsAlgebraA}
Let $V$ be a Banach space with pre-braiding $c$ of norm at most 1, and suppose we have a non-degenerate bilinear form $\langle - , - \rangle: V \hat{\otimes} V \rightarrow k$. Then for each $0< r$, let $I_{r}$ be the radical in $T_{r}^{c}(V)$ of
$$\left( \coprod_{n \geq 0} V^{\hat{\otimes} n} \right) \hat{\otimes} T_{r}^{c}(V) \longrightarrow \left( \coprod_{n \geq 0} V^{\hat{\otimes} n} \right) \hat{\otimes} T_{0}^{c}(V)^{\dagger} \longrightarrow k.$$
Then the $I_{r}$ are closed homogeneous ideals in $T_{r}^{c}(V)$ such that $T_{r}(V) \rightarrow T_{r'}(V)$ maps $I_{r}$ to $I_{r'}$ and $\Delta:T_{r}(V) \rightarrow T_{\frac{r}{2}}(V) \hat{\otimes} T_{\frac{r}{2}}(V)$ maps $I_{r}$ to $I_{\frac{r}{2}} \hat{\otimes} T_{\frac{r}{2}}(V) + T_{\frac{r}{2}}(V) \hat{\otimes} I_{\frac{r}{2}}$. Furthermore, $P(\text{colim}_{r>0} T_{r}^{c}(V)/I_{r}) = V$, hence $\text{colim}_{r>0} T_{r}^{c}(V)/I_{r}$ is a dagger Nichols algebra of $V$.
\end{prop}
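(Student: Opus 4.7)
The plan is to mirror the proof of the non-Archimedean analogue Proposition \ref{BilinearFormGivesNicholsAlgebra}, while carefully tracking the shrinking radii forced on us by the Archimedean comultiplication, and to conclude by identifying our system with that built in Proposition \ref{Radius1NicholsAlgebrasExistsA}.

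First I would establish the structural claims on each $I_r$. Closedness is immediate: $I_r$ is the intersection of the kernels of the continuous functionals $\langle a,-\rangle\colon T_r^{c}(V)\to k$ for $a\in\coprod_n V^{\hat\otimes n}$. Homogeneity follows from Lemma \ref{ExtendBilinearFormA}, since the pairing is zero on $V^{\hat\otimes n}\otimes V^{\hat\otimes m}$ for $n\neq m$, forcing $x=\sum x_n\in I_r$ iff each $x_n\in I_r$. The ideal property comes from the dual pairing identity
\[
\langle a,bb'\rangle=\sum\langle a_{(1)},b\rangle\langle a_{(2)},b'\rangle,
\]
together with the fact that the comultiplication on $A:=\coprod_n V^{\hat\otimes n}$ lands in $A\hat\otimes A$: if $b\in I_r$ then $\langle a,bb'\rangle=0$ for every $a\in A$, so $bb'\in I_r$, and similarly on the left.

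Next I would verify the compatibility claims. Since the pairing $A\otimes T_0^{c}(V)^{\dagger}\to k$ factors through $A\otimes T_r^{c}(V)$ for each $r>0$, the maps $T_r\to T_{r'}$ (for $r\geq r'$) are compatible with pairings against $A$, so they send $I_r$ into $I_{r'}$. For the coideal claim, given $x\in I_r$ and $a,a'\in A$, one has $\langle a\otimes a',\Delta x\rangle=\langle aa',x\rangle=0$ since $aa'\in A$. Using the homogeneous decomposition of $\Delta$ inherited from Proposition \ref{ArchimedeanTensorBialgebra} (which maps each $V_r^{\hat\otimes n}$ into a finite sum $\sum_{i+j=n} V_{r/2}^{\hat\otimes i}\hat\otimes V_{r/2}^{\hat\otimes j}$) and taking closures, I obtain $\Delta(I_r)\subset\overline{I_{r/2}\hat\otimes T_{r/2}+T_{r/2}\hat\otimes I_{r/2}}$.

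For the primitive subspace, the inclusion $V\subset P(R)$, where $R:=\text{colim}_{r>0}T_r^{c}(V)/I_r$, is immediate. For the reverse, represent $\bar x\in P(R)$ by some $x\in T_r^{c}(V)/I_r$, and decompose $x=\sum_{n\geq 0} x_n$ using homogeneity of $I_r$. After passing to a sufficiently small radius $r'\leq r/2$, the primitivity of $\bar x$ yields that each off-diagonal piece $\Delta^{i,n-i}(x_n)$ (for $0<i<n$) vanishes in $(T_{r'}/I_{r'})\hat\otimes(T_{r'}/I_{r'})$. Pairing with arbitrary $a\otimes a'\in V^{\otimes i}\otimes V^{\otimes n-i}$ gives $\langle aa',x_n\rangle=0$; since such products span all of $V^{\otimes n}$ and the pairing is non-degenerate on $V$ (and hence separates algebraic elements of $V^{\otimes n}$ by the algebra structure of the pairing), this forces $x_n\in I_r$ for all $n\geq 2$. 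Combined with $\varepsilon(\bar x)=0$ forcing $\bar x_0=0$, we conclude $\bar x\in V$, so $P(R)=V$.

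Finally, to conclude, I would check that the braiding-preservation condition of Definition \ref{BilinearForm} forces $\overline{I_r\hat\otimes T_r+T_r\hat\otimes I_r}$ to be invariant under $\tilde c$, so $(I_r)_{r>0}\in\mathcal{I}'(V)$ in the notation of Proposition \ref{Radius1NicholsAlgebrasExistsA}; hence $I_r\subset I_r(V)$ for every $r$. Both quotient systems $\text{colim}\,T_r/I_r$ and $\text{colim}\,T_r/I_r(V)=\mathfrak{B}_0(V)^{\dagger}$ have primitive subspace $V$, so Lemma \ref{ACoidealsOfTensorAlgebras} applied to $I_r\subset I_r(V)$ yields equality, and the resulting object is a dagger Nichols algebra by Proposition \ref{Radius1NicholsAlgebrasExistsA}. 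The main obstacle is the careful bookkeeping of radii: unlike the non-Archimedean case where all structure is defined at a single radius, here comultiplication halves the radius, so both the coideal property and the primitive subspace argument require passing to strictly smaller radii in the directed system, and the conclusion must be drawn at the level of the colimit.
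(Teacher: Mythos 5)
Your proof follows the paper's own argument closely for most of its length: closedness, homogeneity and the ideal property of $I_r$ from the graded dual pairing of Lemma \ref{ExtendBilinearFormA}, compatibility with the restriction maps $T_r(V)\to T_{r'}(V)$, the coideal property obtained by pairing $\Delta x$ against products in $\coprod_n V^{\hat\otimes n}$, and the identification $P(\mathrm{colim}_{r>0}\,T^c_r(V)/I_r)=V$ by showing that a primitive element has all its homogeneous components of degree $\geq 2$ killed by every $\langle aa',-\rangle$ and hence lying in the radical. This is exactly the paper's route (which simply defers these steps to Lemma \ref{ExtendBilinearFormA} and to the argument of Proposition \ref{BilinearFormGivesNicholsAlgebra}), with the radius bookkeeping you rightly emphasise.

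The divergence is in your concluding step, and that is where the proposal falls short of the statement. The paper finishes directly: the compatibility of $\langle-,-\rangle$ with $c$ from Definition \ref{BilinearForm} shows $\tilde c$ preserves $I_r\hat\otimes T_r(V)+T_r(V)\hat\otimes I_r$, so the braiding descends to each $T^c_r(V)/I_r$, and the colimit, being graded, generated by $V$, with $R(0)\cong k$ and $P(R)=V$, is already a dagger Nichols algebra. You instead identify the colimit with $\mathfrak{B}^c_0(V)^{\dagger}$ via Proposition \ref{Radius1NicholsAlgebrasExistsA} and Lemma \ref{ACoidealsOfTensorAlgebras}. That identification is really the content of the subsequent Proposition \ref{NicholsAlgebrasEquivalentDefinitionA} and is not available at the stated level of generality: Proposition \ref{Radius1NicholsAlgebrasExistsA} assumes $V$ finite dimensional (and the proof of Lemma \ref{ACoidealsOfTensorAlgebras} uses finite dimensionality of the graded pieces for flatness), whereas the present proposition assumes only that $V$ is Banach. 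There is also a smaller mismatch: membership of $(I_r)_r$ in $\mathcal{I}(V)$, and indeed the statement itself, requires $\Delta(I_r)\subset I_{\frac r2}\hat\otimes T_{\frac r2}(V)+T_{\frac r2}(V)\hat\otimes I_{\frac r2}$ without closure, while your pairing argument as written only delivers the closure of this sum (without closure one needs, e.g., finite-dimensional bigraded pieces). The repair is simply to drop the detour: having checked that $\tilde c$ preserves the relevant subspaces, conclude as the paper does that $\mathrm{colim}_{r>0}\,T^c_r(V)/I_r$ itself satisfies all the defining properties of a dagger Nichols algebra, and leave the comparison with $\mathfrak{B}^c_0(V)^{\dagger}$ to Proposition \ref{NicholsAlgebrasEquivalentDefinitionA}, where finite dimensionality is assumed.
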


\begin{proof}
This is analogous to the proof of Proposition \ref{BilinearFormGivesNicholsAlgebra}. The fact that $I_{r}$ are closed homogeneous ideals compatible under restrictions to smaller radii with $\Delta(I_{r}) \subset I_{\frac{r}{2}} \hat{\otimes} T_{\frac{r}{2}}(V) + T_{\frac{r}{2}}(V) \hat{\otimes} I_{\frac{r}{2}}$ follows from Lemma \ref{ExtendBilinearFormA}. As the bilinear form  on $V$ is non-degenerate, $I_{r} \subset \coprod_{n \geq 2}^{\leq 1} V_{r}^{\hat{\otimes}n}$, and the quotient $T_{r}(V)/I_{r}$ is generated by $V$. It remains to check that the subspace of primitive elements is just $V$, which follows from the same argument as for Proposition \ref{BilinearFormGivesNicholsAlgebra}.  Again, $\tilde{c}$ preserves $I_{r} \hat{\otimes} T_{r}(V) + T_{r}(V) \hat{\otimes} I_{r}$ for all $r>0$, so the braidings on $T_{r}^{c}(V)$ descend to braidings of $T_{r}^{c}(V)/I_{r}$ and $\text{colim}_{r>0} T_{r}^{c}(V)/I_{r}$ is a dagger Nichols algebra of $V$.
\end{proof}

\begin{prop}
\label{NicholsAlgebraUniversalPropertyA}
Let $V$ be a finite dimensional Banach space with pre-braiding $c$ of norm at most $1$. Let $R=\text{colim}_{r>0} \coprod^{\leq 1}R(n)_{r^{n}}$ be a dagger graded pre-braided IndBanach Hopf algebra. Suppose further that the algebra structure on $R$ is determined by algebra structures on $\coprod^{\leq 1}R(n)_{r^{n}}$ for each $r>0$ which are generated by $R(1)_{r}$. Then, if $R(0)\cong k$, $P(R)=R(1) \cong V$ as pre-braided Banach spaces, there is an epimorphism of dagger graded braided Hopf algebras $\mathfrak{B}_{0}^{c}(V) \rightarrow R$ extending $V \overset{\sim}{\longrightarrow} R(1)$.
\end{prop}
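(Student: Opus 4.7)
The plan is to mimic Proposition \ref{NicholsAlgebraUniversalProperty}, performing the tensor-algebra construction at each finite radius $r>0$ and then passing to the colimit. Let $C$ denote the norm of the Banach-space isomorphism $V \overset{\sim}{\longrightarrow} R(1)$. For each $r > 0$, composing with the inclusion $R(1) \hookrightarrow R(1)_r \subset R_r := \coprod\nolimits^{\leq 1}_n R(n)_{r^n}$ gives a map $V \to R_r$ of norm at most $rC$, or equivalently a contracting map $V_{rC} \to R_r$. Since $R_r$ is by hypothesis a Banach algebra generated by $R(1)_r$ with contracting multiplication, iterated multiplication extends this to a contracting algebra homomorphism $\phi_r : T_{rC}(V) \to R_r$, unique because $T_{rC}(V)$ is generated by $V_{rC}$. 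These $\phi_r$ are compatible with the restriction maps $T_{r'C}(V) \to T_{rC}(V)$ and $R_{r'} \to R_r$ for $r' > r > 0$, so they assemble into an IndBanach algebra homomorphism $\phi : T_0(V)^\dagger \to R$.

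Next I would verify that $\phi$ is a morphism of dagger graded pre-braided IndBanach Hopf algebras. The map is graded by construction. Since $V \cong R(1)$ is a pre-braided isomorphism and both braidings are determined on degree-$1$ generators, $\phi$ respects the pre-braiding. For the comultiplication, $\phi(V) \subseteq R(1) = P(R)$, so the relation $\Delta(v) = 1 \otimes v + v \otimes 1$ for $v \in V$ matches on both sides; both $\Delta_R \circ \phi$ and $(\phi \otimes \phi) \circ \Delta_{T_0(V)^\dagger}$ are then algebra homomorphisms into the braided tensor product $R \hat{\otimes} R$ agreeing on the generating subspace $V$, hence they coincide. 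Counit and antipode compatibilities follow automatically.

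The main technical step is to show that $\phi$ factors through $\mathfrak{B}_0^c(V)^\dagger$. Let $K_r := \ker(\phi_r) \subseteq T_{rC}(V)$, a closed homogeneous ideal contained in $\coprod_{n \geq 2}^{\leq 1} V_{rC}^{\hat{\otimes} n}$ (since $\phi_r$ is an isomorphism on degrees $0$ and $1$), compatible under the restriction maps. I need to verify the coideal condition used in the proof of Proposition \ref{Radius1NicholsAlgebrasExistsA}:
$$\Delta(K_r) \subseteq \overline{K_{r/2} \hat{\otimes} T_{rC/2}(V) + T_{rC/2}(V) \hat{\otimes} K_{r/2}}.$$
Since $\phi$ is a bialgebra morphism, $\Delta(K_r) \subseteq \ker(\phi_{r/2} \otimes \phi_{r/2})$. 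By right-exactness of $\hat{\otimes}$, the kernel of $T_{rC/2}(V) \hat{\otimes} T_{rC/2}(V) \twoheadrightarrow A_{r/2} \hat{\otimes} A_{r/2}$ is exactly the closure on the right, where $A_{r/2} := T_{rC/2}(V)/K_{r/2}$ denotes the image algebra; it then suffices to show $A_{r/2} \hat{\otimes} A_{r/2} \hookrightarrow R_{r/2} \hat{\otimes} R_{r/2}$. This follows from flatness: $A_{r/2}$ is a Banach algebra generated by the finite-dimensional image of $V$, so has finite-dimensional graded pieces and is flat by Lemma \ref{ANicholsAlgebrasAreFlat}; the same holds for $R_{r/2}$, so tensoring the injection $A_{r/2} \hookrightarrow R_{r/2}$ by either side remains injective.

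Finally I would apply Lemma \ref{ACoidealsOfTensorAlgebras} with $I_r = I_r(V)$ (the ideal from the construction of $\mathfrak{B}_0^c(V)^\dagger$) and $J_r = K_r + I_r(V)$. By the previous paragraph both families satisfy the hypotheses of the lemma, and the composition
$$P(\mathfrak{B}_0^c(V)^\dagger) \cong V \,\longrightarrow\, \text{colim}_{r>0} T_{rC}(V)/J_r \,\longrightarrow\, R$$
is injective since it factors through the given isomorphism $V \cong R(1) \hookrightarrow R$. Hence $J_r = I_r(V)$ for every $r$, so $K_r \subseteq I_r(V)$, and each $\phi_r$ descends to $\mathfrak{B}_{rC}^c(V) \to R_r$. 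Taking colimits produces the desired morphism $\mathfrak{B}_0^c(V)^\dagger \to R$ of dagger graded braided IndBanach Hopf algebras; it is epic because each $\mathfrak{B}_{rC}^c(V) \to R_r$ is epic by Definition \ref{GeneratedBy} applied with $R(1)_r$ generating $R_r$, and filtered colimits preserve epimorphisms. The principal obstacle is the flatness argument in the third paragraph, which crucially relies on the finite dimensionality of $V$.
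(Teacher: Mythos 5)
Your overall strategy is the paper's: build algebra maps out of the tensor algebras radius by radius, use $R(1)=P(R)$ to upgrade the colimit map $T_{0}(V)^{\dagger}\rightarrow R$ to a morphism of Hopf algebras, and show the kernels $K_{r}$ form an element of $\mathcal{I}(V)$ in the notation of Proposition \ref{Radius1NicholsAlgebrasExistsA}; your right-exactness-of-$\hat{\otimes}$ plus flatness (Lemma \ref{ANicholsAlgebrasAreFlat}) argument for the coideal condition is precisely the point the paper leaves implicit, and it is a worthwhile elaboration. The genuine problem is the final descent step. From $J_{r}=I_{r}(V)$ you conclude $K_{r}\subseteq I_{r}(V)$ and then say that each $\phi_{r}$ ``descends to $\mathfrak{B}^{c}_{rC}(V)\rightarrow R_{r}$.'' That is the wrong direction: the containment $K_{r}\subseteq I_{r}(V)$ yields a map $T_{rC}(V)/K_{r}\rightarrow T_{rC}(V)/I_{r}(V)$, i.e.\ from the image of $\phi_{r}$ onto the Nichols algebra, whereas to factor $\phi_{r}$ through $\mathfrak{B}^{c}_{rC}(V)=T_{rC}(V)/I_{r}(V)$ you need $I_{r}(V)\subseteq K_{r}$. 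Relatedly, your injectivity check for Lemma \ref{ACoidealsOfTensorAlgebras} invokes a composition through a map $\text{colim}_{r>0}\,T_{rC}(V)/J_{r}\rightarrow R$, which exists only if $J_{r}\subseteq K_{r}$ -- exactly what is in question; fortunately the injectivity you need is immediate anyway, since $J_{r}$ sits in degrees at least $2$, so the map is the identity on the degree-one graded piece. (A further small point: the hypothesis does not say the multiplication on $\coprod^{\leq 1}R(n)_{r^{n}}$ is contracting, only bounded; this costs nothing because the bound can be absorbed into the radius when passing to the dagger colimit, but it should be said.)

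The gap is fixable inside your own framework, and the fix is where the hypothesis $P(R)=R(1)$ (which you so far used only for the Hopf-morphism step) is really needed: having $K_{r}\subseteq I_{r}(V)$, apply Lemma \ref{ACoidealsOfTensorAlgebras} a second time with $I_{r}:=K_{r}$ and $J_{r}:=I_{r}(V)$. The colimit of the $T_{rC}(V)/K_{r}$ is a sub--Hopf algebra of $R$ (the graded pieces are finite dimensional, so flatness is available), hence its primitives lie in $P(R)=R(1)\cong V$ and map injectively into $\mathfrak{B}_{0}^{c}(V)^{\dagger}$, being an isomorphism in degree one. The lemma then gives $K_{r}=I_{r}(V)$ for all $r$, so each $\phi_{r}$ has kernel exactly $I_{r}(V)$ and genuinely induces $\mathfrak{B}^{c}_{rC}(V)\rightarrow R_{r}$, which is epic because $R(1)_{r}$ generates $R_{r}$ in the sense of Definition \ref{GeneratedBy}; taking the colimit finishes the proof. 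I note the paper's own proof is equally terse at this point (``Thus we obtain a well-defined epimorphism''), but the equality $K_{r}=I_{r}(V)$ is what makes the descent legitimate, and as written your argument does not establish it.
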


\begin{proof}
Without loss of generality, we may assume that the isomorphism $V \xrightarrow{\sim} R(1)$ is of norm at most 1. Hence we obtain morphisms of Banach algebras $T_{r}(V) \rightarrow \coprod^{\leq 1}_{n \geq 0} R(n)_{r^{n}}$ that are epic since $R(1)_{r}$ generate $\coprod^{\leq 1}_{n \geq 0} R(n)_{r^{n}}$. Since $P(R)=R(1)$, this induces a morphism of Hopf algebras $T_{0}(V)^{\dagger} \rightarrow R$. Hence the kernels of the maps $T_{r}(V) \rightarrow \coprod^{\leq 1}_{n \geq 0} R(n)_{r^{n}}$ form an element of $\mathcal{I}(V)$, in the notation of the proof of Proposition \ref{Radius1NicholsAlgebrasExistsA}. Thus we obtain a well-defined epimorphism $\mathfrak{B}_{0}^{c}(V) \rightarrow R$.
\end{proof}

\begin{prop}
\label{NicholsAlgebrasEquivalentDefinitionA}
Let $V$ be a finite dimensional Banach space with pre-braiding $c$ of norm at most $1$, and suppose we have a non-degenerate symmetric bilinear form $\langle - , - \rangle: V \hat{\otimes} V \rightarrow k$. Retaining the notation of Proposition \ref{BilinearFormGivesNicholsAlgebraA}, there is an isomorphism $\mathfrak{B}_{0}^{c}(V)^{\dagger} \rightarrow \text{colim}_{r>0}T_{r}^{c}(V)/I_{r}$.
\end{prop}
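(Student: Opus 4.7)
The plan is to mimic the strategy used in the non-Archimedean setting (Proposition \ref{NicholsAlgebrasEquivalentDefinition}), with Lemma \ref{ACoidealsOfTensorAlgebras} replacing Lemma \ref{CoidealsOfTensorAlgebras}. The key point is to recognize that the two constructions fit into the framework of the family $\mathcal{I}(V)$ introduced in the proof of Proposition \ref{Radius1NicholsAlgebrasExistsA}, and the bilinear-form version is bounded above by the universal one.

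First I would verify that the collection $(I_{r})_{r>0}$ from Proposition \ref{BilinearFormGivesNicholsAlgebraA} lies in $\mathcal{I}(V)$. The proposition already shows that each $I_{r}$ is a closed homogeneous ideal contained in $\coprod_{n\geq 2}^{\leq 1} V_{r}^{\hat{\otimes}n}$, that restriction maps $T_{r}(V) \rightarrow T_{r'}(V)$ send $I_{r}$ into $I_{r'}$, and that $\Delta(I_{r}) \subset I_{r/2}\hat{\otimes} T_{r/2}(V) + T_{r/2}(V) \hat{\otimes} I_{r/2}$. By maximality of $I_{r}(V)$ we then have $I_{r} \subset I_{r}(V)$ for all $r>0$, producing a natural strict epimorphism
\[
f : \text{"colim"}_{r>0} T_{r}^{c}(V)/I_{r} \twoheadrightarrow \mathfrak{B}_{0}^{c}(V)^{\dagger}
\]
of dagger graded pre-braided IndBanach Hopf algebras that is the identity on degree one.

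Next I would apply Lemma \ref{ACoidealsOfTensorAlgebras} to the nested collection $I_{r} \subset I_{r}(V)$ to upgrade this epimorphism to an isomorphism. The hypothesis of that lemma demands injectivity of the induced map
\[
P\bigl( \text{"colim"}_{r>0} T_{r}(V)/I_{r} \bigr) \longrightarrow \text{"colim"}_{r>0} T_{r}(V)/I_{r}(V).
\]
By Proposition \ref{BilinearFormGivesNicholsAlgebraA} the source is $V$, and by construction the composite map factors through the degree-one piece of the target, which is again $V$, with the composition being the identity on $V$. Injectivity is therefore automatic, and Lemma \ref{ACoidealsOfTensorAlgebras} gives $I_{r}=I_{r}(V)$ for all $r>0$, so that $f$ is an isomorphism.

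I do not expect any serious obstacle: the whole argument is structural and parallels the non-Archimedean proof of Proposition \ref{NicholsAlgebrasEquivalentDefinition}. The only mildly delicate point is to verify that the $I_{r}$ from the bilinear-form construction really do satisfy the comultiplication condition in the Archimedean sense (that is, $\Delta(I_{r}) \subset I_{r/2} \hat{\otimes} T_{r/2}(V) + T_{r/2}(V) \hat{\otimes} I_{r/2}$ rather than just a closure thereof), but this was already established in Proposition \ref{BilinearFormGivesNicholsAlgebraA}, so the verification that $(I_{r})_{r>0}$ is an admissible element of $\mathcal{I}(V)$ is just a matter of citing that earlier result.
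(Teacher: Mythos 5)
Your proposal is correct and follows essentially the same route as the paper: both hinge on Lemma \ref{ACoidealsOfTensorAlgebras} applied to the nested families $I_{r} \subset I_{r}(V)$, with the injectivity hypothesis supplied by the computation $P(\text{colim}_{r>0}T_{r}^{c}(V)/I_{r})=V$ from Proposition \ref{BilinearFormGivesNicholsAlgebraA}. The only cosmetic difference is that you obtain the comparison map from maximality of $I_{r}(V)$ in $\mathcal{I}(V)$, whereas the paper first cites the universal property (Proposition \ref{NicholsAlgebraUniversalPropertyA}) before invoking the same lemma.
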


\begin{proof}
By Proposition \ref{NicholsAlgebraUniversalPropertyA} there is an epimorphism $\mathfrak{B}_{0}^{c}(V) \rightarrow \text{colim}_{r>0}T_{r}^{c}(V)/I_{r}$. By Lemma \ref{ACoidealsOfTensorAlgebras} the maps $\mathfrak{B}_{r}^{c}(V) \rightarrow T_{r}^{c}(V)/I_{r}$ are all isomorphisms. Hence $\mathfrak{B}_{0}^{c}(V) \rightarrow \text{colim}_{r>0}T_{r}^{c}(V)/I_{r}$ is an isomorphism.
\end{proof}

\begin{defn}
Let $V$ be a finite dimensional Banach space with pre-braiding $c$ of norm at most $1$, and suppose we have a non-degenerate symmetric bilinear form $\langle - , - \rangle: V \hat{\otimes} V \rightarrow k$. For each $n \geq 0$ let $I(n)$ be the radical in $V^{\hat{\otimes} n}$ of the composition
$$V^{\hat{\otimes} n} \hat{\otimes} T_{0}^{c}(V)^{\dagger} \longrightarrow \left( \coprod_{n \geq 0} V^{\hat{\otimes} n} \right) \hat{\otimes} T_{0}^{c}(V)^{\dagger} \longrightarrow k$$
and let $\mathfrak{B}_{\text{alg}}^{c}(V)$ be the braided graded Hopf algebra $\coprod_{n \geq 0} V^{\hat{\otimes} n}/I(n)$. This is the algebraic Nichols algebra of $V$, as defined in \cite{PHA}, by Proposition 2.10 of \emph{loc. cit.} By Lemma \ref{ExtendBilinearFormA} there is a dual pairing
$$\mathfrak{B}_{\text{alg}}^{c}(V) \hat{\otimes} \mathfrak{B}_{0}^{c}(V)^{\dagger} \rightarrow k,$$
which we also denote by $\langle - , - \rangle$.
\end{defn}

\subsection{Constructing Archimedean analytic quantum groups}
\label{ConstructingArchimedeanAnalyticQuantumGroups}
\

Again, we will use $q$ to denote an element of $k \setminus \{0\}$ of norm 1 and we fix root datum as in Definition \ref{KacMoodyRootDatum} for a Lie algebra $\mathfrak{g}$.

\begin{defn}
\label{CartanPartA}
Let $H=\coprod_{\lambda \in \Phi^{\ast}}^{\leq 1} k \cdot K_{\lambda}$ denote the Banach group Hopf algebra of $\Phi^{\ast}$ with
$$K_{\lambda} \cdot K_{\lambda'}=K_{\lambda + \lambda'}, \quad \Delta_{H}(K_{\lambda})=K_{\lambda} \otimes K_{\lambda}\quad\text{and}\quad S(K_{\lambda})=K_{-\lambda}.$$
We continue to use the notation
$$t_{i}:=K_{\frac{(\alpha_{i},\alpha_{i})}{2}\lambda_{i}}$$
and let $H'$ be the closed sub-Hopf algebra generated by $\{t_{i} \mid i \in I\}$. As before, there is a duality pairing $H \hat{\otimes} H' \rightarrow k$, which we continue to denote by $\langle-,- \rangle$, defined by
$$ K_{\lambda} \otimes \underline{t}^{\underline{n}} \mapsto q^{\lambda(\sum n_{i} \alpha_{i})}$$
and simultaneous algebra homomorphisms and coalgebra anti-homomorphisms
$$\mathscr{R}:H' \rightarrow H, \ t_{i} \mapsto t_{i}, \quad \overline{\mathscr{R}}:H' \rightarrow H, \ t_{i} \mapsto t_{i}^{-1},$$
for $i \in I$, making $H$ and $H'$ a weakly quasi-triangular dual pair.
\end{defn}

\begin{remark}
\label{HAPairingWeakQuasiTriangularA}
Proposition 4.4 of \cite{TRTfIBS} says that the category of IndBanach $H$-modules that decompose locally into Banach weight spaces with weights in the root lattice $\Psi \subset \Phi$, which we will continue to denote by $H\text{-Mod}_{\Psi}$ as before, is braided.
\end{remark}

\begin{defn}
\label{QuantumBraidingA}
\label{AnalyticQuantumGroupPositivePart}
Let $V=\coprod_{i \in I}^{\leq 1} k \cdot v_{i}$ with basis $\{v_{i} \mid i \in I \}$ have the $H'$-coaction $v_{i} \mapsto t_{i} \otimes v_{i}$. Then $V$ is a $H$-module with braiding $c(x_{i} \otimes x_{j})=q^{(\alpha_{i},\alpha_{j})}x_{j} \otimes x_{i}$. Let $\langle -,- \rangle$ be the non-degenerate bilinear form on $V$ where
$$\langle v_{i},v_{j} \rangle = \delta_{i,j}\frac{1}{(q_{i}-q_{i}^{-1})} \quad \text{for} \quad q_{i}=q^{\frac{(\alpha_{i},\alpha_{i})}{2}}.$$
Given $0 < r$ we denote by $\mathbf{f}_{r}^{\text{an}}$ the algebras $\mathfrak{B}_{r}^{c}(V)$. We then use the notation $\mathbf{f}_{0}^{\dagger}$ for the dagger Nichols algebra $\mathfrak{B}_{0}^{c}(V)^{\dagger}$ and $\mathbf{f}$ for the algebraic Nichols algebra $\mathfrak{B}_{\text{alg}}^{c}(V)$.
\end{defn}

\begin{lem}
\label{PositivePartDenseA}
For each $0<r$, the positive part of the quantum group is dense in the Banach space $\mathbf{f}_{r}^{\text{an}}$.
\end{lem}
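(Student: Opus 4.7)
The plan is to adapt the argument of Proposition \ref{ClassicalNicholsAlgebrasDense} to the Archimedean setting. By Theorem 4.2 of \cite{PHA}, the positive part $U_q^+(\mathfrak{g})$ of the quantum enveloping algebra coincides with the algebraic Nichols algebra $\mathbf{f} = \mathfrak{B}_{\text{alg}}^c(V)$ of the braided vector space $(V, c)$ from Definition \ref{QuantumBraidingA}, so it is enough to prove that $\mathbf{f}$ is dense in $\mathbf{f}_r^{\text{an}}$. First I would observe that the algebraic tensor algebra $\bigoplus_{n \geq 0} V^{\otimes n}$ is dense in the Banach tensor algebra $T_r(V) = \coprod_{n \geq 0}^{\leq 1} V_{r^n}^{\hat{\otimes} n}$ by truncation of $\ell^1$-convergent series. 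Since $\mathbf{f}_r^{\text{an}} = T_r(V)/I_r(V)$ is a strict quotient by a closed ideal, the image $R_0$ of $\bigoplus_{n \geq 0} V^{\otimes n}$ under the composition $\bigoplus_n V^{\otimes n} \hookrightarrow T_r(V) \twoheadrightarrow \mathbf{f}_r^{\text{an}}$ is a dense subalgebra generated by $V$, with $R_0(0) = k$ and $R_0(1) = V$ (since $I_r(V)$ is concentrated in degrees $\geq 2$).

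To identify $R_0$ with $\mathbf{f}$, I would pass to the dagger Nichols algebra $\mathbf{f}_0^\dagger = \text{"colim"}_{r>0} \mathbf{f}_r^{\text{an}}$, which by Proposition \ref{Radius1NicholsAlgebrasExistsA} is a dagger graded braided IndBanach Hopf algebra with $P(\mathbf{f}_0^\dagger) = V$. The same image $R_0$ sits naturally inside $\mathbf{f}_0^\dagger$ and inherits the structure of a braided graded vector space Hopf algebra, because the IndBanach comultiplication restricts to an algebraic comultiplication on the finite-dimensional graded pieces (the comultiplication on $\bigoplus_n V^{\otimes n}$ determined by declaring $V$ primitive and extending via $c$ is the unique such structure). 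The chain of inclusions $V \subset P(R_0) \subset P(\mathbf{f}_0^\dagger) = V$ then forces $P(R_0) = V$, so $R_0$ is an algebraic Nichols algebra of $V$ in the sense of \cite{PHA} and is therefore isomorphic to $\mathbf{f}$.

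The main technical subtlety is that, unlike in the non-Archimedean case, $\mathbf{f}_r^{\text{an}}$ itself is not a Banach Hopf algebra (the comultiplication only lands in $\mathbf{f}_{r/2}^{\text{an}} \hat{\otimes} \mathbf{f}_{r/2}^{\text{an}}$), so the identification of $R_0$ as an algebraic Nichols algebra must be carried out inside $\mathbf{f}_0^\dagger$ rather than directly inside $\mathbf{f}_r^{\text{an}}$. Since this only affects the reasoning about primitive elements and not the density argument, it causes no real obstruction; once performed, the density of $R_0 \cong \mathbf{f}$ in $\mathbf{f}_r^{\text{an}}$ yields the claim.
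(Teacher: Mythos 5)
Your argument is correct and is essentially the paper's own route: the paper proves this lemma by repeating the non-Archimedean argument of Lemma \ref{PositivePartDense}, i.e.\ Proposition \ref{ClassicalNicholsAlgebrasDense} together with Theorem 4.2 of \cite{PHA}, which is exactly what you do, with the Archimedean comultiplication issue handled by passing to $\mathbf{f}_{0}^{\dagger}$. The only step you leave implicit --- that the image $R_{0}$ of $\bigoplus_{n}V^{\otimes n}$ in $\mathbf{f}_{r}^{\text{an}}$ injects into $\mathbf{f}_{0}^{\dagger}$, equivalently that the degree-$n$ piece of the defining ideal is independent of $r$ --- is supplied by the radical description in Propositions \ref{BilinearFormGivesNicholsAlgebraA} and \ref{NicholsAlgebrasEquivalentDefinitionA}, since the pairing restricted to $V^{\otimes n}\otimes V^{\otimes n}$ is purely algebraic.
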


\begin{proof}
The proof of this is the same as for Lemma \ref{PositivePartDense}.
\end{proof}

\begin{lem}
Suppose $0< r$. Then there is a $H'$-equivariant dual pairing $\langle-,-\rangle:\mathbf{f}_{0}^{\dagger} \hat{\otimes} \mathbf{f} \rightarrow k$ extending $\langle-,-\rangle$ in Definition \ref{AnalyticQuantumGroupPositivePart} such that $\langle \mathbf{f}_{0}^{\dagger}(n) , \mathbf{f}(m) \rangle = \{0\}$ for $n \neq m$.
\end{lem}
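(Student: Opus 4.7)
The plan is to lift the bilinear form on $V$ to the pairing of Lemma \ref{ExtendBilinearFormA}, and then descend it to the two Nichols algebras $\mathbf{f}_{0}^{\dagger}$ and $\mathbf{f}$ by noting that both are constructed as quotients by the appropriate radicals of exactly the pairing obtained in that lemma. Concretely, first I would apply Lemma \ref{ExtendBilinearFormA} to the pairing of Definition \ref{QuantumBraidingA} to get a dual pairing
\[
\langle -, - \rangle : \left( \coprod_{n \geq 0} V^{\hat{\otimes} n} \right) \hat{\otimes} T_{0}^{c}(V)^{\dagger} \rightarrow k
\]
whose restriction to $V^{\hat{\otimes} n} \hat{\otimes} V^{\hat{\otimes} m}$ vanishes when $n \neq m$ and is symmetric when $n = m$. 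This gives the homogeneity property required in the statement, and it will be preserved by all subsequent quotients.

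Next I would descend this pairing on both sides. On the right, by Proposition \ref{NicholsAlgebrasEquivalentDefinitionA} (and the construction in Proposition \ref{BilinearFormGivesNicholsAlgebraA}), $\mathbf{f}_{0}^{\dagger} = \text{colim}_{r>0} T_{r}^{c}(V)/I_{r}$, where $I_{r}$ is precisely the radical in $T_{r}^{c}(V)$ of the above pairing with $\coprod_{n} V^{\hat{\otimes}n}$; hence the pairing factors through a well-defined map $(\coprod_{n} V^{\hat{\otimes}n}) \hat{\otimes} \mathbf{f}_{0}^{\dagger} \to k$. On the left, $\mathbf{f} = \coprod_{n}V^{\hat{\otimes}n}/I(n)$ where each $I(n) \subset V^{\hat{\otimes}n}$ is the radical of the pairing against $\mathbf{f}_{0}^{\dagger}$, so dividing out gives a pairing $\mathbf{f} \hat{\otimes} \mathbf{f}_{0}^{\dagger} \to k$. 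Switching the order of arguments via the symmetry of the pairing in degree $n$ produces the desired $\langle -,- \rangle : \mathbf{f}_{0}^{\dagger} \hat{\otimes} \mathbf{f} \to k$. The fact that this descended map is a duality pairing of Hopf algebras follows because the defining diagrams commute at the level of the generating space $V$ and both sides are generated as algebras by $V$, as in the proof of Lemma \ref{ExtendBilinearFormA}.

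Finally I would verify $H'$-equivariance. On the generating pair $V \hat{\otimes} V$ this is a direct calculation: $t_{i} \cdot v_{j} = q^{(\alpha_{i},\alpha_{j})} v_{j}$, and so $\langle t_{i} \cdot v_{j}, v_{k} \rangle = \delta_{j,k} q^{(\alpha_{i}, \alpha_{j})}/(q_{j} - q_{j}^{-1}) = \langle v_{j}, t_{i} \cdot v_{k} \rangle$, since the Kronecker delta forces $\alpha_{j} = \alpha_{k}$. The equivariance then propagates to higher tensor powers because the $H'$-coaction on $\mathbf{f}_{0}^{\dagger}$ and on $\mathbf{f}$ is by algebra homomorphisms, the induced $H'$-actions extend multiplicatively, and the pairing is built using the algebra structure on one side and the coalgebra structure on the other; equivariance on $V \hat{\otimes} V$ thus implies equivariance on all homogeneous bidegree components, and these are the only non-vanishing ones by the homogeneity established above. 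The main subtlety of the argument is the descent through the two radicals simultaneously, but since both are defined precisely as the kernels of the same pairing this is essentially forced once Lemma \ref{ExtendBilinearFormA} is in hand.
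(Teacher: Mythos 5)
Your proposal is correct and follows essentially the same route as the paper: the paper's proof is simply a citation of Lemma \ref{ExtendBilinearFormA}, relying on the fact that $\mathbf{f}_{0}^{\dagger}$ and $\mathbf{f}$ are by construction the quotients by the radicals of exactly that pairing (Propositions \ref{BilinearFormGivesNicholsAlgebraA} and \ref{NicholsAlgebrasEquivalentDefinitionA} and the preceding definition of $\mathfrak{B}_{\text{alg}}^{c}(V)$). Your write-up just makes explicit the descent through the two radicals, the vanishing in mixed degrees, and the $H'$-equivariance check on generators, all of which the paper leaves implicit.
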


\begin{proof}
This follows from Lemma \ref{ExtendBilinearFormA}.
\end{proof}

\begin{defn}
\label{AnalyticQuantumGroupsA}
We denote by $U_{q}(\mathfrak{g})_{0}^{\dagger}$ the double bosonisation $U(\mathbf{f}_{0}^{\dagger},H,\mathbf{f})$. Let us denote by $F_{i}$ the generalised element in $\mathbf{f}_{0}^{\dagger} \rtimes H$ represented by ${v_{i} \otimes 1} \in \mathbf{f}_{r}^{\text{an}} \hat{\otimes} H$, and by $E_{i}$ the generalised element in $\overline{H} \ltimes \overline{\mathbf{f}}$ represented by $1 \otimes v_{i}\in H \hat{\otimes} V$ for $i \in I$. We retain this notation when viewing $\mathbf{f}_{0}^{\dagger} \rtimes H$ and $\overline{H} \ltimes \overline{\mathbf{f}}$ as sub-Hopf algebras of $U_{q}(\mathfrak{g})_{0}^{\dagger}$.
\end{defn}

\begin{prop}
$U_{q}(\mathfrak{g})_{0}^{\dagger}$ is analytically graded by $\mathbb{Z}I \cong \Psi$.
\end{prop}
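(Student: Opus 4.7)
The plan is to essentially repeat the strategy of the proof of Proposition \ref{NAQGGraded} in the Archimedean setting, using the fact that $H'$ is precisely the Banach Hopf algebra corresponding to the analytic grading monoid $\mathbb{Z}I \cong \Psi$.

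First I would observe that $V$ is an $H'$-comodule by construction (via $v_i \mapsto t_i \otimes v_i$), and that its braiding $c$ is $H' \hat{\otimes} H'$-colinear because it acts by the scalar $q^{(\alpha_i,\alpha_j)}$ on $v_i \otimes v_j$, which coincides with the braiding coming from the weak quasi-triangular pairing of $H$ and $H'$. From this, $T_0^c(V)^{\dagger}$ and the algebraic tensor algebra $\bigoplus_{n \geq 0} V^{\otimes n}$ inherit compatible $H'$-coactions under which all of their structure maps (multiplication, comultiplication, unit, counit, antipode, braiding) are $H'$-comodule homomorphisms. The ideals cut out by the bilinear form $\langle -,- \rangle$ from Definition \ref{QuantumBraidingA} are $H'$-subcomodules — indeed, the pairing is $H'$-equivariant since $\langle v_i, v_j \rangle$ is nonzero only when $\alpha_i = \alpha_j$ — so the $H'$-coactions descend to $\mathbf{f}_0^{\dagger}$ and to $\mathbf{f}$ as braided IndBanach Hopf algebras.

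Next I would give $H$ the trivial $H'$-coaction $H \cong k \hat{\otimes} H \xrightarrow{\eta_{H'} \otimes \mathrm{Id}} H' \hat{\otimes} H$, and verify that the weak quasi-triangular structure $\mathscr{R}, \overline{\mathscr{R}}:H' \to H$ together with the duality pairing $\langle -,- \rangle: H \hat{\otimes} H' \to k$ are all $H'$-equivariant with this choice. This is essentially the statement that the induced actions of $H$ on $\mathbf{f}_0^{\dagger}$ and $\mathbf{f}$ preserve weights, which follows directly from the definition of the pairing $K_\lambda \otimes \underline{t}^{\underline{n}} \mapsto q^{\lambda(\sum n_i \alpha_i)}$ together with the $H'$-coactions on $\mathbf{f}_0^{\dagger}$ and $\mathbf{f}$.

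Having established that all of the ingredients appearing in Proposition \ref{DoubleBosonisationVariation} (the coactions of $H'$, the actions of $H$, the pairing between $\mathbf{f}_0^{\dagger}$ and $\mathbf{f}$, the maps $\mathscr{R}, \overline{\mathscr{R}}$, and the antipodes and coproducts on the bosonisations) are morphisms of $H'$-comodules, it follows formally that every structural morphism defining the Hopf algebra $U(\mathbf{f}_0^{\dagger}, H, \mathbf{f})$ is a morphism of $H'$-comodules. Thus $U_q(\mathfrak{g})_0^{\dagger}$ inherits an $H'$-grading, and since $H'$ is canonically the Banach group Hopf algebra of $\mathbb{Z}I$ — which is the Archimedean analogue of $k\{\underline{t}\}$ used to define an analytic $\mathbb{Z}I$-grading — this is precisely an analytic $\mathbb{Z}I$-grading as required.

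The only nontrivial step is verifying that the $H'$-coaction passes to the quotient $\mathbf{f}_0^{\dagger}$ defined by the universal ideal construction of Proposition \ref{Radius1NicholsAlgebrasExistsA}; the main obstacle is checking that the ideals $I_r(V)$ used in that construction are $H'$-subcomodules of $T_r(V)$. This is true because $I_r(V)$ is characterised as a sum of all homogeneous coideal ideals with a certain property, and the $H'$-coaction permutes such ideals (sending each one to another of the same type), so their sum is $H'$-invariant.
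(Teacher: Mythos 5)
Your proposal is correct and follows essentially the same route as the paper: the paper's proof simply notes (as in the non-Archimedean case, Proposition \ref{NAQGGraded}) that $\mathbf{f}_{r}^{\text{an}}$, $\mathbf{f}_{0}^{\dagger}$ and $\mathbf{f}$ are $H'$-comodules, gives $H$ the trivial $H'$-coaction, and observes that all morphisms defining $U(\mathbf{f}_{0}^{\dagger},H,\mathbf{f})$ are then $H'$-comodule homomorphisms. Your extra verifications (colinearity of the braiding, homogeneity of the radical of the pairing, $H'$-invariance of the universal ideals) are just the details the paper leaves implicit.
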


\begin{proof}
As in the proof of Proposition \ref{NAQGGraded} we have that $\mathbf{f}_{r}^{\text{an}}$, $\mathbf{f}_{0}^{\dagger}$ and $\mathbf{f}$ are $H'$-comodules, and if we give $H$ the trivial $H'$-coaction $H \cong k \hat{\otimes} H \xrightarrow{\eta_{H'} \otimes \text{Id}_{H}} H' \hat{\otimes} H$ then all of the morphisms involved in defining $U(\mathbf{f}_{0}^{\dagger},H,\mathbf{f})$ are $H'$-comodule homomorphisms.
\end{proof}

\subsection{Quantum groups as Drinfel'd doubles and braided monoidal representations}

\begin{lem}
There is a duality pairing
$$\langle -,- \rangle:(\overline{H} \ltimes \overline{\mathbf{f}}) \hat{\otimes} (\mathbf{f}^{\dagger}_{0} \rtimes H')^{\text{op}} \rightarrow k$$
given by the composition
$$
\begin{array}{rcccl}
H \hat{\otimes} \mathbf{f}^{\text{an}}_{\frac{s}{4}} \hat{\otimes} \mathbf{f}^{\text{an}}_{\frac{r}{4}} \hat{\otimes} H' &\overset{\text{Id} \otimes \text{Id} \otimes S \otimes\text{Id}}{\xrightarrow{\hspace*{1.5cm}}}& H \hat{\otimes} \mathbf{f}^{\text{an}}_{\frac{s}{4}} \hat{\otimes} \mathbf{f}^{\text{an}}_{\frac{r}{4}} \hat{\otimes} H' 
&\overset{\text{Id} \otimes \langle -,- \rangle \otimes\text{Id}}{\xrightarrow{\hspace*{1.5cm}}}& H \hat{\otimes} k \hat{\otimes} H' \\
&\overset{\text{Id} \otimes S}{\xrightarrow{\hspace*{1.5cm}}}& H \hat{\otimes} H' 
&\overset{\langle -,- \rangle}{\xrightarrow{\hspace*{1.5cm}}}&  k.
\end{array}
$$
\end{lem}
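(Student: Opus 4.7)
The plan is to adapt the proof of the non-Archimedean analogue, Lemma~\ref{DualityPairingforQuantumDouble}, to the Archimedean setting, mirroring the construction of Section~6.3.1 of \cite{QGaTR}. The first step is to recognise that the given composition makes sense as a morphism in $\text{IndBan}_k$: the inner pairing $\langle -,-\rangle:\mathbf{f}^{\dagger}_{0}\hat{\otimes}\mathbf{f}\rightarrow k$ is the $H'$-equivariant dual pairing constructed in the previous lemma of Subsection~4.1 (via Lemma~\ref{ExtendBilinearFormA} and Proposition~\ref{NicholsAlgebrasEquivalentDefinitionA}), the outer pairing $\langle -,-\rangle:H\hat{\otimes}H'\rightarrow k$ and the antipodes $S$ on $H'$ and $\mathbf{f}$ are from Definition~\ref{CartanPartA}, and the whole composition is a morphism of IndBanach spaces after regrouping tensor factors.

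Next, to show it gives a duality pairing, I would follow \cite{QGaTR} and first construct bounded linear functionals $\phi_i,\psi_i\in(\overline{H}\ltimes\overline{\mathbf{f}})^{\ast}$ satisfying
\begin{align*}
\phi_i(K_\lambda\otimes v_j)&=\delta_{ij}\tfrac{1}{q_i^{-1}-q_i}, & \phi_i(K_\lambda\otimes x)&=0\text{ for }x\in\mathbf{f}(\alpha),\ \alpha\neq\alpha_i, \\
\psi_i(K_\lambda\otimes y)&=q^{-\lambda(\alpha_i)}\varepsilon(y).
\end{align*}
These are bounded because $\overline{H}\ltimes\overline{\mathbf{f}}$ decomposes into weight spaces for $H'$ and $\phi_i,\psi_i$ are contracting on each piece. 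Viewing $(\overline{H}\ltimes\overline{\mathbf{f}})^{\ast}$ as an algebra via the coproduct, the assignments $v_i\mapsto\phi_i$ and $t_i\mapsto\psi_i$ give bounded maps $V\rightarrow(\overline{H}\ltimes\overline{\mathbf{f}})^{\ast}$ and $\coprod_i^{\leq 1} kt_i\rightarrow(\overline{H}\ltimes\overline{\mathbf{f}})^{\ast}$, which by Proposition~\ref{TensorHopfAlgebraA} extend to algebra homomorphisms from $T_0(V)^{\dagger}$ and $H'$ respectively. The key observation (as in Proposition~34 of \cite{QGaTR}) is that the map out of $T_0(V)^{\dagger}$ annihilates the defining ideal of $\mathbf{f}^{\dagger}_0$, so it descends to $\mathbf{f}^{\dagger}_0$. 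Combining via the bosonisation structure yields an algebra homomorphism $\mathbf{f}^{\dagger}_0\rtimes H'\rightarrow(\overline{H}\ltimes\overline{\mathbf{f}})^{\ast}$, i.e.\ a bounded bilinear pairing, and by uniqueness of the extension this is exactly the composition described in the statement.

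Finally, verification of the duality pairing axioms (compatibility with products, coproducts, units, counits, and antipodes) is to be performed on the dense subalgebra coming from $U_q(\mathfrak{g})$, where the identities hold by the classical Drinfel'd double computation in Section~6.3.1 of \cite{QGaTR}; they then extend to the full IndBanach pairing by continuity of all structure morphisms and cocontinuity of $\hat{\otimes}$. The $H'$-equivariance is automatic from the $H'$-equivariance of the inner pairing and the definition of bosonisation. The main obstacle is purely bookkeeping: ensuring that the algebra homomorphism from $T_0(V)^{\dagger}$ into the dual really does factor through the ideal defining $\mathbf{f}^{\dagger}_0$, which requires identifying the radical in Proposition~\ref{BilinearFormGivesNicholsAlgebraA} with the ideal controlling this factorisation. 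This is reassuring because, as in Proposition~\ref{NicholsAlgebrasEquivalentDefinitionA}, it is exactly the radical of the pairing with the algebraic Nichols algebra $\mathbf{f}$, so the map $T_0(V)^{\dagger}\rightarrow\mathbf{f}^{\ast}\rightarrow(\overline{H}\ltimes\overline{\mathbf{f}})^{\ast}$ descends by construction.
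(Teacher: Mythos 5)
Your proposal is correct and follows essentially the same route as the paper: the paper's proof simply observes that, as with Lemma \ref{DualityPairingforQuantumDouble}, the statement follows from Proposition 34 of Section 6.3.1 of \cite{QGaTR}, i.e.\ by constructing the functionals $\phi_{i},\psi_{i}$, extending to an algebra homomorphism into the dual, factoring through the radical defining the Nichols algebra, and checking the duality axioms on the dense classical subalgebra. You merely spell out these details explicitly (your only slip is cosmetic: the extension from $V$ to $T_{0}(V)^{\dagger}$ is the universal property of the dagger tensor algebra rather than Proposition \ref{TensorHopfAlgebraA} itself).
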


\begin{proof}
As with Lemma \ref{DualityPairingforQuantumDouble}, this follows from Proposition 34 of Section 6.3.1 of \cite{QGaTR}.
\end{proof}

\begin{defn}
We will denote by $D(\overline{H} \ltimes \overline{\mathbf{f}},\mathbf{f}^{\dagger}_{0} \rtimes H')$ the relative Drinfel'd double of $\overline{H} \ltimes \overline{\mathbf{f}}$ and $\mathbf{f}^{\dagger}_{0} \rtimes H'$ as constructed in Lemma \ref{Drinfel'dDoubleConstruction}.
\end{defn}

Recall the definition of crossed bimodules from Definition \ref{CrossedBimodules}.

\begin{lem}
\label{ModulesofQuantumDoubleA}
There is a fully faithful functor
$$_{\mathbf{f}^{\dagger}_{0} \rtimes H'}\text{Cross}^{\mathbf{f}^{\dagger}_{0} \rtimes H'} \rightarrow D(\overline{H} \ltimes \overline{\mathbf{f}},\mathbf{f}^{\dagger}_{0} \rtimes H')\text{-Mod}.$$
\end{lem}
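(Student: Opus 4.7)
The plan is to adapt the strategy of Lemma \ref{ModulesofQuantumDouble} together with the fullness argument from Lemma \ref{FullyFaithfulFunctorCtoUqanMod} to this Archimedean setting. Write $B = \overline{H} \ltimes \overline{\mathbf{f}}$ and $C = \mathbf{f}^{\dagger}_{0} \rtimes H'$. Given a crossed bimodule $V$ with left $C$-action $\mu_{V}$ and right $C$-coaction $\Delta_{V}$, I would define a $B$-action on $V$ by the composition
$$\mu'_{V} : B \hat{\otimes} V \xrightarrow{\text{Id} \otimes \Delta_{V}} B \hat{\otimes} V \hat{\otimes} C \xrightarrow{(\text{Id} \otimes \langle -,- \rangle) \circ (\tau \otimes \text{Id})} V,$$
exactly as in the proof of Lemma \ref{ModulesofQuantumDouble}, and then combine $\mu_{V}$ and $\mu'_{V}$ into an action of $D(B,C) = C \hat{\otimes} B$. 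The verification that this is an associative action reduces to the commutativity of the pentagon in Definition \ref{CrossedBimodules}, and is exactly Proposition 6 of Section 13.1 of \cite{QGaTR} transcribed into the IndBanach setting; no Archimedean/non-Archimedean subtlety enters here. Functoriality is immediate: a crossed bimodule morphism preserves both $\mu_{V}$ and $\Delta_{V}$, hence both $\mu_{V}$ and $\mu'_{V}$.

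Faithfulness is clear because $C$ embeds into $D(B,C)$ as a sub-Hopf algebra, so a $D(B,C)$-morphism is in particular a $C$-module morphism, which is part of the data of a crossed bimodule morphism.

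For fullness, the substantive step, I would take a $D(B,C)$-morphism $f : M \to N$ between two objects in the essential image and show that $f$ preserves the right $C$-coaction. Since $f$ commutes with the action of $B$, and the $B$-action is built from $\Delta_{M}, \Delta_{N}$ via $\langle -, - \rangle$, the equation $f \circ \mu'_{M} = \mu'_{N} \circ (\text{Id} \otimes f)$ unpacks to
$$\langle b, (\Delta_{N} \circ f)(m) \rangle = \langle b, ((f \otimes \text{Id}) \circ \Delta_{M})(m) \rangle \quad \text{for all } b \in B, \, m \in M,$$
where the pairing acts on the second tensor factor. Non-degeneracy of $\langle -, - \rangle : B \hat{\otimes} C \to k$ — which factors through non-degeneracy of the pairings $H \hat{\otimes} H' \to k$ from Definition \ref{CartanPartA} and $\mathbf{f} \hat{\otimes} \mathbf{f}^{\dagger}_{0} \to k$ from Lemma \ref{ExtendBilinearFormA} — then forces $\Delta_{N} \circ f = (f \otimes \text{Id}) \circ \Delta_{M}$, which is exactly the crossed bimodule compatibility.

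The main obstacle I expect is precisely this non-degeneracy step in the Archimedean IndBanach category. In the algebraic proof one argues pointwise with linear duality, but here one must verify that the induced morphism $C \to B^{*}$ (or rather the analogue respecting the IndBanach structure) is a strict monomorphism so that $\Delta_{N} \circ f - (f \otimes \text{Id}) \circ \Delta_{M}$, viewed as a bounded map $M \to N \hat{\otimes} C$, is forced to vanish. This reduces to checking that the duality pairing $\mathbf{f} \hat{\otimes} \mathbf{f}^{\dagger}_{0} \to k$ of Lemma \ref{ExtendBilinearFormA} is non-degenerate on the right, which follows from its graded non-degeneracy on each $V^{\hat{\otimes} n}$ (Definition \ref{QuantumBraidingA}) combined with the contracting coproduct structure of $\mathbf{f}^{\dagger}_{0}$; together with the analogous statement for $H \hat{\otimes} H' \to k$, this completes the fullness argument.
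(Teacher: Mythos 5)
Your construction of the functor (transporting the $C$-coaction into a $B$-action via the pairing and invoking Proposition 6 of Section 13.1 of \cite{QGaTR}) and your faithfulness remark coincide with the paper, which simply cites Lemma \ref{ModulesofQuantumDouble}. Your treatment of the $\mathbf{f}^{\dagger}_{0}$-coaction in the fullness step is also essentially the paper's: the pairing vanishes between distinct degrees and is a perfect pairing of \emph{finite-dimensional} graded pieces of $\mathbf{f}$ and $\mathbf{f}^{\dagger}_{0}$ (note it is non-degenerate on the Nichols quotients, not on $V^{\hat{\otimes} n}$ itself), so pairing against each $\mathbf{f}(n)$ extracts and kills the degree-$n$ component of the discrepancy in $N \hat{\otimes} \mathbf{f}^{\dagger}_{0}$.

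The gap is in the $H'$-part of the coaction. You reduce it to ``non-degeneracy on the right'' of the pairing $H \hat{\otimes} H' \rightarrow k$ of Definition \ref{CartanPartA}, but injectivity of $H' \rightarrow H^{\ast}$ does not formally imply that pairing against $H$ separates elements of $N \hat{\otimes} H'$ for an IndBanach space $N$: here $H'$ is a completed group algebra with no finite-dimensional graded pieces matched degreewise with pieces of $H$, so the argument you use for $\mathbf{f}$ does not transfer. Concretely, an element of $N \hat{\otimes} H'$ is an absolutely summable series $\sum_{\underline{n}} x_{\underline{n}} \otimes \underline{t}^{\underline{n}}$, and vanishing of $\sum_{\underline{n}} q^{\lambda(\sum n_{i}\alpha_{i})} x_{\underline{n}}$ for all $\lambda$ is a completed linear-independence-of-characters statement (provable over $\mathbb{R}$ or $\mathbb{C}$ by a Bohr-type averaging using $|q|=1$ and $q$ not a root of unity, but you neither state nor prove it). The paper avoids this entirely: objects in the image decompose locally into Banach weight spaces with weights in $\Psi$, and since $f$ commutes with the action of $H$ (equivalently $H'$), an eigenvalue argument --- distinct weights give distinct characters because $q$ is not a root of unity --- shows $f$ preserves each weight component and hence the $H'$-coaction, exactly as in Lemma \ref{FullyFaithfulFunctorCtoUqanMod}. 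Replacing your appeal to non-degeneracy of $H \hat{\otimes} H' \rightarrow k$ by this weight-space argument closes the gap.
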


\begin{proof}
This functor is constructed in Lemma \ref{ModulesofQuantumDouble}. Given a morphism ${M \xrightarrow{f} N}$ in $D(\overline{H} \ltimes \overline{\mathbf{f}},\mathbf{f}^{\dagger}_{0} \rtimes H')\text{-Mod}$ between objects in the image of $_{\mathbf{f}^{\dagger}_{0} \rtimes H'}\text{Cross}^{\mathbf{f}^{\dagger}_{0} \rtimes H'}$ then $f$ commutes with both the action of $H'$ and $\mathbf{f}$. Hence $f$ must preserve the locally Banach weight space decomposition, hence commutes with the coaction of $H'$. Also, since the bilinear pairing between the corresponding graded pieces of $\mathbf{f}$ and $\mathbf{f}^{\dagger}_{0}$ is nondegenrate, $f$ must also commute with the coaction of $\mathbf{f}^{\dagger}_{0}$. Hence $f$ is a morphism in $_{\mathbf{f}^{\dagger}_{0} \rtimes H'}\text{Cross}^{\mathbf{f}^{\dagger}_{0} \rtimes H'}$.
\end{proof}

\begin{prop}
\label{AnalyticQuantumGroupAsDrinfel'dDoubleA}
There is a strict epimorphism
$$D(\overline{H} \ltimes \overline{\mathbf{f}},\mathbf{f}^{\dagger}_{0} \rtimes H') \rightarrow U_{q}(\mathfrak{g})^{\dagger}_{0}$$
whose kernel is
$$\mathbf{f}^{\dagger}_{0} \hat{\otimes} \overline{\langle t_{i} \otimes 1 - 1 \otimes t_{i} \mid i \in I \rangle} \hat{\otimes} \mathbf{f} \hookrightarrow \mathbf{f}^{\dagger}_{0} \hat{\otimes} H' \hat{\otimes} H \hat{\otimes} \mathbf{f}.$$
\end{prop}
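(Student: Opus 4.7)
The strategy is to mirror the proof of Proposition \ref{NAAnalyticQuantumGroupAsDrinfel'dDouble} but carefully account for the fact that in the Archimedean setting our Nichols algebras are defined as filtered colimits rather than individual Banach algebras. First I would define the candidate strict epimorphism as the composition
$$\mathbf{f}^{\dagger}_{0} \hat{\otimes} H' \hat{\otimes} H \hat{\otimes} \mathbf{f} \xrightarrow{\text{Id} \otimes \mu_{H} \otimes \text{Id}} \mathbf{f}^{\dagger}_{0} \hat{\otimes} H \hat{\otimes} \mathbf{f},$$
where $\mu_{H}: H' \hat{\otimes} H \rightarrow H$ uses the inclusion $H' \hookrightarrow H$ followed by multiplication in $H$. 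That this is a strict epimorphism in $\text{IndBan}_{k}$ follows because $H' \hat{\otimes} H \to H$ is a strict epimorphism of Banach spaces (it splits via $1 \otimes \text{Id}_H$), and tensoring with flat $\mathbf{f}_0^\dagger$ and $\mathbf{f}$ preserves strictness.

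Next I would check that this map respects the Hopf structures. By Corollary 15 of Section 8.2.4 of \cite{QGaTR}, together with Proposition \ref{QuantumGroupDenseInNAAnalyticVersion} adapted to the Archimedean setting (using Lemma \ref{PositivePartDenseA} in place of Lemma \ref{PositivePartDense}), the restriction to the algebraic subspace $U_{q}^{\leq 0}(\mathfrak{g}) \otimes U_{q}^{\geq 0}(\mathfrak{g}) \twoheadrightarrow U_{q}(\mathfrak{g})$ is a surjective homomorphism of Hopf algebras of vector spaces. Since the algebraic Nichols algebra $\mathbf{f}$ is dense in $\mathbf{f}^{\dagger}_{0}$ (this is Proposition \ref{NicholsAlgebrasEquivalentDefinitionA} combined with density of the polynomial subspace in each $\mathfrak{B}_r(V)$), the algebraic tensor product $U_{q}^{\leq 0}(\mathfrak{g}) \otimes U_{q}^{\geq 0}(\mathfrak{g})$ sits densely inside $\mathbf{f}^{\dagger}_{0} \hat{\otimes} H' \hat{\otimes} H \hat{\otimes} \mathbf{f}$; likewise $U_{q}(\mathfrak{g})$ is dense in $U_q(\mathfrak{g})_0^\dagger = \mathbf{f}^{\dagger}_{0} \hat{\otimes} H \hat{\otimes} \mathbf{f}$. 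The algebraic compatibility then extends by continuity to a morphism of IndBanach Hopf algebras.

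Finally I would compute the kernel. The kernel of $\mu_H: H' \hat{\otimes} H \to H$ is the closure of the sub-$H'$-bimodule generated by $\{t_i \otimes 1 - 1 \otimes t_i \mid i \in I\}$, namely $\overline{\langle t_{i} \otimes 1 - 1 \otimes t_{i} \mid i \in I \rangle}$. Tensoring with the flat factors $\mathbf{f}_0^\dagger$ on the left and $\mathbf{f}$ on the right, which preserves kernels by the flatness established in the proof of Proposition \ref{Radius1NicholsAlgebrasExistsA} and by Lemma \ref{ANicholsAlgebrasAreFlat}, gives the claimed kernel
$$\mathbf{f}^{\dagger}_{0} \hat{\otimes} \overline{\langle t_{i} \otimes 1 - 1 \otimes t_{i} \mid i \in I \rangle} \hat{\otimes} \mathbf{f} \hookrightarrow \mathbf{f}^{\dagger}_{0} \hat{\otimes} H' \hat{\otimes} H \hat{\otimes} \mathbf{f}.$$

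The main obstacle I anticipate is justifying the extension of the algebraic Hopf algebra homomorphism to the completions: one needs that the multiplication, comultiplication, and in particular the twisted multiplication appearing in the relative Drinfel'd double construction are all continuous with respect to the specific IndBanach topology on the dagger Nichols algebra, which in the Archimedean case requires careful tracking of the colimit-over-radii. The density arguments are more delicate here than in \textbf{(NA)} because one cannot separately work at a fixed radius $r$, so the uniqueness of continuous extension must be argued using the universal property characterization of the dagger Nichols algebra from Proposition \ref{NicholsAlgebraUniversalPropertyA}.
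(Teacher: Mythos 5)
Your proposal is correct and follows essentially the same route as the paper: the same map $\text{Id} \otimes \mu_{H} \otimes \text{Id}$, the Hopf-algebra compatibility transferred from the non-Archimedean argument of Proposition \ref{NAAnalyticQuantumGroupAsDrinfel'dDouble} via density and continuity, and the kernel identified by flatness of $\mathbf{f}^{\dagger}_{0}$ (Lemma \ref{ANicholsAlgebrasAreFlat}) and of $\mathbf{f}$ together with $\text{Ker}(\mu_{H})=\overline{\langle t_{i} \otimes 1 - 1 \otimes t_{i} \mid i \in I \rangle}$. Your additional remarks on strictness of the epimorphism and on tracking the colimit over radii are elaborations of points the paper leaves implicit, not a different method.
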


\begin{proof}
This follows as in the proof of Proposition \ref{NAAnalyticQuantumGroupAsDrinfel'dDouble}. Again, this morphism can be written as
$$\mathbf{f}^{\dagger}_{0} \hat{\otimes} H' \hat{\otimes} H \hat{\otimes} \mathbf{f} \xrightarrow{\text{Id} \otimes \mu_{H} \otimes \text{Id}} \mathbf{f}^{\dagger}_{0} \hat{\otimes} H \hat{\otimes} \mathbf{f}.$$
By Proposition \ref{ANicholsAlgebrasAreFlat}, $\mathbf{f}^{\dagger}_{0}$ is flat. Also, since $\mathbf{f}$ is a colimit of finite dimensional spaces it is also flat. The result then follows from the fact that $\overline{\langle t_{i} \otimes 1 - 1 \otimes t_{i} \mid i \in I \rangle}$ is the kernel of $\mu_{H}: H' \hat{\otimes}H \rightarrow H$.
\end{proof}

\begin{defn}
\label{SubcatOfCrossedBimodulesA}
Let us denote by $\mathcal{C}$ the full subcategory of $_{(\mathbf{f}^{\dagger}_{0} \rtimes H')}\text{Cross}^{(\mathbf{f}^{\dagger}_{0} \rtimes H')}$ consisting of IndBanach spaces $V$ equipped with both a left action and right coaction of $\mathbf{f}^{\dagger}_{0} \rtimes H'$, $\mu_{V}: (\mathbf{f}^{\dagger}_{0} \rtimes H') \hat{\otimes} V \rightarrow V$ and $\Delta_{V}: V \rightarrow V \hat{\otimes} (\mathbf{f}^{\dagger}_{0} \rtimes H')$, such that the diagram

\begin{center}
\begin{tikzpicture}[node distance=6cm, auto]
  \node (A) {$H' \hat{\otimes} V$};
  \node (B) [below=1cm of A] {$(\mathbf{f}^{\dagger}_{0} \rtimes H') \hat{\otimes} V$};
  \node (C) [right=1cm of A] {$(\overline{H} \ltimes \overline{\mathbf{f}}) \hat{\otimes} V$};
  \node (D) [below=1.05cm of C] {$V$};
  \draw[->] (A) to node {} (B);
  \draw[->] (C) to node {$\mu_{V}'$} (D);
  \draw[->] (A) to node {} (C);
  \draw[->] (B) to node {$\mu_{V}$} (D);
\end{tikzpicture}
\end{center}
commutes, where $\mu_{V}'$ is the action of $\overline{H} \ltimes \overline{\mathbf{f}}$ on $V$ induced by $\Delta_{V}$.
\end{defn}

\begin{lem}
There is a fully faithful functor $\mathcal{C} \rightarrow U_{q}(\mathfrak{g})_{0}^{\dagger}\text{-Mod}$.
\end{lem}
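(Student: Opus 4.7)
The plan is to model the argument directly on the proof of Lemma \ref{FullyFaithfulFunctorCtoUqanMod} in the non-Archimedean setting, adapted to the asymmetry between $\mathbf{f}_{0}^{\dagger}$ and $\mathbf{f}$. First, I would define the functor as the composition
$$\mathcal{C}\hookrightarrow {}_{\mathbf{f}^{\dagger}_{0}\rtimes H'}\text{Cross}^{\mathbf{f}^{\dagger}_{0}\rtimes H'}\longrightarrow D(\overline{H}\ltimes\overline{\mathbf{f}},\mathbf{f}^{\dagger}_{0}\rtimes H')\text{-Mod}\longrightarrow U_{q}(\mathfrak{g})_{0}^{\dagger}\text{-Mod},$$
where the first arrow is the inclusion of a full subcategory, the second is the fully faithful functor from Lemma \ref{ModulesofQuantumDoubleA}, and the third is pulled back along the strict epimorphism of Proposition \ref{AnalyticQuantumGroupAsDrinfel'dDoubleA}. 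To land in $U_{q}(\mathfrak{g})_{0}^{\dagger}\text{-Mod}$, I need to check that for $V$ in $\mathcal{C}$, the kernel of $D(\overline{H}\ltimes\overline{\mathbf{f}},\mathbf{f}^{\dagger}_{0}\rtimes H')\twoheadrightarrow U_{q}(\mathfrak{g})^{\dagger}_{0}$ acts trivially on $V$; by Proposition \ref{AnalyticQuantumGroupAsDrinfel'dDoubleA} this kernel is generated by the elements $t_{i}\otimes 1 - 1\otimes t_{i}$, and the compatibility square in Definition \ref{SubcatOfCrossedBimodulesA} says precisely that the action of $t_{i}\in H'\subset \mathbf{f}^{\dagger}_{0}\rtimes H'$ agrees with the action of $t_{i}\in H\subset \overline{H}\ltimes\overline{\mathbf{f}}$ on $V$.

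Faithfulness is inherited from Lemma \ref{ModulesofQuantumDoubleA} together with the fact that $D(\overline{H}\ltimes\overline{\mathbf{f}},\mathbf{f}^{\dagger}_{0}\rtimes H')\twoheadrightarrow U_{q}(\mathfrak{g})^{\dagger}_{0}$ is an epimorphism (so pullback is faithful). For fullness, given a morphism $f:M\rightarrow N$ of $U_{q}(\mathfrak{g})_{0}^{\dagger}$-modules between objects coming from $\mathcal{C}$, I would argue as follows. The morphism $f$ commutes with the actions of both $\mathbf{f}^{\dagger}_{0}\rtimes H'$ and $\overline{H}\ltimes\overline{\mathbf{f}}$. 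Because $f$ commutes with the $H'$-action it preserves the locally Banach weight space decomposition, and the induced morphism on weight pieces assembles to a morphism of right $H'$-comodules. Finally, for each weight $\alpha\in\Psi$ the coaction $M\to M\hat{\otimes}\mathbf{f}^{\dagger}_{0}$ is uniquely determined by the requirement that it induce, through $\langle-,-\rangle:\overline{\mathbf{f}}\hat{\otimes}\mathbf{f}^{\dagger}_{0}\to k$, the given action of $\overline{\mathbf{f}}$; since this pairing is non-degenerate on every graded piece (by Proposition \ref{NicholsAlgebrasEquivalentDefinitionA} applied to the degree-$n$ pieces, where $\mathbf{f}(n)$ is finite dimensional and paired perfectly with the completed dual $\mathbf{f}^{\dagger}_{0}(n)$ via $\langle-,-\rangle$), the morphism $f$ must preserve the coaction of $\mathbf{f}^{\dagger}_{0}$. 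Combining, $f$ respects the full coaction of $\mathbf{f}^{\dagger}_{0}\rtimes H'$ and therefore comes from a morphism in $\mathcal{C}$.

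The main obstacle I anticipate is verifying the non-degeneracy argument for the coaction in the Archimedean case, specifically checking that the duality pairing $\langle-,-\rangle:\mathbf{f}\hat{\otimes}\mathbf{f}^{\dagger}_{0}\to k$ is non-degenerate on each weight piece as needed. Unlike the non-Archimedean case where both sides were Banach Nichols algebras paired symmetrically, here we pair an algebraic Nichols algebra $\mathbf{f}$ against an IndBanach dagger completion $\mathbf{f}^{\dagger}_{0}=\text{"colim"}_{r>0}\mathfrak{B}^{c}_{r}(V)$. The non-degeneracy follows because on each fixed weight $\alpha$, both $\mathbf{f}(\alpha)$ and $\mathbf{f}^{\dagger}_{0}(\alpha)$ have the same underlying finite-dimensional vector space (the degree-$\alpha$ piece of the algebraic Nichols algebra, by the construction of $\mathfrak{B}^{c}_{r}(V)$ via a radical of a bilinear form as in Proposition \ref{BilinearFormGivesNicholsAlgebraA} and the identification in Proposition \ref{NicholsAlgebrasEquivalentDefinitionA}), and the restricted pairing is precisely the classical non-degenerate pairing on this finite-dimensional piece. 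Once this identification is in hand, the uniqueness of the coaction follows by a graded-piece argument identical in spirit to the non-Archimedean case.
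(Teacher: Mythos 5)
Your proposal is correct and follows essentially the same route as the paper: the paper's proof simply cites Lemma \ref{ModulesofQuantumDouble} together with Proposition \ref{AnalyticQuantumGroupAsDrinfel'dDoubleA}, with the fullness argument (weight space decomposition plus non-degeneracy of the pairing between the graded pieces of $\mathbf{f}$ and $\mathbf{f}^{\dagger}_{0}$) already housed in Lemma \ref{ModulesofQuantumDoubleA}. You have merely made explicit the details the paper leaves implicit — in particular that the compatibility square in Definition \ref{SubcatOfCrossedBimodulesA} forces the kernel generated by $t_{i}\otimes 1 - 1\otimes t_{i}$ to act trivially, so the double-module structure descends to $U_{q}(\mathfrak{g})_{0}^{\dagger}$ — and this matches the intended argument.
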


\begin{proof}
This follows from Lemma \ref{ModulesofQuantumDouble} and Proposition \ref{AnalyticQuantumGroupAsDrinfel'dDoubleA}.
\end{proof}

\begin{defn}
\label{IntegrableRepresentations}
Let us denote by $\mathcal{O}_{\Psi}$ the essential image of $\mathcal{C}$ in $U_{q}(\mathfrak{g})_{0}^{\dagger}\text{-Mod}$. By the previous Lemma this is precisely the full subcategory of $U_{q}(\mathfrak{g})_{0}^{\dagger}\text{-Mod}$ consisting of modules, $M$, such that the action of $H$ gives a module in $H\text{-Mod}_{\Psi}$ and the action of $\mathbf{f}$  is induced by a coaction of $\mathbf{f}_{0}^{\dagger}$ via their pairing.
\end{defn}

\begin{corollary}
The category $\mathcal{O}_{\Psi}$ is braided.
\end{corollary}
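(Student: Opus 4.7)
The plan is to mirror the argument given in the non-Archimedean setting (the corollary following Lemma \ref{FullyFaithfulFunctorCtoUqanMod}), transporting the pre-braided monoidal structure on crossed bimodules to $\mathcal{O}_{\Psi}$ via the fully faithful functor $\mathcal{C} \to U_{q}(\mathfrak{g})_{0}^{\dagger}\text{-Mod}$ established in the preceding lemma. The central input is Lemma \ref{CrossedBimodulesBraided}, which endows $_{\mathbf{f}^{\dagger}_{0} \rtimes H'}\text{Cross}^{\mathbf{f}^{\dagger}_{0} \rtimes H'}$ with a pre-braiding, together with the fact that the antipode of $\mathbf{f}^{\dagger}_{0} \rtimes H'$ is invertible, which upgrades the pre-braiding to a genuine braiding. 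Invertibility of the antipode on $H'$ is clear from Definition \ref{CartanPartA}, and on $\mathbf{f}^{\dagger}_{0}$ it follows from the graded Hopf algebra structure (or can be verified directly as in Proposition \ref{TensorHopfAlgebraA} applied at the level of the tensor algebra, since the antipode on $\mathbf{f}^{\dagger}_{0}$ is induced from that of $T_{0}(V)^{\dagger}$, which is the algebra anti-homomorphism extending $v \mapsto -v$).

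Next I would verify that $\mathcal{C}$ is closed under the monoidal structure and under the pre-braiding inherited from $_{\mathbf{f}^{\dagger}_{0} \rtimes H'}\text{Cross}^{\mathbf{f}^{\dagger}_{0} \rtimes H'}$. The only subtle condition distinguishing $\mathcal{C}$ from the ambient category of crossed bimodules is the compatibility diagram of Definition \ref{SubcatOfCrossedBimodulesA}: that the action of $H'$ coming through $\mathbf{f}^{\dagger}_{0} \rtimes H'$ agrees with the one induced by the coaction of $\mathbf{f}^{\dagger}_{0} \rtimes H'$ through the duality pairing with $\overline{H} \ltimes \overline{\mathbf{f}}$. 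For $M, N \in \mathcal{C}$, the action of $H'$ on $M \hat{\otimes} N$ is diagonal, the coaction is also diagonal (composed with the multiplication on $\mathbf{f}^{\dagger}_{0} \rtimes H'$), and the pairing is bialgebra compatible; thus the required compatibility passes to tensor products. The explicit braiding from Lemma \ref{CrossedBimodulesBraided} is built from the structure maps of the crossed bimodules alone, so its image again lies in $\mathcal{C}$.

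Finally, the functor $\mathcal{C} \to U_{q}(\mathfrak{g})_{0}^{\dagger}\text{-Mod}$ is fully faithful by the preceding lemma, and by construction it is a monoidal functor: the tensor product of two objects in the image of $\mathcal{C}$, computed in $U_{q}(\mathfrak{g})_{0}^{\dagger}\text{-Mod}$, receives actions of $H$ and $\mathbf{f}$ that coincide with the ones induced from the tensor product inside $\mathcal{C}$, by the same bialgebra compatibility noted above. Hence the braiding of $\mathcal{C}$ transports to a braiding on the essential image $\mathcal{O}_{\Psi}$.

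The main obstacle I anticipate is not conceptual but bookkeeping: explicitly checking that the braiding formula from Lemma \ref{CrossedBimodulesBraided}, once written in terms of the action of $U_{q}(\mathfrak{g})_{0}^{\dagger}$ rather than $\mathbf{f}^{\dagger}_{0} \rtimes H'$ acting separately, is well-defined and natural in $\mathcal{O}_{\Psi}$. In the Archimedean setting, $\mathbf{f}^{\dagger}_{0}$ is only locally Banach (a filtered colimit along $r \to 0$), so one must take some care that the composition $N \hat{\otimes} M \to N \hat{\otimes} (\mathbf{f}^{\dagger}_{0} \rtimes H') \hat{\otimes} M \to N \hat{\otimes} M$ defining the braiding is indeed a morphism of IndBanach spaces; this ultimately reduces to the fact that the coaction of $\mathbf{f}^{\dagger}_{0}$ lands in a Banach subobject at each stage of the colimit presentations of $M$ and $N$, and that the induced action of $\mathbf{f}$ on $M$ via the duality pairing is bounded on each such piece. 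Once this is in hand, verification of the hexagon axioms follows formally from Lemma \ref{CrossedBimodulesBraided}.
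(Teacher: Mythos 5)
Your proposal follows exactly the paper's route: the paper's proof of this corollary is simply an appeal to Lemma \ref{CrossedBimodulesBraided} (the braiding on crossed bimodules, transported to $\mathcal{O}_{\Psi}$ as the essential image of $\mathcal{C}$ under the fully faithful functor of the preceding lemma), which is precisely your central argument. The additional checks you sketch (invertibility of the antipode, closure of $\mathcal{C}$ under tensor and braiding, boundedness in the colimit presentation) are details the paper leaves implicit, not a different approach.
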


\begin{proof}
This follows from Lemma \ref{CrossedBimodulesBraided}.
\end{proof}

In time, the author hopes to study the representations in $\mathcal{O}_{\Psi}$ further and compute examples of the braiding. The hope is that this will produce interesting new braid group representations in which we might see some special analytic functions arising. For example, in \cite{PRftQDaQM}, Goncharov exhibits an automorphism of a Schwarz space using the quantum dilogarithm that satisfies a pentagon relation. This Schwarz space is equipped with an action of the algebra of regular functions on a quantised cluster variety, and this automorphism of the Schwarz space intertwines an automorphism of this algebra of regular functions. The action of these regular functions gives a natural action of the positive part of $U_{q}(\mathfrak{sl}_{2})$. It would be interesting to see whether we can use this to obtain a representation in $\mathcal{O}_{\Psi}$ for $\mathfrak{g}=\mathfrak{sl}_{2}$ whose braiding is related to the quantum dilogarithm and this work of Goncharov.


\begin{thebibliography}{12}

\bibitem{PHA}
  N. Andruskiewitsch,
  Pointed Hopf Algebras,
  \emph{New directions in Hopf algebras},
  MSRI series Cambridge Univ. Press,
  2002,
  1-68.
   
\bibitem{SDiBAG}
  Federico Bambozzi, Oren Ben-Bassat, and Kobi Kremnizer,
  \emph{Stein Domains in Banach Algebraic Geometry},
  2015,
  https://arxiv.org/pdf/1511.09045v1.pdf.
  
\bibitem{NAAGaRAG}
  Oren Ben-Bassat and Kobi Kremnizer,
  \emph{Non-Archimedean analytic geometry as relative algebraic geometry},
  2013,
  https://arxiv.org/pdf/1312.0338v3.pdf.

\bibitem{aGtQG}
  V. Chari \& A. Pressley,
  \emph{A guide to quantum groups},
  Cambridge University Press,
  1994.

\bibitem{pMPatpKZE}
  Hidekazu Furusho,
  p-adic multiple polylogarithms and the p-adic KZ equation,
  \emph{Inventiones mathematicae},
  Vol. 155,
  2 (2004),
  253-286.
  
\bibitem{PRftQDaQM}
  A. B. Goncharov,
  \emph{The pentagon relation for the quantum dilogarithm and quantized $\mathcal{M}_{0,5}^{\text{cyc}}$},
  2007,
  https://arxiv.org/pdf/0706.4054.pdf.
  
\bibitem{OCB}
  Masaki Kashiwara,
  On Crystal Bases,
  \emph{www.kurims.kyoto-u.ac.jp/ kenkyubu/kashiwara/oncrystal.pdf},
  1995.
  
\bibitem{CaS}
  Masaki Kashiwara and Pierre Schapira,
  \emph{Categories and sheaves},
  Grundlehren der mathematischen Wissenschaften,
  Springer,
  2006.

\bibitem{QG}
  Christian Kassel,
  \emph{Quantum Groups},
  Springer-Verlag,
  Graduate Texts in Mathematics,
  155 (1995).
  
\bibitem{QGaTR}
  A. Klimyk \& K. Schm\"udgen,
  \emph{Quantum Groups and Their Representations},
  Springer-Verlag Berlin Heidelberg,
  1997.

\bibitem{TRTfIBS}
  Kobi Kremnizer \& Craig Smith,
  \emph{A Tannakian reconstruction theorem for IndBanach spaces},
  2017,
  https://arxiv.org/pdf/1703.05679.pdf.
  
\bibitem{ItQG}
  George Lusztig,
  \emph{Introduction to Quantum Groups},
  Birkh\"auser,
  Progress in Mathematics,
  110 (1993).

\bibitem{pQHAfSL2}  
  Anton Lyubinin,
  \emph{$p$ -adic quantum hyperenveloping algebra for $\mathfrak{sl}_{2}$},
  2013,
  https://arxiv.org/pdf/1312.4372.pdf.

\bibitem{AaHAiBC}
  S. Majid,
  Algebras and Hopf algebras in braided categories,
  \emph{Lecture Notes in Pure and Applied Math},
  158 (1994),
  55-105.
  
\bibitem{CPbBGaB}
  Shahn Majid,
  Cross Products by Braided Groups and Bosonisation,
  \emph{Journal of Algebra},
  Vol.163 (1994),
  165-190.

\bibitem{DBoBG}
  S. Majid,
  Double-bosonisation of braided groups and the construction of $U_{q}(g)$,
  \emph{Mathematical Proceedings of the Cambridge Philosophical Society},
  1 (1999),
  151–192.

\bibitem{LaACH}
  Ralf Meyer,
  \emph{Local and analytic cyclic homology},
  Tracts in Mathematics 3,
  European Mathematical Society,
  2007.
    
\bibitem{HAatAoR}
  S. Montgomery,
  \emph{Hopf algebras and their actions on rings},
  American Mathematical Society,
  CBMS Regional Conference Series in Mathematics No. 82 (1992).

\bibitem{TRTfDM}
  Fabienne Prosmans and Jean-Pierre Schneiders,
  A Topological Reconstruction Theorem for $\mathcal{D}^{\infty}$-Modules,
  \emph{Duke mathematical journal},
  Vol. 102,
  1 (2000),
  39-86.

\bibitem{QACaS}
  Jean-Pierre Schneiders,
  Quasi-abelian categories and sheaves,
  \emph{Pr\'epublications Math\'ematiques de l’Universit\'e Paris-Nord},
  76 (1999).

\bibitem{QpSaQpG}
  Yan Soibelman,
  \emph{Quantum $p$-adic spaces and quantum $p$-adic groups},
  2007,
  https://arxiv.org/pdf/0704.2890.pdf.

\bibitem{QGaRoMC}
  D. Yetter,
  Quantum groups and representations of monoidal categories,
  \emph{Mathematical Proceedings Of The Cambridge Philosophical Society},
  Vol.108 (1990),
  261-292.





\end{thebibliography}
\end{document}